\documentclass[oneside,american]{amsart}
\usepackage[T1]{fontenc}
\usepackage[latin9]{inputenc}
\usepackage{enumitem}
\usepackage{amstext}
\usepackage{amsthm}
\usepackage{amssymb}

\makeatletter
\numberwithin{equation}{section}
\numberwithin{figure}{section}
\theoremstyle{plain}
\newtheorem{thm}{\protect\theoremname}[section]
\theoremstyle{definition}
\newtheorem{defn}[thm]{\protect\definitionname}
\theoremstyle{remark}
\newtheorem*{rem*}{\protect\remarkname}
\theoremstyle{plain}
\newtheorem{cor}[thm]{\protect\corollaryname}
\theoremstyle{plain}
\newtheorem{lem}[thm]{\protect\lemmaname}
\theoremstyle{plain}
\newtheorem{prop}[thm]{\protect\propositionname}
\theoremstyle{plain}
\newtheorem*{prop*}{\protect\propositionname}

\usepackage[pdftex,pdfpagelabels,bookmarks,hyperindex,hyperfigures]{hyperref}

\usepackage{thmtools}

\makeatother

\usepackage{babel}
\providecommand{\corollaryname}{Corollary}
\providecommand{\definitionname}{Definition}
\providecommand{\lemmaname}{Lemma}
\providecommand{\propositionname}{Proposition}
\providecommand{\remarkname}{Remark}
\providecommand{\theoremname}{Theorem}

\begin{document}

\title{Dimension of self-conformal measures associated to an exponentially
separated holomorphic IFS}

\author{\noindent Zhou Feng and Ariel Rapaport}

\subjclass[2000]{\noindent 28A80, 37C45.}

\keywords{holomorphic IFS, self-conformal measure, dimension of measures, Hausdorff
dimension, exponential separation}

\thanks{Funded by the European Union (ERC, DIM-FRACTAL, 101217774). Views
and opinions expressed are however those of the authors only and do
not necessarily reflect those of the European Union or the European
Research Council Executive Agency. Neither the European Union nor
the granting authority can be held responsible for them. This research
was also supported by the Israel Science Foundation (grant No. 619/22).
ZF was supported in part by a Technion fellowship.}
\begin{abstract}
Let $\Phi$ be a holomorphic IFS on a bounded domain in $\mathbb{C}$.
Suppose that the following conditions hold: (1) the maps in $\Phi$
do not have a common fixed point; (2) there does not exist a regular
real-analytic curve which is invariant under all of the maps in $\Phi$;
(3) $\Phi$ is not holomorphically conjugate to a homothetic IFS;
(4) $\Phi$ is exponentially separated. Under these assumptions, we
show that the dimensions of the self-conformal measures associated
to $\Phi$, as well as the Hausdorff dimension of the associated self-conformal
set, attain their natural upper bounds. The proof combines recently
developed methods from the dimension theory of stationary fractal
measures with complex-analytic arguments.
\end{abstract}

\maketitle

\section{Introduction}

\subsection{Background}

Let $\Omega$ be a bounded domain in $\mathbb{R}^{d}$. A function
$f:\Omega\rightarrow\mathbb{R}^{d}$ is called conformal if it is
smooth and, for each $x\in\Omega$, there exist $U_{x}\in\mathrm{O}(d)$
and $r_{x}>0$ such that $f'(x)=r_{x}U_{x}$, where $\mathrm{O}(d)$
denotes the orthogonal group of $\mathbb{R}^{d}$. For $d=1$, this
is equivalent to $f$ being smooth with nonvanishing derivative. For
$d=2$, upon identifying $\mathbb{R}^{2}$ with $\mathbb{C}$, it
is equivalent to $f$ being holomorphic or antiholomorphic with nonvanishing
derivative. For $d\geq3$, Liouville\textquoteright s theorem implies
that $f$ is conformal if and only if it is the restriction of a Möbius
transformation.

Let $\Lambda$ be a finite index set, and for each $i\in\Lambda$
let $\varphi_{i}:\overline{\Omega}\rightarrow\Omega$ be given. Assume
that each $\varphi_{i}$ extends to a conformal injection on a domain
containing $\overline{\Omega}$ and that $0<\Vert\varphi_{i}'(x)\Vert_{\mathrm{op}}<1$
for all $i\in\Lambda$ and $x\in\overline{\Omega}$, where $\Vert\cdot\Vert_{\mathrm{op}}$
denotes the operator norm. Setting $\Phi:=\left\{ \varphi_{i}\right\} _{i\in\Lambda}$,
the collection $\Phi$ is called a conformal iterated function system
(IFS) on $\Omega$.

For a word $u=i_{1}...i_{n}\in\Lambda^{n}$, set $\varphi_{u}:=\varphi_{i_{1}}\circ...\circ\varphi_{i_{n}}$.
For a sufficiently large $n\geq1$, every map $\varphi_{u}$ with
$u\in\Lambda^{n}$ is a strict contraction of $\overline{\Omega}$.
Hence, by \cite{Hut}, there exists a unique nonempty compact set
$K_{\Phi}\subset\Omega$ satisfying $K_{\Phi}=\cup_{i\in\Lambda}\varphi_{i}(K_{\Phi})$.
This set is called the self-conformal set, or the attractor, associated
with $\Phi$.

Certain natural measures are also associated with $\Phi$. Let $p=(p_{i})_{i\in\Lambda}$
be a positive probability vector. By \cite{Hut}, there exists a unique
Borel probability measure $\mu$ on $\Omega$ satisfying $\mu=\sum_{i\in\Lambda}p_{i}\cdot\varphi_{i}\mu$,
where $\varphi_{i}\mu$ denotes the pushforward of $\mu$ under $\varphi_{i}$.
The measure $\mu$, which is supported on $K_{\Phi}$, is called the
self-conformal measure corresponding to $\Phi$ and $p$.

By the work of Feng and Hu \cite{FH-dimension}, the measure $\mu$
is exact dimensional. That is, there exists a number $\dim\mu$, called
the dimension of $\mu$, such that
\[
\underset{\delta\downarrow0}{\lim}\frac{\log\mu\left(B(x,\delta)\right)}{\log\delta}=\dim\mu\text{ for }\mu\text{-a.e. }x\in\Omega.
\]
Here, $B(x,\delta)$ denotes the closed ball in $\mathbb{R}^{d}$
with centre $x$ and radius $\delta$. Computing the dimension of
self-conformal measures is a natural and important problem in fractal
geometry.

Write $H(p)$ for the entropy of $p$ and $\chi:=\chi\left(\Phi,p\right)$
for the Lyapunov exponent associated to $\Phi$ and $p$. That is,
\[
H(p):=-\sum_{i\in\Lambda}p_{i}\log p_{i}\text{ and }\chi:=-\sum_{i\in\Lambda}p_{i}\int\log\Vert\varphi_{i}'(x)\Vert_{\mathrm{op}}\:d\mu(x),
\]
where we always use $2$ as the base of the logarithm. It is easy
to show that $H(p)/\chi$ is always an upper bound for $\dim\mu$.
Moreover, in the absence of certain obvious obstructions, one generally
expects that
\begin{equation}
\dim\mu=\min\left\{ d,H(p)/\chi\right\} ,\label{eq:dim mu =00003D min=00007Bd,H(p)/chi=00007D}
\end{equation}
although this is often difficult to verify.

The IFS $\Phi$ is said to satisfy the strong separation condition
(SSC) if the sets $\left\{ \varphi_{i}\left(K_{\Phi}\right)\right\} _{i\in\Lambda}$
are disjoint, in which case it is easy to establish (\ref{eq:dim mu =00003D min=00007Bd,H(p)/chi=00007D}).
Additionally, in various settings, this equality can be shown to hold
almost surely under a natural randomization of the parameters (see,
e.g., \cite[Section 7]{MR1852098} and \cite[Chapter 14]{MR4661364}).
It is therefore desirable to find explicit conditions ensuring (\ref{eq:dim mu =00003D min=00007Bd,H(p)/chi=00007D})
that are much milder than the restrictive SSC.

The IFS $\Phi$ is called self-similar if each $\varphi_{i}$ is a
contracting similarity. That is, for each $i\in\Lambda$, there exist
$0<r_{i}<1$, $U_{i}\in\mathrm{O}(d)$, and $a_{i}\in\mathbb{R}^{d}$
such that $\varphi_{i}(x)=r_{i}U_{i}x+a_{i}$ for all $x\in\Omega$.
In recent years, major progress has been made on the above problem
in the self-similar case, initiated by Hochman's seminal work \cite{Ho1}
on the real line. Let $\Vert\cdot\Vert_{\Omega}$ denote the supremum
norm on $\Omega$. That is, $\Vert f\Vert_{\Omega}=\sup_{x\in\Omega}\left|f(x)\right|$
for a bounded function $f:\Omega\rightarrow\mathbb{R}^{d}$.
\begin{defn}
\label{def:exp sep}We say that $\Phi$ is exponentially separated
if there exists $c>0$ such that for infinitely many $n\in\mathbb{Z}_{>0}$,
\[
\Vert\varphi_{u_{1}}-\varphi_{u_{2}}\Vert_{\Omega}\ge c^{n}\text{ for all distinct }u_{1},u_{2}\in\Lambda^{n}.
\]
\end{defn}

The exponential separation assumption is significantly weaker than
both the SSC and the well-known open set condition, which we do not
state here.

The main result of \cite{Ho1} states that (\ref{eq:dim mu =00003D min=00007Bd,H(p)/chi=00007D})
holds whenever $d=1$ and $\Phi$ is self-similar and exponentially
separated. Later, in \cite{Ho}, Hochman extended this result to higher
dimensions. Kittle and Kogler \cite{kittle2025dimension} subsequently
extended Hochman\textquoteright s higher-dimensional result to the
contracting-on-average setting. In the uniformly contracting setting
considered here, it follows directly from \cite[Theorem 1.5]{Ho}
that (\ref{eq:dim mu =00003D min=00007Bd,H(p)/chi=00007D}) holds
whenever the following conditions are all satisfied:
\begin{enumerate}[label={(A\arabic*)}]
\item \label{enu:hoch R^d cond SS}$\Phi$ is self-similar;
\item \label{enu:hoch R^d cond affine irred}there is no proper affine subspace
of $\mathbb{R}^{d}$ that is invariant under all the maps in $\Phi$;
\item \label{enu:hoch R^d cond linear irred}there is no nontrivial linear
subspace of $\mathbb{R}^{d}$ that is invariant under all the linear
parts of the maps in $\Phi$;
\item \label{enu:hoch R^d cond exp sep}$\Phi$ is exponentially separated.
\end{enumerate}
It is a challenging and important open problem to relax these assumptions
as much as possible.

Note that conditions \ref{enu:hoch R^d cond affine irred} and \ref{enu:hoch R^d cond linear irred}
are necessary. Indeed, suppose that there exists a proper affine subspace
$V$ of $\mathbb{R}^{d}$ that is invariant under all the maps in
$\Phi$. Then $K_{\Phi}\subset V$ and hence $\mu(V)=1$, which implies
that $\dim\mu\leq\dim V$. Consequently, (\ref{eq:dim mu =00003D min=00007Bd,H(p)/chi=00007D})
fails whenever $H(p)/\chi>\dim V$. For the case $d=1$, note that
the condition \ref{enu:hoch R^d cond affine irred} is equivalent
to requiring that the maps in $\Phi$ do not all share a common fixed
point. This assumption is not stated explicitly in \cite{Ho1}, since
in the one-dimensional self-similar setting it follows from exponential
separation.

Concerning the necessity of condition \ref{enu:hoch R^d cond linear irred},
as demonstrated in \cite[Example 1.2]{Ho}, one can construct exponentially
separated self-similar measures $\mu$ for which condition \ref{enu:hoch R^d cond affine irred}
holds but (\ref{eq:dim mu =00003D min=00007Bd,H(p)/chi=00007D}) fails.
In these examples, $\mu$ is saturated, in a sense made precise in
\cite{Ho}, along translates of a nontrivial linear subspace of $\mathbb{R}^{d}$
which is invariant under the linear parts of the maps in the IFS.
Note that when $d=2$ and the maps in $\Phi$ are orientation-preserving,
the condition \ref{enu:hoch R^d cond linear irred} fails if and only
if the maps in $\Phi$ are all homotheties. That is, for each $i\in\Lambda$,
the map $\varphi_{i}$ is of the form $\varphi_{i}(x)=r_{i}x+a_{i}$
for some $0\neq r_{i}\in(-1,1)$ and $a_{i}\in\mathbb{R}^{d}$.

Regarding the exponential separation assumption, note that it is necessary
to assume some form of separation. Indeed, if $\Phi$ does not generate
a free semigroup, in which case it is said to have exact overlaps,
then (\ref{eq:dim mu =00003D min=00007Bd,H(p)/chi=00007D}) necessarily
fails whenever $\dim\mu<d$. One of the most important open problems
in fractal geometry, called the exact overlaps conjecture, asserts
that in the self-similar setting on the real line, (\ref{eq:dim mu =00003D min=00007Bd,H(p)/chi=00007D})
can only fail in the presence of exact overlaps (see \cite{MR3966837,Var_ICM}
for further discussion). This conjecture has recently been verified
in various situations (see \cite{MR4929167,Rap-EO,rapaport20203maps,Var-Bernoulli}).
In higher dimensions, given the above result from \cite{Ho}, it is
reasonable to expect that, when conditions \ref{enu:hoch R^d cond SS},
\ref{enu:hoch R^d cond affine irred} and \ref{enu:hoch R^d cond linear irred}
hold, (\ref{eq:dim mu =00003D min=00007Bd,H(p)/chi=00007D}) should
hold whenever $\Phi$ generates a free semigroup.

It is easy to verify that $\Phi$ has exact overlaps if and only if
there exist $n\ge1$ and distinct $u_{1},u_{2}\in\Lambda^{n}$ such
that $\varphi_{u_{1}}=\varphi_{u_{2}}$. Thus, exponential separation
is a stronger quantitative version of the assumption that there are
no exact overlaps.

Especially relevant to us is the extension of the above results beyond
the self-similar setting. Hochman and Solomyak \cite{HS} proved that
(\ref{eq:dim mu =00003D min=00007Bd,H(p)/chi=00007D}) holds for $d=1$
when $\Phi$ is exponentially separated, its maps are restrictions
of Möbius transformations, and the semigroup generated by $\Phi$,
viewed as a subsemigroup of $\mathrm{SL}(2,\mathbb{R})$, is strongly
irreducible and proximal. Rapaport and Ren \cite{Rap_Ren_Fur_on_CP1}
obtained the analogous result for $d=2$, with $\mathrm{SL}(2,\mathbb{C})$
in place of $\mathrm{SL}(2,\mathbb{R})$, under the additional requirement
that there be no generalized circle invariant under all the maps in
$\Phi$\footnote{The results of \cite{HS} and \cite{Rap_Ren_Fur_on_CP1} are actually
more general, as they do not require $\mu$ to be compactly supported.}.

In all the above results, the semigroup generated by $\Phi$ naturally
embeds into a finite-dimensional Lie group, thereby avoiding significant
difficulties that arise when this is not the case. In \cite{Rap_analytic_on_R},
Rapaport made substantial progress beyond this situation by proving
that (\ref{eq:dim mu =00003D min=00007Bd,H(p)/chi=00007D}) holds
for $d=1$ when all the maps in $\Phi$ are real-analytic, have no
common fixed point, and $\Phi$ is exponentially separated.

It is natural to try to extend the preceding result to higher dimensions.
As pointed out above, when $d\geq3$, conformal maps are restrictions
of Möbius transformations. Hence, in this case, self-conformal IFSs
may be viewed naturally as finite subsets of the projective orthogonal
group $\mathrm{PO}(d+1,1)$ (see, e.g., \cite[Theorem 2.9]{conf-book}).
Consequently, proving the desired equality for $d\geq3$ should not
require extending the arguments from \cite{Rap_analytic_on_R} that
were developed to handle the absence of an ambient finite-dimensional
Lie group.

In the present paper, we extend the result of \cite{Rap_analytic_on_R}
to the case $d=2$. Given a holomorphic IFS on a domain $\Omega\subset\mathbb{C}$,
we establish (\ref{eq:dim mu =00003D min=00007Bd,H(p)/chi=00007D})
under mild assumptions that, apart from exponential separation, are
necessary. This also extends the planar case of Hochman\textquoteright s
self-similar result to the holomorphic setting.

\subsection{\label{subsec:Setup-and-the-main-result}Setup and the main result}

Throughout the rest of the paper, let $\Omega$ be a bounded domain
in $\mathbb{C}$, let $\Lambda$ be a finite index set, and for each
$i\in\Lambda$ let $\varphi_{i}:\overline{\Omega}\rightarrow\mathbb{C}$
be given. We shall always assume that
\begin{enumerate}[label={(\roman*)}]
\item \label{enu:holo IFS cond 1}there exists a domain $\Omega'\subset\mathbb{C}$
containing $\overline{\Omega}$ such that, for each $i\in\Lambda$,
the map $\varphi_{i}$ extends to a holomorphic injection on $\Omega'$;
\item \label{enu:holo IFS cond 2}for each $i\in\Lambda$, $\varphi_{i}\left(\overline{\Omega}\right)\subset\Omega$
and $0<\left|\varphi_{i}'(z)\right|<1$ for all $z\in\overline{\Omega}$.
\end{enumerate}
Setting $\Phi:=\left\{ \varphi_{i}\right\} _{i\in\Lambda}$, the collection
$\Phi$ is called a holomorphic IFS on $\Omega$. Thus, viewing $\mathbb{C}$
as $\mathbb{R}^{2}$, every holomorphic IFS is in particular a conformal
IFS.

As before, let $K_{\Phi}$ denote the self-conformal set associated
with $\Phi$. Fix a positive probability vector $p=(p_{i})_{i\in\Lambda}$,
and let $\mu$ denote the self-conformal measure associated with $\Phi$
and $p$. We continue to write $H(p)$ for the entropy of $p$ and
$\chi:=\chi(\Phi,p)$ for the Lyapunov exponent associated with $\Phi$
and $p$.

To state our results, we need the following definitions.
\begin{defn}
Given an interval $J\subset\mathbb{R}$, a function $\gamma:J\rightarrow\mathbb{C}$
is said to be real-analytic if, for each $t_{0}\in J$, there exist
$\epsilon>0$ and numbers $a_{0},a_{1},\ldots\in\mathbb{C}$ such
that $\gamma(t)=\sum_{n\geq0}a_{n}(t-t_{0})^{n}$ for all $t\in J\cap(t_{0}-\epsilon,t_{0}+\epsilon)$.
\end{defn}

\begin{defn}
\label{def:reg real-anal curve}A subset $\Gamma\subset\mathbb{C}$
is said to be a regular real-analytic curve if it is an embedded $1$-dimensional
real-analytic submanifold of $\mathbb{C}$. That is, for every $z_{0}\in\Gamma$
there exist an open subset $U\subset\mathbb{C}$ containing $z_{0}$,
an open interval $J\subset\mathbb{R}$, and a real-analytic map $\gamma:J\rightarrow\mathbb{C}$
such that $\gamma(J)=\Gamma\cap U$, $\gamma$ is a homeomorphism
onto $\Gamma\cap U$, and $\gamma'(t)\ne0$ for all $t\in J$. When
$\Gamma\subset\Omega$, we say that $\Gamma$ is $\Phi$-invariant
if $\varphi_{i}(\Gamma)\subset\Gamma$ for each $i\in\Lambda$.
\end{defn}

\begin{rem*}
Note that a regular real-analytic curve need not be connected.
\end{rem*}
\begin{defn}
Given an open set $U\subset\mathbb{C}$, a function $f:U\rightarrow\mathbb{C}$
is said to be homothetic if there exist $0\neq r\in\mathbb{R}$ and
$a\in\mathbb{C}$ such that $f(z)=rz+a$ for all $z\in U$. We say
that $\Phi$ is holomorphically conjugate to a homothetic IFS if there
exists an injective holomorphic map\footnote{Recall that an injective holomorphic map $\psi:\Omega\rightarrow\mathbb{C}$
is biholomorphic onto its image. That is, $\psi(\Omega)$ is open
and $\psi^{-1}:\psi(\Omega)\rightarrow\Omega$ is also holomorphic;
see, e.g., \cite{MR1976398}.} $\psi:\Omega\rightarrow\mathbb{C}$ such that $\psi\circ\varphi_{i}\circ\psi^{-1}:\psi(\Omega)\rightarrow\mathbb{C}$
is homothetic for each $i\in\Lambda$.
\end{defn}

Throughout the paper, we use exponential separation in the sense of
Definition \ref{def:exp sep}, identifying $\mathbb{C}$ with $\mathbb{R}^{2}$.
The following theorem is our main result.
\begin{thm}
\label{thm:main thm}Let $\Omega\subset\mathbb{C}$ be a bounded domain
and let $\Phi=\left\{ \varphi_{i}\right\} _{i\in\Lambda}$ be a holomorphic
IFS on $\Omega$ (so that conditions \ref{enu:holo IFS cond 1} and
\ref{enu:holo IFS cond 2} hold). Suppose further that
\begin{enumerate}[label={(M\arabic*)}]
\item \label{enu:main thm cond no common fix}the maps in $\Phi$ do not
have a common fixed point;
\item \label{enu:main thm cond no inv curve}there does not exist a regular
real-analytic curve $\Gamma\subset\Omega$ which is $\Phi$-invariant
and closed in $\Omega$;
\item \label{enu:main thm cond not holo conj}$\Phi$ is not holomorphically
conjugate to a homothetic IFS;
\item $\Phi$ is exponentially separated.
\end{enumerate}
Then for every positive probability vector $p=(p_{i})_{i\in\Lambda}$,
\begin{equation}
\dim\mu=\min\left\{ 2,H(p)/\chi(\Phi,p)\right\} ,\label{eq:dim formula in main thm}
\end{equation}
where $\mu$ is the self-conformal measure associated to $\Phi$ and
$p$.
\end{thm}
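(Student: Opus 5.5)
We argue by contradiction, following the Hochman entropy-increase scheme as adapted to non-linear IFSs in \cite{Rap_analytic_on_R}. Suppose $\dim\mu<\min\{2,H(p)/\chi\}$. The plan has four steps.

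\textbf{Step 1 (reduction to an entropy-porosity statement).} Using exponential separation and the fact that $\mu$ is exact dimensional (Feng--Hu), we first show that for a suitable small scale parameter the measures
\[
\nu_{n}:=\sum_{u\in\Lambda^{n}}p_{u}\,\delta_{\varphi_{u}},
\]
viewed on the space of holomorphic maps, have normalized entropy at scale $c^{n}$ close to $H(p)/\chi$ times the appropriate scale ratio. Consequently $\mu=\nu_{n}.\mu$, the action measure, has entropy deficient by a definite amount compared to what $\nu_n$ "should" contribute. Since $\varphi_u$ is nearly affine on small balls, we linearize: a component of $\nu_n.\mu$ at scale $2^{-m}$ near $\varphi_u(x)$ looks like $\varphi_u'(x)\cdot(\text{component of }\mu)$ plus translates. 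Together with an inverse theorem for entropy of convolutions in $\mathbb{C}$ (Hochman's higher-dimensional inverse theorem, in the linearized form of \cite{Ho}), this forces the following: at most scales, typical components of $\mu$ must be either already of full dimension $2$, or saturated along some real line direction.

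\textbf{Step 2 (ruling out saturation via (M3), (M2)).} The key new input is that saturation on lines forces geometric structure. If components of $\mu$ at typical points and scales are saturated along directions $\theta(x)\in\mathbb{RP}^1$, then by equivariance $\theta(\varphi_i x)=\arg\varphi_i'(x)+\theta(x)$, a cocycle equation. We aim to show that a measurable solution $\theta$ must be the restriction of a real-analytic line field on a neighbourhood of $K_\Phi$. We will try this by an approximation argument using the analyticity of $\arg\varphi_u'$, or via a Livsic-type rigidity for the cocycle $\arg\varphi_i'$ over the expanding dynamics. A real-analytic solution then gives two cases:
\begin{itemize}
\item[(a)] The line field is the direction field of a holomorphic coordinate $\psi$ with $\arg\psi'\circ\varphi_i+\arg\varphi_i'=\arg\psi'$. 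This yields $\psi\circ\varphi_i\circ\psi^{-1}$ with real derivative, i.e.\ homothetic after integration, contradicting (M3).
\item[(b)] The direction is tangent to a $\Phi$-invariant family of curves. Combined with the saturation/dimension-deficit structure and (M1), this forces $\mu$ to live on a single invariant real-analytic curve, contradicting (M2).
\end{itemize}
We expect a dichotomy of exactly this kind is what the hypotheses (M2) and (M3) are designed for.

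\textbf{Step 3 (concluding full dimension).} Once saturation is excluded, the inverse theorem yields entropy growth of $\nu_n.\mu$ strictly beyond $\dim\mu$. Using the nonlinearity estimates (bounded distortion and second-derivative control of $\varphi_u$ on balls of size $2^{-m}$ with $m\ll n$), this contradicts $\dim\mu<\min\{2,H(p)/\chi\}$, as in \cite{Rap_analytic_on_R}.

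\textbf{Main obstacle.} We expect the main obstacle to be Step 2: upgrading a merely measurable, almost-everywhere saturation direction to a real-analytic invariant object, and distinguishing the conjugate-homothetic case from the invariant-curve case. We would try first to exploit (M1) to find two maps with distinct fixed points. Their iterates produce a Zariski-dense-like family of derivative arguments, which should force analyticity of $\theta$ through the rigidity of holomorphic functions on a non-polar set.
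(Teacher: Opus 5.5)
\textbf{There is a genuine gap, and it sits in Step~2, which you correctly flag as the crux.}

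\emph{Step 2 cannot be carried out as formulated.} Hochman's inverse theorem only says that, at a positive proportion of scales, many components of $\mu$ are saturated along \emph{some} line. That line may depend on the component and on the scale. Near $\Pi\omega$ at scale $2^{-\chi n}$ it is naturally rotated by $\alpha_n(\omega)$, where $\alpha(\omega)=\varphi_{\omega_0}'(\Pi\sigma\omega)\mathbb{R}$. Nothing produces a measurable, scale-independent field $\theta$ on $K_\Phi$ satisfying the exact identity $\theta(\varphi_i x)=\arg\varphi_i'(x)+\theta(x)$. You would in any case have to rule out saturation along directions that wander with the scale. Separately, ``approximation'', Livsic-type rigidity, or ``Zariski-dense'' families of derivative arguments give no mechanism for making a merely measurable solution analytic.

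\emph{What the paper does instead.} It reverses the logic and first proves that the direction cocycle $\alpha_n$ is not a measurable coboundary on $(\Lambda^{\mathbb{N}},\beta)$.
\begin{itemize}
\item The measurable-to-holomorphic upgrade: Lusin's theorem and the martingale theorem give a point $\eta$ with $f(\eta|_{n_k}\omega)\to f(\eta)$ for $\beta$-a.e.\ $\omega$. The renormalized cylinder maps $|\varphi_{\eta|_{n_k}}'(0)|^{-1}(\varphi_{\eta|_{n_k}}-\varphi_{\eta|_{n_k}}(0))$ converge, by Montel and Hurwitz, to an injective holomorphic $\psi$ on $\Omega$. Hence $f(\omega)=\psi'(\Pi\omega)\mathbb{R}f(\eta)$.
\item So every measurable solution is automatically of your type (a): $g_j=\psi\circ\varphi_j\circ\psi^{-1}$ has real derivative on $\psi(K_\Phi)$. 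Your dichotomy becomes whether the harmonic function $\arg g_j'\circ\psi$ is constant, which makes $g_j$ homothetic and contradicts (M3), or not. If not, then since $\mu$ is non-atomic by (M1), $\mu$ charges a real-analytic level curve.
\item That last case is excluded by a separate result: $\mu$ gives zero mass to every regular real-analytic curve. It is proved by zooming in with cylinder maps and taking a normal-families limit of rescaled harmonic defining functions, which produces a closed $\Phi$-invariant curve and contradicts (M2). No saturation or dimension-deficit input enters.
\item Non-saturation is then obtained without any saturation direction field. Non-coboundary gives, via the Rapaport--Ren equidistribution argument, that $\alpha_i(\omega)h(\sigma^i\omega)$ is non-concentrated for every continuous $h$. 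Each $\varphi_v\mu$ has projection entropy at least $\frac12\dim\mu-\epsilon$ outside a single $\epsilon$-ball of directions. A linearization turns the projection of $\psi\varphi_{u\omega|_{i+k}}\mu$ into a projection of $\varphi_{(\sigma^i\omega)|_k}\mu$ in a direction involving $\alpha_i(\omega)^{-1}$. So projections gain entropy $\frac12\dim\mu>\dim\mu-1$ at a positive proportion of scales, and a multiscale average turns this into a lower bound $\dim\mu-1+\gamma$ for projections of components in every direction.
\end{itemize}

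\emph{Steps 1 and 3 also skip where non-linearity really bites.} Your $\nu_n=\sum_u p_u\delta_{\varphi_u}$ lives in the infinite-dimensional space $\mathcal{O}(\Omega)$. Both the multiscale decomposition over components of $\nu$ and the transfer of entropy of $\nu$ to the evaluations $\nu_{q,i}.z$ need a finite-dimensional parameter space. The paper gets one as follows.
\begin{itemize}
\item It uses Feng--Hu's fiber entropy $\Delta'=H(\beta,\mathcal{C}_1\mid\Pi^{-1}\mathcal{B}_{\mathbb{C}})>0$ and the zero-entropy lemma for $(\Pi_n\beta_\omega).\mu$.
\item It splits fiber words as $vu$ with $|u|=n'\approx n\Delta/(2\log|\Lambda|)$.
\item It replaces $\varphi_v$ on $\varphi_u(\Omega)$ by its degree-$k$ Taylor polynomial at $\varphi_u(0)$, with $k$ large in terms of $M$ and $\Delta$.
\item Exponential separation at level $n+n'$ then separates these polynomials at scale $2^{-Mn}$, and the entropy-increase theorem is proved for measures on $\mathcal{P}_k$.
\end{itemize}
First-order linearization cannot replace this. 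With $k=1$ the error is of order $2^{-\chi n-2\chi n'}$. This is below the required $2^{-Mn-\chi(n+n')}$ only if $\chi n'\gtrsim Mn$. That is incompatible with keeping $n'\log|\Lambda|$ small compared with the fiber entropy $\Delta n$.
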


Let us make a few remarks on the assumptions appearing in the theorem.
Conditions \ref{enu:holo IFS cond 1} and \ref{enu:holo IFS cond 2}
are standard in the context of conformal IFSs. Although condition
\ref{enu:holo IFS cond 1} requires the maps in $\Phi$ to be holomorphic,
our arguments also apply, with only minor modifications, when some
of them are anti-holomorphic. For simplicity of exposition, we restrict
to the holomorphic case. The injectivity assumption in condition \ref{enu:holo IFS cond 1}
is needed to obtain certain bi-Lipschitz estimates (see Section \ref{subsec:A-bi-Lipchitz-estimate})
and to carry out various compactness arguments (see Lemma \ref{lem:injective or constant}).
Condition \ref{enu:holo IFS cond 2} ensures that sufficiently long
compositions of maps in $\Phi$ are uniformly contracting and that
the bounded distortion property holds (see Lemma \ref{lem:bounded distortion}).

Assumption \ref{enu:main thm cond no common fix} is equivalent to
$K_{\Phi}$ not being a singleton. This is clearly necessary for the
validity of (\ref{eq:dim formula in main thm}), unless $\Phi$ consists
of a single map.

Assumption \ref{enu:main thm cond no inv curve} also cannot be omitted
in general. Indeed, if there exists a regular real-analytic curve
$\Gamma\subset\Omega$ that is $\Phi$-invariant and closed in $\Omega$,
then $K_{\Phi}\subset\Gamma$, and hence $\dim\mu\leq1$. Consequently,
(\ref{eq:dim formula in main thm}) fails whenever $H(p)/\chi>1$.
In the self-similar case, it is not difficult to see that assumptions
\ref{enu:main thm cond no common fix} and \ref{enu:main thm cond no inv curve}
together are equivalent to Hochman\textquoteright s affine irreducibility
condition \ref{enu:hoch R^d cond affine irred}. Notice that assumption
\ref{enu:main thm cond no inv curve} excludes only $\Phi$-invariant
curves that are closed in $\Omega$. This slight strengthening of
the theorem will be useful in subsequent applications of our main
result.

Assumption \ref{enu:main thm cond not holo conj} is again necessary
in general. Indeed, the example in \cite[Example 1.2]{Ho} mentioned
above shows that the homothetic case must be excluded. Since holomorphic
conjugacy leaves both $\dim\mu$ and $\chi$ unchanged, one must also
exclude IFSs that are holomorphically conjugate to a homothetic IFS.
In the holomorphic self-similar case, it is not difficult to see that
assumption \ref{enu:main thm cond not holo conj} is equivalent to
Hochman\textquoteright s linear irreducibility condition \ref{enu:hoch R^d cond linear irred}.

Although assumptions \ref{enu:main thm cond no inv curve} and \ref{enu:main thm cond not holo conj}
are necessary, they may be difficult to verify in concrete situations.
In Corollary \ref{cor:ver suff criterion} below, we replace these
two assumptions with a single condition that implies both and is often
easier to verify.

Finally, as in the self-similar case, although exponential separation
is a rather mild condition, it is desirable to weaken it to the necessary
assumption that there are no exact overlaps. Since the exact overlaps
conjecture remains open in full generality even for self-similar IFSs
on the real line, this is currently well beyond our reach.

\subsection{Dimension of self-conformal sets}

Using Theorem \ref{thm:main thm}, we can compute the dimension of
the self-conformal set $K_{\Phi}$. In what follows, $\dim_{H}$ denotes
Hausdorff dimension.

For each $t\ge0$ set
\[
P_{\Phi}(t):=\underset{n\rightarrow\infty}{\lim}\frac{1}{n}\log\left(\sum_{u\in\Lambda^{n}}\Vert\varphi_{u}'\Vert_{\Omega}^{t}\right),
\]
where $\Vert\cdot\Vert_{\Omega}$ denotes the supremum norm on $\Omega$
and the limit exists by sub-additivity. It is easy to verify that
$P_{\Phi}(0)=\log|\Lambda|$, $P_{\Phi}(t)\rightarrow-\infty$ as
$t\rightarrow\infty$, and $P_{\Phi}$ is strictly decreasing and
continuous. Thus, there exists a unique $s(\Phi)\ge0$ such that $P_{\Phi}\left(s(\Phi)\right)=0$.
Following \cite{MR4661364}, we call the number $s(\Phi)$ the conformal
similarity dimension associated to $\Phi$. It is well known and easy
to show that $s(\Phi)$ is always an upper bound of $\dim_{H}K_{\Phi}$.
\begin{cor}
\label{cor:main cor for sets}Let $\Omega\subset\mathbb{C}$ be a
bounded domain, and let $\Phi=\left\{ \varphi_{i}\right\} _{i\in\Lambda}$
be a holomorphic IFS on $\Omega$ satisfying the assumptions of Theorem
\ref{thm:main thm}. Then
\begin{equation}
\dim_{H}K_{\Phi}=\min\left\{ 2,s(\Phi)\right\} ,\label{eq:exp dim formula for sets}
\end{equation}
where $K_{\Phi}$ denotes the self-conformal set associated to $\Phi$.
\end{cor}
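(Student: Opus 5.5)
The upper bound $\dim_H K_\Phi\le\min\{2,s(\Phi)\}$ is standard, so the plan is to prove the lower bound. We will pass to a high iterate of $\Phi$, choose a suitable Bernoulli measure on the iterate, and apply Theorem \ref{thm:main thm} to it. Since $\dim_H K_\Phi\ge\dim\mu$ for every self-conformal measure $\mu$, it suffices to produce, for each $\epsilon>0$, a self-conformal measure $\mu$ supported on $K_\Phi$ with $\dim\mu\ge\min\{2,s(\Phi)\}-\epsilon$.

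Fix $n$ large and consider the iterated IFS $\Phi^n:=\{\varphi_u\}_{u\in\Lambda^n}$, which is again a holomorphic IFS on $\Omega$ with attractor $K_{\Phi^n}=K_\Phi$. Take the probability vector $p_u:=\|\varphi_u'\|_\Omega^{t}/\sum_{v\in\Lambda^n}\|\varphi_v'\|_\Omega^{t}$, where $t:=\min\{s(\Phi),2\}$ (or $t$ slightly smaller than $s(\Phi)$ if needed). By the bounded distortion property (Lemma \ref{lem:bounded distortion}), $-\log|\varphi_u'(x)|=-\log\|\varphi_u'\|_\Omega+O(1)$ uniformly in $u$ and $x$. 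Hence
\[
\chi(\Phi^n,p)=-\sum_u p_u\log\|\varphi_u'\|_\Omega+O(1),
\]
and therefore
\[
H(p)=-t\sum_u p_u\log\|\varphi_u'\|_\Omega+\log\sum_v\|\varphi_v'\|_\Omega^{t}.
\]
Since $\frac1n\log\sum_v\|\varphi_v'\|^{t}_\Omega\to P_\Phi(t)\ge0$ for $t\le s(\Phi)$, and $\chi(\Phi^n,p)\ge cn$ by uniform contraction of long compositions, we get $H(p)/\chi(\Phi^n,p)\ge t-O(1/n)$. Theorem \ref{thm:main thm} then gives $\dim\mu\ge\min\{2,t-O(1/n)\}$. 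Letting $n\to\infty$ finishes the argument.

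The main obstacle is checking that $\Phi^n$ satisfies hypotheses (M1)--(M4).
\begin{itemize}
\item Exponential separation passes to iterates: if $\Phi$ is separated at infinitely many levels $m$, and $\Phi$ fails separation at non-multiples of $n$, we instead note that separation at level $m$ implies separation at level $\lceil m/n\rceil n$ with a worse constant. This holds because appending a common suffix word $w$ of length $<n$ preserves $\|\varphi_{u_1w}-\varphi_{u_2w}\|_{\Omega}$ up to a factor $\|\varphi_w'\|$-type bound, using injectivity, bi-Lipschitz estimates and the fact that $\varphi_w(\Omega)$ contains a ball of radius $\ge c^n$.
\item (M1) is immediate, since common fixed points correspond to $K_\Phi$ being a singleton.
\item For (M2), suppose $\Gamma$ is a closed $\Phi^n$-invariant regular analytic curve. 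Then $K_\Phi\subset\Gamma$, so $\dim_H K_\Phi\le1$. The bound $\dim\ge\min\{2,s\}$ is then obtained if $s\le1$ anyway, but to avoid circularity one should rather show $\Gamma':=\bigcup_{|w|<n}\varphi_w(\Gamma)$, suitably taken near $K_\Phi$, yields a $\Phi$-invariant closed analytic curve. This requires analytic continuation and uniqueness arguments, since analytic curves through the infinite set $K_\Phi$ containing it locally must coincide.
\item For (M3), a holomorphic conjugacy $\psi$ making all $\varphi_u$, $|u|=n$, homothetic forces $\psi\circ\varphi_i\circ\psi^{-1}$ to commute, up to rotation, with affine structure. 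One should check that $\psi'$ relates derivatives so that $\arg(\psi\varphi_i\psi^{-1})'$ is constant, via $(\psi\varphi_{iu}\psi^{-1})'$ being real constant for all $u$.
\end{itemize}

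I expect (M2) and (M3) to be the delicate part, and would handle them via rigidity of analytic objects containing $K_\Phi$, which is not contained in any finite set.
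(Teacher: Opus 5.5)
Your overall strategy is the paper's: pass to an iterate $\Phi^n$, put weights on $\Lambda^n$, apply Theorem \ref{thm:main thm}, and let $n\to\infty$. Your weights $p_u\propto\|\varphi_u'\|_\Omega^{t}$ are a valid and more elementary substitute for the paper's equilibrium measure. Submultiplicativity gives $\log\sum_{v\in\Lambda^n}\|\varphi_v'\|_\Omega^{t}\ge nP_\Phi(t)\ge0$, and bounded distortion then gives $H(p)/\chi(\Phi^n,p)\ge t-O(1/n)$ for \emph{every} $n$.

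\textbf{The gap is (M3).} You try to show that every large iterate $\Phi^n$ is not holomorphically conjugate to a homothetic IFS, but this is false in general. Take $\Omega=D_2(0)$, $\varphi_1(z)=\frac{\mathrm{i}}{3}z$ and $\varphi_2(z)=\frac{\mathrm{i}}{3}z+1$. For $|u|=2$ one gets $\varphi_u(z)=-\frac19 z+b_u$, so $\Phi^{2k}$ is homothetic for every $k$. Yet $\Phi$ satisfies (M1)--(M3), by the argument of Corollary \ref{cor:ver suff criterion}, since $\varphi_1'(0)=\mathrm{i}/3\notin\mathbb{R}$ at the fixed point $0$.

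Your sketched mechanism cannot work either. For $|iu|=n$, the derivative $(\psi\varphi_{iu}\psi^{-1})'$ factors through $(\psi\varphi_u\psi^{-1})'$ with $|u|=n-1$, about which nothing is known. Moreover, constancy of $\arg(\psi\varphi_i\psi^{-1})'$ would not suffice: in the example it is constantly $\pi/2$.

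\textbf{The missing idea.} What is true, and what the paper proves, is that for each $n$ at least one of $\Phi^n,\Phi^{n+1}$ satisfies (M3). Since your weight construction works for every $n$, this suffices. The argument runs as follows.
\begin{itemize}
\item Suppose $\psi_1$ conjugates $\Phi^n$ and $\psi_2$ conjugates $\Phi^{n+1}$ to homothetic IFSs. Then both conjugate $\Phi^{n(n+1)}$.
\item Comparing derivative directions along orbits converging to points of $K_\Phi$ gives $\psi_2'(z)\mathbb{R}=w\,\psi_1'(z)\mathbb{R}$ on $K_\Phi$, for some $w\neq0$.
\item Expand $\varphi_{ui}=\varphi_u\circ\varphi_i$ with $|u|=n$. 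Using that $\psi_2\varphi_{ui}\psi_2^{-1}$ and $\psi_1\varphi_u\psi_1^{-1}$ are both homothetic, one obtains $(\psi_2\varphi_i\psi_2^{-1})'(\psi_2 z)\in\mathbb{R}$ for all $z\in K_\Phi$.
\item The rigidity Lemma \ref{lem:lem bef no cobd prop} upgrades this to $\psi_2\varphi_i\psi_2^{-1}$ being homothetic. Its proof rests on Proposition \ref{prop:mu(Gamma)=00003D0}: a nonconstant harmonic branch of $\arg(g'\circ\psi)$ taking values in $\pi\mathbb{Z}$ on $K_\Phi$ would force $\mu$ to charge an analytic curve.
\item This holds for each $i$, so $\Phi$ itself would be conjugate to a homothetic IFS, contradicting (M3) for $\Phi$.
\end{itemize}

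\textbf{Two smaller points.}

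For (M4) you argue in the wrong direction. Appending a common suffix does not cover pairs in $\Lambda^{m+r}$ whose suffixes differ. Also, a suffix $w$ acts on $\varphi_{u_1}-\varphi_{u_2}$ by restriction to $\varphi_w(\Omega)$, not by a derivative factor. Go down instead: for distinct $v_1,v_2\in\Lambda^{m'}$ with $m'\le m$, and any $x\in\Lambda^{m-m'}$, you have $\|\varphi_{v_1}-\varphi_{v_2}\|_\Omega\ge\|\varphi_{v_1x}-\varphi_{v_2x}\|_\Omega\ge c^m$. Taking $m'=n\lfloor m/n\rfloor$ gives exponential separation of $\Phi^n$.

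For (M2) no curve-building is needed. A closed $\Phi^n$-invariant regular real-analytic curve $\Gamma$ contains $K_\Phi$, so $\mu(\Gamma)=1$ for any self-conformal measure $\mu$ of $\Phi$. This contradicts Proposition \ref{prop:mu(Gamma)=00003D0}, which uses only (M1) and (M2) for $\Phi$.
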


The proof of Corollary \ref{cor:main cor for sets} is given in Section
\ref{sec:Proof-of-main Corollary for sets}.

\subsection{A verifiable sufficient criterion}

The following corollary provides a convenient sufficient criterion
for applying Theorem \ref{thm:main thm} and Corollary \ref{cor:main cor for sets}.
Let $\Lambda^{*}$ denote the set of finite words over $\Lambda$.
\begin{cor}
\label{cor:ver suff criterion}Let $\Omega\subset\mathbb{C}$ be a
bounded domain and let $\Phi=\left\{ \varphi_{i}\right\} _{i\in\Lambda}$
be a holomorphic IFS on $\Omega$. Suppose that
\begin{enumerate}
\item the maps in $\Phi$ do not have a common fixed point;
\item \label{enu:exists fix pt with non triv rot}there exists $u\in\Lambda^{*}$
such that $\varphi_{u}'(z_{u})\notin\mathbb{R}$, where $z_{u}$ denotes
the unique fixed point of $\varphi_{u}$ in $\Omega$;
\item $\Phi$ is exponentially separated.
\end{enumerate}
Then $\dim_{H}K_{\Phi}=\min\left\{ 2,s(\Phi)\right\} $, where $K_{\Phi}$
is the self-conformal set associated to $\Phi$. Moreover, for any
positive probability vector $p=(p_{i})_{i\in\Lambda}$, we have $\dim\mu=\min\left\{ 2,\frac{H(p)}{\chi(\Phi,p)}\right\} $,
where $\mu$ is the self-conformal measure associated to $\Phi$ and
$p$.
\end{cor}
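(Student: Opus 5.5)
The plan is to reduce Corollary \ref{cor:ver suff criterion} to Theorem \ref{thm:main thm} and Corollary \ref{cor:main cor for sets}. It suffices to show that condition (\ref{enu:exists fix pt with non triv rot}) implies both \ref{enu:main thm cond no inv curve} and \ref{enu:main thm cond not holo conj}. Both implications rest on one invariant: the multiplier $\varphi_{u}'(z_{u})$ at the fixed point. It is preserved under holomorphic conjugacy, and it is forced to be real in either of the two excluded situations.

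For \ref{enu:main thm cond not holo conj}, suppose $\psi:\Omega\rightarrow\mathbb{C}$ is injective holomorphic and each $g_{i}:=\psi\circ\varphi_{i}\circ\psi^{-1}$ is homothetic. Then $g_{u}:=\psi\circ\varphi_{u}\circ\psi^{-1}$ is a composition of homotheties on $\psi(\Omega)$. One needs $\varphi_{i}(\Omega)\subset\Omega$ for these compositions to make sense, and this holds by \ref{enu:holo IFS cond 2}. Hence $g_{u}(w)=r_{u}w+a_{u}$ with $r_{u}\in\mathbb{R}$. The point $\psi(z_{u})$ is fixed by $g_{u}$, and the chain rule gives
\[
\varphi_{u}'(z_{u})=(\psi^{-1})'(\psi(z_{u}))\,g_{u}'(\psi(z_{u}))\,\psi'(z_{u})=r_{u}\in\mathbb{R},
\]
which contradicts the choice of $u$.

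For \ref{enu:main thm cond no inv curve}, suppose $\Gamma\subset\Omega$ is a $\Phi$-invariant regular real-analytic curve that is closed in $\Omega$. Since $\varphi_{u}(\Gamma)\subset\Gamma$ and $\Gamma$ is closed in $\Omega$, the iterates $\varphi_{u}^{n}(z)$ of any $z\in\Gamma$ converge to $z_{u}$ (a sufficiently high power of $\varphi_{u}$ is a contraction). So $z_{u}\in\Gamma$. Take a local real-analytic chart $\gamma:J\rightarrow\Gamma\cap U$ with $\gamma(t_{0})=z_{u}$ and $\gamma'(t_{0})\neq0$. The tangent line $T=\mathbb{R}\gamma'(t_{0})$ is then a well-defined real line. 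Because $\varphi_{u}$ maps $\Gamma$ into $\Gamma$ and fixes $z_{u}$, its real differential at $z_{u}$, which is multiplication by $\lambda:=\varphi_{u}'(z_{u})$, sends $T$ into $T$.

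The one point needing care is that $\varphi_{u}(\Gamma)$ near $z_{u}$ actually lies in the local branch $\gamma(J)$, and not in some other piece of $\Gamma$ accumulating at $z_{u}$. This is guaranteed by the embedded-submanifold assumption: $\Gamma\cap U=\gamma(J)$ for a neighbourhood $U$, and continuity of $\varphi_{u}$ sends a small arc through $z_{u}$ into $U$. Therefore $\varphi_{u}\circ\gamma$ near $t_{0}$ is a curve in $\gamma(J)$, and since $\gamma$ is a homeomorphism onto $\Gamma\cap U$ with nonvanishing derivative, it can be written as $\gamma\circ h$ for a differentiable $h$ with $h(t_{0})=t_{0}$. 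Differentiating gives
\[
\lambda\gamma'(t_{0})=h'(t_{0})\gamma'(t_{0}).
\]
Since $\lambda\neq0$, we get $\lambda=h'(t_{0})\in\mathbb{R}$, again a contradiction.

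With \ref{enu:main thm cond no inv curve} and \ref{enu:main thm cond not holo conj} established, all hypotheses of Theorem \ref{thm:main thm} hold, so the formula for $\dim\mu$ follows. Corollary \ref{cor:main cor for sets} then gives $\dim_{H}K_{\Phi}=\min\{2,s(\Phi)\}$.
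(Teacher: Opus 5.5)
Your proposal is correct and follows the paper's proof. You reduce to Theorem \ref{thm:main thm} and Corollary \ref{cor:main cor for sets} by showing that condition (\ref{enu:exists fix pt with non triv rot}) rules out the failure of \ref{enu:main thm cond not holo conj}, via the chain rule at $\psi(z_u)$, and the failure of \ref{enu:main thm cond no inv curve}, via invariance of the tangent line of $\Gamma$ at $z_u$ under multiplication by $\varphi_u'(z_u)$. Your local-chart computation, and your use of iterates to get $z_u\in\Gamma$ instead of $K_\Phi\subset\Gamma$, only spell out what the paper takes as the standard fact that $\varphi_u|_\Gamma$ is a smooth self-map of $\Gamma$.
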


\begin{proof}
By Theorem \ref{thm:main thm} and Corollary \ref{cor:main cor for sets},
it suffices to verify assumptions \ref{enu:main thm cond no inv curve}
and \ref{enu:main thm cond not holo conj} from Theorem \ref{thm:main thm}.

Assume by contradiction that there exists a regular real-analytic
curve $\Gamma\subset\Omega$ which is $\Phi$-invariant and closed
in $\Omega$. Being $\Phi$-invariant and closed in $\Omega$, it
is clear that $\Gamma$ contains $K_{\Phi}$. Moreover, it is clear
that $z_{u}\in K_{\Phi}$, and so $z_{u}\in\Gamma$. Write $V$ for
the tangent space of $\Gamma$ at $z_{u}$, so that $V$ is a $1$-dimensional
real subspace of $\mathbb{C}$. Since $\varphi_{u}$ is smooth on
$\Omega$ and $\Gamma$ is $\Phi$-invariant and embedded in $\Omega$,
the restriction of $\varphi_{u}$ to $\Gamma$ is smooth as a map
from the manifold $\Gamma$ into itself. Hence, from $\varphi_{u}(z_{u})=z_{u}$,
we obtain that $\varphi_{u}'(z_{u})V=V$. But this contradicts $\varphi_{u}'(z_{u})\notin\mathbb{R}$,
and so there cannot exist a regular real-analytic curve $\Gamma\subset\Omega$
which is $\Phi$-invariant and closed in $\Omega$.

Next, assume by contradiction that there exists a holomorphic and
injective $\psi:\Omega\rightarrow\mathbb{C}$ such that $\psi\circ\varphi_{i}\circ\psi^{-1}:\psi(\Omega)\rightarrow\mathbb{C}$
is homothetic for each $i\in\Lambda$. It is clear that $h:=\psi\circ\varphi_{u}\circ\psi^{-1}$
is homothetic, and so $h'\left(\psi(z_{u})\right)\in\mathbb{R}$.
On the other hand, by the chain rule and since $\varphi_{u}(z_{u})=z_{u}$,
we have $h'\left(\psi(z_{u})\right)=\varphi_{u}'(z_{u})\notin\mathbb{R}.$
This contradiction shows that $\Phi$ is not holomorphically conjugate
to a homothetic IFS, which completes the proof of the corollary.
\end{proof}
\begin{rem*}
In the proof above, we showed that condition (\ref{enu:exists fix pt with non triv rot})
appearing in Corollary \ref{cor:ver suff criterion} implies assumptions
\ref{enu:main thm cond no inv curve} and \ref{enu:main thm cond not holo conj}.
It is unclear to us whether the converse holds, which seems to be
an interesting question.
\end{rem*}
In the next subsection, we use Corollary \ref{cor:ver suff criterion}
to show that the expected dimension formulas (\ref{eq:dim formula in main thm})
and (\ref{eq:exp dim formula for sets}) hold for all parameters outside
a set of Hausdorff dimension zero in the context of one-parameter
families of holomorphic IFSs. By contrast, we do not pursue here the
interesting problem of applying our results to obtain explicit non-self-similar
examples for which these formulas hold. Nevertheless, we briefly comment
on this direction.

In \cite{BaranyKolossvaryTroscheit}, Bárány, Kolossváry and Troscheit
established a sufficient condition for an analytic IFS on $\mathbb{R}$
to be exponentially separated. Combining their result with the result
of Rapaport \cite{Rap_analytic_on_R} mentioned above yields explicit
examples of overlapping, non-self-similar analytic IFSs on $\mathbb{R}$
whose associated self-conformal sets and measures satisfy the expected
dimension formulas. It seems likely that the results of \cite{BaranyKolossvaryTroscheit}
could be extended to the holomorphic setting in a form that, when
combined with Corollary \ref{cor:ver suff criterion}, would yield
explicit examples in our setting. We leave this as a problem for further
research.

\subsection{One-parameter families of holomorphic IFSs}

In this subsection, we combine Corollary \ref{cor:ver suff criterion}
with a result of Solomyak and Takahashi \cite{MR4300232} to deduce
a statement about one-parameter families of holomorphic IFSs.

Let $\Omega'\subset\mathbb{C}$ be a domain containing $\overline{\Omega}$,
let $J$ be a compact interval in $\mathbb{R}$, and suppose that
the index set $\Lambda$ has cardinality at least $2$. For each $i\in\Lambda$,
let $\{\varphi_{i,t}\}_{t\in J}$ be a one-parameter family of maps
from $\Omega'$ into $\mathbb{C}$. Suppose that
\begin{enumerate}
\item $\varphi_{i,t}:\Omega'\rightarrow\mathbb{C}$ is holomorphic and injective
for all $i\in\Lambda$ and $t\in J$;
\item \label{enu:parametric cond uni cont}$\varphi_{i,t}\left(\overline{\Omega}\right)\subset\Omega$
and $0<\left|\varphi_{i,t}'(z)\right|<1$ for all $i\in\Lambda$,
$t\in J$ and $z\in\overline{\Omega}$;
\item for each $i\in\Lambda$, the map $(t,z)\mapsto\varphi_{i,t}(z)$ extends
to a real-analytic map on an open neighborhood of $J\times\Omega'$.
\end{enumerate}
For $t\in J$, set $\Phi_{t}:=\left\{ \varphi_{i,t}\right\} _{i\in\Lambda}$
and let $K_{\Phi_{t}}$ be the self-conformal set corresponding to
$\Phi_{t}$. Given a probability vector $p=(p_{i})_{i\in\Lambda}$,
write $\mu_{t,p}$ for the self-conformal measure associated to $\Phi_{t}$
and $p$.

Fix some $z_{0}\in\Omega$. Given $\omega,\eta\in\Lambda^{\mathbb{N}}$,
let $F_{\omega,\eta}:J\rightarrow\mathbb{C}$ be defined by
\[
F_{\omega,\eta}(t):=\underset{n\rightarrow\infty}{\lim}\left(\varphi_{\omega_{0},t}\circ...\circ\varphi_{\omega_{n},t}(z_{0})-\varphi_{\eta_{0},t}\circ...\circ\varphi_{\eta_{n},t}(z_{0})\right)\text{ for }t\in J,
\]
where the existence of the limit follows easily from assumption (\ref{enu:parametric cond uni cont}).
We say that the family of IFSs $\left\{ \Phi_{t}\right\} _{t\in J}$
is real (resp. imaginary) non-degenerate if, for all distinct $\omega,\eta\in\Lambda^{\mathbb{N}}$,
the function $\mathrm{Re}\circ F_{\omega,\eta}$ (resp. $\mathrm{Im}\circ F_{\omega,\eta}$)
is not identically zero.

Given $i_{1}...i_{n}=u\in\Lambda^{*}$ and $t\in J$, set $\varphi_{u,t}:=\varphi_{i_{1},t}\circ...\circ\varphi_{i_{n},t}$
and write $z_{u,t}$ for the unique fixed point of $\varphi_{u,t}$.
\begin{cor}
Suppose that $\left\{ \Phi_{t}\right\} _{t\in J}$ is either real
non-degenerate or imaginary non-degenerate. Assume moreover that there
exist $u\in\Lambda^{*}$ and $t_{0}\in J$ such that $\varphi_{u,t_{0}}'\left(z_{u,t_{0}}\right)\notin\mathbb{R}$.
Then there exists $E\subset J$ with $\dim_{H}E=0$ such that, for
all $t\in J\setminus E$, we have $\dim_{H}K_{\Phi_{t}}=\min\left\{ 2,s(\Phi_{t})\right\} $
and $\dim\mu_{t,p}=\min\left\{ 2,H(p)/\chi\left(\Phi_{t},p\right)\right\} $
for every positive probability vector $p=(p_{i})_{i\in\Lambda}$.
\end{cor}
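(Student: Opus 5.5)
The plan is to reduce the corollary to Corollary \ref{cor:ver suff criterion}, applied at each $t\in J\setminus E$. We must show three things for all $t$ outside a set $E$ of Hausdorff dimension zero: the maps of $\Phi_t$ have no common fixed point; some word $v\in\Lambda^*$ satisfies $\varphi_{v,t}'(z_{v,t})\notin\mathbb{R}$; and $\Phi_t$ is exponentially separated. We take $E:=E_1\cup E_2\cup E_3$, where $E_1,E_2,E_3$ are the exceptional sets for these three properties, and bound the dimension of each.

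\textbf{Step 1: exponential separation.} Here we invoke the result of Solomyak and Takahashi \cite{MR4300232}. For a real-analytic one-parameter family of real-analytic IFSs on the line satisfying a non-degeneracy condition of this type, it yields exponential separation outside a set of parameters of Hausdorff dimension zero. To place ourselves in that framework, assume real non-degeneracy; the imaginary case is symmetric. We apply their result to the real-valued functions $\mathrm{Re}\circ F_{\omega,\eta}$ and conclude that, off a set $E_3$ with $\dim_H E_3=0$, there is $c>0$ such that for infinitely many $n$,
\[
\left|\mathrm{Re}\left(\varphi_{u_1,t}(z_0)-\varphi_{u_2,t}(z_0)\right)\right|\ge c^n
\]
for all distinct $u_1,u_2\in\Lambda^n$. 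This lower bound already gives $\Vert\varphi_{u_1,t}-\varphi_{u_2,t}\Vert_\Omega\ge c^n$.

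The main obstacle is checking that the hypotheses of \cite{MR4300232} really apply in the complex setting. Their theorem is formulated abstractly, in terms of real-analytic dependence of the coding-map differences on $t$, uniform contraction, and non-degeneracy. So the work is to verify the required analyticity and uniform bounds for $t\mapsto\mathrm{Re}\,\varphi_{u,t}(z_0)$. These should follow from hypothesis (3) via Cauchy estimates on a complex neighbourhood of $J$, together with uniform contraction for $t$ near $J$.

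\textbf{Step 2: no common fixed point.} Non-degeneracy gives $\mathrm{Re}\,F_{\omega,\eta}\not\equiv0$ for the constant sequences $\omega=i^\infty$ and $\eta=j^\infty$ with $i\ne j$. Now $F_{i^\infty,j^\infty}(t)=z_{i,t}-z_{j,t}$, and its real part is real-analytic on $J$. A real-analytic function that is not identically zero has only finitely many zeros on the compact interval $J$. Hence the fixed points of $\varphi_{i,t}$ and $\varphi_{j,t}$ coincide only for $t$ in a finite set $E_1$.

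\textbf{Step 3: a non-real multiplier.} Let $g(t):=\mathrm{Im}\,\varphi_{u,t}'(z_{u,t})$. The fixed point $z_{u,t}$ depends real-analytically on $t$, by the implicit function theorem applied to $z-\varphi_{u,t}(z)$, whose $z$-derivative $1-\varphi_{u,t}'$ never vanishes. Therefore $g$ is real-analytic on a neighbourhood of $J$. Since $g(t_0)\neq0$, the function $g$ is not identically zero, so its zero set $E_2$ in $J$ is finite.

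Since $E_1$ and $E_2$ are finite and $\dim_H E_3=0$, we get $\dim_H E=0$. For every $t\in J\setminus E$, Corollary \ref{cor:ver suff criterion} then gives both dimension formulas, for all positive probability vectors $p$.
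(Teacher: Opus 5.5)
Your proposal is essentially the paper's proof: the same split $E=E_1\cup E_2\cup E_3$, with $\dim_H E_3=0$ taken from Solomyak--Takahashi, and $E_1,E_2$ finite as zero sets of nontrivial real-analytic functions (you get analyticity of $t\mapsto z_{u,t}$ from the implicit function theorem, where the paper cites \cite[Lemma 2.6]{MR4300232}). One caution: cite \cite[Theorem 2.10]{MR4300232} directly for the exponential separation of $\Phi_t$ off a set of dimension zero, rather than claiming $\left|\mathrm{Re}\left(\varphi_{u_1,t}(z_0)-\varphi_{u_2,t}(z_0)\right)\right|\ge c^{n}$ for all distinct $u_1,u_2\in\Lambda^n$; that bound is stronger than you need and is not justified, because for words sharing a prefix $w$ the factor $\varphi_{w,t}'$ rotates the difference and mixes its real and imaginary parts.
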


\begin{proof}
Let $E_{1}$ be the set of $t\in J$ for which the maps in $\Phi_{t}$
have a common fixed point. Let $u\in\Lambda^{*}$ and $t_{0}\in J$
be as in the statement of the corollary, and denote by $E_{2}$ the
set of $t\in J$ for which $\varphi_{u,t}'\left(z_{u,t}\right)\in\mathbb{R}$.
Additionally, write $E_{3}$ for the set of $t\in J$ for which $\Phi_{t}$
is not exponentially separated. By Corollary \ref{cor:ver suff criterion},
in order to prove the corollary, it suffices to show that the sets
$E_{1}$, $E_{2}$ and $E_{3}$ are all of Hausdorff dimension zero.

Since $\left\{ \Phi_{t}\right\} _{t\in J}$ is real or imaginary non-degenerate,
it follows from \cite[Theorem 2.10]{MR4300232} that $\dim_{H}E_{3}=0$.
We next complete the proof by showing that $E_{1}$ and $E_{2}$ are
finite.

Let $i_{1},i_{2}\in\Lambda$ be distinct. For $j=1,2$, let $i_{j}^{\infty}$
denote the infinite word in $\Lambda^{\mathbb{N}}$ consisting only
of the letter $i_{j}$, and set $F:=F_{i_{1}^{\infty},i_{2}^{\infty}}$.
Given $t\in J$, note that $\varphi_{i_{1},t}$ and $\varphi_{i_{2},t}$
share a common fixed point if and only if $F(t)=0$. Consequently,
we have $E_{1}\subset F^{-1}\{0\}$. Since $\left\{ \Phi_{t}\right\} _{t\in J}$
is real or imaginary non-degenerate, $F$ is not identically zero.
Moreover, by \cite[Lemma 2.6]{MR4300232}, $F$ is real analytic on
$J$. Together, these two facts imply that $F^{-1}\{0\}$ is finite,
which shows that $E_{1}$ is finite.

Let $f:J\rightarrow\mathbb{R}$ be defined by $f(t)=\mathrm{Im}\left(\varphi_{u,t}'\left(z_{u,t}\right)\right)$
for $t\in J$. Since $\varphi_{u,t_{0}}'\left(z_{u,t_{0}}\right)\notin\mathbb{R}$,
the function $f$ is not identically zero. Note that for each $t\in J$,
we have $z_{u,t}=\underset{n\rightarrow\infty}{\lim}\varphi_{u,t}^{n}(z_{0})$,
where $\varphi_{u,t}^{n}$ denotes the composition of $\varphi_{u,t}$
with itself $n$ times. Thus, by \cite[Lemma 2.6]{MR4300232}, the
function $t\mapsto z_{u,t}$ is real-analytic on $J$. Additionally,
by our assumptions on $\left\{ \Phi_{t}\right\} _{t\in J}$, the function
$(t,z)\mapsto\varphi_{u,t}'(z)$ is real analytic on $J\times\Omega$.
Combining these facts, we obtain that $f$ is real analytic on $J$.
Since $f$ is not identically zero and $E_{2}=f^{-1}\{0\}$, it follows
that $E_{2}$ is finite, which completes the proof.
\end{proof}

\subsection{\label{subsec:An-overview-of-the-proof}An overview of the proof}

In this subsection, we provide a brief overview of the proof of Theorem
\ref{thm:main thm}. For the sake of presentation, we allow ourselves
to be rather imprecise and informal. All the arguments will be carried
out rigorously and in detail in later sections of the paper.

We continue to use the notation introduced in Section \ref{subsec:Setup-and-the-main-result},
and suppose that $\Phi$ satisfies the assumptions of Theorem \ref{thm:main thm}.
Denote by $\mathcal{O}(\Omega)$ the vector space of holomorphic functions
from $\Omega$ to $\mathbb{C}$. For $k\ge1$, denote by $\mathcal{P}_{k}$
the vector space of polynomials $q\in\mathbb{C}[X]$ with $\deg q\le k$.
Given a set $Y$, write $\mathcal{M}_{\mathrm{fin}}(Y)$ for the collection
of finitely supported probability measures on $Y$.

As in many other recent developments in the dimension theory of stationary
fractal measures, the basic strategy of our proof is to extend Hochman\textquoteright s
argument from \cite{Ho1}, developed there in the self-similar setting
on $\mathbb{R}$, to the setting studied here. In carrying out this
strategy, we assume for contradiction that
\[
\Delta:=\min\left\{ 2,H(p)/\chi\right\} -\dim\mu>0
\]
and encounter convolutions of the form $\nu.\mu$, where $\nu\in\mathcal{M}_{\mathrm{fin}}\left(\mathcal{O}(\Omega)\right)$
has non-negligible Shannon entropy in a way that depends on $\Delta$\footnote{During the proof, we actually need to consider more general convolutions
of the form $\nu.(\psi\mu)$, where $\psi:\Omega\rightarrow\mathbb{C}$
is a suitably controlled biholomorphism onto its image.}. Here, $\nu.\mu$ denotes the pushforward of $\nu\times\mu$ via
the map $(f,z)\mapsto f(z)$. In order to successfully carry out the
aforementioned extension of the argument from \cite{Ho1}, we need
to show that, in a certain sense to be made precise, the entropy of
$\nu.\mu$ is significantly larger than that of $\mu$ alone.

At this point, we encounter major difficulties caused by the infinite-dimensionality
of $\mathcal{O}(\Omega)$. In order to overcome these difficulties,
we extend to our setting a method developed in \cite{Rap_analytic_on_R}
in the context of real-analytic systems on $\mathbb{R}$. This method
enables us to reduce the above problem to a finite-dimensional situation.
More precisely, it enables us to approximate the entropy of convolutions
of the form $\nu.\mu$, with $\nu\in\mathcal{M}_{\mathrm{fin}}\left(\mathcal{O}(\Omega)\right)$,
by the entropy of convolutions of the form $\nu'.\mu$, with $\nu'\in\mathcal{M}_{\mathrm{fin}}\left(\mathcal{P}_{k}\right)$.
Here, $k\geq1$ is an integer which is assumed to be large with respect
to $\Delta$ and the exponential separation constant $c$ appearing
in Definition \ref{def:exp sep}.

In order to proceed with the overview of the proof, we introduce some
additional notation and terminology. For $n\geq0$, denote by $\mathcal{D}_{n}$
the dyadic partition of $\mathbb{C}$. Given $D\in\mathcal{D}_{n}$
with $\mu(D)>0$, we call a measure of the form $\mu_{D}$ a component
of $\mu$. Very roughly speaking, given a line $\ell\in\mathbb{RP}^{1}$,
the component $\mu_{D}$ is said to be saturated in the direction
$\ell^{\perp}$ if the entropy of $\pi_{\ell}\mu_{D}$, where $\pi_{\ell}$
denotes the orthogonal projection onto $\ell$, is close to its trivial
lower bound, namely, the entropy of $\mu_{D}$ minus $1$.

Given the above reduction, we shall need to establish an entropy increase
result for convolutions $\nu'.\mu$, where $\nu'\in\mathcal{M}_{\mathrm{fin}}\left(\mathcal{P}_{k}\right)$
has non-negligible entropy\footnote{We shall actually prove such a statement for compactly supported probability
measures $\nu'$ on $\mathcal{P}_{k}$, rather than just for finitely
supported probability measures.}. The proof of this result is based on Hochman\textquoteright s inverse
theorem \cite{Ho} for the entropy growth of convolutions of measures
on $\mathbb{R}^{d}$. In order to apply this theorem, we shall need
to show that most components of $\mu$ are not saturated in any direction,
which is the main challenge in our proof of the entropy increase result.

To address this challenge, we shall extend to our setting a method
from the work of Rapaport and Ren \cite{Rap_Ren_Fur_on_CP1}, which
was used there to establish non-saturation of components in the context
of Furstenberg measures on $\mathbb{CP}^{1}$. This method relies
on an ergodic-theoretic argument involving a certain cocycle, which,
following \cite{Rap_Ren_Fur_on_CP1}, we call the direction cocycle.
A key step is to show that the direction cocycle is not a coboundary.
The proof of this fact relies on extending ideas from \cite{Rap_Ren_Fur_on_CP1}
and combining them with complex-analytic techniques.

\medskip{}

\noindent \textbf{Structure of the paper.} In Section \ref{sec:Preliminaries},
we introduce the necessary notation and establish some basic facts
that will be used throughout the paper. In Section \ref{sec:-measure-of-real-analytic-curves},
we show that $\mu$ vanishes on real-analytic curves, which is used
in several places in our arguments. In Section \ref{sec:non-saturation-of-components},
we establish the aforementioned non-saturation property. In Section
\ref{sec:An-entropy-increase-result}, we prove the aforementioned
entropy increase result. In Section \ref{sec:Proof-of-the-main-result},
we prove our main result, Theorem \ref{thm:main thm}. Finally, in
Section \ref{sec:Proof-of-main Corollary for sets}, we prove Corollary
\ref{cor:main cor for sets}, concerning the dimension of self-conformal
sets.

\section{\label{sec:Preliminaries}Preliminaries}

\subsection{\label{subsec:Basic-notations}Basic notation}

Throughout the paper, the base of the logarithm is $2$.

For a metric space $X$, denote by $\mathcal{M}(X)$ the collection
of all compactly supported Borel probability measures on $X$. Given
another metric space $Y$, a Borel map $f:X\rightarrow Y$, and a
measure $\nu\in\mathcal{M}(X)$, we write $f\nu:=\nu\circ f^{-1}$
for the pushforward of $\nu$ via $f$. For a Borel set $E\subset X$
with $\nu(E)>0$, we denote by $\nu_{E}$ the conditioning of $\nu$
on $E$. That is, $\nu_{E}:=\frac{1}{\nu(E)}\nu|_{E}$, where $\nu|_{E}$
is the restriction of $\nu$ to $E$.

Given a partition $\mathcal{D}$ of a set $X$, for $x\in X$ we denote
by $\mathcal{D}(x)$ the unique $D\in\mathcal{D}$ containing $x$.

Given an integer $n\ge1$, let $\mathcal{N}_{n}:=\left\{ 1,...,n\right\} $,
and denote the normalized counting measure on $\mathcal{N}_{n}$ by
$\lambda_{n}$; that is, $\lambda_{n}\{i\}=1/n$ for each $1\le i\le n$.

For a complex vector space $V$, a scalar $c\in\mathbb{C}$, and a
vector $v\in V$, define $S_{c}:V\rightarrow V$ and $T_{v}:V\rightarrow V$
by $S_{c}w=cw$ and $T_{v}w=v+w$ for $w\in V$. We will often use
this notation with $V=\mathbb{C}$.

Given $z\in\mathbb{C}$ and $r>0$, denote by $D_{r}(z)$, $\overline{D}_{r}(z)$
and $C_{r}(z)$ the open disc, closed disc and circle in $\mathbb{C}$
with centre $z$ and radius $r$. Write $\mathbb{D}:=D_{1}(0)$ for
the open unit disc. For a nonempty subset $E\subset\mathbb{C}$, we
denote by $E^{(r)}$ the $r$-thickening of $E$; that is,
\[
E^{(r)}:=\left\{ z\in\mathbb{C}\::\:|z-w|\le r\text{ for some }w\in E\right\} .
\]

We denote by $\mathbb{RP}^{1}$ the set of real lines through the
origin in $\mathbb{C}$; that is, $\mathbb{RP}^{1}:=\left\{ z\mathbb{R}\::\:0\ne z\in\mathbb{C}\right\} $.
For $z\mathbb{R},w\mathbb{R}\in\mathbb{RP}^{1}$, we set $z\mathbb{R}w\mathbb{R}:=zw\mathbb{R}$,
which makes $\mathbb{RP}^{1}$ into a multiplicative group whose identity
element is $\mathbb{R}$. We denote by $\left(z\mathbb{R}\right)^{\perp}\in\mathbb{RP}^{1}$
the line perpendicular to $z\mathbb{R}$. 

For $z,w\in\mathbb{C}$ with $|z|=|w|=1$, write
\[
d\left(z\mathbb{R},w\mathbb{R}\right):=\left(1-\left(\mathrm{Re}\left(z\overline{w}\right)\right)^{2}\right)^{1/2},
\]
which defines a metric on $\mathbb{RP}^{1}$. In fact, $d\left(z\mathbb{R},w\mathbb{R}\right)$
equals the modulus of the sine of the angle between $z\mathbb{R}$
and $w\mathbb{R}$. Given $r>0$, we write $B\left(z\mathbb{R},r\right)$
for the closed ball in $\left(\mathbb{RP}^{1},d\right)$ with center
$z\mathbb{R}$ and radius $r$.

For $z\mathbb{R}\in\mathbb{RP}^{1}$, we denote by $\pi_{z\mathbb{R}}:\mathbb{C}\rightarrow\mathbb{C}$
the orthogonal projection onto $z\mathbb{R}$, where $\mathbb{C}$
is identified with $\mathbb{R}^{2}$; that is,
\[
\pi_{z\mathbb{R}}(w)=|z|^{-2}\mathrm{Re}\left(w\overline{z}\right)z\;\text{ for }w\in\mathbb{C}.
\]

\subsubsection*{Relations between parameters}

Given $R_{1},R_{2}\in\mathbb{R}$ with $R_{1},R_{2}\ge1$, we write
$R_{1}\ll R_{2}$ in order to indicate that $R_{2}$ is large with
respect to $R_{1}$. Formally, this means that $R_{2}\ge f(R_{1})$,
where $f$ is an unspecified function from $[1,\infty)$ into itself.
The values attained by $f$ are assumed to be sufficiently large in
a manner depending on the specific context.

Similarly, given $0<\epsilon_{1},\epsilon_{2}<1$ we write $R_{1}\ll\epsilon_{1}^{-1}$,
$\epsilon_{2}^{-1}\ll R_{2}$ and $\epsilon_{1}^{-1}\ll\epsilon_{2}^{-1}$
in order to respectively indicate that $\epsilon_{1}$ is small with
respect to $R_{1}$, $R_{2}$ is large with respect to $\epsilon_{2}$,
and $\epsilon_{2}$ is small with respect to $\epsilon_{1}$.

The relation $\ll$ is clearly transitive. That is, if $R_{1}\ll R_{2}$
and for $R_{3}\ge1$ we have $R_{2}\ll R_{3}$, then also $R_{1}\ll R_{3}$.
For instance, the sentence `Let $m\ge1$, $k\ge K(m)\ge1$ and $n\ge N(m,k)\ge1$
be given' is equivalent to `Let $m,k,n\ge1$ be with $m\ll k\ll n$'.

\subsection{\label{subsec:The-setup}The setup}

As in Section \ref{subsec:Setup-and-the-main-result}, let $\Omega\subset\mathbb{C}$
be a bounded domain, let $\Lambda$ be a finite index set, and let
$\Phi=\left\{ \varphi_{i}\right\} _{i\in\Lambda}$ be a holomorphic
IFS on $\Omega$. In what follows, we always assume that $\Phi$ satisfies
all the assumptions in Theorem \ref{thm:main thm}, except the exponential
separation condition, which is assumed only in Sections \ref{sec:Proof-of-the-main-result}
and \ref{sec:Proof-of-main Corollary for sets}, where we prove our
main results. Without loss of generality, suppose that $0\in\Omega$.

Throughout the paper, we fix a bounded domain $\Omega_{0}\subset\mathbb{C}$,
containing $\overline{\Omega}$, such that the maps $\varphi_{i}$
are defined on $\overline{\Omega_{0}}$ and conditions \ref{enu:holo IFS cond 1}
and \ref{enu:holo IFS cond 2} hold with $\Omega_{0}$ in place of
$\Omega$.

Let $K_{\Phi}\subset\Omega$ denote the self-conformal set associated
to $\Phi$. That is, $K_{\Phi}$ is the unique nonempty compact subset
of $\Omega$ such that $K_{\Phi}=\cup_{i\in\Lambda}\varphi_{i}(K_{\Phi})$.

Fix a positive probability vector $p=(p_{i})_{i\in\Lambda}$, and
let $\mu$ denote the self-conformal measure associated to $\Phi$
and $p$. That is, $\mu$ is the unique element in $\mathcal{M}(\Omega)$
satisfying the relation
\begin{equation}
\mu=\sum_{i\in\Lambda}p_{i}\cdot\varphi_{i}\mu.\label{eq:def rel of mu}
\end{equation}
Note that the support of $\mu$ is $K_{\Phi}$.

As above, let $\chi:=\chi\left(\Phi,p\right)$ denote the Lyapunov
exponent associated to $\Phi$ and $p$. That is,
\begin{equation}
\chi:=-\sum_{i\in\Lambda}p_{i}\int\log\left|\varphi_{i}'(z)\right|\:d\mu(z).\label{eq:def of Lyap expo}
\end{equation}

\subsection{\label{subsec:A-bi-Lipchitz-estimate}A bi-Lipschitz estimate}

Given $0<\epsilon\le1$, let $\mathcal{F}_{\epsilon}$ denote the
set of holomorphic injective maps $\psi:\Omega_{0}\rightarrow\mathbb{C}$
satisfying $\epsilon\le\left|\psi'(z)\right|\le\epsilon^{-1}$ for
all $z\in\Omega_{0}$.
\begin{lem}
\label{lem:lip prop for psi in F_epsilon}There exists a constant
$C>1$ such that for all $0<\epsilon\le1$ and $\psi\in\mathcal{F}_{\epsilon}$,
\[
C^{-1}\epsilon|z-w|\le\left|\psi(z)-\psi(w)\right|\le C\epsilon^{-1}|z-w|\;\text{ for all }z,w\in\Omega.
\]
\end{lem}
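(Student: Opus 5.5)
We prove the upper and lower bounds separately. Both rely on the fact that $\overline{\Omega}$ is a compact subset of the open set $\Omega_{0}$. Set $\delta:=\mathrm{dist}(\overline{\Omega},\mathbb{C}\setminus\Omega_{0})>0$.

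\emph{Upper bound.} Take $z,w\in\Omega$. If $|z-w|<\delta$, the segment $[z,w]$ lies in $\Omega_{0}$. Integrating $\psi'$ along it gives $|\psi(z)-\psi(w)|\le\epsilon^{-1}|z-w|$. If $|z-w|\ge\delta$, we need a global argument, since $\Omega$ need not be convex. Cover the compact set $\overline{\Omega}$ by finitely many discs of radius $\delta/2$ centred in $\overline{\Omega}$. Since $\Omega$ is connected, any two points of $\Omega$ can be joined by a path in $\Omega_{0}$ whose length is bounded by a constant $L$ depending only on $\Omega,\Omega_{0}$; for instance, chain the finitely many discs together, using that $\Omega$ is path-connected. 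Then $|\psi(z)-\psi(w)|\le\epsilon^{-1}L\le\epsilon^{-1}(L/\delta)|z-w|$. The constant $C\ge\max\{1,L/\delta\}$ does not depend on $\psi$ or $\epsilon$.

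\emph{Lower bound.} This is the main step, because injectivity must be used quantitatively and a contradiction argument would lose uniformity in $\epsilon$. I would use the Koebe $1/4$ theorem. For $z\in\overline{\Omega}$, the map $\psi$ is injective on $D_{\delta}(z)\subset\Omega_{0}$, so
\[
\psi(D_{\delta}(z))\supset D_{|\psi'(z)|\delta/4}(\psi(z))\supset D_{\epsilon\delta/4}(\psi(z)).
\]
Now take $w\in\Omega$ and consider two cases.

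\emph{Case $|z-w|<\delta$.} By injectivity, $\psi(w)\neq\psi(z)$. Apply the upper-bound argument to $\psi^{-1}$, using $|(\psi^{-1})'|\le\epsilon^{-1}$. If $|\psi(w)-\psi(z)|<\epsilon\delta/4$, then $\psi(w)$ lies in the Koebe disc, so $\psi(w)=\psi(w')$ for some $w'\in D_{\delta}(z)$, and injectivity forces $w'=w$. The segment from $\psi(z)$ to $\psi(w)$ lies in the disc $D_{\epsilon\delta/4}(\psi(z))$, which is contained in $\psi(D_{\delta}(z))$, where $\psi^{-1}$ is holomorphic with $|(\psi^{-1})'|\le\epsilon^{-1}$. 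Integrating gives $|z-w|\le\epsilon^{-1}|\psi(z)-\psi(w)|$. If instead $|\psi(w)-\psi(z)|\ge\epsilon\delta/4$, then since $|z-w|\le\mathrm{diam}\,\Omega$ we get $|\psi(z)-\psi(w)|\ge\epsilon(\delta/(4\,\mathrm{diam}\,\Omega))|z-w|$.

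\emph{Case $|z-w|\ge\delta$.} Here $w\notin D_{\delta}(z)$. By injectivity on all of $\Omega_{0}$, $\psi(w)\notin\psi(D_{\delta}(z))$, and hence $\psi(w)$ lies outside the Koebe disc. So again $|\psi(z)-\psi(w)|\ge\epsilon\delta/4\ge\epsilon(\delta/(4\,\mathrm{diam}\,\Omega))|z-w|$.

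Taking $C$ larger than $4\,\mathrm{diam}(\Omega)/\delta$, $L/\delta$ and $1$ completes the proof. In the lower bound, the hypothesis $|\psi'|\ge\epsilon$ enters only through the Koebe radius, and $|\psi'|\le\epsilon^{-1}$ is not needed there.
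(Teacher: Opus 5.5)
Your proof is correct and follows the paper's route: the upper bound comes from integrating $|\psi'|$ along a path in $\Omega_0$ of length $O(|z-w|)$, and the lower bound from the Koebe quarter theorem combined with injectivity on $\Omega_0$. For the upper bound, the paper cites \cite[Lemma 2.1]{MR1479016} for the comparability of intrinsic and Euclidean distance, which your disc-chaining argument proves directly. For the lower bound, the paper applies Koebe on $D_r(z)$ with the adaptive radius $r=\min\{1/C,|z-w|\}$; for nearby points this gives $|\psi(z)-\psi(w)|\ge\epsilon|z-w|/4$ at once, because $w\notin D_{|z-w|}(z)$. That makes your detour through $\psi^{-1}$ unnecessary. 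The detour also uses $|\psi'|\ge\epsilon$ a second time, through $|(\psi^{-1})'|\le\epsilon^{-1}$, so your closing remark that this hypothesis enters only through the Koebe radius is slightly off.
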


\begin{proof}
Let $C>1$ be large with respect to $\Omega$ and $\Omega_{0}$, and
let $0<\epsilon\le1$ and $\psi\in\mathcal{F}_{\epsilon}$ be given.

We first prove the upper bound. For $z,w\in\Omega$, write $d\left(z,w\right)$
for the infimum of the lengths of smooth paths $\gamma:[0,1]\rightarrow\Omega_{0}$
with $\gamma(0)=z$ and $\gamma(1)=w$. As shown in the proof of \cite[Lemma 2.1]{MR1479016},
assuming $C$ is sufficiently large we have
\[
d\left(z,w\right)\le C|z-w|\text{ for all }z,w\in\Omega.
\]
Given $z,w\in\Omega$ and $\delta>0$, there exists a smooth path
$\gamma:[0,1]\rightarrow\Omega_{0}$ such that $\gamma(0)=z$, $\gamma(1)=w$,
and $\mathrm{length}(\gamma)\le d\left(z,w\right)+\delta$. Note that
$\psi\circ\gamma$ is a smooth path from $\psi(z)$ to $\psi(w)$.
Hence,
\begin{multline*}
\left|\psi(z)-\psi(w)\right|\le\mathrm{length}\left(\psi\circ\gamma\right)=\int_{0}^{1}\left|\psi'\left(\gamma(t)\right)\right|\cdot\left|\gamma'(t)\right|\:dt\le\epsilon^{-1}\int_{0}^{1}\left|\gamma'(t)\right|\:dt\\
=\epsilon^{-1}\mathrm{length}(\gamma)\le\epsilon^{-1}\left(d\left(z,w\right)+\delta\right)\le\epsilon^{-1}\left(C|z-w|+\delta\right).
\end{multline*}
Since this holds for all $\delta>0$, the upper bound follows.

We turn to the proof of the lower bound. We may assume that $D_{1/C}(z)\subset\Omega_{0}$
for all $z\in\Omega$. Let $z,w\in\Omega$, and set $r:=\min\left\{ 1/C,|z-w|\right\} $.
By the Koebe quarter theorem (see, e.g., \cite[Theorem 14.14]{MR924157}),
\[
D\left(\psi(z),\frac{1}{4}\left|\psi'(z)\right|r\right)\subset\psi\left(D\left(z,r\right)\right).
\]
Thus, since $w\notin D\left(z,r\right)$ and $\psi$ is injective
on $\Omega_{0}$,
\[
\left|\psi(z)-\psi(w)\right|\ge\frac{1}{4}\left|\psi'(z)\right|r\ge\frac{\epsilon r}{4}.
\]
Since $\Omega$ is bounded, we may assume that $r\ge|z-w|/C^{2}$.
Hence,
\[
\left|\psi(z)-\psi(w)\right|\ge\frac{\epsilon}{4C^{2}}|z-w|,
\]
which completes the proof of the lemma.
\end{proof}

\subsection{Bounded distortion and related auxiliary results}

Recall that $\Omega_{0}$ is a bounded domain containing $\overline{\Omega}$,
the maps $\varphi_{i}$ are defined on $\overline{\Omega_{0}}$, and
conditions \ref{enu:holo IFS cond 1} and \ref{enu:holo IFS cond 2}
hold with $\Omega_{0}$ in place of $\Omega$. This implies that $\Phi$
satisfies the so-called bounded distortion property on $\Omega_{0}$,
as stated in the following lemma. Its proof can be found, for example,
in \cite[Lemma 2.1]{MR1479016}. Recall that $\Lambda^{*}$ denotes
the set of finite words over $\Lambda$.
\begin{lem}[Bounded distortion property]
\label{lem:bounded distortion}There exists a constant $C\ge1$ such
that $\left|\varphi_{u}'(z)\right|\le C\left|\varphi_{u}'(w)\right|$
for all $u\in\Lambda^{*}$ and $z,w\in\Omega_{0}$.
\end{lem}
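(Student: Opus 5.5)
The strategy is to exploit the fact that every $\varphi_{u}$, $u\in\Lambda^{*}$, is an injective holomorphic map defined on a slightly larger domain than $\Omega_{0}$. Koebe's distortion theorem then bounds $\log|\varphi_{u}'|$ uniformly over a compact set, and the bound is independent of $u$. Concretely, the setup lets us choose a bounded domain $\Omega_{1}\supset\overline{\Omega_{0}}$ on which each $\varphi_{i}$ is a holomorphic injection with $\varphi_{i}(\Omega_{1})\subset\Omega_{1}$. To see that such a domain exists, note that $\varphi_{i}(\overline{\Omega_{0}})\subset\Omega_{0}$, so a small enough neighbourhood works by compactness and continuity; alternatively, shrink $\Omega_{0}$ slightly and apply the argument to $\overline{\Omega_{0}}$ as a compact subset of a domain with the invariance property. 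Once $\Omega_{1}$ is fixed, every composition $\varphi_{u}$ is a holomorphic injection of $\Omega_{1}$ into $\Omega_{1}$. This uniform family is what makes the constant independent of the word length.

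The first step is local. Let $r>0$ be such that $D_{2r}(z)\subset\Omega_{1}$ for every $z\in\overline{\Omega_{0}}$. For $u\in\Lambda^{*}$ and $z_{0}\in\overline{\Omega_{0}}$, normalize by setting
\[
g(\zeta):=\frac{\varphi_{u}(z_{0}+2r\zeta)-\varphi_{u}(z_{0})}{2r\varphi_{u}'(z_{0})}.
\]
Then $g$ is univalent on $\mathbb{D}$ with $g(0)=0$ and $g'(0)=1$. The Koebe distortion theorem gives
\[
\frac{1-|\zeta|}{(1+|\zeta|)^{3}}\le|g'(\zeta)|\le\frac{1+|\zeta|}{(1-|\zeta|)^{3}}.
\]
Restricting to $|\zeta|\le1/2$, this says $|\varphi_{u}'(w)|/|\varphi_{u}'(z_{0})|\in[C_{0}^{-1},C_{0}]$ for all $w\in D_{r}(z_{0})$, where $C_{0}$ is an absolute constant.

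The second step is a chaining argument. The set $\overline{\Omega_{0}}$ is compact and contained in the domain $\Omega_{1}$, so we can fix a connected compact set $L$ with $\overline{\Omega_{0}}\subset L\subset\Omega_{1}$, for instance a finite union of closed discs joined by polygonal paths. Adjusting $r$ so that the $r$-neighbourhood-disc estimate holds at points of $L$ as well, we cover $L$ by finitely many discs $D_{r}(x_{1}),\dots,D_{r}(x_{N})$ with $x_{j}\in L$. Since $L$ is connected, any two points $z,w\in\overline{\Omega_{0}}$ are joined by a chain of at most $N$ of these discs in which consecutive discs intersect. Applying the local estimate along the chain gives $|\varphi_{u}'(z)|\le C_{0}^{2N}|\varphi_{u}'(w)|$. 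The number $N$ depends only on the geometry, and in particular not on $u$, so $C:=C_{0}^{2N}$ proves the lemma.

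I expect the main obstacle to be the preliminary step: producing the enlarged invariant domain $\Omega_{1}$, on which all compositions are simultaneously defined and injective, while also obtaining a connected compact set containing $\overline{\Omega_{0}}$ inside it. If constructing $\Omega_{1}\supset\overline{\Omega_{0}}$ with forward invariance proves awkward, I would instead prove the estimate on a slightly smaller domain. One can always pass to such a domain, because $\Omega_{0}$ was chosen with room to spare: condition (ii) holds on $\overline{\Omega_{0}}$, and the $\varphi_{i}$ extend injectively beyond it.
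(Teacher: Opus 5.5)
Your proof is correct, but it takes a different route from the one the paper relies on. The paper gives no argument of its own. It cites \cite[Lemma 2.1]{MR1479016}, which, as far as I can tell, treats general $C^{1+\epsilon}$ conformal IFSs by the standard dynamical argument. There one uses the chain rule to write $\log|\varphi_u'(z)|-\log|\varphi_u'(w)|$ as a telescoping sum over the letters of $u$, and bounds it by a geometric series. The two inputs are the H\"older (here even Lipschitz) continuity of $\log|\varphi_i'|$ on $\overline{\Omega_0}$, and the uniform exponential contraction of the inner compositions. The contraction comes from $|\varphi_i'|\le r<1$ together with a path-metric estimate of the type $d(z,w)\le C|z-w|$, which the paper also quotes in the proof of Lemma \ref{lem:lip prop for psi in F_epsilon}.

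Your Koebe-plus-chaining argument uses neither contraction nor smoothness estimates. It only needs every $\varphi_u$ to be univalent on one fixed neighbourhood $\Omega_1$ of $\overline{\Omega_0}$. The resulting constant $C_0^{2N}$ depends only on the pair $\overline{\Omega_0}\subset\Omega_1$, so it holds uniformly for all univalent maps on $\Omega_1$, not just for compositions. The price is that your argument is special to holomorphic maps in the plane and relies on injectivity (hypothesis (i)). The telescoping argument works in every dimension and without injectivity, but requires uniform contraction.

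Two small remarks. First, the step you flag as the main obstacle is routine, so your fallback is unnecessary. The open $\delta$-neighbourhood of $\Omega_0$ serves as $\Omega_1$ once $\delta$ is small enough that its closure lies in the injectivity domain from (i) and each $\varphi_i$ maps it into $\Omega_0$. This is possible because $\varphi_i(\overline{\Omega_0})$ is a compact subset of $\Omega_0$. Second, $L=\overline{\Omega_0}$ is already compact and connected, so no auxiliary set is needed for the chaining.
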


Combining the bounded distortion property with Lemma \ref{lem:lip prop for psi in F_epsilon}
we obtain the following.
\begin{lem}
\label{lem:Lip prop of varphi_u}There exists a constant $C>1$ such
that for all $u\in\Lambda^{*}$, $0<\epsilon\le1$, $\psi\in\mathcal{F}_{\epsilon}$,
and $z,w\in\Omega$
\[
C^{-1}\epsilon\left|\varphi_{u}'(0)\right||z-w|\le\left|\psi\circ\varphi_{u}(z)-\psi\circ\varphi_{u}(w)\right|\le C\epsilon^{-1}\left|\varphi_{u}'(0)\right||z-w|.
\]
\end{lem}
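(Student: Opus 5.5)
The plan is to reduce the statement to Lemma \ref{lem:lip prop for psi in F_epsilon}, applied to the single holomorphic injection $\psi\circ\varphi_{u}$ after a suitable normalization. Fix $u\in\Lambda^{*}$, $0<\epsilon\le1$ and $\psi\in\mathcal{F}_{\epsilon}$. By condition \ref{enu:holo IFS cond 2} on $\Omega_{0}$ (which is how $\Omega_{0}$ was chosen), $\varphi_{u}$ maps $\overline{\Omega_{0}}$ into $\Omega_{0}$. Hence $\psi\circ\varphi_{u}$ is a holomorphic injection on $\Omega_{0}$, being a composition of injections. Its derivative is $\psi'(\varphi_{u}(z))\varphi_{u}'(z)$.

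By Lemma \ref{lem:bounded distortion}, with constant $C_{0}$, and since $0\in\Omega\subset\Omega_{0}$,
\[
C_{0}^{-1}\left|\varphi_{u}'(0)\right|\le\left|\varphi_{u}'(z)\right|\le C_{0}\left|\varphi_{u}'(0)\right|\quad\text{for all }z\in\Omega_{0}.
\]
Set $g:=\left|\varphi_{u}'(0)\right|^{-1}\psi\circ\varphi_{u}$, which is again a holomorphic injection on $\Omega_{0}$. Combining the distortion bound with $\epsilon\le|\psi'|\le\epsilon^{-1}$ gives
\[
C_{0}^{-1}\epsilon\le|g'(z)|\le C_{0}\epsilon^{-1}\quad\text{for all }z\in\Omega_{0}.
\]
Since $C_{0}\ge1$, this says $g\in\mathcal{F}_{\epsilon/C_{0}}$, and $\epsilon/C_{0}\in(0,1]$.

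Applying Lemma \ref{lem:lip prop for psi in F_epsilon} to $g$ with parameter $\epsilon/C_{0}$ yields
\[
C_{1}^{-1}C_{0}^{-1}\epsilon|z-w|\le|g(z)-g(w)|\le C_{1}C_{0}\epsilon^{-1}|z-w|\quad\text{for all }z,w\in\Omega.
\]
Multiplying through by $\left|\varphi_{u}'(0)\right|$ gives the claim with $C:=C_{0}C_{1}$ (enlarged slightly if needed so that $C>1$). This constant depends only on $\Phi$, $\Omega$ and $\Omega_{0}$, not on $u$, $\epsilon$ or $\psi$.

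There is no serious obstacle; the points to check are small.
\begin{itemize}
\item Lemma \ref{lem:bounded distortion} must hold on all of $\Omega_{0}$, not just on $\Omega$, because membership in $\mathcal{F}_{\epsilon/C_{0}}$ is a condition on $\Omega_{0}$. The lemma is stated for all $z,w\in\Omega_{0}$, so this is fine.
\item $\varphi_{u}(\Omega_{0})\subset\Omega_{0}$ is needed for $\psi\circ\varphi_{u}$ to be defined on $\Omega_{0}$. This holds by condition \ref{enu:holo IFS cond 2} for $\Omega_{0}$, iterated along the letters of $u$.
\item For the empty word, $\varphi_{u}$ is the identity and the claim is exactly Lemma \ref{lem:lip prop for psi in F_epsilon}.
\end{itemize}
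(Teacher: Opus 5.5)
Your proposal is correct and follows the paper's own argument: normalize $\psi\circ\varphi_{u}$ by the size of $\varphi_{u}'(0)$, use the chain rule and bounded distortion on $\Omega_{0}$ to place it in $\mathcal{F}_{\epsilon/C_{0}}$, and then apply Lemma \ref{lem:lip prop for psi in F_epsilon}. The only cosmetic difference is that the paper divides by the complex number $\varphi_{u}'(0)$ (via $S_{\varphi_{u}'(0)}^{-1}$) rather than by its modulus, which makes no difference to the estimates.
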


\begin{proof}
Let $C>1$ be a constant at least as large as the constants obtained
in Lemmas \ref{lem:lip prop for psi in F_epsilon} and \ref{lem:bounded distortion}.
Let $u\in\Lambda^{*}$, $0<\epsilon\le1$ and $\psi\in\mathcal{F}_{\epsilon}$
be given. By the chain rule and Lemma \ref{lem:bounded distortion},
$S_{\varphi_{u}'(0)}^{-1}\circ\psi\circ\varphi_{u}\in\mathcal{F}_{\epsilon/C}$.
Hence, by Lemma \ref{lem:lip prop for psi in F_epsilon}, for each
$z,w\in\Omega$
\[
C^{-2}\epsilon|z-w|\le\left|S_{\varphi_{u}'(0)}^{-1}\circ\psi\circ\varphi_{u}(z)-S_{\varphi_{u}'(0)}^{-1}\circ\psi\circ\varphi_{u}(w)\right|\le C^{2}\epsilon^{-1}|z-w|,
\]
which proves the lemma.
\end{proof}
Combining Cauchy's estimates with the above considerations, we can
also obtain the following estimate for the higher derivatives.
\begin{lem}
\label{lem:ub on high derivs}There exists a constant $C>1$ such
that for all $u\in\Lambda^{*}$, $0<\epsilon\le1$, $\psi\in\mathcal{F}_{\epsilon}$,
$k\in\mathbb{Z}_{>0}$, and $z\in\Omega$,
\[
\left|\left(\psi\circ\varphi_{u}\right)^{(k)}(z)\right|\le k!C^{k}\epsilon^{-1}\left|\varphi_{u}'(0)\right|,
\]
where $\left(\psi\circ\varphi_{u}\right)^{(k)}$ denotes the $k$-th
derivative of $\psi\circ\varphi_{u}$.
\end{lem}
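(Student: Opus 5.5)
The plan is to apply Cauchy's estimates to the function $g:=\psi\circ\varphi_{u}$ on a disc around $z$ that stays inside $\Omega_{0}$, and to bound $\sup|g|$ there only up to an additive constant. Subtracting a constant does not change the $k$-th derivative, so we work with $g-g(z)$ and need to bound its oscillation on a small disc. The oscillation should be of order $\epsilon^{-1}|\varphi_{u}'(0)|$ times the radius. The factor $k!C^{k}$ will then come from the Cauchy estimate with a fixed radius $\rho$ depending only on $\Omega$ and $\Omega_{0}$.

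\textbf{Step 1 (choose the radius).} Since $\overline{\Omega}$ is compact and contained in the open set $\Omega_{0}$, fix $\rho>0$ with $\overline{D}_{2\rho}(z)\subset\Omega_{0}$ for all $z\in\overline{\Omega}$. Set $\Omega_{1}:=\overline{\Omega}^{(\rho)}$, a compact subset of $\Omega_{0}$.

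\textbf{Step 2 (Lipschitz bound on the disc).} The difficulty is that Lemma \ref{lem:Lip prop of varphi_u} is stated only for points of $\Omega$, whereas the disc $D_{\rho}(z)$ may leave $\Omega$. I would avoid this by bounding $|g'|$ pointwise on $\Omega_{0}$ instead.

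For $w\in\Omega_{0}$ the chain rule gives $g'(w)=\psi'(\varphi_{u}(w))\varphi_{u}'(w)$. We have $|\psi'|\le\epsilon^{-1}$ provided $\varphi_{u}(w)\in\Omega_{0}$, which holds: condition \ref{enu:holo IFS cond 2} with $\Omega_{0}$ in place of $\Omega$ gives $\varphi_{i}(\overline{\Omega_{0}})\subset\Omega_{0}$, so $\varphi_{u}(\Omega_{0})\subset\Omega_{0}$. Lemma \ref{lem:bounded distortion} gives $|\varphi_{u}'(w)|\le C_{0}|\varphi_{u}'(0)|$, where $0\in\Omega\subset\Omega_{0}$.

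Hence $|g'|\le C_{0}\epsilon^{-1}|\varphi_{u}'(0)|$ on all of $\Omega_{0}$. Integrating along the radial segment, which lies in the convex disc $D_{\rho}(z)\subset\Omega_{0}$, gives
\[
|g(w)-g(z)|\le C_{0}\epsilon^{-1}|\varphi_{u}'(0)|\rho\quad\text{for }w\in D_{\rho}(z).
\]
With this argument Lemma \ref{lem:Lip prop of varphi_u} is not actually needed.

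\textbf{Step 3 (Cauchy estimate).} Apply the Cauchy estimate to $h:=g-g(z)$ on the circle $C_{\rho'}(z)$ for any $\rho'<\rho$, and let $\rho'\to\rho$:
\[
|g^{(k)}(z)|=|h^{(k)}(z)|\le k!\rho^{-k}\sup_{C_{\rho}(z)}|h|\le k!\rho^{-k}C_{0}\epsilon^{-1}|\varphi_{u}'(0)|\rho .
\]
Taking $C:=\max\{C_{0},2\}\cdot\max\{1,\rho^{-1}\}$ gives $k!C^{k}\epsilon^{-1}|\varphi_{u}'(0)|$ for every $k\ge1$, because $\rho^{1-k}C_{0}\le C^{k}$ when $k\ge1$.

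The only step requiring care is Step 2, namely ensuring that the derivative bounds hold off $\Omega$. This is handled by the invariance $\varphi_{u}(\Omega_{0})\subset\Omega_{0}$ together with the fact that bounded distortion is stated on $\Omega_{0}$. The case $k=1$ also follows directly from Step 2.
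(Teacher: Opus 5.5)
Your proof is correct and follows the paper's approach: apply Cauchy's estimate on discs of a fixed radius around points of $\Omega$ to $\psi\circ\varphi_{u}$ minus a constant, with the sup bound coming from bounded distortion and $|\psi'|\le\epsilon^{-1}$. The only difference is how the oscillation is bounded. The paper subtracts $\psi\circ\varphi_{u}(0)$ and extends the Lipschitz estimate of Lemmas \ref{lem:lip prop for psi in F_epsilon} and \ref{lem:Lip prop of varphi_u} to $\Omega^{(\delta)}$; you subtract $\psi\circ\varphi_{u}(z)$ and integrate the pointwise bound on $|(\psi\circ\varphi_{u})'|$ over a disc, which neatly avoids the path-metric argument.
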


\begin{proof}
Let $0<\delta<1$ be sufficiently small so that the closure of $\Omega^{(\delta)}$
is contained in $\Omega_{0}$. By the arguments given in the proofs
of Lemmas \ref{lem:lip prop for psi in F_epsilon} and \ref{lem:Lip prop of varphi_u},
there exists $C=C(\delta)>1$ such that for all $u\in\Lambda^{*}$,
$0<\epsilon\le1$ and $\psi\in\mathcal{F}_{\epsilon}$,
\begin{equation}
\left|\psi\circ\varphi_{u}(z)-\psi\circ\varphi_{u}(w)\right|\le C\epsilon^{-1}\left|\varphi_{u}'(0)\right||z-w|\;\text{ for all }z,w\in\Omega^{(\delta)}.\label{eq:lip prop on Omega^(eps)}
\end{equation}

Let $u\in\Lambda^{*}$, $0<\epsilon\le1$ and $\psi\in\mathcal{F}_{\epsilon}$
be given, and set $f:=\psi\circ\varphi_{u}-\psi\circ\varphi_{u}(0)$.
From (\ref{eq:lip prop on Omega^(eps)}) and by assuming $C$ is sufficiently
large, for $w\in\Omega^{(\delta)}$
\[
\left|f(w)\right|\le C\epsilon^{-1}\left|\varphi_{u}'(0)\right||w|\le C^{2}\epsilon^{-1}\left|\varphi_{u}'(0)\right|.
\]
Thus, by Cauchy's estimate, for $k\in\mathbb{Z}_{>0}$ and $z\in\Omega$
\[
\left|\left(\psi\circ\varphi_{u}\right)^{(k)}(z)\right|=\left|f^{(k)}(z)\right|\le k!\delta^{-k}\cdot C^{2}\epsilon^{-1}\left|\varphi_{u}'(0)\right|,
\]
which completes the proof of the lemma.
\end{proof}

\subsection{\label{subsec:Function-spaces}Function spaces and Taylor approximation}

Denote by $\mathcal{O}(\Omega)$ the vector space of holomorphic functions
from $\Omega$ to $\mathbb{C}$.

Given $k\ge1$, denote by $\mathcal{P}_{k}$ the vector space of polynomials
$q\in\mathbb{C}[X]$ with $\deg q\le k$. Let $\Vert\cdot\Vert_{2}$
be the norm on $\mathcal{P}_{k}$ defined by
\[
\Vert q\Vert_{2}^{2}:=\sum_{j=0}^{k}|a_{j}|^{2}\;\text{ for }q(X)=\sum_{j=0}^{k}a_{j}X^{j}\in\mathcal{P}_{k}.
\]
In what follows, all metric and topological concepts in $\mathcal{P}_{k}$
are considered with respect to $\Vert\cdot\Vert_{2}$.

Let $\nu$ be a finitely supported probability measure on $\mathcal{O}(\Omega)$
or an element of $\mathcal{M}\left(\mathcal{P}_{k}\right)$ for some
$k\ge1$. Given $\theta\in\mathcal{M}(\Omega)$, we write $\nu.\theta$
for the convolution of $\nu$ with $\theta$. That is, $\nu.\theta$
denotes the pushforward of $\nu\times\theta$ via the map sending
$(f,z)\in\mathcal{O}(\Omega)\times\Omega$ to $f(z)$. For $z\in\Omega$
we write $\nu.z$ in place of $\nu.\delta_{z}$, where $\delta_{z}$
is the Dirac mass at $z$.

For $k\ge1$ and $a\in\Omega$, let $P_{k,a}:\mathcal{O}(\Omega)\rightarrow\mathcal{P}_{k}$
be the linear map such that $P_{k,a}f$ equals the $k$-th order Taylor
polynomial of $f\in\mathcal{O}(\Omega)$ at the point $a$. That is,
\begin{equation}
P_{k,a}f(X)=\sum_{j=0}^{k}\frac{f^{(j)}(a)}{j!}(X-a)^{j}.\label{eq:exp form of P_k,a}
\end{equation}
The following Taylor estimate will be crucial for our arguments.
\begin{lem}
\label{lem:Taylor estimate}There exist $\delta>0$ and $C>1$ such
that for all $u\in\Lambda^{*}$, $0<\epsilon\le1$, $\psi\in\mathcal{F}_{\epsilon}$,
$k\ge1$, and $a,z\in\Omega$ with $|z-a|<\delta$,
\[
\left|\psi\circ\varphi_{u}(z)-P_{k,a}\left(\psi\circ\varphi_{u}\right)(z)\right|\le C^{k+1}\epsilon^{-1}\left|\varphi_{u}'(0)\right||z-a|^{k+1}.
\]
\end{lem}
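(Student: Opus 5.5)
We will deduce the estimate from the Cauchy-type derivative bound of Lemma \ref{lem:ub on high derivs}, in the form of a remainder estimate for the Taylor series. The point is that Lemma \ref{lem:ub on high derivs} gives $\left|(\psi\circ\varphi_{u})^{(j)}(w)\right|\le j!C_{0}^{j}\epsilon^{-1}\left|\varphi_{u}'(0)\right|$ for all $w\in\Omega$ and $j\ge1$, with a constant $C_{0}>1$ independent of $u,\epsilon,\psi$. We therefore expect the Taylor series of $f:=\psi\circ\varphi_{u}$ about $a$ to converge on a disc of radius comparable to $C_{0}^{-1}$. The tail beyond degree $k$ should then be a geometric series dominated by its first term.

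Concretely, we first choose $\delta>0$ small enough (depending only on $\Omega$, $\Omega_{0}$ and $C_{0}$) that $\delta\le1/(2C_{0})$. We need the Taylor expansion of $f$ at $a$ to converge to $f(z)$ whenever $a,z\in\Omega$ and $|z-a|<\delta$. Here there is one mild technicality: Lemma \ref{lem:ub on high derivs} is only stated for points of $\Omega$, while the Taylor expansion at $a$ requires holomorphy of $f$ on a disc $D_{r}(a)$ that may leave $\Omega$. We handle this as in the proof of Lemma \ref{lem:ub on high derivs}. Fix $\delta_{1}>0$ with $\overline{\Omega^{(\delta_{1})}}\subset\Omega_{0}$. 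Then $f$ is holomorphic on $D_{\delta_{1}}(a)\subset\Omega_{0}$ for every $a\in\Omega$, so its Taylor series at $a$ converges to $f$ on $D_{\delta_{1}}(a)$. Taking $\delta\le\min\{\delta_{1},1/(2C_{0})\}$ thus guarantees that, for $|z-a|<\delta$,
\[
f(z)-P_{k,a}f(z)=\sum_{j\ge k+1}\frac{f^{(j)}(a)}{j!}(z-a)^{j}.
\]

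Next we bound the tail using only the derivative bounds at the point $a\in\Omega$. For $|z-a|<\delta$ this gives
\[
\left|f(z)-P_{k,a}f(z)\right|\le\epsilon^{-1}\left|\varphi_{u}'(0)\right|\sum_{j\ge k+1}C_{0}^{j}|z-a|^{j}.
\]
Since $C_{0}|z-a|\le1/2$, the sum is at most $2C_{0}^{k+1}|z-a|^{k+1}$. Hence the claim holds with $C:=2C_{0}$, since $2C_{0}^{k+1}\le(2C_{0})^{k+1}$.

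The only step requiring care is ensuring that the power series identity is valid on the whole disc $|z-a|<\delta$, which is exactly why $\delta$ is taken below the distance from $\Omega$ to $\partial\Omega_{0}$. Beyond that the argument is routine. Alternatively, we could avoid power series and use the Cauchy integral form of the remainder on the circle $C_{\delta_{1}}(a)$, together with the sup bound on $f-f(a)$ from the proof of Lemma \ref{lem:ub on high derivs}. That route gives the same estimate with $C$ of order $\delta_{1}^{-1}$.
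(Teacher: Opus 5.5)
Your proposal is correct and is essentially the paper's proof. You expand $\psi\circ\varphi_{u}$ in its Taylor series on a disc around $a$ contained in $\Omega_{0}$, bound the tail termwise using Lemma \ref{lem:ub on high derivs} at $a\in\Omega$, and sum the geometric series using $C_{0}|z-a|\le1/2$. Your explicit choice $\delta\le\min\{\delta_{1},1/(2C_{0})\}$ and the absorption $2C_{0}^{k+1}\le(2C_{0})^{k+1}$ make precise the constants that the paper's proof leaves slightly implicit.
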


\begin{proof}
Let $\delta>0$ be sufficiently small so that $\Omega^{(\delta)}\subset\Omega_{0}$.
Let $C>1$ be the constant obtained in Lemma \ref{lem:ub on high derivs},
and let $u\in\Lambda^{*}$, $0<\epsilon\le1$, $\psi\in\mathcal{F}_{\epsilon}$,
$k\ge1$, and $a,z\in\Omega$ with $0<|z-a|<\min\left\{ \delta,1/(2C)\right\} $.
Since $\psi\circ\varphi_{u}$ is analytic on $\Omega_{0}$ and $z\in D_{\delta}(a)\subset\Omega_{0}$,
\[
\psi\circ\varphi_{u}(z)=\sum_{j=0}^{\infty}\frac{\left(\psi\circ\varphi_{u}\right)^{(j)}(a)}{j!}(z-a)^{j}.
\]
Thus, by Lemma \ref{lem:ub on high derivs} and since $C|z-a|<1/2$,
\begin{eqnarray*}
\left|\psi\circ\varphi_{u}(z)-P_{k,a}\left(\psi\circ\varphi_{u}\right)(z)\right| & \le & \sum_{j=k+1}^{\infty}\frac{\left|\left(\psi\circ\varphi_{u}\right)^{(j)}(a)\right|}{j!}|z-a|^{j}\\
 & \le & 2C^{k+1}\epsilon^{-1}\left|\varphi_{u}'(0)\right||z-a|^{k+1},
\end{eqnarray*}
which completes the proof of the lemma.
\end{proof}

\subsection{Entropy}

Let $(X,\mathcal{B})$ be a measurable space. Given a probability
measure $\theta$ on $X$ and a countable partition $\mathcal{D}\subset\mathcal{B}$
of $X$, the entropy of $\theta$ with respect to $\mathcal{D}$ is
defined by
\[
H(\theta,\mathcal{D}):=-\sum_{D\in\mathcal{D}}\theta(D)\log\theta(D).
\]
If $\mathcal{E}\subset\mathcal{B}$ is another countable partition
of $X$, the conditional entropy given $\mathcal{E}$ is defined as
follows
\[
H(\theta,\mathcal{D}\mid\mathcal{E}):=\sum_{E\in\mathcal{E}}\theta(E)\cdot H(\theta_{E},\mathcal{D}).
\]

Throughout the paper, we repeatedly use basic properties of entropy
and conditional entropy. We frequently do so without explicit reference.
Readers are advised to consult \cite[Section 3.1]{Ho1} for details.

\subsection{\label{subsec:Dyadic-partitions}Dyadic partitions and component
measures}

For $d\ge1$ and $n\ge0$, denote by $\mathcal{D}_{n}^{\mathbb{C}^{d}}$
the level-$n$ dyadic partition of $\mathbb{C}^{d}$, where $\mathbb{C}^{d}$
is identified with $\mathbb{R}^{2d}$. For a real number $t\ge0$,
we write $\mathcal{D}_{t}^{\mathbb{C}^{d}}$ in place of $\mathcal{D}_{\left\lfloor t\right\rfloor }^{\mathbb{C}^{d}}$,
where $\left\lfloor t\right\rfloor $ denotes the integer part of
$t$. We omit the superscript $\mathbb{C}^{d}$ when it is clear from
the context.

Dyadic entropy, i.e. entropy measured with respect to the partitions
$\mathcal{D}_{n}^{\mathbb{C}}$, plays an important role in our arguments.
Note that by \cite{Yo}, when $\theta\in\mathcal{M}\left(\mathbb{C}\right)$
is exact dimensional,
\begin{equation}
\underset{n\rightarrow\infty}{\lim}\frac{1}{n}H\left(\theta,\mathcal{D}_{n}\right)=\dim\theta.\label{eq:ent dim =00003D exact dim}
\end{equation}

We shall also need the following two simple lemmas. They follow easily
from basic properties of entropy, and their proofs are therefore omitted.
\begin{lem}
\label{lem:dyad ent =000026 lip func}Let $d\ge1$ and $\theta\in\mathcal{M}\left(\mathbb{C}^{d}\right)$
be given. Let $C>1$, and let $f:\mathrm{supp}(\theta)\rightarrow\mathbb{C}^{d}$
be bi-Lipschitz with bi-Lipschitz constant $C$. That is, $C^{-1}|x-y|\le\left|f(x)-f(y)\right|\le C|x-y|$
for $x,y\in\mathrm{supp}(\theta)$. Then for each $n\ge0$,
\[
H\left(f\theta,\mathcal{D}_{n}\right)=H\left(\theta,\mathcal{D}_{n}\right)+O_{d}\left(1+\log C\right).
\]
\end{lem}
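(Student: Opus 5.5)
The plan is to compare the two partitions $\mathcal{D}_{n}$ and $f^{-1}\mathcal{D}_{n}$ on $\mathrm{supp}(\theta)$ by bounding the conditional entropies in both directions. Since $f$ is a bijection from $\mathrm{supp}(\theta)$ onto its image, we have $H(f\theta,\mathcal{D}_{n})=H(\theta,f^{-1}\mathcal{D}_{n})$, where $f^{-1}\mathcal{D}_{n}:=\{f^{-1}(D)\,:\,D\in\mathcal{D}_{n}\}$ is a partition of $\mathrm{supp}(\theta)$. The standard inequality
\[
H(\theta,\mathcal{A})\le H(\theta,\mathcal{B})+H(\theta,\mathcal{A}\mid\mathcal{B})
\]
applies to any two countable partitions $\mathcal{A},\mathcal{B}$. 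Using it with $(\mathcal{A},\mathcal{B})=(f^{-1}\mathcal{D}_{n},\mathcal{D}_{n})$ and then with the roles swapped, it suffices to show that both conditional entropies $H(\theta,f^{-1}\mathcal{D}_{n}\mid\mathcal{D}_{n})$ and $H(\theta,\mathcal{D}_{n}\mid f^{-1}\mathcal{D}_{n})$ are $O_{d}(1+\log C)$.

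Each conditional entropy is bounded by the logarithm of the maximal number of cells of one partition meeting a single cell of the other, so the task is a counting bound.

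\emph{First count.} Fix $D\in\mathcal{D}_{n}$; it has diameter $\sqrt{2d}\,2^{-n}$. Its image $f(D\cap\mathrm{supp}(\theta))$ then has diameter at most $C\sqrt{2d}\,2^{-n}$. A set of diameter $r$ in $\mathbb{R}^{2d}$ meets at most $O_{d}\big((1+r2^{n})^{2d}\big)$ cells of $\mathcal{D}_{n}$. Hence the number of cells of $f^{-1}\mathcal{D}_{n}$ meeting $D$ is $O_{d}(C^{2d})$, and its logarithm is $O_{d}(1+\log C)$.

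\emph{Second count.} Fix $E=f^{-1}(D')$ with $D'\in\mathcal{D}_{n}$. The lower Lipschitz bound gives
\[
\mathrm{diam}(E)\le C\,\mathrm{diam}(D')=C\sqrt{2d}\,2^{-n},
\]
so by the same counting $E$ meets $O_{d}(C^{2d})$ cells of $\mathcal{D}_{n}$. Taking logarithms again gives $O_{d}(1+\log C)$.

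Combining the two bounds yields $\left|H(f\theta,\mathcal{D}_{n})-H(\theta,\mathcal{D}_{n})\right|=O_{d}(1+\log C)$. There is no real obstacle here; the only care needed is that the partitions are taken on $\mathrm{supp}(\theta)$, which is compact, so all partitions involved are finite and the entropies are finite.
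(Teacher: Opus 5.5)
Your proof is correct. The paper omits this argument as following easily from basic properties of entropy, and your approach is exactly the standard one intended: show that $\mathcal{D}_{n}$ and $f^{-1}\mathcal{D}_{n}$ are $O_{d}(C^{2d})$-commensurable on $\mathrm{supp}(\theta)$, using the upper Lipschitz bound for one count and the lower bound for the other, and then bound both conditional entropies by the logarithm of that count, as in \cite[Lemma 3.2]{Ho1}.
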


\begin{lem}
\label{lem:close func --> close ent}Let $(X,\mathcal{B},\theta)$
be a probability space, and let $f,g:X\rightarrow\mathbb{C}$ be measurable.
Let $n\ge0$, and suppose that $\left|f(x)-g(x)\right|\le2^{-n}$
for all $x\in X$. Then
\[
H\left(f\theta,\mathcal{D}_{n}\right)=H\left(g\theta,\mathcal{D}_{n}\right)+O(1).
\]
\end{lem}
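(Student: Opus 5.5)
Both parts are routine facts about entropy of pushforwards, and the plan is to prove them by counting how many cells of one partition can meet a single cell of the other.

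For Lemma \ref{lem:close func --> close ent}, I compare the partitions $f^{-1}\mathcal{D}_{n}$ and $g^{-1}\mathcal{D}_{n}$ of $X$, using $H(f\theta,\mathcal{D}_{n})=H(\theta,f^{-1}\mathcal{D}_{n})$ and the same for $g$. By the chain rule for conditional entropy,
\[
H(\theta,f^{-1}\mathcal{D}_{n})\le H(\theta,g^{-1}\mathcal{D}_{n})+H(\theta,f^{-1}\mathcal{D}_{n}\mid g^{-1}\mathcal{D}_{n}).
\]
Fix an atom $g^{-1}(D)$ with $D\in\mathcal{D}_{n}$. For any $x$ in it, $f(x)$ lies in $D^{(2^{-n})}$, since $|f(x)-g(x)|\le2^{-n}$. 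That thickened set meets at most $O(1)$ cells of $\mathcal{D}_{n}^{\mathbb{C}}$; because cells have side $2^{-n}$, a $5\times5$ block of cells suffices. So the conditional entropy on each atom is at most $\log 25$, and hence the conditional term is $O(1)$. Exchanging the roles of $f$ and $g$ gives the reverse inequality.

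For Lemma \ref{lem:dyad ent =000026 lip func}, write $Y=\mathrm{supp}(\theta)$ and argue in the same way. Take $m\ge0$ with $2^{m}\approx C\sqrt{2d}$, so that $m=O_{d}(1+\log C)$. The key observation is that if $D\in\mathcal{D}_{n}$, then $f(D\cap Y)$ has diameter at most $C\sqrt{2d}\,2^{-n}$. It therefore meets at most $O_{d}(C^{2d})$ cells of $\mathcal{D}_{n}$. This gives
\[
H(\theta,f^{-1}\mathcal{D}_{n}\mid\mathcal{D}_{n})\le O_{d}(1+\log C).
\]
Combining this with the chain rule, I obtain $H(f\theta,\mathcal{D}_{n})\le H(\theta,\mathcal{D}_{n})+O_{d}(1+\log C)$.

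The reverse inequality follows by symmetry. The map $f$ is injective on $Y$, and its inverse $f^{-1}:f(Y)\to Y$ is also $C$-Lipschitz. The same counting argument, now applied to $f\theta$ and $f^{-1}$, gives $H(\theta,\mathcal{D}_{n})\le H(f\theta,\mathcal{D}_{n})+O_{d}(1+\log C)$.

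There is no genuine obstacle in either part. The only point needing care is measurability: $f$ is defined only on $\mathrm{supp}(\theta)$, and the inverse is taken on $f(Y)$, where it is continuous and hence Borel. Since the dimension $2d$ is fixed, all the implicit constants depend only on $d$.
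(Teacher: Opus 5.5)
Your argument is correct. Comparing $f^{-1}\mathcal{D}_{n}$ with $g^{-1}\mathcal{D}_{n}$ via the chain rule, and bounding the conditional entropy on each atom by the logarithm of the number of level-$n$ cells meeting $D^{(2^{-n})}$ (in fact at most $16$, so your bound of $25$ is safe), is exactly the routine ``basic properties of entropy'' argument the paper has in mind when it omits this proof. Your sketch for the companion bi-Lipschitz lemma is also fine, except that the auxiliary integer $m$ you introduce there is never used and can simply be dropped.
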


We also need to consider dyadic partitions of vector spaces of polynomials.
For $k\ge1$ and $n\ge0$, we denote by $\mathcal{D}_{n}^{\mathcal{P}_{k}}$
the level-$n$ dyadic partition of $\mathcal{P}_{k}$, which is given
by 
\[
\mathcal{D}_{n}^{\mathcal{P}_{k}}:=\left\{ \left\{ \sum_{j=0}^{k}a_{j}X^{j}\::\:(a_{0},...,a_{k})\in D\right\} \::\:D\in\mathcal{D}_{n}^{\mathbb{C}^{k+1}}\right\} .
\]
Thus, $\mathcal{D}_{n}^{\mathcal{P}_{k}}$ is defined by identifying
$\mathcal{P}_{k}$ with $\mathbb{C}^{k+1}$ in a natural way. We omit
the superscript $\mathcal{P}_{k}$ when it is clear from the context.

Given $\theta\in\mathcal{M}\left(\mathbb{C}\right)$, $n\ge0$, and
$z\in\mathbb{C}$ with $\theta\left(\mathcal{D}_{n}(z)\right)>0$,
we write $\theta_{z,n}$ in place of the conditional measure $\theta_{\mathcal{D}_{n}(z)}$.
Similarly, for $\nu\in\mathcal{M}\left(\mathcal{P}_{k}\right)$ and
$q\in\mathcal{P}_{k}$ with $\nu\left(\mathcal{D}_{n}(q)\right)>0$,
we write $\nu_{q,n}$ in place of $\nu_{\mathcal{D}_{n}(q)}$. The
measures $\theta_{z,n}$ and $\nu_{q,n}$ are said to be level-$n$
components of $\theta$ and $\nu$ respectively.

Throughout the rest of the paper, we use the probabilistic notations
introduced in \cite[Section 2.2]{Ho1}; readers are encouraged to
consult this reference for further details. In particular, we often
consider $\theta_{z,n}$ and $\nu_{q,n}$ as random measures in a
natural way.

\subsection{\label{subsec:Symbolic-notations}Symbolic preliminaries}

Let $\Lambda^{\mathbb{N}}$ denote the set of one-sided infinite words
over $\Lambda$. We equip $\Lambda^{\mathbb{N}}$ with the product
topology, where each copy of $\Lambda$ is equipped with the discrete
topology. Let $\sigma:\Lambda^{\mathbb{N}}\rightarrow\Lambda^{\mathbb{N}}$
denote the left-shift map. That is, $\sigma(\omega)=(\omega_{n+1})_{n\ge0}$
for $(\omega_{n})_{n\ge0}=\omega\in\Lambda^{\mathbb{N}}$.

Given $n\ge1$ and $(\omega_{k})_{k\ge0}=\omega\in\Lambda^{\mathbb{N}}$,
write $\omega|_{n}:=\omega_{0}...\omega_{n-1}\in\Lambda^{n}$ for
the $n$th prefix of $\omega$. For a word $u\in\Lambda^{n}$, denote
by $[u]\subset\Lambda^{\mathbb{N}}$ the cylinder set determined by
$u$. That is, $[u]:=\left\{ \omega\in\Lambda^{\mathbb{N}}\::\:\omega|_{n}=u\right\} $.
Let $\mathcal{C}_{n}:=\left\{ [u]\right\} _{u\in\Lambda^{n}}$ denote
the partition of $\Lambda^{\mathbb{N}}$ into level-$n$ cylinder
sets. Given a set of words $\mathcal{U}\subset\Lambda^{*}$, we often
write $\left[\mathcal{U}\right]$ in place of $\cup_{u\in\mathcal{U}}[u]$.

Denote by $\beta:=p^{\mathbb{N}}$ the Bernoulli measure on $\Lambda^{\mathbb{N}}$
associated to $p$. That is, $\beta$ is the unique element in $\mathcal{M}\left(\Lambda^{\mathbb{N}}\right)$
such that $\beta\left([u]\right)=p_{u}$ for $u\in\Lambda^{*}$, where
$p_{u}:=p_{i_{1}}\cdot...\cdot p_{i_{n}}$ for $u=i_{1}...i_{n}\in\Lambda^{n}$.

Let $\Pi:\Lambda^{\mathbb{N}}\rightarrow\Omega$ be the coding map
associated to $\Phi$. That is,
\[
\Pi(\omega):=\underset{n\rightarrow\infty}{\lim}\varphi_{\omega|_{n}}(0)\text{ for }\omega\in\Lambda^{\mathbb{N}},
\]
where recall that we assumed that $0\in\Omega$. Note that $\mu=\Pi\beta$.
For $n\ge1$, let $\Pi_{n}:\Lambda^{\mathbb{N}}\rightarrow\mathcal{O}(\Omega)$
be defined by $\Pi_{n}(\omega)=\varphi_{\omega|_{n}}$ for $\omega\in\Lambda^{\mathbb{N}}$.

From the ergodicity of $\left(\Lambda^{\mathbb{N}},\sigma,\beta\right)$,
by bounded distortion, and since $\mu=\Pi\beta$, it follows easily
that 
\begin{equation}
\chi=-\underset{n\rightarrow\infty}{\lim}\frac{1}{n}\log\left|\varphi_{\omega|_{n}}'(0)\right|\text{ for }\beta\text{-a.e. }\omega.\label{eq:chi as a.s.  limit}
\end{equation}

Write $\{\beta_{\omega}\}_{\omega\in\Lambda^{\mathbb{N}}}\subset\mathcal{M}\left(\Lambda^{\mathbb{N}}\right)$
for the disintegration of $\beta$ with respect to $\Pi^{-1}\mathcal{B}_{\mathbb{C}}$,
where $\mathcal{B}_{\mathbb{C}}$ is the Borel $\sigma$-algebra of
$\mathbb{C}$. For the definition and basic properties of disintegrations,
we refer the reader to \cite[Theorem 5.14]{EiWa} and the discussion
following it.

For $u,v\in\Lambda^{*}$, we denote by $uv$ the concatenation of
$u$ with $v$.

Given integers $l,n\ge1$ and $0\le j<l$, let $\Psi\left(j,l;n\right)$
denote the set of words $u_{0}...u_{s}\in\Lambda^{*}$ such that $u_{0}\in\Lambda^{j}$,
$u_{i}\in\Lambda^{l}$ for $1\le i\le s$, $\left|\varphi_{u_{0}...u_{s}}'(0)\right|<2^{-n}$,
and $\left|\varphi_{u_{0}...u_{i}}'(0)\right|\ge2^{-n}$ for $0\le i<s$.
By bounded distortion, there exists a constant $C(l)>1$ such that
\begin{equation}
2^{-n}/C(l)\le\left|\varphi_{u}'(0)\right|<2^{-n}\text{ for all }u\in\Psi\left(j,l;n\right).\label{eq:der at 0 is comp to 2^-n for u in Psi_n}
\end{equation}
Additionally, note that $\Psi\left(j,l;n\right)$ is a minimal cut-set
for $\Lambda^{*}$, which means that for each $\omega\in\Lambda^{\mathbb{N}}$
there exists a unique $u\in\Psi\left(j,l;n\right)$ with $\omega\in[u]$.
From this, and the relation $\mu=\sum_{i\in\Lambda}p_{i}\cdot\varphi_{i}\mu$,
it follows that
\begin{equation}
\mu=\sum_{u\in\Psi\left(j,l;n\right)}p_{u}\cdot\varphi_{u}\mu.\label{eq:mu as conv comb with u in Psi_n}
\end{equation}
We shall write $\Psi_{n}$ in place of $\Psi\left(0,1;n\right)$.

It will sometimes be useful to choose words from $\Lambda^{n}$ and
$\Psi\left(j,l;n\right)$ at random. Let $\mathbf{U}_{n}$ and $\mathbf{I}(j,l;n)$
denote the random words with
\[
\mathbb{P}\left\{ \mathbf{U}_{n}=u\right\} =\begin{cases}
p_{u} & \text{ if }u\in\Lambda^{n}\\
0 & \text{ otherwise}
\end{cases}\:\text{ and }\:\mathbb{P}\left\{ \mathbf{I}(j,l;n)=u\right\} =\begin{cases}
p_{u} & \text{ if }u\in\Psi\left(j,l;n\right)\\
0 & \text{ otherwise}
\end{cases}.
\]

The following lemma will be needed in Section \ref{sec:non-saturation-of-components}
when we establish the non-saturation of components discussed in Section
\ref{subsec:An-overview-of-the-proof}. The lemma explains why $\Psi_{n}$
and $\mathbf{I}(0,1;n)$ are not sufficient, and why the more general
$\Psi\left(j,l;n\right)$ and $\mathbf{I}(j,l;n)$ are required.
\begin{lem}
\label{lem:ac of words}For every sufficiently large $l\ge1$ there
exists $M=M(l)\in\mathbb{Z}_{>0}$ such that for all $0\le j<l$ and
$n\ge1$ we have
\begin{equation}
\mathbb{E}_{1\le k\le n}\left(\delta_{\mathbf{U}_{j+lk}}\right)\ll\mathbb{E}_{1\le k\le nM}\left(\delta_{\mathbf{I}(j,l;k)}\right),\label{eq:ac of words}
\end{equation}
with Radon--Nikodym derivative bounded by $M$.
\end{lem}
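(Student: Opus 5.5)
The plan is to compare the two measures pointwise on individual words. Both sides of \eqref{eq:ac of words} are finitely supported on $\Lambda^{*}$, so absolute continuity with Radon--Nikodym derivative at most $M$ amounts to this: for every word $w$ charged by the left-hand side, its mass there is at most $M$ times its mass on the right-hand side. Fix $0\le j<l$ and $n\ge1$. A word charged by the left-hand side has the form $w=u_{0}u_{1}\cdots u_{k}$ with $u_{0}\in\Lambda^{j}$, $u_{i}\in\Lambda^{l}$ and $1\le k\le n$. Since $|w|=j+lk$ determines $k$, such a $w$ receives left-hand mass exactly $p_{w}/n$. Its right-hand mass is $\frac{p_{w}}{nM}\cdot\#\{1\le k'\le nM:\ w\in\Psi(j,l;k')\}$. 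It therefore suffices to find, for each such $w$, at least one $k'\le nM$ with $w\in\Psi(j,l;k')$.

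First I fix constants. Let $\rho:=\max_{i}\sup_{\overline{\Omega_{0}}}|\varphi_{i}'|<1$ and $\lambda:=\min_{i}\inf_{\overline{\Omega_{0}}}|\varphi_{i}'|>0$, and let $C$ be the bounded distortion constant from Lemma \ref{lem:bounded distortion}. By the chain rule and Lemma \ref{lem:bounded distortion}, for $v,v'\in\Lambda^{*}$,
\[
C^{-1}\lambda^{|v'|}|\varphi_{v}'(0)|\le|\varphi_{vv'}'(0)|=|\varphi_{v}'(\varphi_{v'}(0))|\,|\varphi_{v'}'(0)|\le C\rho^{|v'|}|\varphi_{v}'(0)|.
\]
I take $l$ large enough that $C\rho^{l}\le1/2$; this is exactly where the hypothesis ``$l$ sufficiently large'' enters. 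With this choice, appending a block of length $l$ at least halves $|\varphi'(0)|$. Hence the block prefixes $w_{i}:=u_{0}\cdots u_{i}$, $0\le i\le k$, satisfy
\[
|\varphi_{w_{0}}'(0)|>|\varphi_{w_{1}}'(0)|>\cdots>|\varphi_{w_{k}}'(0)|,
\qquad
|\varphi_{w_{k}}'(0)|\le\tfrac12|\varphi_{w_{k-1}}'(0)|.
\]

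Next I produce the required $k'$. The half-open interval $\bigl(-\log|\varphi_{w_{k-1}}'(0)|,\,-\log|\varphi_{w_{k}}'(0)|\bigr]$ has length at least $1$, so I can choose an integer $k'$ with
\[
|\varphi_{w_{k}}'(0)|<2^{-k'}\le|\varphi_{w_{k-1}}'(0)|.
\]
By the monotonicity above, the second inequality gives $|\varphi_{w_{i}}'(0)|\ge2^{-k'}$ for every $i<k$. Together with the first inequality, this means $w\in\Psi(j,l;k')$ exactly by the definition of $\Psi(j,l;k')$ (with $s=k\ge1$). Also $k'\ge1$, because $|\varphi_{w_{k}}'(0)|<1$ forces $-\log|\varphi_{w_{k}}'(0)|>0$; if necessary I shift to the next integer, which the length-one interval allows up to an $O(1)$ adjustment.

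It remains to bound $k'$ from above. From the lower distortion bound, $|\varphi_{w}'(0)|\ge C^{-1}\lambda^{j+lk}$, so
\[
k'\le-\log|\varphi_{w}'(0)|+1\le\log C+1+(j+lk)\log(1/\lambda)\le A\,l\,n,
\]
where $A$ depends only on $C$ and $\lambda$, using $j<l$ and $k\le n$. Setting $M:=\lceil Al\rceil$ gives $k'\le nM$. This $M$ depends only on $l$, not on $j$ or $n$, so the pointwise comparison holds with Radon--Nikodym derivative at most $M$.

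The only real subtlety is the monotonicity of the block prefixes. Without the block structure of length $l$, bounded distortion alone would not make $|\varphi'(0)|$ decrease along prefixes, and the minimal cut-set condition could fail at an intermediate prefix. This is why $\Psi_{n}$ is not enough and blocks of sufficiently large length $l$ are used; the rest is bookkeeping with the distortion constants.
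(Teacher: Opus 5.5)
Your argument is correct and is essentially the paper's proof. You choose $l$ so that appending an $l$-block halves $|\varphi'(0)|$, use the resulting monotonicity along block prefixes to place $w$ in some $\Psi(j,l;k')$, and bound $k'\le Mn$ from $|\varphi_w'(0)|\ge\lambda^{j+lk}$; the paper takes $k'$ to be the integer with $2^{-k'-1}\le|\varphi_w'(0)|<2^{-k'}$, which is the largest admissible choice in your interval.

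The one loose end is $k'\ge1$. Shifting $k'$ up, as you suggest, could break $|\varphi_w'(0)|<2^{-k'}$. Instead, require $C\rho^l<1/2$ strictly and take the largest choice; then $k'\ge1$ follows from $|\varphi_w'(0)|<1/2$. Also, the admissible interval is $[-\log|\varphi_{w_{k-1}}'(0)|,-\log|\varphi_{w_k}'(0)|)$, not the half-open interval you wrote.
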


\begin{rem*}
Note that $\mathbb{E}_{1\le k\le n}\left(\delta_{\mathbf{U}_{j+lk}}\right)$
denotes the probability measure $\frac{1}{n}\sum_{k=1}^{n}\mathbb{E}\left(\delta_{\mathbf{U}_{j+lk}}\right)$.
The measure $\mathbb{E}_{1\le k\le nM}\left(\delta_{\mathbf{I}(j,l;k)}\right)$
is defined analogously.
\end{rem*}
\begin{proof}
Let $C\ge1$ be the constant obtained in Lemma \ref{lem:bounded distortion},
set
\[
r_{\max}:=\sup\left\{ \left|\varphi_{i}'(z)\right|\::\:i\in\Lambda\text{ and }z\in\Omega\right\} \text{ and }r_{\min}:=\inf\left\{ \left|\varphi_{i}'(z)\right|\::\:i\in\Lambda\text{ and }z\in\Omega\right\} ,
\]
let $l\ge1$ be such that $Cr_{\max}^{l}<1/2$, and let $M\ge1$ be
with $r_{\min}^{2l}\ge2^{-M}$. For $u\in\Lambda^{*}$ and $v\in\Lambda^{l}$,
\begin{equation}
\left|\varphi_{uv}'(0)\right|=\left|\varphi_{u}'\left(\varphi_{v}(0)\right)\right|\left|\varphi_{v}'(0)\right|\le Cr_{\max}^{l}\left|\varphi_{u}'(0)\right|<\left|\varphi_{u}'(0)\right|/2.\label{eq:varphi_uv le varphi_u/2}
\end{equation}

Let $0\le j<l$, $n\ge1$, $1\le k\le n$, $u_{0}\in\Lambda^{j}$,
and $u_{1},...,u_{k}\in\Lambda^{l}$ be given. Let $m\ge1$ be the
unique integer with 
\[
2^{-m-1}\le\left|\varphi_{u_{0}...u_{k}}'(0)\right|<2^{-m}.
\]
From
\[
\left|\varphi_{u_{0}...u_{k}}'(0)\right|\ge r_{\min}^{l(n+1)}\ge2^{-Mn}
\]
it follows that $m\le Mn$. Moreover, by \ref{eq:varphi_uv le varphi_u/2},
\[
\left|\varphi_{u_{0}...u_{k}}'(0)\right|\le\left|\varphi_{u_{0}...u_{i}}'(0)\right|/2\text{ for all }0\le i<k,
\]
which implies that $u_{0}...u_{k}\in\Psi\left(j,l;m\right)$.

We have thus shown that
\[
\cup_{k=1}^{n}\Lambda^{j+lk}\subset\cup_{k=1}^{nM}\Psi\left(j,l;k\right).
\]
This clearly gives (\ref{eq:ac of words}) with Radon--Nikodym derivative
bounded by $M$, which completes the proof of the lemma.
\end{proof}

\section{\label{sec:-measure-of-real-analytic-curves}The $\mu$-measure of
real-analytic curves}

\subsection{Vanishing of $\mu$ on real-analytic curves}

The following proposition is the main result of this subsection.
\begin{prop}
\label{prop:mu(Gamma)=00003D0}For each regular real-analytic curve
$\Gamma\subset\Omega$ we have $\mu(\Gamma)=0$.
\end{prop}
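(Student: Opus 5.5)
We argue by contradiction, in the spirit of the treatment of affine subspaces in \cite{Ho}. Suppose $\Gamma\subset\Omega$ is a regular real-analytic curve with $\mu(\Gamma)>0$. Since $\Gamma$ is a countable union of embedded arcs $\gamma(J)$ with $\gamma$ real-analytic on a neighbourhood of $\overline{J}$, we may assume $\Gamma$ is such a single arc. Pick a $\mu$-density point $z\in\Gamma$ of $\mu|_{\Gamma}$, so that $\mu(\Gamma\cap D_{r}(z))/\mu(D_{r}(z))\to1$ as $r\downarrow0$.

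The first step is to blow up. For small $r$ we use (\ref{eq:mu as conv comb with u in Psi_n}) with $\Psi_{n}$, $2^{-n}\approx r$. The words $u\in\Psi_{n}$ with $\varphi_{u}(K_{\Phi})\cap D_{r}(z)\neq\emptyset$ satisfy $\varphi_{u}(K_{\Phi})\subset D_{C'r}(z)$ by Lemma \ref{lem:Lip prop of varphi_u}, and they carry most of the mass of $D_{r}(z)$. Averaging then yields words $u=u_{r}$ with
\[
\mu\left(\varphi_{u}^{-1}(\Gamma)\right)=\varphi_{u}\mu(\Gamma)\longrightarrow1 .
\]
Write $\varphi_{u}=T_{\varphi_{u}(0)}\circ S_{\varphi_{u}'(0)}\circ g_{u}$. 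By bounded distortion (Lemma \ref{lem:bounded distortion}), the maps $g_{u}$ lie in some $\mathcal{F}_{\epsilon}$ with normalized derivative at $0$, so they form a normal family. By the compactness lemma (Lemma \ref{lem:injective or constant}), along a subsequence $g_{u}\to g$ locally uniformly, where $g$ is injective holomorphic on $\Omega_{0}$. On the other side, $S_{\varphi_{u}'(0)}^{-1}T_{\varphi_{u}(0)}^{-1}(\Gamma\cap D_{C'r}(z))$ is, since $\Gamma$ is $C^{1}$ at $z$, $C^{1}$-close on bounded sets to a line $\ell_{u}$ of slope $\approx\gamma'(t_{z})\varphi_{u}'(0)^{-1}$. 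Passing to a further subsequence, $\ell_{u}\to\ell$. Because $\mu(\varphi_{u}^{-1}(\Gamma))\to1$ and mass on closed neighbourhoods is upper semicontinuous, we get $\mu(g^{-1}(\ell))=1$. Hence
\[
K_{\Phi}\subset\Gamma_{0}:=g^{-1}(\ell)\cap\overline{\Omega},
\]
and $\Gamma_{0}$ is the image of a line under a biholomorphism, so it is a regular real-analytic curve, closed in $\Omega_{0}$.

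The second step upgrades $K_{\Phi}\subset\Gamma_{0}$ to $\Phi$-invariance, contradicting \ref{enu:main thm cond no inv curve}. By \ref{enu:main thm cond no common fix}, $K_{\Phi}$ is infinite; in fact it is perfect. For each $i\in\Lambda$ consider
\[
h_{i}:=g\circ\varphi_{i}\circ g^{-1},
\]
defined near $g(K_{\Phi})\subset\ell$, and parametrize $\ell$ by $t\mapsto a+tb$. The function $t\mapsto\mathrm{Im}\bigl(\overline{b}\,(h_{i}(a+tb)-a)\bigr)$ is real-analytic. It vanishes on the set $g(K_{\Phi}\cap\varphi_{i}^{-1}K_{\Phi})\supset g(K_{\Phi})$, which has accumulation points. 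Hence it vanishes on the connected components of its domain meeting $g(K_{\Phi})$, so $h_{i}$ maps these segments of $\ell$ into $\ell$.

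We then replace $\Gamma_{0}$ by the smallest closed-in-$\Omega$ union $\Gamma_{1}$ of connected components of $g^{-1}(\ell)\cap\Omega$ that meet $K_{\Phi}$, or possibly of the forward orbit pieces $\varphi_{w}(\cdot)$ of those components. We expect $\Gamma_{1}$ to be $\Phi$-invariant and a regular real-analytic curve closed in $\Omega$, which contradicts \ref{enu:main thm cond no inv curve}. Since $\varphi_{i}(\overline{\Omega})\subset\Omega$ and the components are arcs of an embedded analytic curve, invariance of a component meeting $K_{\Phi}$ should follow from the identity principle above.

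I expect the main obstacle to be this final bookkeeping. One must ensure that $\varphi_{i}$ maps each selected arc into $g^{-1}(\ell)$ along its whole length and not only near $K_{\Phi}$, and that the resulting union is still embedded and closed in $\Omega$. The compactness step also needs care: the limit line must really capture full measure, which requires choosing the density scale so that the blown-up neighbourhoods exhaust $\Omega$.
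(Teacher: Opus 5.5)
Your argument is essentially the paper's. You obtain words $u$ with $\varphi_{u}\mu(\Gamma)\to1$ and $\mathrm{diam}\,\varphi_{u}(\Omega)\to0$. You pass to a normal-family limit to get a regular real-analytic curve, closed in $\Omega$, of full $\mu$-measure and hence containing $K_{\Phi}$. You then use the identity principle on the components meeting $K_{\Phi}$ to contradict \ref{enu:main thm cond no inv curve}. The paper instead writes a piece of $\Gamma$ as the zero set of the regular harmonic function $h=\mathrm{Im}\circ f^{-1}$ and takes a limit $h_{0}$ of $h\circ\varphi_{u_{n}}/|\varphi_{u_{n}}'(0)|$. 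You linearize $\Gamma$ and take the limit $g$ of the normalized maps, as in the proof of Lemma \ref{lem:pre mu-mass of neigh of dyd cubes}. The limit curves $h_{0}^{-1}\{0\}$ and $g^{-1}(\ell)$ are of the same kind, so this difference is cosmetic.

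Two points need repair.

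\textbf{The averaging step.} As written, it does not give $\varphi_{u}\mu(\Gamma)\to1$. The words of $\Psi_{n}$ with $2^{-n}\approx r$ that meet $D_{r}(z)$ can have total weight up to $\mu(D_{C'r}(z))$. The density property only makes $\mu(D_{C'r}(z)\setminus\Gamma)$ small relative to $\mu(D_{C'r}(z))$. So the weighted average of $1-\varphi_{u}\mu(\Gamma)$ is only bounded by $o(1)\cdot\mu(D_{C'r}(z))/\mu(D_{r}(z))$, which requires a doubling bound you have not supplied.

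The paper's Lemma \ref{lem:exists word u s.t. varph_u mu(E) is large} avoids this by decoupling the scales. It chooses $r$ with $\mu(C_{r}(z))=0$ and uses cylinders of size $\delta\ll r$. Cylinders meeting $\overline{D}_{r}(z)$ but not contained in it then lie in a thin annulus of negligible mass. Your linearization works equally well at the finer scale. A simpler fix is also available: since $\beta([u]\cap\Pi^{-1}\Gamma)=p_{u}\,\varphi_{u}\mu(\Gamma)$, the martingale convergence theorem gives $\varphi_{\omega|_{n}}\mu(\Gamma)\to1$ for $\beta$-a.e. $\omega\in\Pi^{-1}\Gamma$.

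\textbf{The final bookkeeping.} This step, which you flag as the main obstacle, is immediate and needs no forward-orbit pieces. Let $\Gamma_{1}$ be the union of the components of $g^{-1}(\ell)\cap\Omega$ that meet $K_{\Phi}$. It is closed in $\Omega$, since components of a $1$-manifold are open in it. Take such a component $\Gamma'$. It is open in $g^{-1}(\ell)\cap\Omega\supset K_{\Phi}$, and $K_{\Phi}$ is perfect, so $K_{\Phi}\cap\Gamma'$ accumulates inside $\Gamma'$. Your identity-principle step then gives $\varphi_{i}(\Gamma')\subset g^{-1}(\ell)\cap\Omega$ along all of $\Gamma'$, not just near $K_{\Phi}$. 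Being connected, $\varphi_{i}(\Gamma')$ lies in a single component, and that component meets $K_{\Phi}$ because $\varphi_{i}(K_{\Phi})\subset K_{\Phi}$. Hence $\varphi_{i}(\Gamma_{1})\subset\Gamma_{1}$, which is exactly how the paper concludes.
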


The proof of the proposition requires some preparation.
\begin{lem}
\label{lem:exists word u s.t. varph_u mu(E) is large}Let $E\subset\Omega$
be a Borel set with $\mu(E)>0$. Then for each $\epsilon>0$ there
exists $u\in\Lambda^{*}$ such that $\varphi_{u}\mu(E)>1-\epsilon$
and $\mathrm{diam}\left(\varphi_{u}(\Omega)\right)\le\epsilon$.
\end{lem}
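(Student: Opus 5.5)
We use a Lebesgue density argument on the symbolic side, $\mu=\Pi\beta$. Set $F:=\Pi^{-1}(E)\subset\Lambda^{\mathbb{N}}$. Then $F$ is Borel and $\beta(F)=\mu(E)>0$.

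\textbf{Step 1: a cylinder where $F$ has density close to one.} The cylinders $\{\mathcal{C}_{n}\}_{n\ge1}$ form a refining sequence of finite partitions generating the Borel $\sigma$-algebra of $\Lambda^{\mathbb{N}}$. By the martingale convergence theorem, $\beta(F\cap[\omega|_{n}])/\beta([\omega|_{n}])\to1_{F}(\omega)$ for $\beta$-a.e.\ $\omega$. Since $\beta(F)>0$, we can pick $\omega\in F$ at which this convergence holds.

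\textbf{Step 2: choose the word $u$.} The IFS is eventually uniformly contracting: by Lemma \ref{lem:Lip prop of varphi_u} (with $\psi$ the identity) together with (\ref{eq:varphi_uv le varphi_u/2}), $\mathrm{diam}(\varphi_{u}(\Omega))\le C|\varphi_{u}'(0)|\,\mathrm{diam}(\Omega)\to0$ as $|u|\to\infty$. So we can choose $n$ large enough that $u:=\omega|_{n}$ satisfies both $\beta(F\cap[u])>(1-\epsilon)\beta([u])$ and $\mathrm{diam}(\varphi_{u}(\Omega))\le\epsilon$.

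\textbf{Step 3: transfer the density to $\varphi_{u}\mu$.} For $\eta\in\Lambda^{\mathbb{N}}$ we have $\Pi(u\eta)=\varphi_{u}(\Pi(\eta))$, since $\varphi_{u}$ is continuous. Also, $\beta$ restricted to $[u]$ and normalized equals the pushforward of $\beta$ under $\eta\mapsto u\eta$, because $\beta$ is Bernoulli. Therefore
\[
\varphi_{u}\mu(E)=\beta\{\eta:\varphi_{u}\Pi(\eta)\in E\}=\beta\{\eta:u\eta\in F\}=\frac{\beta(F\cap[u])}{\beta([u])}>1-\epsilon.
\]

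There is no real obstacle here. The only point needing care is Step 1: the density statement must be made in $\Lambda^{\mathbb{N}}$, not in $\Omega$, since $\varphi_{u}(\Omega)$ need not look like a ball. An equivalent alternative to the martingale theorem is to approximate $F$ in $\beta$-measure by a finite union of cylinders $G$ with $\beta(F\triangle G)$ small. An averaging argument then produces a cylinder inside $G$ of high relative $F$-density, and that cylinder can be refined to any desired length.
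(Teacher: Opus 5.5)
Your proof is correct, but it follows a different route from the paper's.

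The paper argues geometrically in $\mathbb{C}$. It uses the density theorem for Radon measures to find $z_0\in K_\Phi$ at which $E$ has relative $\mu$-density tending to one in the discs $\overline{D}_r(z_0)$. It then picks $r$ with $\mu(C_r(z_0))=0$, so that the annulus $A=\overline{D}_{r+\delta}(z_0)\setminus D_{r-\delta}(z_0)$ has mass less than $\epsilon\mu(\overline{D}_r(z_0))$. Next it decomposes $\mu=\sum_{u\in\Psi_{n+m}}p_u\varphi_u\mu$ into pieces of diameter at most $\delta$, and discards the pieces that meet the disc without lying inside it, since these are supported in $A$. Averaging over the remaining pieces gives $u$ with $\varphi_u\mu(E)>1-2\epsilon$.

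You instead pass to $\Lambda^{\mathbb{N}}$. There the exact identity $\varphi_u\mu(E)=\beta(\Pi^{-1}E\cap[u])/p_u$ reduces everything to a cylinder density statement, which the martingale convergence theorem supplies. This is the same device the paper itself uses later, in the proof of Proposition \ref{prop:noncoboundary}.

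Your route is shorter. Cylinders are exactly compatible with the decomposition of $\mu$ into the pieces $\varphi_u\mu$, whereas discs in $\mathbb{C}$ cut across those pieces. That mismatch is what forces the paper's null circle, annulus and $\mathcal{U}_1,\mathcal{U}_2$ bookkeeping.

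The paper's route gives slightly more localization: a word in $\Psi_{n+m}$ with $\varphi_u(K_\Phi)$ inside a small disc about a density point of $E$. This is not needed for the statement. It is also not needed where the lemma is applied, in showing that $\mu$ vanishes on regular real-analytic curves. There the inclusion $\varphi_{u_n}(\Omega)\subset W$ follows from $\varphi_{u_n}\mu(f(J))>0$ together with the small diameter of $\varphi_{u_n}(\Omega)$.
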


\begin{proof}
Since $\mu(E)>0$ and $K_{\Phi}=\mathrm{supp}(\mu)$, the density
theorem for Radon measures (see \cite[Corollary 2.14]{Ma}) implies
that there exists $z_{0}\in K_{\Phi}$ such that
\begin{equation}
\underset{r\downarrow0}{\lim}\frac{\mu\left(E\cap\overline{D}_{r}(z_{0})\right)}{\mu\left(\overline{D}_{r}(z_{0})\right)}=1.\label{eq:by density thm}
\end{equation}

Since $\mu$ is a probability measure, the set $\left\{ r>0\::\:\mu\left(C_{r}(z_{0})\right)>0\right\} $
is at most countable. Let $C>1$ be a large global constant, and let
$m\in\mathbb{Z}_{>0}$ and $\epsilon,r,\delta\in(0,1)$ be such that
$C\ll m\ll\epsilon^{-1}\ll r^{-1}\ll\delta^{-1}$ and $\mu\left(C_{r}(z_{0})\right)=0$.
Since $\epsilon^{-1}\ll r^{-1}$ and by (\ref{eq:by density thm}),
we may assume that
\[
\mu\left(E\cap\overline{D}_{r}(z_{0})\right)>(1-\epsilon)\mu\left(\overline{D}_{r}(z_{0})\right).
\]
Write $A$ for the closed annulus $\overline{D}_{r+\delta}(z_{0})\setminus D_{r-\delta}(z_{0})$.
Since $\mu\left(C_{r}(z_{0})\right)=0$ and $\epsilon^{-1},r^{-1}\ll\delta^{-1}$,
we may assume that $\mu\left(A\right)<\epsilon\mu\left(\overline{D}_{r}(z_{0})\right)$.

Setting $n:=\left\lceil -\log\delta\right\rceil $, let $\mathcal{U}_{1}$
be the set of all $u\in\Psi_{n+m}$ such that $\varphi_{u}\mu\left(\overline{D}_{r}(z_{0})\right)>0$
and let $\mathcal{U}_{2}$ be the set of all $u\in\Psi_{n+m}$ such
that $\varphi_{u}\left(K_{\Phi}\right)\subset\overline{D}_{r}(z_{0})$.
By Lemma \ref{lem:Lip prop of varphi_u}, from (\ref{eq:der at 0 is comp to 2^-n for u in Psi_n}),
and since $C\ll m$, for all $u\in\Psi_{n+m}$
\[
\mathrm{diam}\left(\varphi_{u}\left(\Omega\right)\right)\le C2^{-n-m}\le\delta.
\]
From this, since $K_{\Phi}\subset\Omega$, and by the definitions
and $\mathcal{U}_{1}$ and $\mathcal{U}_{2}$, it follows that $\varphi_{u}\left(K_{\Phi}\right)\subset A$
for each $u\in\mathcal{U}_{1}\setminus\mathcal{U}_{2}$. Hence, from
(\ref{eq:mu as conv comb with u in Psi_n}),
\[
\sum_{u\in\mathcal{U}_{1}\setminus\mathcal{U}_{2}}p_{u}\le\sum_{u\in\Psi_{n+m}}p_{u}\cdot\varphi_{u}\mu\left(A\right)=\mu(A)\le\epsilon\mu\left(\overline{D}_{r}(z_{0})\right),
\]
which gives
\begin{multline*}
(1-\epsilon)\mu\left(\overline{D}_{r}(z_{0})\right)<\mu\left(E\cap\overline{D}_{r}(z_{0})\right)\\
\le\sum_{u\in\mathcal{U}_{1}}p_{u}\cdot\varphi_{u}\mu\left(E\right)\le\epsilon\mu\left(\overline{D}_{r}(z_{0})\right)+\sum_{u\in\mathcal{U}_{2}}p_{u}\cdot\varphi_{u}\mu\left(E\right).
\end{multline*}
Thus,
\[
(1-2\epsilon)\sum_{u\in\mathcal{U}_{2}}p_{u}\le(1-2\epsilon)\mu\left(\overline{D}_{r}(z_{0})\right)<\sum_{u\in\mathcal{U}_{2}}p_{u}\cdot\varphi_{u}\mu\left(E\right),
\]
from which it follows that there exists $u\in\mathcal{U}_{2}$ such
that $\varphi_{u}\mu\left(E\right)>1-2\epsilon$. Since $\mathrm{diam}\left(\varphi_{u}\left(\Omega\right)\right)\le\delta<\epsilon$,
this completes the proof of the lemma.
\end{proof}
We omit the proof of the following lemma, which follows easily from
analyticity and the fact that harmonic functions are real-analytic.
\begin{lem}
\label{lem:harmonic =00003D0 on connected curve}Let $\Gamma\subset\Omega$
be a connected regular real-analytic curve, let $h:\Omega\rightarrow\mathbb{R}$
be harmonic, and suppose that the set $h^{-1}\{0\}\cap\Gamma$ has
a limit point in $\Gamma$. Then $h(z)=0$ for all $z\in\Gamma$.
\end{lem}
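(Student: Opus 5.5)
The plan is to reduce to a local statement, pass to a parametrization, and then use the identity principle for real-analytic functions of one real variable. Let $\Gamma$, $h$ be as in Lemma \ref{lem:harmonic =00003D0 on connected curve}, and let $z_{0}\in\Gamma$ be a limit point of $Z:=h^{-1}\{0\}\cap\Gamma$. Since $\Gamma$ is connected, I will show that the set $A$ of points $w\in\Gamma$ such that $h$ vanishes on a neighborhood of $w$ in $\Gamma$ is nonempty, open and closed in $\Gamma$. This forces $A=\Gamma$, and hence $h\equiv0$ on $\Gamma$.

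The local tool is the following. Take $w\in\Gamma$ with a chart as in Definition \ref{def:reg real-anal curve}: an open $U\ni w$, an open interval $J$, and a real-analytic homeomorphism $\gamma:J\rightarrow\Gamma\cap U$. Since $h$ is harmonic on $\Omega$, it is real-analytic there, being locally the real part of a holomorphic function. Writing $\gamma=\gamma_{1}+i\gamma_{2}$ with $\gamma_{1},\gamma_{2}$ real-analytic, the composition $g:=h\circ\gamma:J\rightarrow\mathbb{R}$ is real-analytic on the interval $J$. To check this, expand $h$ locally as a convergent power series in $(x-x_{0},y-y_{0})$ and substitute the convergent expansions of $\gamma_{1},\gamma_{2}$; this is the standard fact that compositions of real-analytic maps are real-analytic. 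Because $J$ is connected, the identity theorem for real-analytic functions on an interval gives: if $g^{-1}\{0\}$ has an accumulation point in $J$, then $g\equiv0$ on $J$.

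Nonemptiness of $A$ comes from applying this at $w=z_{0}$. Points of $Z$ accumulating at $z_{0}$ eventually lie in $\Gamma\cap U$. Since $\gamma$ is a homeomorphism onto $\Gamma\cap U$, their preimages accumulate at $\gamma^{-1}(z_{0})\in J$, so $g\equiv0$ and $z_{0}\in A$. $A$ is open in $\Gamma$ by definition. For closedness, let $w\in\Gamma$ lie in the closure of $A$. Points of $A$ near $w$ lie in the chart $\Gamma\cap U$ around $w$, and each carries a whole neighborhood of zeros, so $g^{-1}\{0\}$ contains a nonempty open subset of $J$. Hence $g\equiv0$, and $w\in A$.

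The main point needing care is that the limit point $z_{0}$ is taken in the topology of $\Gamma$. Here $\Gamma$ is embedded, so the subspace topology agrees with the chart topology, and $\gamma$ being a homeomorphism transfers accumulation from $\Gamma$ to $J$. The real-analyticity of $h\circ\gamma$ is routine.
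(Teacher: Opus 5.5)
Your proposal is correct and is essentially the argument the paper has in mind; the paper omits the proof, citing only analyticity and the real-analyticity of harmonic functions. You compose $h$ with real-analytic charts of $\Gamma$, apply the one-variable identity theorem on each chart interval, and propagate the vanishing along $\Gamma$ by a clopen argument using connectedness, which is exactly the unpacking of that remark.
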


\begin{proof}[Proof of Proposition \ref{prop:mu(Gamma)=00003D0}]
Assume by contradiction that $\mu$ gives positive mass to some regular
real-analytic curve. From this assumption and by Definition \ref{def:reg real-anal curve},
it follows easily that there exist a compact interval $J\subset\mathbb{R}$,
a domain $V\subset\mathbb{C}$ with $J\subset V$, and a holomorphic
injection $f:V\rightarrow\Omega$, such that $\mu\left(f(J)\right)>0$.
Writing $h:=\mathrm{Im}\circ f^{-1}$, it holds that $h:f(V)\rightarrow\mathbb{R}$
is harmonic and regular (i.e. $\nabla h(z)\ne0$ for all $z\in f(V)$,
where $\nabla h$ denotes the gradient of $h$). Moreover, $f(J)\subset h^{-1}\{0\}$.

Let $W\subset\mathbb{C}$ be a bounded domain with $f(J)\subset W\subset\overline{W}\subset f(V)$.
By Lemma \ref{lem:exists word u s.t. varph_u mu(E) is large}, there
exists a sequence $\left\{ u_{n}\right\} _{n\ge1}\subset\Lambda^{*}$
such that $\varphi_{u_{n}}\mu\left(f(J)\right)>1-\frac{1}{n}$ and
$\varphi_{u_{n}}(\Omega)\subset W$ for each $n\ge1$. For $n\ge1$
set $r_{n}:=\left|\varphi_{u_{n}}'(0)\right|$ and $h_{n}:=S_{r_{n}}^{-1}\circ h\circ\varphi_{u_{n}}$,
where recall that $S_{r_{n}}z:=r_{n}z$ for $z\in\mathbb{C}$. Since
$h$ is harmonic and $\varphi_{u_{n}}$ is holomorphic, it follows
that $h_{n}:\Omega\rightarrow\mathbb{R}$ is harmonic.

It is clear that $h$ is Lipschitz on $\overline{W}$. From this and
by Lemma \ref{lem:Lip prop of varphi_u}, it follows that there exists
$M\ge1$ such that $h_{n}$ is $M$-Lipschitz on $\Omega$ for each
$n\ge1$. Given $n\ge1$,
\[
\varphi_{u_{n}}\mu\left(h^{-1}\{0\}\right)\ge\varphi_{u_{n}}\mu\left(f(J)\right)>0,
\]
and so there exists $z_{n}\in\Omega$ such that $h_{n}(z_{n})=0$.
Thus, for each $z\in\Omega$,
\[
\left|h_{n}(z)\right|=\left|h_{n}(z)-h_{n}(z_{n})\right|\le M\cdot\mathrm{diam}(\Omega),
\]
which shows that $\left\{ h_{n}\right\} _{n\ge1}$ is uniformly bounded
on $\Omega$. Hence, by \cite[Theorems 1.23 and 2.6]{MR1805196} and
by moving to a subsequence without changing notation, there exists
a harmonic function $h_{0}:\Omega\rightarrow\mathbb{R}$ such that
$h_{n}\overset{n}{\rightarrow}h_{0}$ and $\nabla h_{n}\overset{n}{\rightarrow}\nabla h_{0}$
uniformly on compact subsets of $\Omega$.

Since $\overline{W}$ is compact and $h$ is regular,
\[
\min\left\{ \left|\nabla h(z)\right|\::\:z\in\overline{W}\right\} >0.
\]
Hence, by bounded distortion and the chain rule,
\[
\inf\left\{ \left|\nabla h_{n}(z)\right|\::\:n\ge1\text{ and }z\in\Omega\right\} >0.
\]
From this, and since $\nabla h_{n}\overset{n}{\rightarrow}\nabla h_{0}$
uniformly on compact subsets of $\Omega$, it follows that $h_{0}$
is regular. Thus, since $h_{0}$ is real-analytic (being harmonic)
and by the real-analytic implicit function theorem (see \cite[Theorem 2.3.5]{MR1916029}),
we obtain that $h_{0}^{-1}\{0\}$ is a regular real-analytic curve.

Let us show that $\mu\left(h_{0}^{-1}\{0\}\right)=1$. Let $\epsilon>0$
be given. For each $n\ge1$,
\[
\mu\left(h_{n}^{-1}\{0\}\right)=\varphi_{u_{n}}\mu\left(h^{-1}\{0\}\right)\ge\varphi_{u_{n}}\mu\left(f(J)\right)>1-\frac{1}{n}.
\]
From this and since $h_{n}\overset{n}{\rightarrow}h_{0}$ uniformly
on compact subsets of $\Omega$, it follows that there exists $n\ge1$
such that $\mu\left(h_{n}^{-1}\{0\}\right)>1-\epsilon$ and $\Vert h_{0}-h_{n}\Vert_{K_{\Phi}}<\epsilon$,
where $\Vert\cdot\Vert_{K_{\Phi}}$ denotes the supremum norm on $K_{\Phi}$.
Given $z\in K_{\Phi}\cap h_{n}^{-1}\{0\}$,
\[
\left|h_{0}(z)\right|=\left|h_{0}(z)-h_{n}(z)\right|<\epsilon,
\]
from which it follows that
\[
\mu\left(h_{0}^{-1}(-\epsilon,\epsilon)\right)\ge\mu\left(K_{\Phi}\cap h_{n}^{-1}\{0\}\right)>1-\epsilon.
\]
Since this holds for all $\epsilon>0$ and $h_{0}^{-1}\{0\}=\cap_{\epsilon>0}h_{0}^{-1}(-\epsilon,\epsilon)$,
we obtain that $\mu\left(h_{0}^{-1}\{0\}\right)=1$.

Since $h_{0}^{-1}\{0\}$ is a closed subset of $\Omega$ and $K_{\Phi}$
is the support of $\mu$, it follows that $K_{\Phi}\subset h_{0}^{-1}\{0\}$.
Let $\Gamma$ denote the union of all connected components of $h_{0}^{-1}\{0\}$
whose intersection with $K_{\Phi}$ is nonempty. Note that $\Gamma$
is a regular real-analytic curve.

Since $h_{0}^{-1}\{0\}$ is a manifold it is locally connected, and
so its connected components are open in $h_{0}^{-1}\{0\}$. Thus,
$\Gamma$ is closed in $h_{0}^{-1}\{0\}$, from which it follows that
$\Gamma$ is also closed in $\Omega$. We shall arrive at the desired
contradiction by showing that $\Gamma$ is $\Phi$-invariant.

Let $\Gamma_{0}$ be a connected component of $h_{0}^{-1}\{0\}$ such
that $\Gamma_{0}\subset\Gamma$, and fix $z_{0}\in\Gamma_{0}\cap K_{\Phi}$.
Since $\Gamma_{0}$ is open in $h_{0}^{-1}\{0\}$, there exists $\epsilon>0$
such that
\begin{equation}
D_{\epsilon}(z_{0})\cap h_{0}^{-1}\{0\}\subset\Gamma_{0}.\label{eq:cont in Gamma}
\end{equation}
Let $u\in\Lambda^{*}$ be such that $z_{0}\in\varphi_{u}(K_{\Phi})$
and $\mathrm{diam}\left(\varphi_{u}(K_{\Phi})\right)<\epsilon$. From
these properties, since $\varphi_{u}(K_{\Phi})\subset K_{\Phi}\subset h_{0}^{-1}\{0\}$,
and by (\ref{eq:cont in Gamma}), we obtain that $\varphi_{u}(K_{\Phi})\subset\Gamma_{0}$.

Given $i\in\Lambda$ we have $\varphi_{i}\left(\varphi_{u}(K_{\Phi})\right)\subset K_{\Phi}\subset h_{0}^{-1}\{0\}$,
and so
\begin{equation}
\varphi_{u}(K_{\Phi})\subset\left(h_{0}\circ\varphi_{i}\right)^{-1}\{0\}\cap\Gamma_{0}.\label{eq:varph_u(K) con in 0 set}
\end{equation}
Note that, since the maps in $\Phi$ do not have a common fixed point,
the set $K_{\Phi}$ is necessarily uncountable, and so $\varphi_{u}(K_{\Phi})$
is also uncountable. From this and (\ref{eq:varph_u(K) con in 0 set}),
it follows that $\left(h_{0}\circ\varphi_{i}\right)^{-1}\{0\}\cap\Gamma_{0}$
has a limit point in $\Gamma_{0}$. Thus, since $h_{0}\circ\varphi_{i}$
is harmonic and by Lemma \ref{lem:harmonic =00003D0 on connected curve},
$h_{0}\left(\varphi_{i}(z)\right)=0$ for all $z\in\Gamma_{0}$, which
implies that $\varphi_{i}\left(\Gamma_{0}\right)\subset h_{0}^{-1}\{0\}$.

Since $\varphi_{i}\left(\Gamma_{0}\right)$ is connected, $\varphi_{i}\left(\Gamma_{0}\right)\subset\Gamma_{1}$
for some connected component $\Gamma_{1}$ of $h_{0}^{-1}\{0\}$.
Since $\varphi_{i}\left(\varphi_{u}(K_{\Phi})\right)\subset\varphi_{i}\left(\Gamma_{0}\right)\subset\Gamma_{1}$
and $\varphi_{i}\left(\varphi_{u}(K_{\Phi})\right)\subset K_{\Phi}$,
we obtain that $\Gamma_{1}\subset\Gamma$, and so $\varphi_{i}\left(\Gamma_{0}\right)\subset\Gamma$.
We have thus shown that $\Gamma$ is a $\Phi$-invariant regular real-analytic
curve which is closed in $\Omega$. This contradicts our standing
assumptions and completes the proof of the proposition.
\end{proof}

\subsection{The $\mu$-measure of neighborhoods of dyadic cubes}

The purpose of this subsection is to prove the following proposition.
Recall that for a nonempty set $E\subset\mathbb{C}$ and $r>0$, we
write $E^{(r)}$ for the $r$-thickening of $E$. Also recall the
notation $\mathcal{F}_{\epsilon}$ introduced in Section \ref{subsec:A-bi-Lipchitz-estimate}.
\begin{prop}
\label{prop:mu-mass of neigh of dyd cubes}For each $\epsilon>0$
there exists $\delta>0$ such that,
\[
\psi\mu\left(\cup_{D\in\mathcal{D}_{n}^{\mathbb{C}}}(\partial D)^{(\delta2^{-n})}\right)<\epsilon\text{ for all }\psi\in\mathcal{F}_{\epsilon}\text{ and }n\ge1,
\]
where $\partial D$ denotes the boundary of $D$.
\end{prop}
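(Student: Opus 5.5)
We argue by contradiction, combining a rescaling argument with a compactness argument, and we use Proposition \ref{prop:mu(Gamma)=00003D0} to show that each limiting configuration assigns zero mass to the grid. Write $L:=(\mathbb{Z}\times\mathbb{R})\cup(\mathbb{R}\times\mathbb{Z})\subset\mathbb{C}$ for the integer grid. Then $\cup_{D\in\mathcal{D}_{n}}(\partial D)^{(\delta2^{-n})}=S_{2^{-n}}\left(L^{(\delta)}\right)$. Moreover, $L$ and $L^{(\delta)}$ are invariant under translations by Gaussian integers $a\in\mathbb{Z}+i\mathbb{Z}$. Fix $\epsilon>0$ and suppose that no $\delta$ works. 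Then there are $\delta_{k}\downarrow0$, integers $n_{k}\ge1$, and maps $\psi_{k}\in\mathcal{F}_{\epsilon}$ with $\psi_{k}\mu\left(S_{2^{-n_{k}}}L^{(\delta_{k})}\right)\ge\epsilon$.

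\emph{Step 1 (rescaling).} Let $m\ge0$ be a fixed constant, to be chosen depending only on $\epsilon$ and $\Phi$. For $n\ge m$ decompose $\mu=\sum_{u\in\Psi_{n-m}}p_{u}\cdot\varphi_{u}\mu$ using (\ref{eq:mu as conv comb with u in Psi_n}); for $n<m$ use only the trivial word. For each such $u$ choose $a_{u}\in\mathbb{Z}+i\mathbb{Z}$ close to $2^{n}\psi\circ\varphi_{u}(0)$, and set $g_{u}:=2^{n}\psi\circ\varphi_{u}-a_{u}$. Then
\[
\psi\mu\left(S_{2^{-n}}L^{(\delta)}\right)=\sum_{u}p_{u}\cdot g_{u}\mu\left(L^{(\delta)}\right).
\]
By Lemma \ref{lem:Lip prop of varphi_u}, Lemma \ref{lem:bounded distortion} and (\ref{eq:der at 0 is comp to 2^-n for u in Psi_n}), every $g_{u}$ lies in the family $\mathcal{G}$ of injective holomorphic maps $g:\Omega_{0}\rightarrow\mathbb{C}$ satisfying $c\le|g'|\le c^{-1}$ on $\Omega_{0}$ and $|g(0)|\le c^{-1}$, for some $c=c(\epsilon,\Phi,m)>0$. (For $n<m$ the maps $2^{n}\psi-a$ lie in such a family directly, since $\psi\in\mathcal{F}_{\epsilon}$.) Hence
\[
\epsilon\le\sup_{g\in\mathcal{G}}g\mu\left(L^{(\delta_{k})}\right)
\]
for all $k$, and we may pick $g_{k}\in\mathcal{G}$ with $g_{k}\mu\left(L^{(\delta_{k})}\right)\ge\epsilon/2$.

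\emph{Step 2 (compactness).} The family $\mathcal{G}$ is locally uniformly bounded, so by Montel's theorem a subsequence satisfies $g_{k}\rightarrow g$ locally uniformly on $\Omega_{0}$. Uniform convergence of the derivatives on compact subsets gives $|g'|\ge c$ on $\Omega$, so $g$ is nonconstant. By Hurwitz's theorem (this is the content of the injective-or-constant lemma mentioned in the introduction), $g$ is injective. Since $\mathrm{supp}\,\mu=K_{\Phi}$ is a compact subset of $\Omega$, we get $g_{k}\mu\rightarrow g\mu$ weakly. Now fix $\eta>0$. For large $k$ we have $L^{(\delta_{k})}\subset L^{(\eta)}$, and $L^{(\eta)}$ is closed, so the portmanteau theorem gives
\[
g\mu\left(L^{(\eta)}\right)\ge\limsup_{k}g_{k}\mu\left(L^{(\eta)}\right)\ge\epsilon/2.
\]
Letting $\eta\downarrow0$ yields $g\mu(L)\ge\epsilon/2$.

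\emph{Step 3 (contradiction).} The set $g(K_{\Phi})$ is bounded, so it meets only finitely many lines $\{\mathrm{Re}=j\}$ and $\{\mathrm{Im}=j\}$. For each such line $\ell$, the preimage $g^{-1}(\ell)\cap\Omega$ is the zero set in $\Omega$ of the harmonic function $\mathrm{Re}\,g-j$ (or $\mathrm{Im}\,g-j$). Its gradient has modulus $|g'|\neq0$, so by the real-analytic implicit function theorem this zero set is a regular real-analytic curve in $\Omega$. Proposition \ref{prop:mu(Gamma)=00003D0} then gives $\mu\left(g^{-1}(\ell)\right)=0$ for each such $\ell$, hence $g\mu(L)=0$. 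This contradicts Step 2.

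The main obstacle is Step 1: the normalization has to be chosen so that the rescaled maps, with the lattice translation subtracted, land in a single compact family of injective maps whose derivatives are bounded away from $0$. This must hold uniformly in $n$, in $\psi\in\mathcal{F}_{\epsilon}$, and in the word $u$, including the small-$n$ regime. The bounded distortion estimates in Lemma \ref{lem:Lip prop of varphi_u} together with (\ref{eq:der at 0 is comp to 2^-n for u in Psi_n}) are what make this work. Once this is in place, Steps 2 and 3 are standard.
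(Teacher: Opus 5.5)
Your argument is correct. It uses the same ingredients as the paper: decomposing $\psi\mu$ into cylinder measures at scale $2^{-n}$, rescaling to unit scale, Montel plus Hurwitz, and Proposition \ref{prop:mu(Gamma)=00003D0}. You organize them differently, though.

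The paper first proves Lemma \ref{lem:pre mu-mass of neigh of dyd cubes}. This is a uniform bound on the $\psi\varphi_u\mu$-mass of the $\delta2^{-n}$-neighbourhood of an \emph{arbitrary} line $z+\ell$. Its compactness argument has to extract limits of the line offsets and directions as well as the rescaled maps. The paper then deduces the proposition by a counting step: each $\psi\varphi_u\mu$ with $u\in\Psi_n$ has support of diameter $O(\epsilon^{-1}2^{-n})$, so it meets only $O_\epsilon(1)$ of the thickened square boundaries.

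You instead use the $\mathbb{Z}+i\mathbb{Z}$-periodicity of the grid $L$ to absorb the translation into the rescaled map. The limiting object is then a single map $g$ together with the fixed grid $L$. The counting step disappears, since only finitely (or countably) many lines of $L$ meet $g(K_\Phi)$, and no line parameters need to converge.

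The paper's route yields a more general intermediate statement about arbitrary lines. Yours is shorter and tailored to the dyadic grid.

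Two small remarks. First, the parameter $m$ plays no role: taking $m=0$, i.e.\ $u\in\Psi_n$, already bounds $|g_u'|$ above and below in terms of $\epsilon$. Second, Step 3 never uses the injectivity of $g$ obtained from Hurwitz. The condition $g'\neq0$ on $\Omega$ alone makes the level sets of $\mathrm{Re}\,g$ and $\mathrm{Im}\,g$ regular real-analytic curves.
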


We shall need the following lemma. Its proof can be found, for example,
in \cite[Chapter 8, Proposition 3.5]{MR1976398}.
\begin{lem}
\label{lem:injective or constant}Let $\Omega_{1}\subset\mathbb{C}$
be a domain, and let $f,f_{1},f_{2},...:\Omega_{1}\rightarrow\mathbb{C}$
be holomorphic. Suppose that $f_{n}\overset{n}{\rightarrow}f$ uniformly
on compact subsets of $\Omega_{1}$, and that $f_{n}$ is injective
for each $n\ge1$. Then $f$ is either injective or constant.
\end{lem}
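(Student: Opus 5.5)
We plan to prove this by contradiction via the argument principle, i.e.\ the classical Hurwitz theorem. Suppose $f$ is neither constant nor injective. Then there exist distinct $a,b\in\Omega_{1}$ with $f(a)=f(b)=:c$. Since $\Omega_{1}$ is connected and $f$ is nonconstant, the identity theorem shows that the zeros of $g:=f-c$ are isolated in $\Omega_{1}$.

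Choose $r>0$ with the following properties:
\begin{enumerate}
\item the closed discs $\overline{D}_{r}(a)$ and $\overline{D}_{r}(b)$ are disjoint and contained in $\Omega_{1}$;
\item $g$ has no zeros on the circles $C_{r}(a)$ and $C_{r}(b)$.
\end{enumerate}
Set
\[
m:=\min\left\{ |g(z)|\::\:z\in C_{r}(a)\cup C_{r}(b)\right\} ,
\]
so that $m>0$ by compactness.

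Put $g_{n}:=f_{n}-c$. Since $f_{n}\to f$ uniformly on the compact set $C_{r}(a)\cup C_{r}(b)$, there is $N$ such that for all $n\ge N$,
\[
|g_{n}(z)-g(z)|<m\le|g(z)|\quad\text{for all }z\in C_{r}(a)\cup C_{r}(b).
\]
By Rouch\'e's theorem, for $n\ge N$ the function $g_{n}$ has the same number of zeros, counted with multiplicity, as $g$ in each of $D_{r}(a)$ and $D_{r}(b)$. The zero count of $g$ is at least $1$ in each disc, because $g(a)=g(b)=0$. An alternative to Rouch\'e is to write the zero count as $\frac{1}{2\pi i}\oint g_{n}'/g_{n}\,dz$ and use Weierstrass's theorem, which gives $g_{n}'\to g'$ uniformly on compact sets; the integrals then converge and are integers, so they eventually agree.

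It follows that for $n\ge N$ there exist $a_{n}\in D_{r}(a)$ and $b_{n}\in D_{r}(b)$ with $f_{n}(a_{n})=f_{n}(b_{n})=c$. Since the two discs are disjoint, $a_{n}\ne b_{n}$, which contradicts the injectivity of $f_{n}$. Hence $f$ is either injective or constant.

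The only step requiring care is choosing $r$ so that $g$ is nonvanishing on both circles. This is exactly where the assumption that $f$ is nonconstant, together with the connectedness of $\Omega_{1}$, is used: the zeros of $g$ are isolated, so all sufficiently small radii work. The rest is a standard application of the argument principle.
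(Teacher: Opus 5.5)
Your proof is correct and is the standard Hurwitz-type (argument principle/Rouch\'e) proof; the paper gives no argument of its own and simply cites \cite[Chapter 8, Proposition 3.5]{MR1976398}, which proves the lemma this way. The only cosmetic difference is that the cited proof uses $f_n - f_n(z_1)$, which vanishes only at $z_1$, and counts zeros near $z_2$ alone, whereas you subtract the fixed value $c$ and apply Rouch\'e in both discs.
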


Most of the proof of Proposition \ref{prop:mu-mass of neigh of dyd cubes}
is contained in the proof of the following lemma.
\begin{lem}
\label{lem:pre mu-mass of neigh of dyd cubes}For each $\epsilon>0$
there exists $\delta>0$ such that $\psi\varphi_{u}\mu\left(\left(z+\ell\right)^{(\delta2^{-n})}\right)<\epsilon$
for all $n\ge1$, $u\in\Psi_{n}$, $\psi\in\mathcal{F}_{\epsilon}$,
$\ell\in\mathbb{RP}^{1}$ and $z\in\mathbb{C}$.
\end{lem}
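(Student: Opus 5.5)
We argue by contradiction combined with a compactness argument, reducing everything to Proposition \ref{prop:mu(Gamma)=00003D0}. Suppose the lemma fails for some $\epsilon>0$. Then for each $m\ge1$ there are $n_{m}\ge1$, $u_{m}\in\Psi_{n_{m}}$, $\psi_{m}\in\mathcal{F}_{\epsilon}$, $\ell_{m}\in\mathbb{RP}^{1}$ and $z_{m}\in\mathbb{C}$ such that
\[
\psi_{m}\varphi_{u_{m}}\mu\left(\left(z_{m}+\ell_{m}\right)^{(2^{-n_{m}}/m)}\right)\ge\epsilon .
\]
We rescale. Set $g_{m}:=S_{\varphi_{u_{m}}'(0)}^{-1}\circ\left(\psi_{m}\circ\varphi_{u_{m}}-\psi_{m}\circ\varphi_{u_{m}}(0)\right)$, which is holomorphic and injective on $\Omega_{0}$. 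By (\ref{eq:der at 0 is comp to 2^-n for u in Psi_n}), $|\varphi_{u_{m}}'(0)|$ is comparable to $2^{-n_{m}}$ up to a constant $C$. Rescaling by $\varphi_{u_{m}}'(0)$ rotates the line, so the condition becomes
\[
g_{m}\mu\left(\left(w_{m}+\ell_{m}'\right)^{(C/m)}\right)\ge\epsilon
\]
for suitable $w_{m}\in\mathbb{C}$ and $\ell_{m}'\in\mathbb{RP}^{1}$.

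Next we establish compactness of the $g_{m}$. By the chain rule and bounded distortion (Lemma \ref{lem:bounded distortion}), $|g_{m}'|$ lies between $\epsilon/C$ and $C/\epsilon$ on $\Omega_{0}$. Hence, by Lemma \ref{lem:Lip prop of varphi_u} (or its $\Omega_{0}$-version from the proof of Lemma \ref{lem:ub on high derivs}), the $g_{m}$ are uniformly bounded on a neighborhood $\Omega^{(\delta_{0})}$ of $\overline{\Omega}$, since $g_{m}(0)=0$. Montel's theorem then gives a subsequence $g_{m}\to g$ uniformly on compact subsets of $\Omega^{(\delta_{0})}$, and in particular uniformly on $\overline{\Omega}\supset K_{\Phi}$. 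The limit satisfies $|g'(0)|=1$ (more generally $|g'|\ge\epsilon/C$), so by Lemma \ref{lem:injective or constant} it is injective on the domain $\Omega^{(\delta_{0})}$.

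The main point is to keep $w_{m}$ bounded and pass to the limit. Since $g_{m}(K_{\Phi})$ lies in a fixed bounded set, the strip $(w_{m}+\ell_{m}')^{(C/m)}$ must meet it, so we may translate $w_{m}$ along $\ell_{m}'$ to make it bounded. Passing to a further subsequence, $w_{m}\to w$ and $\ell_{m}'\to\ell$. Uniform convergence then gives, for every fixed $\eta>0$ and all large $m$,
\[
g_{m}^{-1}\left(\left(w_{m}+\ell_{m}'\right)^{(C/m)}\right)\cap K_{\Phi}\subset g^{-1}\left(\left(w+\ell\right)^{(\eta)}\right),
\]
so $\mu\left(g^{-1}\left((w+\ell)^{(\eta)}\right)\right)\ge\epsilon$. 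Letting $\eta\downarrow0$ yields $\mu\left(g^{-1}(w+\ell)\right)\ge\epsilon$.

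Finally, $g$ is a biholomorphism of $\Omega^{(\delta_{0})}$ onto its open image. The preimage of the line $w+\ell$ under it, intersected with $\Omega$, is therefore an embedded regular real-analytic curve: locally it is parametrized by $t\mapsto g^{-1}(w+t\zeta)$, which has nonvanishing derivative, and it is embedded since $g^{-1}$ is a homeomorphism onto its image. This contradicts Proposition \ref{prop:mu(Gamma)=00003D0}. The step I expect to need the most care is the normalization and compactness: ensuring that injectivity and nondegeneracy survive in the limit, which is handled by the lower derivative bound together with Lemma \ref{lem:injective or constant}, and handling the unbounded parameter $z$ by translating along the line. The uniform-convergence passage from thickened strips to the limiting line is routine once uniform convergence on $\overline{\Omega}$ is secured.
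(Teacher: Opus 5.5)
Your proposal is correct and follows essentially the same route as the paper. Both arguments go by contradiction, rescale $\psi\circ\varphi_u$ by the size of its derivative at $0$, and extract a locally uniform limit by Montel's theorem. Both then use the lower derivative bound and Lemma \ref{lem:injective or constant} to get an injective limit, keep the offset of the line bounded (the paper via $\pi_{\ell_k^{\perp}}$, you by translating along the line), and contradict Proposition \ref{prop:mu(Gamma)=00003D0} with the preimage of the limiting line.

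The only slip is the claim $|g'(0)|=1$: in fact $g_m'(0)=\psi_m'(\varphi_{u_m}(0))$, so only $|g'(0)|\ge\epsilon$ holds. The bound $|g'|\ge\epsilon/C$ that you also state is all that is needed. Working on $\Omega^{(\delta_0)}$ is also unnecessary, since locally uniform convergence on $\Omega$ already gives uniform convergence on the compact set $K_{\Phi}\subset\Omega$.
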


\begin{proof}
Let $C>1$ be a large global constant, and let $\epsilon>0$ be given.
Assume by contradiction that for every $k\ge1$ there exists $n_{k}\ge1$,
$u_{k}\in\Psi_{n_{k}}$, $\psi_{k}\in\mathcal{F}_{\epsilon}$, $\ell_{k}\in\mathbb{RP}^{1}$
and $z_{k}\in\mathbb{C}$ such that
\begin{equation}
\psi_{k}\varphi_{u_{k}}\mu\left(\left(z_{k}+\ell_{k}\right)^{(C^{-1}k^{-1}2^{-n_{k}})}\right)\ge\epsilon.\label{eq:lem for thickend lines =0000231}
\end{equation}

For $k\ge1$ set $r_{k}:=\left|\varphi_{u_{k}}'(0)\right|$, $w_{k}:=S_{r_{k}}^{-1}\circ\psi_{k}\circ\varphi_{u_{k}}(0)$,
and $f_{k}:=T_{w_{k}}^{-1}S_{r_{k}}^{-1}\psi_{k}\varphi_{u_{k}}$.
By (\ref{eq:der at 0 is comp to 2^-n for u in Psi_n}) and (\ref{eq:lem for thickend lines =0000231}),
\begin{equation}
f_{k}\mu\left(\left(\pi_{\ell_{k}^{\perp}}\left(r_{k}^{-1}z_{k}-w_{k}\right)+\ell_{k}\right)^{(1/k)}\right)\ge\epsilon.\label{eq:lem for thickend lines =0000232}
\end{equation}
Note that $f_{k}(0)=0$. Thus, by Lemma \ref{lem:Lip prop of varphi_u}
and since $\Omega$ is bounded, we may assume that
\begin{equation}
\left|f_{k}(z)\right|\le C/\epsilon\text{ for all }z\in\Omega.\label{eq:lem for thickend lines =0000233}
\end{equation}
From this and by (\ref{eq:lem for thickend lines =0000232}),
\begin{equation}
\left|\pi_{\ell_{k}^{\perp}}\left(r_{k}^{-1}z_{k}-w_{k}\right)\right|\le1+C/\epsilon.\label{eq:lem for thickend lines =0000234}
\end{equation}

From (\ref{eq:lem for thickend lines =0000233}) and (\ref{eq:lem for thickend lines =0000234}),
by Montel's theorem, and by moving to a subsequence without changing
the notation, there exist $w\in\mathbb{C}$, $\ell\in\mathbb{RP}^{1}$,
and a holomorphic $f:\Omega\rightarrow\mathbb{C}$ such that $\pi_{\ell_{k}^{\perp}}\left(r_{k}^{-1}z_{k}-w_{k}\right)\overset{k}{\rightarrow}w$,
$d\left(\ell,\ell_{k}\right)\overset{k}{\rightarrow}0$, and $f_{k}\overset{k}{\rightarrow}f$
uniformly on compact subsets of $\Omega$. From this and by (\ref{eq:lem for thickend lines =0000232})
it follows easily that $f\mu\left(w+\ell\right)\ge\epsilon$.

It also holds that $f_{k}'(0)\overset{k}{\rightarrow}f'(0)$ (see
\cite[page 54]{MR1976398}). Thus, since $\left|f_{k}'(0)\right|\ge\epsilon$
for each $k\ge1$, it follows that $\left|f'(0)\right|\ge\epsilon$.
In particular, $f$ is nonconstant. Hence, since the maps $f_{k}$
are injective, Lemma \ref{lem:injective or constant} implies that
$f$ is injective as well. Since $f$ is also holomorphic, it follows
that $f^{-1}\left(w+\ell\right)$ is a regular real-analytic curve.
But $\mu\left(f^{-1}\left(w+\ell\right)\right)\ge\epsilon$, which
contradicts Proposition \ref{prop:mu(Gamma)=00003D0} and completes
the proof of the lemma.
\end{proof}
\begin{proof}[Proof of Proposition \ref{prop:mu-mass of neigh of dyd cubes}]
Let $0<\epsilon,\eta,\delta<1$ be with $\epsilon^{-1}\ll\eta^{-1}\ll\delta^{-1}$,
let $\psi\in\mathcal{F}_{\epsilon}$ and $n\ge1$ be given, and set
\[
B:=\cup_{D\in\mathcal{D}_{n}^{\mathbb{C}}}(\partial D)^{(\delta2^{-n})}.
\]

For $u\in\Psi_{n}$, it follows from Lemma \ref{lem:Lip prop of varphi_u}
and (\ref{eq:der at 0 is comp to 2^-n for u in Psi_n}) that
\[
\mathrm{diam}\left(\mathrm{supp}\left(\psi\varphi_{u}\mu\right)\right)=O\left(\epsilon^{-1}2^{-n}\right).
\]
Hence,
\[
\#\left\{ D\in\mathcal{D}_{n}^{\mathbb{C}}\::\:\psi\varphi_{u}\mu\left((\partial D)^{(\delta2^{-n})}\right)>0\right\} =O_{\epsilon}(1).
\]
Moreover, by Lemma \ref{lem:pre mu-mass of neigh of dyd cubes} and
since $\epsilon^{-1},\eta^{-1}\ll\delta^{-1}$,
\[
\psi\varphi_{u}\mu\left((\partial D)^{(\delta2^{-n})}\right)<\eta\text{ for all }D\in\mathcal{D}_{n}^{\mathbb{C}}.
\]
Combining these facts, we obtain
\[
\psi\varphi_{u}\mu\left(B\right)=O_{\epsilon}(\eta)\text{ for all }u\in\Psi_{n}.
\]
From this, together with $\psi\mu=\sum_{u\in\Psi_{n}}p_{u}\cdot\psi\varphi_{u}\mu$
and $\epsilon^{-1}\ll\eta^{-1}$, we get that $\psi\mu(B)<\epsilon$,
which completes the proof of the proposition.
\end{proof}

\section{\label{sec:non-saturation-of-components}Non-saturation of components}

The following proposition is the main result of this section. Its
proof is a modification of the proof of \cite[Proposition 1.5]{Rap_Ren_Fur_on_CP1},
but it contains significant differences and requires several adaptations
to our holomorphic setting.
\begin{prop}
\label{prop:lb on ent of proj of comp of nu}Suppose that $\dim\mu<2$.
Then there exists $\gamma>0$ such that for every $\epsilon>0$, $m\ge M(\epsilon)\ge1$,
$n\ge1$, and $\psi\in\mathcal{F}_{\epsilon}$, if we set $\theta:=\psi\mu$,
then
\[
\mathbb{P}\left\{ \underset{w\mathbb{R}\in\mathbb{RP}^{1}}{\inf}\frac{1}{m}H\left(\pi_{w\mathbb{R}}\theta_{z,n},\mathcal{D}_{n+m}\right)>\dim\mu-1+\gamma\right\} >1-\epsilon.
\]
\end{prop}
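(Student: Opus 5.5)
We will follow the strategy of \cite[Proposition 1.5]{Rap_Ren_Fur_on_CP1}, via magnified cylinder measures and a direction cocycle. First, the components $\theta_{z,n}$ should be compared with images of $\mu$ under normalized cylinder maps. Fix $l$ large as in Lemma \ref{lem:ac of words}. For $u\in\Psi(j,l;n)$, Lemma \ref{lem:Lip prop of varphi_u} and (\ref{eq:der at 0 is comp to 2^-n for u in Psi_n}) show that $\psi\varphi_{u}\mu$ is supported on a set of diameter $O_{\epsilon,l}(2^{-n})$. Using $\theta=\sum_{u\in\Psi(j,l;n)}p_{u}\psi\varphi_{u}\mu$ together with Proposition \ref{prop:mu-mass of neigh of dyd cubes} (which rules out mass near dyadic boundaries), we will show the following: with probability $>1-\epsilon$ over $z$, most of the mass of $\theta_{z,n}$, up to a small error, is a convex combination of measures $\psi\varphi_{u}\mu$ lying entirely inside $\mathcal{D}_{n}(z)$. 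Concavity of entropy, up to $O(1/m)$ boundary terms, then reduces the claim to lower bounds on $\frac{1}{m}H(\pi_{w\mathbb{R}}\psi\varphi_{u}\mu,\mathcal{D}_{n+m})$ for typical $u$. Write $\psi\varphi_{u}=T_{a}S_{(\psi\varphi_{u})'(0)}\circ g_{u}$, where $g_{u}$ is a normalized univalent map with $g_{u}'(0)=1$ and bounded distortion. Then
\[
\pi_{w\mathbb{R}}\psi\varphi_{u}\mu\approx\pi_{w\mathbb{R}}S_{(\psi\varphi_{u})'(0)}g_{u}\mu ,
\]
and after rescaling by $|(\psi\varphi_{u})'(0)|\asymp2^{-n}$, the direction that matters is $w\mathbb{R}\cdot\overline{(\psi\varphi_{u})'(0)}\mathbb{R}\in\mathbb{RP}^{1}$. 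This rotational part $(\psi\varphi_u)'(0)\mathbb{R}$ defines the direction cocycle over the shift.

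Second, we need a uniform projection bound for the limiting measures. Let $\mathcal{G}$ be the compact (by Montel and Lemma \ref{lem:injective or constant}) family of limits of $S_{\varphi_{u}'(0)}^{-1}(\varphi_{u}-\varphi_{u}(0))$, composed with elements of $\mathcal{F}_{\epsilon}$ and suitably normalized. We claim that for each $g\in\mathcal{G}$ and each line, $\dim\pi_{w\mathbb{R}}(g\mu)$-type entropy averages exceed $\dim\mu-1+2\gamma$ for most $w$. Since $\dim\mu<2$, the trivial bound $\dim\mu-1$ is attained only if $g\mu$ is saturated along the fibres of $\pi_{w\mathbb{R}}$ at almost all small scales. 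By the Ledrappier--Young/Hochman-type entropy-porosity arguments, this forces conditional measures along a line field to carry full dimension $1$. Pulling back through the holomorphic maps, saturation in direction $w\mathbb{R}$ at scale $2^{-n-m}$ inside the cylinder $u$ corresponds to saturation of $\mu$ in the direction $\overline{\varphi_{u}'(\cdot)}w\mathbb{R}$, twisted by the argument of $g'$.

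Third, we argue by contradiction that uniform saturation fails. Suppose that for arbitrarily small $\gamma$ there are $\psi$ and a set of components of measure $\ge\epsilon$ whose infimum over $w$ is at most $\dim\mu-1+\gamma$. Pass to limits: by compactness of $\mathcal{G}$ and $\mathbb{RP}^{1}$, and by ergodicity of the multiple-of-$l$ shift (this is why Lemma \ref{lem:ac of words} introduces $\Psi(j,l;n)$ and averages over $k$), we obtain a measurable assignment $\omega\mapsto L(\omega)\in\mathbb{RP}^{1}$ of saturated directions for $\mu$ at $\Pi(\omega)$. This assignment satisfies the equivariance
\[
L(\omega)=\varphi_{\omega_{0}}'(\Pi\sigma\omega)\mathbb{R}\cdot L(\sigma\omega)
\]
for $\beta$-a.e.\ $\omega$; that is, the direction cocycle $\omega\mapsto\varphi_{\omega_{0}}'(\Pi\sigma\omega)\mathbb{R}$ is a measurable coboundary into $\mathbb{RP}^{1}$. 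We expect the same scheme to handle $\psi$, since $\psi$ only contributes the fixed factor $\psi'$.

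The main obstacle is showing that the direction cocycle is \emph{not} a coboundary, using (M1)--(M3) and Proposition \ref{prop:mu(Gamma)=00003D0}. The first step would be to upgrade the measurable transfer function $L$ to a continuous, indeed real-analytic, line field $z\mapsto L(z)$ on a neighborhood of $K_{\Phi}$ that is invariant under every $\varphi_{i}$. The tool here is a Liv\v{s}ic-type rigidity argument: the cocycle is H\"older with bounded distortion, and $L$ should depend only on $\Pi\omega$ because of the saturation meaning. We would then write $L(z)=e^{i\alpha(z)}\mathbb{R}$ with $\alpha(\varphi_{i}(z))=\alpha(z)+\arg\varphi_{i}'(z)\pmod\pi$. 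Since $\arg\varphi_{i}'=\mathrm{Im}\log\varphi_{i}'$ is harmonic, we try to show that $\alpha$ extends as a harmonic function $\mathrm{Im}\,F$, using analytic continuation along the dense set of cylinder maps and the measure of curves being zero. If so, then $\psi:=\int e^{-iF}$, defined on a simply connected neighborhood, satisfies the condition that $\psi\circ\varphi_{i}\circ\psi^{-1}$ has real derivative. A real-valued holomorphic derivative is constant, so these maps are homothetic, which contradicts (M3). If instead $\alpha$ fails to be harmonic and is only measurable, the saturation directions integrate to an invariant foliation. Its leaves carry the conditional measures, and we would aim to show that a leaf through $K_{\Phi}$ is a $\Phi$-invariant real-analytic curve, contradicting (M2), or that it gives positive mass to a curve, contradicting Proposition \ref{prop:mu(Gamma)=00003D0}. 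This last dichotomy is where we expect the most work.
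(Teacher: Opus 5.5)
Your reduction from components to cylinder measures is essentially the paper's. One correction: you need $u\in\Psi_{n+k}$ with $k$ large, because cylinders from $\Psi(j,l;n)$ have diameter comparable to $2^{-n}$ and need not fit inside level-$n$ cells.

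After that reduction there is a genuine gap in both places where the content lies.

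\textbf{First gap: from saturation to a coboundary.} The step ``uniform non-saturation fails $\Rightarrow$ there is a measurable $L$ with $L(\omega)=\varphi_{\omega_0}'(\Pi\sigma\omega)\mathbb{R}\,L(\sigma\omega)$'' is only asserted. The hypothesis involves finitely many scales, with $\gamma,\psi,n,m$ all varying. Neither compactness of $\mathcal{G}$ nor a Ledrappier--Young/line-field heuristic produces an exact coboundary from it. Bounds ``for most $w$'' also say nothing about the infimum over $w$.

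The paper runs the logic forwards, and the ingredient you are missing is elementary. Since
$H(\theta,\pi_{z\mathbb{R}}^{-1}\mathcal{D}_m\vee\pi_{w\mathbb{R}}^{-1}\mathcal{D}_m)\le H(\pi_{z\mathbb{R}}\theta,\mathcal{D}_m)+H(\pi_{w\mathbb{R}}\theta,\mathcal{D}_m)$,
each rescaled cylinder measure has projected entropy $>\frac12\dim\mu-\epsilon$ outside a single $\epsilon$-ball of directions $h(\omega)$ (Lemma~\ref{lem:ent > 1/2 dim mu outside a small interval}). The argument then proceeds as follows.
\begin{itemize}
\item Linearization (Lemma~\ref{lem:linearization in non sat argument}) shows that the depth-$i$ subcylinder sees the fixed projection direction rotated by $\alpha_i(\omega)^{-1}$, times a factor independent of $i$.
\item Non-coboundary gives, through the Rapaport--Ren equidistribution argument (Corollary~\ref{cor:from equidist prop}), a proportion $\delta$ of times $i$ at which $\alpha_i(\omega)h(\sigma^i\omega)$ is $\delta$-far from any given line.
\item Multiscale averaging is done over the stopping-time words $\mathbf{I}(j,l;i)$. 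Lemma~\ref{lem:ac of words} serves to compare these with the fixed-length words $\mathbf{U}_i$; it is not about ergodicity of $\sigma^l$.
\item Combining the gain at those scales with the trivial bound $\dim\mu-1$ (Lemma~\ref{lem:triv lb}) at the remaining scales yields $\gamma\approx\frac{\delta}{4M}(1-\frac12\dim\mu)$, uniformly in $u$, $z\mathbb{R}$ and $\psi$. This is where $\dim\mu<2$ is used.
\end{itemize}

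\textbf{Second gap: the non-coboundary proof.} Your argument that $\alpha$ is not a coboundary is left open (``we try to show'', ``where we expect the most work''), and its ingredients do not hold up:
\begin{itemize}
\item nothing justifies that $L$ depends only on $\Pi\omega$;
\item a measurable line field need not integrate to a foliation;
\item the harmonic continuation of $\alpha$ is unsupported;
\item with $\alpha=\mathrm{Im}F$ one wants $\psi'=e^{-F}$, not $e^{-iF}$.
\end{itemize}

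The missing idea is a blow-up. Suppose $f(\omega)=\varphi_u'(\Pi\omega)\mathbb{R}\,f(u\omega)$ for all $u$ and a.e.\ $\omega$.
\begin{itemize}
\item Lusin's theorem and martingale convergence give $\eta$ and $n_k\to\infty$ with $f(\eta|_{n_k}\omega)\to f(\eta)$ a.e.
\item The normalized maps $S_{\eta|_{n_k}}T^{-1}_{\varphi_{\eta|_{n_k}}(0)}\varphi_{\eta|_{n_k}}$ have a Montel limit $\psi$. Its derivative is nonvanishing by bounded distortion, and it is injective by Lemma~\ref{lem:injective or constant}.
\item Hence $f(\omega)=\psi'(\Pi\omega)\mathbb{R}\,f(\eta)$. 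The transfer function is automatically the direction of a univalent derivative, with no Liv\v{s}ic regularity or analytic continuation needed.
\end{itemize}
Then $g=\psi\varphi_j\psi^{-1}$ has $g'(\psi(z))\in\mathbb{R}$ on $K_\Phi$. Lemma~\ref{lem:lem bef no cobd prop} shows $g$ is homothetic: $\mathrm{Im}\log g'\circ\psi$ is harmonic, its level sets are a countable set plus a real-analytic curve, these are $\mu$-null by non-atomicity and Proposition~\ref{prop:mu(Gamma)=00003D0}, and the open mapping theorem finishes. This contradicts (M3).

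Assumption (M2) enters only through Proposition~\ref{prop:mu(Gamma)=00003D0}. The ``invariant foliation'' branch of your dichotomy never arises.
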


\begin{rem*}
Let $\theta$ be as in the statement of the proposition. Since $\theta$
has uniform entropy dimension (see Proposition \ref{prop:uni ent dim}
below), Proposition \ref{prop:lb on ent of proj of comp of nu} implies
that, in the terminology introduced by Hochman (see \cite[Definition 2.3]{Ho}),
all but an arbitrarily small proportion of the components of $\theta$
are non-saturated in any direction.
\end{rem*}
Throughout this section, given $u\in\Lambda^{*}$ we shall write $S_{u}$
in place of $S_{\left|\varphi_{u}'(0)\right|}^{-1}$. That is, $S_{u}z=\left|\varphi_{u}'(0)\right|^{-1}z$
for $z\in\mathbb{C}$. Most of the proof of Proposition \ref{prop:lb on ent of proj of comp of nu}
is devoted to establishing the following statement.
\begin{prop}
\label{prop:lb on ent of proj of cylinders of nu}Suppose that $\dim\mu<2$.
Then there exists $0<\gamma<1$ such that for every $\epsilon>0$,
$n\ge N(\epsilon)\ge1$, $z\mathbb{R}\in\mathbb{RP}^{1}$, $u\in\Lambda^{*}$,
and $\psi\in\mathcal{F}_{\epsilon}$,
\[
\frac{1}{n}H\left(S_{u}\pi_{z\mathbb{R}}\psi\varphi_{u}\mu,\mathcal{D}_{n}\right)\ge\dim\mu-1+\gamma.
\]
\end{prop}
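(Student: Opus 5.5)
First I reduce the statement to one about projections of normalized copies of $\mu$. Set $g_{u}:=S_{u}\circ\psi\circ\varphi_{u}$. By Lemma \ref{lem:Lip prop of varphi_u} and Lemma \ref{lem:ub on high derivs}, after subtracting the constant $g_{u}(0)$ (which costs $O(1)$ entropy), the maps $g_{u}$ range over a normal family $\mathcal{G}_{\epsilon}$. This family consists of holomorphic injections on $\Omega_{0}$ with $|g'|$ bounded above and below in terms of $\epsilon$. Moreover, $S_{u}\pi_{z\mathbb{R}}\psi\varphi_{u}\mu=\pi_{z\mathbb{R}}g_{u}\mu$, because $S_{u}$ is a positive real scaling and so commutes with $\pi_{z\mathbb{R}}$. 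By Lemma \ref{lem:injective or constant}, the closure of $\mathcal{G}_{\epsilon}$ (normalized so that $g(0)=0$) is a compact family of injective holomorphic maps with $|g'(0)|\ge\epsilon$. It therefore suffices to find $\gamma>0$ such that, for each $\epsilon$ and all large $n$,
\[
\inf_{g\in\mathcal{G}_{\epsilon},\,z\mathbb{R}}\frac{1}{n}H\left(\pi_{z\mathbb{R}}g\mu,\mathcal{D}_{n}\right)\ge\dim\mu-1+\gamma .
\]
The map $(g,z\mathbb{R})\mapsto\pi_{z\mathbb{R}}\circ g$ is continuous in sup norm on $\Omega$. By Lemma \ref{lem:close func --> close ent} and compactness, a finite net then reduces this to a lower bound, for each fixed pair $(g,z\mathbb{R})$, on the lower entropy dimension of $\pi_{z\mathbb{R}}g\mu$ at scales $2^{-n}$ for $n$ large. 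This requires uniformity in $n$, which I would secure by proving the bound for the quantity $\liminf_{n}\frac{1}{n}H(\cdot,\mathcal{D}_{n})$ with a uniform rate.

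Next I lower-bound the dimension of $\pi_{z\mathbb{R}}g\mu$ by decomposing $\mu=\sum_{v\in\Psi_{m}}p_{v}\varphi_{v}\mu$. Locally, $g\circ\varphi_{v}$ is close to an affine map $w\mapsto g(\varphi_{v}(0))+(g\circ\varphi_{v})'(0)w$, up to errors controlled by Lemma \ref{lem:Taylor estimate}. Hence, at scale $2^{-n}$ relative to $|\varphi_{v}'(0)|$, the measure $\pi_{z\mathbb{R}}g\varphi_{v}\mu$ looks like the projection of a rescaled copy $g_{v}\mu$ onto the rotated direction $z\mathbb{R}\cdot\overline{(g\circ\varphi_{v})'(0)}\mathbb{R}$. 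Concavity of entropy, combined with this multiscale decomposition, gives
\[
\frac{1}{n}H(\pi_{z\mathbb{R}}g\mu,\mathcal{D}_{n})\gtrsim\mathbb{E}\Big[\frac{1}{n}H\big(\pi_{\ell_{\mathbf{v}}}g_{\mathbf{v}}\mu,\mathcal{D}_{n-m}\big)\Big].
\]
Here $\ell_{v}$ is the direction transported by the \emph{direction cocycle} $v\mapsto\arg(g\circ\varphi_{v})'(0)\ \mathrm{mod}\ \pi$, as in \cite{Rap_Ren_Fur_on_CP1}. Marstrand-type projection results for $\mu$ give $\dim\pi_{\ell}\mu=\min\{1,\dim\mu\}$ for a.e.\ $\ell$. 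Since $\dim\mu<2$, we have $\min\{1,\dim\mu\}\ge\dim\mu-1+\gamma_{0}$ with $\gamma_{0}:=\min\{1,2-\dim\mu\}>0$. So if the directions $\ell_{\mathbf{v}}$ equidistribute in $\mathbb{RP}^{1}$ in a non-concentrated way, uniformly in the initial direction, the averaged bound gives $\dim\mu-1+\gamma$ with, say, $\gamma=\gamma_{0}/2$. I expect to need the words $\Psi(j,l;n)$ and Lemma \ref{lem:ac of words} to pass from the ergodic averages over $\mathbf{U}_{j+lk}$ to stopping-time averages over $\mathbf{I}(j,l;k)$.

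The main obstacle is proving that the direction cocycle is not cohomologous to a constant, in the sense of not being a coboundary into a finite subgroup of $\mathbb{RP}^{1}$. This is exactly what drives equidistribution of $\ell_{\mathbf{v}}$ via ergodicity of the skew product over $(\Lambda^{\mathbb{N}},\sigma,\beta)$. I would argue by contradiction. Suppose $\arg\varphi_{i}'(\Pi(\sigma\omega))\equiv F(\omega)-F(\sigma\omega)\ \mathrm{mod}\ \pi$ for some measurable $F$, with a possible finite-order twist. I would first upgrade $F$ to a continuous, and then real-analytic, function on $K_{\Phi}$, using bounded distortion and the holomorphy of $\log\varphi_{i}'$. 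The goal is then to produce a harmonic function $h$ with $\nabla h$ pointing in a $\Phi$-equivariant direction field. If $h$ is nonconstant along $K_{\Phi}$, its level sets give a $\Phi$-invariant regular real-analytic curve, contradicting \ref{enu:main thm cond no inv curve}; I would use Proposition \ref{prop:mu(Gamma)=00003D0} for this step. Otherwise the direction field is holomorphically trivializable, which yields a conjugacy $\psi$ with $\psi'\circ\varphi_{i}\cdot\varphi_{i}'/\psi'\in\mathbb{R}$, i.e.\ a conjugacy to a homothetic IFS, contradicting \ref{enu:main thm cond not holo conj}. Carrying out the regularity upgrade of $F$ and extending it off $K_{\Phi}$ holomorphically is the step I expect to be hardest.
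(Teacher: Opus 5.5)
\textbf{There is a genuine gap: the key step fails.} You identify as the main obstacle the claim that the direction cocycle is not cohomologous to a cocycle with values in a finite subgroup of $\mathbb{RP}^{1}$. That claim is what ergodicity of the skew product, and hence Haar-equidistribution of the $\ell_{\mathbf v}$, requires. It is false under the standing hypotheses.

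Take $\Omega=D_{2}(0)$ and $\varphi_{j}(z)=\frac{i}{3}z+b_{j}$ with $b_{j}\in\{0,\pm\frac{2}{3},\pm\frac{2i}{3}\}$. This IFS satisfies the standing hypotheses and the proposition is non-vacuous for it:
\begin{itemize}
\item There is no common fixed point.
\item \ref{enu:main thm cond no inv curve} and \ref{enu:main thm cond not holo conj} hold by the argument in the proof of Corollary \ref{cor:ver suff criterion}, since $\varphi_{1}'(0)=i/3\notin\mathbb{R}$.
\item The open set condition gives $1<\dim\mu=\log5/\log3<2$ for uniform $p$.
\end{itemize}
Yet $\alpha(\omega)=i\mathbb{R}$ for every $\omega$, an element of order $2$ in $\mathbb{RP}^{1}$. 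So $\alpha_{n}(\omega)\in\{\mathbb{R},i\mathbb{R}\}$, and the directions you average over take only two values $\ell_{0},\ell_{0}^{\perp}$.

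If you fall back on mere non-concentration, the Marstrand step has nothing to act on. Projection theorems hold only off a Lebesgue-null exceptional set, which may contain any prescribed finite set of directions. Moreover, in your averaged bound the measures $g_{\mathbf v}\mu$ change with $\mathbf v$, each with its own exceptional set. So the route ``Haar-equidistributed directions $+$ a.e.\ projection theorem'' cannot be completed, even in self-similar cases like this one.

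Separately, your compactness reduction is circular. A net fine enough for Lemma \ref{lem:close func --> close ent} at scale $2^{-n}$ depends on $n$, so uniformity in $n$ is exactly what remains to be proved.

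\textbf{The missing idea} is an elementary substitute for Marstrand that makes non-concentration of the directions sufficient.
\begin{itemize}
\item If $d(z\mathbb{R},w\mathbb{R})\ge\epsilon$, then $\pi_{z\mathbb{R}}^{-1}\mathcal{D}_{m}\vee\pi_{w\mathbb{R}}^{-1}\mathcal{D}_{m}$ is $O_{\epsilon}(1)$-commensurable with $\mathcal{D}_{m}$. Hence the two projection entropies of $S_{v}\varphi_{v}\mu$ at scale $m$ sum to at least $\dim\mu-\epsilon$ (Lemma \ref{lem:ent wrt mod cont dyad part}).
\item Therefore, for each of the finitely many $v\in\Lambda^{k}$, all directions outside a single ball $B(w_{v}\mathbb{R},\epsilon)$ have projection entropy $>\frac{1}{2}\dim\mu-\epsilon$ (Lemma \ref{lem:ent > 1/2 dim mu outside a small interval}). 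This beats the trivial bound because $\frac{1}{2}\dim\mu>\dim\mu-1$ exactly when $\dim\mu<2$.
\item After linearizing $\psi\circ\varphi_{u\omega|_{i}}$ at $\Pi(\sigma^{i}\omega)$ (Lemma \ref{lem:linearization in non sat argument}), you only need $\alpha_{i}(\omega)h(\sigma^{i}\omega)$, with $h(\omega)=w_{\omega|_{k}}\mathbb{R}$, to avoid every $\delta$-ball a positive proportion of the time.
\item That follows from $\alpha$ merely not being a coboundary (Proposition \ref{prop:noncoboundary} and Corollary \ref{cor:from equidist prop}), which does hold in the example above.
\end{itemize}
The non-coboundary statement also needs no regularity upgrade of $f$. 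A Lusin density point $\eta$ and a Montel limit $\psi$ of the normalized maps $\varphi_{\eta|_{n_{k}}}$ give $f(\omega)=\psi'(\Pi\omega)\mathbb{R}f(\eta)$ directly. Then $\mathrm{Im}\log\big((\psi\varphi_{j}\psi^{-1})'\circ\psi\big)$ is a harmonic function whose level set $\{0,-\pi\}$ has full $\mu$-measure. Since $\mu$ vanishes on regular real-analytic curves, this function must be locally constant, which yields a homothetic conjugacy.

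Finally, the gain is spread over scales by Hochman's multiscale formula together with the stopping-time words $\mathbf{I}(j,l;i)$ and Lemma \ref{lem:ac of words}. A proportion $\gtrsim\delta/M$ of scales contributes $\frac{1}{2}\dim\mu$, and the rest contribute the trivial $\dim\mu-1$ of Lemma \ref{lem:triv lb}.
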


The proof of Proposition \ref{prop:lb on ent of proj of cylinders of nu}
involves bounding from below entropies of the form
\begin{equation}
\frac{1}{m}H\left(S_{uv}\pi_{z\mathbb{R}}\psi\varphi_{uv}\mu,\mathcal{D}_{m}\right),\label{eq:entropies needed for prop}
\end{equation}
where $m\ge1$ is large, $u\in\Lambda^{*}$ is fixed, and $v\in\cup_{j=1}^{n}\Lambda^{j}$,
with $n\ge1$ large. Appropriate lower bounds for these entropies,
combined with a multiscale decomposition, the concavity of entropy,
and the stationarity of $\mu$, will yield the proposition.

In Section \ref{subsec:The-trivial-lower bd}, we show that the entropies
in (\ref{eq:entropies needed for prop}) are bounded from below by
$\dim\mu-1$ up to an arbitrarily small error. Section \ref{subsec:The-direction-cocycle}
is devoted to the study of the direction cocycle $\alpha_{n}:\Lambda^{\mathbb{N}}\rightarrow\mathbb{RP}^{1}$
(defined in that section). We prove that it is not a coboundary, which
yields an important non-concentration corollary (Corollary \ref{cor:from equidist prop}).
In Section \ref{subsec:The-nontrivial-lower lb}, we use this corollary
in order to show that, when $v$ is chosen randomly according to $\mathbb{E}_{1\le i\le n}\left(\delta_{\mathbf{U}_{i}}\right)$,
the entropies in (\ref{eq:entropies needed for prop}) are, with non-negligible
probability, bounded from below by $\frac{1}{2}\dim\mu$ up to an
arbitrarily small error. Note that since $\dim\mu<2$, we have $\frac{1}{2}\dim\mu>\dim\mu-1$.
Finally, in Section \ref{subsec:Proof-of-Propositions}, we complete
the proofs of Propositions \ref{prop:lb on ent of proj of comp of nu}
and \ref{prop:lb on ent of proj of cylinders of nu}.

\subsection{\label{subsec:The-trivial-lower bd}The trivial lower bound}

The following lemma will be used several times below.
\begin{lem}
\label{lem:ent wrt mod cont dyad part}For every $\epsilon>0$, $m\ge M(\epsilon)\ge1$,
$u\in\Lambda^{*}$, $\psi\in\mathcal{F}_{\epsilon}$, and $z\mathbb{R},w\mathbb{R}\in\mathbb{RP}^{1}$
with $d\left(z\mathbb{R},w\mathbb{R}\right)\ge\epsilon$, we have
\[
\left|\frac{1}{m}H\left(S_{u}\psi\varphi_{u}\mu,\pi_{z\mathbb{R}}^{-1}\mathcal{D}_{m}\vee\pi_{w\mathbb{R}}^{-1}\mathcal{D}_{m}\right)-\dim\mu\right|<\epsilon.
\]
\end{lem}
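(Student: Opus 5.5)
The plan is to reduce the statement to the standard entropy-dimension identity for $\mu$ itself, using bi-Lipschitz control together with a compactness argument over the family of normalized maps $f:=S_u\psi\varphi_u$.

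\emph{Step 1: the partition is comparable to a dyadic partition.} Since $d(z\mathbb{R},w\mathbb{R})\ge\epsilon$, the map $L:\mathbb{C}\to\mathbb{R}^2$, $L(x)=(\langle x,\hat z\rangle,\langle x,\hat w\rangle)$ with $\hat z,\hat w$ unit vectors, is a linear isomorphism with bi-Lipschitz constant $O(\epsilon^{-1})$. The partition $\pi_{z\mathbb{R}}^{-1}\mathcal{D}_m\vee\pi_{w\mathbb{R}}^{-1}\mathcal{D}_m$ is the pullback under $L$ of a partition of $\mathbb{R}^2$ into products of sets of diameter at most $2^{-m}$ (up to a factor $|z|$, which rescales the grid). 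Each cell of this partition meets $O_\epsilon(1)$ cells of $L^{-1}$ applied to $\mathcal{D}_m^{\mathbb{R}^2}$, and conversely. Hence, for any $\theta\in\mathcal{M}(\mathbb{C})$,
\[
H\left(\theta,\pi_{z\mathbb{R}}^{-1}\mathcal{D}_m\vee\pi_{w\mathbb{R}}^{-1}\mathcal{D}_m\right)=H\left(L\theta,\mathcal{D}_m\right)+O_\epsilon(1).
\]

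\emph{Step 2: replace $S_u\psi\varphi_u$ by the identity.} By Lemma \ref{lem:Lip prop of varphi_u}, $f=S_u\psi\varphi_u$ is bi-Lipschitz on $\Omega\supset\mathrm{supp}\,\mu$ with constant $C\epsilon^{-1}$, where $C$ is independent of $u$. Therefore $L\circ f$ is bi-Lipschitz with constant $O(\epsilon^{-2})$. Lemma \ref{lem:dyad ent =000026 lip func} then gives
\[
H\left(f\mu,\pi_{z\mathbb{R}}^{-1}\mathcal{D}_m\vee\pi_{w\mathbb{R}}^{-1}\mathcal{D}_m\right)=H(\mu,\mathcal{D}_m)+O_\epsilon(1),
\]
with the error term uniform in $u$, $\psi$, $z\mathbb{R}$ and $w\mathbb{R}$.

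\emph{Step 3: pass to the limit.} Since $\mu$ is exact dimensional, (\ref{eq:ent dim =00003D exact dim}) gives $\frac1m H(\mu,\mathcal{D}_m)\to\dim\mu$. Choosing $M(\epsilon)$ so that $\left|\frac1mH(\mu,\mathcal{D}_m)-\dim\mu\right|<\epsilon/2$ and $O_\epsilon(1)/m<\epsilon/2$ for $m\ge M(\epsilon)$ completes the argument.

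The only delicate point is Step 1: one must check carefully that the non-orthogonal product partition is uniformly comparable to a dyadic one, and that the error constant depends only on $\epsilon$. This is an elementary counting argument: cells have diameter $O(\epsilon^{-1}2^{-m})$ and contain a ball of radius $\gg\epsilon 2^{-m}$. No compactness argument is actually needed beyond the uniform bi-Lipschitz bound.
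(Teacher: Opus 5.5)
Your proposal is correct and is essentially the paper's argument. It uses the same three ingredients: commensurability of $\pi_{z\mathbb{R}}^{-1}\mathcal{D}_{m}\vee\pi_{w\mathbb{R}}^{-1}\mathcal{D}_{m}$ with a dyadic partition, the uniform bi-Lipschitz bound of Lemma \ref{lem:Lip prop of varphi_u} fed into Lemma \ref{lem:dyad ent =000026 lip func}, and exact dimensionality. The only cosmetic difference is that the paper compares directly with $\mathcal{D}_{m}^{\mathbb{C}}$ via \cite[Lemma 3.2]{Ho1}, whereas you route the $O(\epsilon^{-1})$ distortion through your map $L$.

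One correction to your closing justification: with $\mathcal{D}_m=\mathcal{D}_m^{\mathbb{C}}$, cells of $\pi_{z\mathbb{R}}^{-1}\mathcal{D}_{m}$ need not contain balls of radius $\gg\epsilon2^{-m}$. The segment $D\cap z\mathbb{R}$ can be arbitrarily short when the line nearly clips a corner of $D$. The counting should instead use that a segment of length $O(2^{-m})$ meets only $O(1)$ level-$m$ dyadic squares, which gives the required commensurability in both directions.
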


\begin{proof}
Let $0<\epsilon<1$ and $m\ge1$ be with $\epsilon^{-1}\ll m$, let
$u\in\Lambda^{*}$ and $\psi\in\mathcal{F}_{\epsilon}$ be given,
let $z\mathbb{R},w\mathbb{R}\in\mathbb{RP}^{1}$ be with $d\left(z\mathbb{R},w\mathbb{R}\right)\ge\epsilon$,
and set
\[
\mathcal{E}:=\pi_{z\mathbb{R}}^{-1}\mathcal{D}_{m}\vee\pi_{w\mathbb{R}}^{-1}\mathcal{D}_{m}.
\]
From $d\left(z\mathbb{R},w\mathbb{R}\right)\ge\epsilon$ it follows
easily that the partitions $\mathcal{E}$ and $\mathcal{D}_{m}^{\mathbb{C}}$
are $O_{\epsilon}(1)$-commensurable. That is, for each $E\in\mathcal{E}$
and $D\in\mathcal{D}_{m}^{\mathbb{C}}$
\[
\#\left\{ D'\in\mathcal{D}_{m}^{\mathbb{C}}\::\:D'\cap E\ne\emptyset\right\} ,\#\left\{ E'\in\mathcal{E}\::\:E'\cap D\ne\emptyset\right\} =O_{\epsilon}(1).
\]
Hence, by \cite[Lemma 3.2]{Ho1} and since $\epsilon^{-1}\ll m$,
\[
\left|\frac{1}{m}H\left(S_{u}\psi\varphi_{u}\mu,\mathcal{E}\right)-\frac{1}{m}H\left(S_{u}\psi\varphi_{u}\mu,\mathcal{D}_{m}\right)\right|<\epsilon.
\]

By Lemma \ref{lem:Lip prop of varphi_u}, $S_{u}\circ\psi\circ\varphi_{u}$
is bi-Lipschitz with bi-Lipschitz constant $O\left(1/\epsilon\right)$.
Thus, by Lemma \ref{lem:dyad ent =000026 lip func},
\[
\left|\frac{1}{m}H\left(S_{u}\psi\varphi_{u}\mu,\mathcal{D}_{m}\right)-\frac{1}{m}H\left(\mu,\mathcal{D}_{m}\right)\right|<\epsilon.
\]
Moreover, by (\ref{eq:ent dim =00003D exact dim}),
\[
\left|\frac{1}{m}H\left(\mu,\mathcal{D}_{m}\right)-\dim\mu\right|<\epsilon.
\]
All of this completes the proof of the lemma.
\end{proof}
We can now obtain the following lower bound.
\begin{lem}
\label{lem:triv lb}For every $\epsilon>0$, $m\ge M(\epsilon)\ge1$,
$z\mathbb{R}\in\mathbb{RP}^{1}$, $u\in\Lambda^{*}$, and $\psi\in\mathcal{F}_{\epsilon}$,
\[
\frac{1}{m}H\left(S_{u}\pi_{z\mathbb{R}}\psi\varphi_{u}\mu,\mathcal{D}_{m}\right)>\dim\mu-1-\epsilon.
\]
\end{lem}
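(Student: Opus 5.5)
The plan is to deduce the bound from Lemma \ref{lem:ent wrt mod cont dyad part}, using the fact that entropy with respect to a join of two partitions is at most the sum of the entropies.

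Fix $\epsilon>0$, and let $\epsilon'>0$ be small with respect to $\epsilon$ (say $\epsilon'\le\epsilon/3$), and let $m$ be large with respect to $\epsilon'$. Given $z\mathbb{R}$, take $w\mathbb{R}:=(z\mathbb{R})^{\perp}$, so that $d(z\mathbb{R},w\mathbb{R})=1\ge\epsilon'$. Also note $\mathcal{F}_{\epsilon}\subset\mathcal{F}_{\epsilon'}$ for $\epsilon'\le\epsilon$. Set $\theta:=S_{u}\psi\varphi_{u}\mu$.

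First, since $S_{u}$ is a positive real scaling, it commutes with $\pi_{z\mathbb{R}}$, so $S_{u}\pi_{z\mathbb{R}}\psi\varphi_{u}\mu=\pi_{z\mathbb{R}}\theta$. Consequently
\[
H\left(S_{u}\pi_{z\mathbb{R}}\psi\varphi_{u}\mu,\mathcal{D}_{m}\right)=H\left(\theta,\pi_{z\mathbb{R}}^{-1}\mathcal{D}_{m}\right).
\]
Second, by subadditivity of entropy,
\[
H\left(\theta,\pi_{z\mathbb{R}}^{-1}\mathcal{D}_{m}\vee\pi_{w\mathbb{R}}^{-1}\mathcal{D}_{m}\right)\le H\left(\theta,\pi_{z\mathbb{R}}^{-1}\mathcal{D}_{m}\right)+H\left(\theta,\pi_{w\mathbb{R}}^{-1}\mathcal{D}_{m}\right).
\]
By Lemma \ref{lem:ent wrt mod cont dyad part} applied with $\epsilon'$, the left-hand side is at least $m(\dim\mu-\epsilon')$.

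Third, we bound the $w\mathbb{R}$ term by $m+O(1)$. The measure $\pi_{w\mathbb{R}}\theta$ lives on a line. By Lemma \ref{lem:Lip prop of varphi_u}, $\theta$ is supported on a set of diameter $O(\epsilon^{-1})$, so $\pi_{w\mathbb{R}}\theta$ is supported on a segment of length $O(\epsilon^{-1})$. This segment meets at most $O(\epsilon^{-1}2^{m})$ cells of $\mathcal{D}_{m}^{\mathbb{C}}$, since a segment of length $L$ meets $O(L2^{m}+1)$ dyadic squares of side $2^{-m}$. Hence $H(\theta,\pi_{w\mathbb{R}}^{-1}\mathcal{D}_{m})\le m+O(\log(1/\epsilon))$.

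Combining the three steps gives
\[
\frac{1}{m}H\left(S_{u}\pi_{z\mathbb{R}}\psi\varphi_{u}\mu,\mathcal{D}_{m}\right)\ge\dim\mu-\epsilon'-1-O\!\left(\frac{\log(1/\epsilon)}{m}\right),
\]
which exceeds $\dim\mu-1-\epsilon$ once $m\ge M(\epsilon)$.

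There is no real obstacle. The only points needing care are that $S_{u}$ commutes with the projection, which holds because $S_{u}$ scales by a positive real, and that the $w$-direction count is uniform in $u$, $\psi$ and $z\mathbb{R}$, which holds because the support bound from Lemma \ref{lem:Lip prop of varphi_u} depends only on $\epsilon$.
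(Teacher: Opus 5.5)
Your proposal is correct and follows the paper's proof essentially step for step. The paper also applies Lemma \ref{lem:ent wrt mod cont dyad part} to the join of the pullbacks of $\mathcal{D}_m$ under $\pi_{z\mathbb{R}}$ and $\pi_{(z\mathbb{R})^{\perp}}$, bounds the perpendicular term by $1+\epsilon$ using the $O(\epsilon^{-1})$ support diameter from Lemma \ref{lem:Lip prop of varphi_u}, subtracts, and finishes by commuting $S_u$ with $\pi_{z\mathbb{R}}$. Your auxiliary $\epsilon'\le\epsilon/3$, together with $\mathcal{F}_\epsilon\subset\mathcal{F}_{\epsilon'}$, tracks the constants more carefully than the paper, which ends at $\dim\mu-1-2\epsilon$ and absorbs the extra factor implicitly.
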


\begin{proof}
Let $0<\epsilon<1$ and $m\ge1$ be with $\epsilon^{-1}\ll m$, let
$z\mathbb{R}\in\mathbb{RP}^{1}$, $u\in\Lambda^{*}$, and $\psi\in\mathcal{F}_{\epsilon}$
be given, and set $\theta:=S_{u}\psi\varphi_{u}\mu$. Recall that
$\left(z\mathbb{R}\right)^{\perp}\in\mathbb{RP}^{1}$ denotes the
line perpendicular to $z\mathbb{R}$, and set
\[
\mathcal{E}:=\pi_{z\mathbb{R}}^{-1}\mathcal{D}_{m}\vee\pi_{\left(z\mathbb{R}\right)^{\perp}}^{-1}\mathcal{D}_{m}.
\]
By Lemma \ref{lem:ent wrt mod cont dyad part},
\[
\left|\frac{1}{m}H\left(\theta,\mathcal{E}\right)-\dim\mu\right|<\epsilon.
\]

By Lemma \ref{lem:Lip prop of varphi_u},
\[
\mathrm{diam}\left(\mathrm{supp}\left(\theta\right)\right)=O\left(1/\epsilon\right).
\]
Hence, since $\epsilon^{-1}\ll m$,
\[
\frac{1}{m}H\left(\pi_{\left(z\mathbb{R}\right)^{\perp}}\theta,\mathcal{D}_{m}\right)\le1+\epsilon.
\]
Additionally, by the conditional entropy formula,
\[
\frac{1}{m}H\left(\theta,\mathcal{E}\right)\le\frac{1}{m}H\left(\pi_{z\mathbb{R}}\theta,\mathcal{D}_{m}\right)+\frac{1}{m}H\left(\pi_{\left(z\mathbb{R}\right)^{\perp}}\theta,\mathcal{D}_{m}\right).
\]

All of this gives
\[
\frac{1}{m}H\left(\pi_{z\mathbb{R}}\theta,\mathcal{D}_{m}\right)\ge\dim\mu-1-2\epsilon.
\]
Since $S_{u}\circ\pi_{z\mathbb{R}}=\pi_{z\mathbb{R}}\circ S_{u}$,
this completes the proof of the lemma.
\end{proof}

\subsection{\label{subsec:The-direction-cocycle}The direction cocycle}

Let $\alpha:\Lambda^{\mathbb{N}}\rightarrow\mathbb{RP}^{1}$ be such
that
\[
\alpha(\omega)=\varphi_{\omega_{0}}'\left(\Pi(\sigma\omega)\right)\mathbb{R}\text{ for }\omega\in\Lambda^{\mathbb{N}}.
\]
Define a cocycle, which we call the direction cocycle, by setting
for each $n\ge0$ and $\omega\in\Lambda^{\mathbb{N}}$
\[
\alpha_{n}(\omega):=\prod_{i=0}^{n-1}\alpha\left(\sigma^{i}\omega\right),
\]
where recall from Section \ref{subsec:Basic-notations} that $\mathbb{RP}^{1}$
is considered as a multiplicative group. Note that $\alpha_{0}(\omega)=\mathbb{R}$,
and that by the chain rule
\begin{equation}
\alpha_{n}(\omega)=\varphi_{\omega|_{n}}'\left(\Pi\left(\sigma^{n}\omega\right)\right)\mathbb{R}.\label{eq:alpha_n equals}
\end{equation}

The following proposition is the main result of this subsection.
\begin{prop}
\label{prop:noncoboundary}There does not exist a Borel measurable
map $f:\Lambda^{\mathbb{N}}\rightarrow\mathbb{RP}^{1}$ such that
$\alpha(\omega)=f(\omega)^{-1}f\left(\sigma\omega\right)$ for $\beta$-a.e.
$\omega$.
\end{prop}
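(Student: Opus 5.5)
We argue by contradiction and try to turn a measurable transfer function $f$ into a holomorphic conjugacy to a homothetic IFS, which \ref{enu:main thm cond not holo conj} forbids. Two examples guide the plan. If $\psi\circ\varphi_{i}\circ\psi^{-1}$ is homothetic for every $i$, then $\varphi_{i}'(z)\mathbb{R}=\psi'(\varphi_{i}z)^{-1}\psi'(z)\mathbb{R}$, so $f(\omega):=\psi'(\Pi\omega)^{-1}\mathbb{R}$ (inverse in the group $\mathbb{RP}^{1}$) is a transfer function; a similar formula holds with tangent lines of an invariant curve. So we must show that every transfer function is of this rigid form. Assume $\alpha(\omega)=f(\omega)^{-1}f(\sigma\omega)$ for $\beta$-a.e. $\omega$. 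Iterating gives $f(\sigma^{n}\omega)=f(\omega)\alpha_{n}(\omega)$, and by (\ref{eq:alpha_n equals}),
\[
f(\omega)=f(\sigma^{n}\omega)\,\bigl(\varphi_{\omega|_{n}}'(\Pi\sigma^{n}\omega)\mathbb{R}\bigr)^{-1}.
\]

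\textbf{Step 1 (descent to $K_{\Phi}$).} We show $f(\omega)=g(\Pi\omega)$ a.e. for a Borel map $g:K_{\Phi}\rightarrow\mathbb{RP}^{1}$. By Lusin's theorem, $f$ is uniformly continuous on a compact set $A$ with $\beta(A)>1-\eta$. Take $\omega,\omega'$ in the same fibre of $\Pi$, distributed according to the disintegration $\beta_{\omega}$. We expect that, for many $n$, there exist $\eta,\eta'\in A$ that are close, together with words $u,u'\in\Lambda^{n}$, such that $\omega=u\eta$, $\omega'=u'\eta'$ and $\varphi_{u}(\Pi\eta)=\varphi_{u'}(\Pi\eta')$. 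We then plan to compare $\varphi_{u}'$ and $\varphi_{u'}'$ through bounded distortion and the Taylor estimate, Lemma \ref{lem:Taylor estimate}. This comparison is delicate, because the prefixes differ. We would try the Rapaport--Ren device: in the two-sided natural extension, the coboundary equation forces $f$ to be measurable with respect to the past $\sigma$-algebra. Combined with the one-sided structure, we hope this leaves only dependence on $\Pi\omega$.

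\textbf{Step 2 (regularity via zooming).} The relation now reads $g(\varphi_{u}z)=g(z)\,\varphi_{u}'(z)\mathbb{R}$ for $\mu$-a.e. $z$ and all $u$. By Lusin's theorem and Lemma \ref{lem:exists word u s.t. varph_u mu(E) is large}, we choose words $u_{k}$ such that $g$ is within $1/k$ of a constant $c_{k}$ on a set of $\varphi_{u_{k}}\mu$-measure $>1-1/k$. Pulling back gives
\[
g(z)\approx c_{k}\bigl(\varphi_{u_{k}}'(z)\mathbb{R}\bigr)^{-1}
\]
on a set of large $\mu$-measure. The normalized maps $S_{u_{k}}\circ\varphi_{u_{k}}$, suitably translated, lie in a normal family with derivative bounded below. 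By Montel's theorem and Lemma \ref{lem:injective or constant}, a subsequence converges to an injective holomorphic $\psi$ on $\Omega$ with $g(z)=c\,\psi'(z)^{-1}\mathbb{R}$ for $\mu$-a.e. $z$.

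\textbf{Step 3 (rigidity).} Set $F_{i}:=\psi\circ\varphi_{i}\circ\psi^{-1}$. Inserting the formula for $g$ into the cocycle relation gives $F_{i}'(w)\in\mathbb{R}$ for $\psi\mu$-a.e. $w$. So the harmonic function $\mathrm{Im}\,F_{i}'$ vanishes on a set of full $\psi\mu$-measure. If it is not identically zero, its zero set is locally a finite union of regular real-analytic arcs and isolated points. Pulling back by $\psi$, this contradicts Proposition \ref{prop:mu(Gamma)=00003D0}; that proposition is where \ref{enu:main thm cond no inv curve} enters, and $\mu$ has no atoms by \ref{enu:main thm cond no common fix}. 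Hence $F_{i}'$ is a real constant, so each $F_{i}$ is homothetic, contradicting \ref{enu:main thm cond not holo conj}. I expect Step 1 to be the main obstacle: passing from a symbolic measurable transfer function to one defined on $K_{\Phi}$ while the fibres of $\Pi$ may be nontrivial.
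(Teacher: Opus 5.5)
Your Steps 2 and 3 are sound in substance. Step 3 is essentially the paper's Lemma \ref{lem:lem bef no cobd prop}, and using the harmonic function $\mathrm{Im}\,F_i'$ directly is slightly simpler than the paper's $\mathrm{Im}\circ\log_U\circ g'\circ\psi$. However, the argument has a genuine gap at Step 1, which you leave as an expectation rather than a proof. The route you sketch there also does not close. Suppose $\omega=u\eta$ and $\omega'=u'\eta'$ lie in the same fibre with $u\neq u'$. The cocycle relation expresses $f(\omega)$ and $f(\omega')$ through $\varphi_u'(\Pi\eta)\mathbb{R}$ and $\varphi_{u'}'(\Pi\eta')\mathbb{R}$. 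Nothing in bounded distortion or the Taylor estimate forces these two directions to agree; that agreement is essentially the rigidity you are trying to prove. The appeal to past-measurability in the natural extension is likewise only a hope.

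The missing idea is that Step 1 is unnecessary: run your zooming argument on $\Lambda^{\mathbb{N}}$ rather than on $K_\Phi$.
\begin{itemize}
\item The coboundary equation gives $f(\omega)=\varphi_u'(\Pi\omega)\mathbb{R}\,f(u\omega)$ for all $u\in\Lambda^{*}$ and $\beta$-a.e. $\omega$. Here $\omega$ enters only through $\Pi\omega$ and $f(u\omega)$.
\item Take a Lusin set $E$ for $f$. By the martingale theorem, pick $\eta\in E$ with $\beta_{[\eta|_n]}(E)\to1$.
\item Since $\beta_{[\eta|_n]}$ is the pushforward of $\beta$ under $\omega\mapsto\eta|_n\omega$ and $f|_E$ is continuous, $f(\eta|_n\omega)\to f(\eta)$ in probability. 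Hence this holds $\beta$-a.e. along a subsequence $n_k$.
\item Normalize $\varphi_{\eta|_{n_k}}$ and pass to a Montel limit $\psi$. It is injective by Lemma \ref{lem:injective or constant}, and $\psi'\neq0$ by bounded distortion.
\end{itemize}
This yields $f(\omega)=\psi'(\Pi\omega)\mathbb{R}\,f(\eta)$ for $\beta$-a.e. $\omega$, which gives Steps 1 and 2 at once; the nontrivial fibres of $\Pi$ never enter. This is exactly how the paper proceeds before applying Lemma \ref{lem:lem bef no cobd prop}.

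There is also a harmless inversion slip. With the convention $\alpha(\omega)=f(\omega)^{-1}f(\sigma\omega)$, the transfer function coming from a conjugacy is $\psi'(\Pi\omega)\mathbb{R}$, not its inverse. The descended relation is $g(\varphi_u z)=g(z)\bigl(\varphi_u'(z)\mathbb{R}\bigr)^{-1}$. Your two inversions cancel in Step 3, so the conclusion $F_i'\in\mathbb{R}$ is unaffected.
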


\begin{rem*}
In the terminology of measurable cohomology (see \cite{MR578731}),
Proposition \ref{prop:noncoboundary} asserts that the cocycle $\alpha_{n}$
is not a coboundary.
\end{rem*}
The proof of the proposition requires some preparation. We omit the
proof of the following lemma, which follows easily from the real-analytic
implicit function theorem (see \cite[Theorem 2.3.5]{MR1916029}) and
the real-analyticity of harmonic functions.
\begin{lem}
\label{lem:level sets of nonconst harmonic}Let $\Omega_{1}\subset\mathbb{C}$
be a domain, and let $h:\Omega_{1}\rightarrow\mathbb{R}$ be a nonconstant
harmonic function. Then for each $c\in\mathbb{R}$ there exists a
(possibly empty) countable set $A\subset\Omega_{1}$ and a (possibly
empty) regular real-analytic curve $\Gamma\subset\Omega_{1}$ such
that $h^{-1}\{c\}=A\cup\Gamma$.
\end{lem}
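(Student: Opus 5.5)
We will deduce the lemma from the local structure of the zero set of a real-analytic function in the plane. Fix $c$ and set $g:=h-c$, which is harmonic and not identically zero on the connected set $\Omega_{1}$; since $g$ is real-analytic and $\Omega_{1}$ is connected, $g$ does not vanish on any nonempty open set. Split the level set as
\[
h^{-1}\{c\}=\left(h^{-1}\{c\}\cap\{\nabla h\neq0\}\right)\cup\left(h^{-1}\{c\}\cap\{\nabla h=0\}\right)=:\Gamma\cup A .
\]

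\textbf{The set $\Gamma$.} The set $\Omega_{2}:=\Omega_{1}\cap\{\nabla h\neq0\}$ is open, and $\Gamma=g^{-1}\{0\}\cap\Omega_{2}$. Harmonic functions are real-analytic, so at each $z_{0}\in\Gamma$ the real-analytic implicit function theorem \cite[Theorem 2.3.5]{MR1916029} gives a neighborhood $U\subset\Omega_{2}$ of $z_{0}$ and a real-analytic parametrization $\gamma:J\rightarrow\mathbb{C}$ of $\Gamma\cap U$ with $\gamma'\neq0$. The map $\gamma$ is a homeomorphism onto its image because $\Gamma\cap U$ is locally a graph over a coordinate axis. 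Hence $\Gamma$ is an embedded $1$-dimensional real-analytic submanifold, i.e.\ a regular real-analytic curve in the sense of Definition \ref{def:reg real-anal curve}. It may be empty or disconnected, which the remark after that definition allows.

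\textbf{The set $A$.} We show that the critical set $Z:=\{\nabla h=0\}$ is discrete in $\Omega_{1}$, so $A\subset Z$ is countable. Since $\Omega_{1}$ is simply connected locally, around each point $h=\mathrm{Re}\,F$ for some holomorphic $F$, and then $\partial_{x}h-i\partial_{y}h=F'$. More globally, $2\partial_{z}h=h_{x}-ih_{y}$ is itself holomorphic on all of $\Omega_{1}$, because $\partial_{\bar z}\partial_{z}h=\tfrac14\Delta h=0$. If $\partial_{z}h\equiv0$, then $\nabla h\equiv0$ and $h$ is constant on the connected set $\Omega_{1}$, contradicting the hypothesis. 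So $\partial_{z}h$ is a holomorphic function that is not identically zero, and its zero set $Z$ is discrete, hence countable since $\Omega_{1}$ is $\sigma$-compact. Therefore $A=h^{-1}\{c\}\cap Z$ is countable.

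This gives $h^{-1}\{c\}=A\cup\Gamma$ as required. There is no real obstacle. The only point needing care is checking that the implicit-function charts are embeddings, which follows because locally $\Gamma$ is the graph of a real-analytic function. We do not need to describe the structure of $h^{-1}\{c\}$ near the critical points, since those points are simply placed in $A$.
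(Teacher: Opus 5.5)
Your proof is correct and follows essentially the route the paper has in mind; the paper omits the proof, pointing only to the real-analytic implicit function theorem and the real-analyticity of harmonic functions. The non-critical part of $h^{-1}\{c\}$ is a regular real-analytic curve by the implicit function theorem, and the critical part is countable because $\partial_z h=\frac12(h_x-ih_y)$ is holomorphic and not identically zero on the domain $\Omega_1$, so its zeros are isolated; this is exactly the step where harmonicity, rather than mere real-analyticity, is needed.
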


Using this lemma, we can establish the following statement.
\begin{lem}
\label{lem:lem bef no cobd prop}Let $\psi:\Omega\rightarrow\mathbb{C}$
be holomorphic and injective, and let $j\in\Lambda$ be given. Set
$g:=\psi\circ\varphi_{j}\circ\psi^{-1}$, and suppose that
\begin{equation}
g'\left(\psi(z)\right)\mathbb{R}=\mathbb{R}\text{ for all }z\in K_{\Phi}.\label{eq:cond in lem bef no cobd prop}
\end{equation}
Then $g$ is homothetic.
\end{lem}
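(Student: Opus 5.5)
The plan is to apply Lemma \ref{lem:level sets of nonconst harmonic} to $h:=\mathrm{Im}\circ g'$ on $\psi(\Omega)$. Since $g'$ is holomorphic, $h$ is harmonic there. Note that $g$ is defined and holomorphic on $\psi(\Omega)\cap\psi(\varphi_j^{-1}(\Omega))\supset\psi(\Omega_0')$ for a suitable neighbourhood; for simplicity we work on the domain $U:=\psi(\Omega)$, using $\varphi_j(\Omega)\subset\Omega$, so that $g:U\to U$ is holomorphic and injective. The hypothesis (\ref{eq:cond in lem bef no cobd prop}) says that $g'(\psi(z))\in\mathbb{R}\setminus\{0\}$ for all $z\in K_\Phi$; in other words, $h$ vanishes on $\psi(K_\Phi)$. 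The goal is to show that $h\equiv0$ on $U$. Once this is known, $g'$ is a holomorphic function on the domain $U$ with identically zero imaginary part, so by the open mapping theorem $g'$ is constant, equal to some $r\in\mathbb{R}\setminus\{0\}$. Then $g(w)=rw+a$ on $U$ (using that $U$ is connected), i.e. $g$ is homothetic.

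To show $h\equiv0$, I argue by contradiction and suppose $h$ is nonconstant; note that $h$ cannot be a nonzero constant, since it vanishes on the nonempty set $\psi(K_\Phi)$. By Lemma \ref{lem:level sets of nonconst harmonic}, $h^{-1}\{0\}=A\cup\Gamma$ with $A$ countable and $\Gamma\subset U$ a regular real-analytic curve. Then
\[
\psi(K_\Phi)\subset A\cup\Gamma,\quad\text{so}\quad K_\Phi\subset\psi^{-1}(A)\cup\psi^{-1}(\Gamma).
\]
Here $\psi^{-1}(\Gamma)$ is again a regular real-analytic curve in $\Omega$, since $\psi^{-1}$ is biholomorphic and hence real-analytic with invertible differential. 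Moreover $\psi^{-1}(A)$ is countable.

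Now $\mu$ has no atoms: $\mu$ is exact dimensional, and any atom would force $\dim\mu=0$; more directly, an atom together with the absence of a common fixed point contradicts the self-conformality relation (\ref{eq:def rel of mu}) applied to a point of maximal mass. Hence $\mu(\psi^{-1}(A))=0$. By Proposition \ref{prop:mu(Gamma)=00003D0}, $\mu(\psi^{-1}(\Gamma))=0$. Combining these gives $\mu(K_\Phi)=0$, contradicting $\mu(K_\Phi)=1$. Therefore $h$ is constant, and thus $h\equiv0$.

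The main obstacle is the non-atomicity of $\mu$. To establish it, suppose $x$ is an atom of maximal mass $a>0$; there are only finitely many atoms of mass $a$. The relation $\mu(\{x\})=\sum_i p_i\mu(\varphi_i^{-1}\{x\})$ forces each $\varphi_i^{-1}(x)$, for every $i$, to be an atom of mass $a$. Iterating this, every finite composition maps the finite set $F$ of maximal atoms into itself surjectively, so each $\varphi_i$ permutes $F$. Some power $\varphi_i^{N}$ then fixes all of $F$; since $\varphi_i^{N}$ is a contraction with a unique fixed point, $F$ is a single point, which is fixed by every $\varphi_i$. This contradicts \ref{enu:main thm cond no common fix}. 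The remaining technical points are minor bookkeeping about the domain of $g$ (defined on $\psi(\Omega)$ because $\varphi_j(\Omega)\subset\Omega$) and the connectedness of $\psi(\Omega)$.
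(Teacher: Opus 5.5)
Your proof is correct and follows the same route as the paper. In both, a harmonic function built from $g'$ is shown to be constant by splitting its zero set, via Lemma \ref{lem:level sets of nonconst harmonic}, into a countable set and a regular real-analytic curve, neither of which $\mu$ charges (non-atomicity plus Proposition \ref{prop:mu(Gamma)=00003D0}), and the open mapping theorem then finishes. The only differences are cosmetic: the paper uses $\mathrm{Im}\circ\log\circ g'\circ\psi$ on a component of an open subset of $\Omega$ (levels $0$ and $-\pi$), whereas your $\mathrm{Im}\circ g'$ on all of $\psi(\Omega)$, with the curve pulled back by $\psi^{-1}$, avoids the branch cut and the passage to a component; you also prove the non-atomicity of $\mu$, which the paper only calls clear.
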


\begin{proof}
Set $U:=\mathbb{C}\setminus i\mathbb{R}_{\ge0}$ and
\[
\Omega_{1}:=\left\{ z\in\Omega\::\:g'\left(\psi(z)\right)\in U\right\} ,
\]
so that $\Omega_{1}$ is an open subset of $\Omega$. From (\ref{eq:cond in lem bef no cobd prop})
and since $\mathrm{supp}(\mu)=K_{\Phi}$, it follows that $\mu(\Omega_{1})=1$.
Let $\log_{U}$ denote the natural branch of the logarithm corresponding
to the domain $U$, so that
\[
\mathrm{Im}\left(\log_{U}(z)\right)\in\left(-\frac{3\pi}{2},\frac{\pi}{2}\right)\text{ for all }z\in U.
\]
Setting
\[
h:=\mathrm{Im}\circ\log_{U}\circ g'\circ\psi,
\]
it holds that $h:\Omega_{1}\rightarrow\mathbb{R}$ is harmonic.

Let $\Omega_{2}$ be a connected component of $\Omega_{1}$ with $\mu(\Omega_{2})>0$.
Assume by contradiction that $h$ is nonconstant on $\Omega_{2}$,
and set
\[
L:=\Omega_{2}\cap h^{-1}\{0,-\pi\}.
\]
By Lemma \ref{lem:level sets of nonconst harmonic}, there exists
a (possibly empty) countable set $A\subset\Omega_{2}$ and a (possibly
empty) regular real-analytic curve $\Gamma\subset\Omega_{2}$ such
that $L=A\cup\Gamma$. On the other hand, by (\ref{eq:cond in lem bef no cobd prop})
we have $\mu\left(h^{-1}\{0,-\pi\}\right)=1$, and so $\mu(L)=\mu(\Omega_{2})>0$.
Since the maps in $\Phi$ do not have a common fixed point, $\mu$
is clearly nonatomic. Hence $\mu(A)=0$, which implies that $\mu(\Gamma)=\mu(L)>0$.
But this contradicts Proposition \ref{prop:mu(Gamma)=00003D0}, and
so $h$ must be constant on $\Omega_{2}$.

Since $h$ is constant on $\Omega_{2}$, there exists $0\ne z_{0}\in\mathbb{C}$
such that $g'\left(\psi(\Omega_{2})\right)\subset z_{0}\mathbb{R}_{>0}$.
Thus, since $\psi(\Omega_{2})$ is open and $z_{0}\mathbb{R}_{>0}$
has empty interior, the open mapping theorem implies that $g'$ is
constant on $\psi(\Omega)$. From this and (\ref{eq:cond in lem bef no cobd prop}),
it follows that there exists $0\ne r\in\mathbb{R}$ such that $g'(z)=r$
for all $z\in\psi(\Omega)$, which shows that $g$ is homothetic.
\end{proof}
\begin{proof}[Proof of Proposition \ref{prop:noncoboundary}]
Assume by contradiction that there exists a Borel measurable $f:\Lambda^{\mathbb{N}}\rightarrow\mathbb{RP}^{1}$
such that $\alpha(\omega)=f(\omega)^{-1}f\left(\sigma\omega\right)$
for $\beta$-a.e. $\omega$. Then by (\ref{eq:alpha_n equals}),
\[
\varphi_{\omega|_{n}}'\left(\Pi\left(\sigma^{n}\omega\right)\right)\mathbb{R}=f(\omega)^{-1}f\left(\sigma^{n}\omega\right)\text{ for all }n\ge0\text{ and }\beta\text{-a.e. }\omega,
\]
which implies that
\begin{equation}
f(\omega)=\varphi_{u}'\left(\Pi\left(\omega\right)\right)\mathbb{R}f(u\omega)\text{ for all }u\in\Lambda^{*}\text{ and }\beta\text{-a.e. }\omega.\label{eq:first eq in comp arg}
\end{equation}

By Lusin's theorem, there exists a Borel subset $E$ of $\Lambda^{\mathbb{N}}$
such that $\beta(E)>0$ and $f|_{E}$ is continuous, where $f|_{E}$
denotes the restriction of $f$ to $E$. By the martingale theorem,
there exists $\eta\in E$ such that $\underset{n\rightarrow\infty}{\lim}\beta_{[\eta|_{n}]}(E)=1$.
Note that, for each $n\ge1$, the measure $\beta_{[\eta|_{n}]}$ equals
the pushforward of $\beta$ via the map $\omega\mapsto\eta|_{n}\omega$.
Thus,
\[
\underset{n\rightarrow\infty}{\lim}\beta\left\{ \omega\::\:\eta|_{n}\omega\in E\right\} =1.
\]
Given $\epsilon>0$, this implies that
\[
\underset{n\rightarrow\infty}{\lim}\beta\left\{ \omega\::\:d\left(f\left(\eta|_{n}\omega\right),f(\eta)\right)>\epsilon\right\} =0.
\]
In other words, the sequence $\left\{ \omega\mapsto f\left(\eta|_{n}\omega\right)\right\} _{n\ge1}$
converges in probability to $f(\eta)$. Hence, there exists a strictly
increasing sequence $\left(n_{k}\right)_{k\ge1}\subset\mathbb{Z}_{>0}$
such that
\begin{equation}
\underset{k\rightarrow\infty}{\lim}f\left(\eta|_{n_{k}}\omega\right)=f(\eta)\text{ for }\beta\text{-a.e }\omega.\label{eq:conv to f(eta)}
\end{equation}

For $k\ge1$, write
\[
\psi_{k}:=S_{\eta|_{n_{k}}}\circ T_{\varphi_{\eta|_{n_{k}}}(0)}^{-1}\circ\varphi_{\eta|_{n_{k}}}.
\]
By Lemma \ref{lem:Lip prop of varphi_u} and since $\psi_{k}(0)=0$
for $k\ge1$, the sequence $\left\{ \psi_{k}\right\} _{k\ge1}$ is
uniformly bounded on $\Omega$. Thus, by Montel's theorem and by moving
to a subsequence without changing notation, there exists a holomorphic
$\psi:\Omega\rightarrow\mathbb{C}$ such that $\psi_{k}\overset{k}{\rightarrow}\psi$
uniformly on compact subsets of $\Omega$.

It also holds that $\psi_{k}'\overset{k}{\rightarrow}\psi'$ uniformly
on compact subsets of $\Omega$ (see \cite[page 54]{MR1976398}).
Moreover, by bounded distortion,
\[
\inf\left\{ \left|\psi_{k}'(z)\right|\::\:k\ge1\text{ and }z\in\Omega\right\} >0,
\]
and so $\psi'(z)\ne0$ for all $z\in\Omega.$ Hence, for each $z\in\Omega$
\[
\underset{k\rightarrow\infty}{\lim}\varphi_{\eta|_{n_{k}}}'(z)\mathbb{R}=\underset{k\rightarrow\infty}{\lim}\psi_{k}'(z)\mathbb{R}=\psi'(z)\mathbb{R}.
\]
From this, (\ref{eq:first eq in comp arg}), and (\ref{eq:conv to f(eta)}),
it follows that for $\beta$-a.e. $\omega$
\begin{equation}
f(\omega)=\underset{k\rightarrow\infty}{\lim}\varphi_{\eta|_{n_{k}}}'\left(\Pi\left(\omega\right)\right)\mathbb{R}f\left(\eta|_{n_{k}}\omega\right)=\psi'\left(\Pi\left(\omega\right)\right)\mathbb{R}f(\eta).\label{eq:f(omega)=00003Dpsi'(Pi(omega))R}
\end{equation}

Recall that the maps in $\Phi$ are all assumed to be injective on
$\Omega$, from which it follows that $\psi_{k}$ is injective on
$\Omega$ for each $k\ge1$. Moreover, since $\psi$ has nonvanishing
derivative it is certainly nonconstant. Hence, by Lemma \ref{lem:injective or constant},
we obtain that $\psi$ is injective.

Fix $j\in\Lambda$ and set $g:=\psi\circ\varphi_{j}\circ\psi^{-1}$.
By (\ref{eq:first eq in comp arg})
\[
f(j\omega)\varphi_{j}'\left(\Pi\left(\omega\right)\right)\mathbb{R}f(\omega)^{-1}=\mathbb{R}\text{ for }\beta\text{-a.e. }\omega.
\]
Thus, by (\ref{eq:f(omega)=00003Dpsi'(Pi(omega))R})
\[
\psi'\left(\Pi\left(j\omega\right)\right)\varphi_{j}'\left(\Pi\left(\omega\right)\right)\psi'\left(\Pi\left(\omega\right)\right)^{-1}\mathbb{R}=\mathbb{R}\text{ for }\beta\text{-a.e. }\omega.
\]
Note that $\Pi\left(j\omega\right)=\varphi_{j}\left(\Pi(\omega)\right)$
for $\omega\in\Lambda^{\mathbb{N}}$, and $\psi'(z)^{-1}=\left(\psi^{-1}\right)'(\psi(z))$
for $z\in\Omega$. Hence, for $\beta$-a.e. $\omega$
\begin{equation}
g'\left(\psi\circ\Pi(\omega)\right)\mathbb{R}=\psi'\left(\varphi_{j}\left(\Pi(\omega)\right)\right)\varphi_{j}'\left(\Pi\left(\omega\right)\right)\left(\psi^{-1}\right)'\left(\psi\circ\Pi(\omega)\right)\mathbb{R}=\mathbb{R}.\label{eq:for beta-a.e omega =00003DR}
\end{equation}
Since $\mu=\Pi\beta$ and $\mathrm{supp}(\mu)=K_{\Phi}$, this clearly
implies that $g'\left(\psi(z)\right)\mathbb{R}=\mathbb{R}$ for all
$z\in K_{\Phi}$. Thus, by Lemma \ref{lem:lem bef no cobd prop},
$g$ is homothetic.

We have thus shown that for each $j\in\Lambda$ the map $\psi\circ\varphi_{j}\circ\psi^{-1}:\psi(\Omega)\rightarrow\mathbb{C}$
is homothetic, and so $\Phi$ is holomorphically conjugate to a homothetic
IFS, which contradicts our standing assumption. Hence, there does
not exist a Borel measurable $f:\Lambda^{\mathbb{N}}\rightarrow\mathbb{RP}^{1}$
such that $\alpha(\omega)=f(\omega)^{-1}f\left(\sigma\omega\right)$
for $\beta$-a.e. $\omega$, which completes the proof of the proposition.
\end{proof}
We next state an important implication of Proposition \ref{prop:noncoboundary}.
First, we need the following definition.
\begin{defn}
Given $\delta>0$, we say that $\theta\in\mathcal{M}\left(\mathbb{RP}^{1}\right)$
is $\delta$-concentrated if there exists $z\mathbb{R}\in\mathbb{RP}^{1}$
such that $\theta\left(B\left(z\mathbb{R},\delta\right)\right)>1-\delta$.
\end{defn}

The following proposition is a consequence of Proposition \ref{prop:noncoboundary}.
The derivation is identical to that of \cite[Proposition 4.8]{Rap_Ren_Fur_on_CP1}
from \cite[Proposition 4.6]{Rap_Ren_Fur_on_CP1} and is therefore
omitted.
\begin{prop}
\label{prop:equidist prop}There exists $\delta>0$ such that for
every continuous $h:\Lambda^{\mathbb{N}}\rightarrow\mathbb{RP}^{1}$
and for $\beta$-a.e. $\omega$, the sequence $\left(\alpha_{n}(\omega)h\left(\sigma^{n}\omega\right)\right)_{n\ge0}$
is equidistributed with respect to some $\theta\in\mathcal{M}\left(\mathbb{RP}^{1}\right)$
that is not $\delta$-concentrated.
\end{prop}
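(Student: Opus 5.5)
We derive Proposition \ref{prop:equidist prop} from Proposition \ref{prop:noncoboundary} via the skew-product over the Bernoulli shift determined by the direction cocycle. Let $X:=\Lambda^{\mathbb{N}}\times\mathbb{RP}^{1}$ and $T(\omega,x):=(\sigma\omega,\alpha(\omega)x)$, so that $T^{n}(\omega,x)=(\sigma^{n}\omega,\alpha_{n}(\omega)x)$. Here $\mathbb{RP}^{1}$ is an abelian compact group isomorphic to the circle, and $\alpha$ is continuous, since $\Pi$ and $\varphi_{i}'$ are continuous and nonvanishing.

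First, we identify the limit measures. For a continuous $h$ and a test function $F\in C(\mathbb{RP}^{1})$, we want $\frac{1}{N}\sum_{n<N}F(\alpha_{n}(\omega)h(\sigma^{n}\omega))$ to converge for $\beta$-a.e. $\omega$. Writing $\alpha_{n}(\omega)h(\sigma^{n}\omega)=x\cdot\bigl(\alpha_{n}(\omega)\bigr)\cdot h(\sigma^{n}\omega)x^{-1}$ is awkward, so instead we expand $F$ in characters $\chi_{k}(x)=x^{k}$ of $\mathbb{RP}^{1}\cong\mathbb{R}/\pi\mathbb{Z}$. It then suffices to understand $\frac{1}{N}\sum_{n}\chi_{k}(\alpha_{n}(\omega))\chi_{k}(h(\sigma^{n}\omega))$. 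This is a Birkhoff-type average for the unitary Koopman-twisted operator $U_{k}G:=\chi_{k}(\alpha)\cdot G\circ\sigma$ on $L^{2}(\beta)$, applied to $\chi_{k}\circ h$.

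Second, we use the noncoboundary property to control eigenfunctions. For $k\neq0$, an eigenfunction $U_{k}G=\lambda G$ with $|\lambda|=1$ and $G\neq0$ has $|G|$ shift-invariant, hence constant by ergodicity. It therefore yields a measurable $f$ with $\chi_{k}(\alpha)=\lambda f^{-1}\cdot f\circ\sigma$, i.e. $\alpha^{k}$ is cohomologous to a constant. We show that this is excluded for all $k$, and for all $\lambda$ apart from controlled cases. Proposition \ref{prop:noncoboundary} handles $k=1$, $\lambda=1$. For the general case we would rerun the proof of Proposition \ref{prop:noncoboundary} with $\chi_{k}(\alpha)$ and a constant $\lambda$: the Lusin and martingale argument again yields $f(\omega)=\psi'(\Pi\omega)^{k}\mathbb{R}\cdot(\text{const})$. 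The constant $\lambda$ must be absorbed, which is possible by iterating along $\eta|_{n_k}$ since $\lambda^{n_{k}}$ has a convergent subsequence. Lemma \ref{lem:lem bef no cobd prop} then applies to $g'(\psi(z))^{k}\in\bar\lambda\mathbb{R}$, and its harmonic-function argument (using Proposition \ref{prop:mu(Gamma)=00003D0}) works equally well for any fixed line. The conclusion is again a holomorphic conjugacy to a homothetic IFS, or to one with constant derivative direction, which is ruled out similarly. This is the step I expect to be the main obstacle, since this is where Rapaport--Ren's \cite[Proposition 4.6]{Rap_Ren_Fur_on_CP1} does the real work. Once eigenvalues are excluded, the spectral measure of $\chi_{k}\circ h$ for $U_{k}$ is continuous. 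Combined with a Wiener--Wintner-type pointwise theorem on the mixing Bernoulli base with continuous $h$, this gives convergence of the averages to $0$ for each $k\neq0$.

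Third, we conclude. The averages therefore converge a.s. for every $F$: the $k$-th Fourier coefficients of the empirical measures tend to $0$ for $k\neq0$. Hence the sequence equidistributes with respect to Haar measure on $\mathbb{RP}^{1}$, which is not $\delta$-concentrated for any small fixed $\delta$ (say $\delta=1/10$), uniformly in $h$. If the full exclusion of eigenvalues fails for some $k$, I would fall back to the weaker statement actually claimed. A limit measure exists a.s. by ergodic decomposition of $T$ on $X$. If it were $\delta$-concentrated for $\delta\to0$ along a positive-measure set, a compactness argument would produce measurable $f$ with $\alpha=f^{-1}\cdot f\circ\sigma$ a.e., contradicting Proposition \ref{prop:noncoboundary}; a quantitative version of this argument yields a uniform $\delta$.
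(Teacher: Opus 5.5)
Your main route (Steps 2--3) aims at a statement that is false under the standing assumptions. Proposition \ref{prop:noncoboundary} only rules out $\alpha$ itself being a coboundary. Rerunning its proof for $\alpha^{k}$ with $k\ge2$ only shows that $\Phi$ is holomorphically conjugate to an IFS of similarities $z\mapsto c_{j}z+b_{j}$ with $c_{j}^{k}\in\mathbb{R}$, and the hypotheses do \emph{not} exclude this.

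Take $\varphi_{1}(z)=iz/3$ and $\varphi_{2}(z)=iz/3+1$ on $\Omega=D_{2}(0)$. This IFS has no common fixed point and satisfies the SSC. It also satisfies \ref{enu:main thm cond no inv curve} and \ref{enu:main thm cond not holo conj}, by the argument in the proof of Corollary \ref{cor:ver suff criterion}, since $\varphi_{1}'(0)=i/3\notin\mathbb{R}$. Yet $\alpha\equiv i\mathbb{R}$, so $\alpha^{2}$ is trivial and $U_{2}$ fixes constants. For $h\equiv\mathbb{R}$ the sequence $\alpha_{n}(\omega)h(\sigma^{n}\omega)$ alternates between $\mathbb{R}$ and $i\mathbb{R}$, so it equidistributes to $\frac{1}{2}(\delta_{\mathbb{R}}+\delta_{i\mathbb{R}})$, not to Haar measure.

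So your claim that the resulting conjugacy ``is ruled out similarly'' fails, and this is exactly why the proposition asserts only non-concentration. Incidentally, for the averages $\frac{1}{N}\sum_{n<N}U_{k}^{n}G$ only the eigenvalue $1$ of the isometry $U_{k}$ matters. Wiener--Wintner and continuity of spectral measures are beside the point.

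Your fallback is where the proof actually lives, and it is the route the paper intends: the proposition is derived from Proposition \ref{prop:noncoboundary} alone. As written, though, the fallback is only a hope. It also skips the real issue: $\delta$ must be independent of $h$, since it is later applied with $h$ depending on $\epsilon,m,k$. A compactness argument across different $h$'s does not obviously yield a measurable $f$. The missing argument runs as follows.

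Fix $h$.
\begin{enumerate}
\item Existence of the limit: $\theta_{\omega}$ exists for $\beta$-a.e. $\omega$ by the ergodic theorem for $T$ with respect to $\beta\times m$, applied to $F(\omega,x)=G(xh(\omega))$. Fibre rotations commute with $T$, which lets you pass from a.e.\ $x$ to $x=\mathbb{R}$.
\item Equivariance: since $\alpha_{n}(\omega)=\alpha(\omega)\alpha_{n-1}(\sigma\omega)$, you get $\theta_{\omega}=\alpha(\omega)\theta_{\sigma\omega}$. Hence the rotation class of $\theta_{\omega}$ is $\sigma$-invariant, so by ergodicity it is a.e.\ that of a fixed $\tau$, i.e.\ $\theta_{\omega}=f(\omega)\tau$.
\item The stabilizer is nontrivial: let $H$ be the stabilizer of $\tau$. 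If $H$ were trivial, the orbit map would be a homeomorphism, so $f$ would be Borel with $f(\omega)=\alpha(\omega)f(\sigma\omega)$. Then $\alpha$ would be a coboundary, contradicting Proposition \ref{prop:noncoboundary}. So $H$ is a nontrivial closed subgroup, either cyclic of order $k\ge2$ or all of $\mathbb{RP}^{1}$.
\item Uniform non-concentration: averaging over $H$ shows that every ball of radius $r$ has $\tau$-mass at most $\frac{1}{2}+\frac{2}{\pi}\arcsin r$. The same bound holds for each rotate $\theta_{\omega}$.
\end{enumerate}
So, for example, $\delta=1/10$ works uniformly in $h$. This argument replaces both your Haar-equidistribution step and the unspecified ``quantitative compactness'' step.
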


The following corollary follows immediately from Proposition \ref{prop:equidist prop}.
Recall that $\lambda_{n}$ denotes the uniform probability measure
on $\mathcal{N}_{n}:=\left\{ 1,...,n\right\} $.
\begin{cor}
\label{cor:from equidist prop}There exists $0<\delta<1$ such that
for every continuous $h:\Lambda^{\mathbb{N}}\rightarrow\mathbb{RP}^{1}$
and for $\beta$-a.e. $\omega$, there exists $N_{h,\omega}\ge1$
so that for every $n\ge N_{h,\omega}$,
\[
\lambda_{n}\left\{ i\in\mathcal{N}_{n}\::\:d\left(\alpha_{i}(\omega)h\left(\sigma^{i}\omega\right),z\mathbb{R}\right)>\delta\right\} >\delta\text{ for all }z\mathbb{R}\in\mathbb{RP}^{1}.
\]
\end{cor}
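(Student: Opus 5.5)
The plan is to take $\delta$ to be the constant from Proposition \ref{prop:equidist prop}, possibly shrunk, and then turn "equidistributed with respect to a non-$\delta$-concentrated measure" into the uniform frequency statement by a compactness argument on $\mathbb{RP}^{1}$.

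First, fix a continuous $h$ and let $\delta_{0}>0$ be given by Proposition \ref{prop:equidist prop}. For $\beta$-a.e. $\omega$, the empirical measures
\[
\theta_{n}:=\frac{1}{n}\sum_{i=1}^{n}\delta_{\alpha_{i}(\omega)h(\sigma^{i}\omega)}
\]
converge weak-$*$ to some $\theta$ that is not $\delta_{0}$-concentrated. Dropping the $i=0$ term, or adding an $i=n$ term, changes each $\theta_{n}$ by $O(1/n)$ in total variation, so indexing over $\mathcal{N}_{n}$ rather than $\{0,\dots,n-1\}$ does not matter. Set $\delta:=\delta_{0}/4$. Then
\[
\theta\left(B(z\mathbb{R},\delta_{0})\right)\le1-\delta_{0}
\]
for every $z\mathbb{R}$.

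Next, we get uniformity in $z\mathbb{R}$. Cover the compact space $\mathbb{RP}^{1}$ by finitely many balls $B(z_{j}\mathbb{R},\delta)$ with $j=1,\dots,J$. For each $j$, the open ball $U_{j}$ of radius $\delta_{0}-\delta$ around $z_{j}\mathbb{R}$ satisfies $\theta(U_{j})\le1-\delta_{0}$. The closed set $F_{j}:=\overline{U_{j}}$ (or a slightly smaller closed ball) also has $\theta(F_{j})\le1-\delta_{0}$. By the portmanteau theorem, $\limsup_{n}\theta_{n}(F_{j})\le\theta(F_{j})$, so there is $N_{h,\omega}$ such that $\theta_{n}(F_{j})<1-\delta_{0}/2$ for all $j$ and all $n\ge N_{h,\omega}$.

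Now let $z\mathbb{R}$ be arbitrary and choose $j$ with $d(z\mathbb{R},z_{j}\mathbb{R})\le\delta$. By the triangle inequality, $B(z\mathbb{R},\delta)\subset B(z_{j}\mathbb{R},2\delta)\subset F_{j}$, using $2\delta<\delta_{0}-\delta$. Hence the proportion of $i\in\mathcal{N}_{n}$ with $d(\alpha_{i}(\omega)h(\sigma^{i}\omega),z\mathbb{R})>\delta$ is at least $\delta_{0}/2>\delta$.

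The only delicate point is the direction of the portmanteau inequality. We need closed sets for the upper bound on mass near $z\mathbb{R}$, which is why we pass to slightly smaller closed balls inside the open balls controlled by non-concentration. This works because $\theta$ of an open ball of radius $\delta_{0}-\delta$ is at most $\theta(B(z_{j}\mathbb{R},\delta_{0}))$. Everything else is routine, and $\delta$ depends only on $\delta_{0}$, not on $h$ or $\omega$, as required.
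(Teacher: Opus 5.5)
Your proposal is correct and is essentially the argument the paper leaves implicit when it says the corollary "follows immediately" from Proposition \ref{prop:equidist prop}. You use weak-$*$ convergence of the empirical measures, apply the portmanteau upper bound on finitely many closed balls covering $\mathbb{RP}^{1}$, and take $\delta$ to be a fixed fraction of the proposition's constant. The index shift from $\{0,\dots,n-1\}$ to $\mathcal{N}_{n}$ and the containment $B(z\mathbb{R},\delta)\subset B(z_{j}\mathbb{R},2\delta)\subset F_{j}\subset B(z_{j}\mathbb{R},\delta_{0})$ are handled correctly.
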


\subsection{\label{subsec:The-nontrivial-lower lb}The nontrivial lower bound}

The purpose of this subsection is to prove the following proposition.
Recall the random words $\mathbf{U}_{i}$ introduced in Section \ref{subsec:Symbolic-notations}.
\begin{prop}
\label{prop:nontriv portion > dim(mu)/2}There exists $0<\delta<1$
such that for every $\epsilon>0$, $m\ge M(\epsilon)\ge1$, $n\ge N(\epsilon,m)\ge1$,
$z\mathbb{R}\in\mathbb{RP}^{1}$, $u\in\Lambda^{*}$, and $\psi\in\mathcal{F}_{\epsilon}$,
\[
\mathbb{P}_{1\le i\le n}\left\{ \frac{1}{m}H\left(S_{u\mathbf{U}_{i}}\pi_{z\mathbb{R}}\psi\varphi_{u\mathbf{U}_{i}}\mu,\mathcal{D}_{m}\right)>\frac{1}{2}\dim\mu-\epsilon\right\} >\delta.
\]
\end{prop}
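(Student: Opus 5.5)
The plan is to reduce the statement to the two-projection bound of Lemma \ref{lem:ent wrt mod cont dyad part}, using the non-concentration supplied by Corollary \ref{cor:from equidist prop}. The key observation is this: if, for two random indices $i$ and $i'$, the effective projection directions of $\psi\varphi_{u\mathbf{U}_{i}}$ onto $z\mathbb{R}$ are $\delta$-separated, then at least one of the two projected entropies is at least $\frac{1}{2}\dim\mu-\epsilon$.

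\emph{Linearization.} For $v\in\Lambda^{i}$ with $i$ large, I write $S_{uv}\psi\varphi_{uv}=(S_{uv}\psi\varphi_{u}\varphi_{v}\circ\varphi_{v}^{-1}\ldots)$; more usefully, $\varphi_{uv}\mu=\varphi_{u}\varphi_{v}\mu$, and on $\varphi_{v}(\Omega)$ (diameter $O(2^{-\chi i})$) the map $\psi\circ\varphi_{u}$ is, up to relative error $o(1)$ at scale $|\varphi_{uv}'(0)|2^{-m}$ once $i\gg m$, the affine map $w\mapsto(\psi\varphi_{u})'(x_{v})(w-x_{v})+\mathrm{const}$. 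Here I use Lemma \ref{lem:Taylor estimate} with $k=1$ together with Lemma \ref{lem:close func --> close ent}.

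Hence $S_{uv}\pi_{z\mathbb{R}}\psi\varphi_{uv}\mu$ agrees, up to $O(1)$ entropy at scale $2^{-m}$ and a translation, with $\pi_{z\mathbb{R}}$ applied to a rotation of $S_{v}\varphi_{v}\mu$ by the argument of $(\psi\varphi_{u})'(x_{v})$. Equivalently, it is the projection of $S_{v}\varphi_{v}\mu$ onto the line $\big((\psi\varphi_{u})'(x_{v})\big)^{-1}z\mathbb{R}$. Write $\varphi_{v}=\varphi_{\omega|_{i}}$ with $\omega$ random under $\beta$. Then $S_{v}\varphi_{v}\mu$ is itself, by the same linearization but now applied on $\Omega$, the image of $\mu$ under a bi-Lipschitz holomorphic map $G_{\omega,i}$. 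I would normalize $G_{\omega,i}$ by the direction $\alpha_{i}(\omega)$, using (\ref{eq:alpha_n equals}), bounded distortion, and a continuous auxiliary correction $h(\sigma^{i}\omega)$, so that $G_{\omega,i}=\alpha_{i}(\omega)h(\sigma^{i}\omega)\cdot F_{\sigma^{i}\omega}$ with $F$ ranging over a compact family in $\mathcal{F}_{c}$.

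The precise choice of $h$ is the technical point: it should absorb the discrepancy between $\varphi_{\omega|_{i}}'(\Pi\sigma^{i}\omega)$ and $\varphi_{\omega|_{i}}'(0)$. Using Montel compactness, as in the proof of Proposition \ref{prop:noncoboundary}, the limit depends continuously on the tail. I would also fix the direction $\xi:=(\psi\varphi_{u})'(\Pi\omega)\mathbb{R}$, which is constant to first order over the whole of $\varphi_{u}$'s cylinder after the next reduction.

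\emph{Non-concentration and pairing.} By Corollary \ref{cor:from equidist prop}, for $\beta$-a.e.\ $\omega$ and $n$ large, every line $z'\mathbb{R}$ has at least a $\delta$-proportion of indices $i\le n$ with $d(\alpha_{i}(\omega)h(\sigma^{i}\omega),z'\mathbb{R})>\delta$. Egorov's theorem makes $N_{h,\omega}$ uniform on a set of $\omega$ of measure $>1-\delta/10$.

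Fix $\omega$. Let $\ell_{i}$ be the effective projection line for index $i$. Call $i$ \emph{bad} if $\frac{1}{m}H(\ldots)\le\frac{1}{2}\dim\mu-\epsilon$. If two bad indices $i,i'$ had $d(\ell_{i},\ell_{i'})\ge\delta$, then the entropy of the common measure $F_{\ldots}\mu$ would fail Lemma \ref{lem:ent wrt mod cont dyad part}. That lemma gives
\[
\dim\mu-\epsilon\le\tfrac{1}{m}H(\pi_{\ell_{i}}^{-1}\mathcal{D}_{m}\vee\pi_{\ell_{i'}}^{-1}\mathcal{D}_{m})\le\text{sum of two projections}<\dim\mu-2\epsilon,
\]
which is a contradiction. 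The difficulty is that the measures for $i$ and $i'$ differ, since $F_{\sigma^{i}\omega}\ne F_{\sigma^{i'}\omega}$. I would handle this by pooling over $\omega$: average the statement over $\beta$ and use stationarity, so that the distribution of $\sigma^{i}\omega$ is $\beta$ for every $i$. The comparison then becomes one between projections of a single random $F\mu$ in two directions, using a Fubini-type counting argument.

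Concretely, I expect to use the following contrapositive. If more than a $(1-\delta)$-fraction of pairs $(\omega,i)$ were bad, then the lines $\ell_{i}(\omega)$ would be $\delta$-concentrated for typical $\omega$, because otherwise two bad indices with well-separated lines would exist. Whether these two indices involve the same measure would be arranged through the continuity of $F$ in the tail together with a further cylinder refinement. Such concentration contradicts Corollary \ref{cor:from equidist prop}, applied with $z'\mathbb{R}=\xi^{-1}z\mathbb{R}$.

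\emph{Main obstacle.} The main obstacle is precisely this last comparison. Lemma \ref{lem:ent wrt mod cont dyad part} controls two projections of the \emph{same} measure, while the indices produce different measures $F_{\sigma^{i}\omega}\mu$. I would first try to resolve it by choosing $h$ so that the residual maps $F$ lie in a compact family, then covering that family by finitely many $\epsilon$-balls (a cost of $O_{\epsilon}(1)$). Pigeonholing bad indices inside a single ball would give pairs with essentially the same measure, and Lemma \ref{lem:close func --> close ent} would transfer the bound between them. The number of balls must then be kept small relative to $\delta$. If it is not, I would instead restrict to indices $i$ with $\sigma^{i}\omega$ in a fixed small cylinder, which has positive frequency by ergodicity, and apply Corollary \ref{cor:from equidist prop} with $h$ adapted to that cylinder.
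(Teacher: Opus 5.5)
There is a genuine gap, sitting where you place your ``main obstacle'' and caused by an incorrect factorization earlier. Linearizing $\psi\circ\varphi_{u}$ on $\varphi_{v}(\Omega)$ is fine, but it leaves you with the measure $S_{\omega|_{i}}\varphi_{\omega|_{i}}\mu$. You then claim $G_{\omega,i}=\alpha_{i}(\omega)h(\sigma^{i}\omega)\cdot F_{\sigma^{i}\omega}$ with $F$ depending only on the tail, and this cannot hold.

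The map $S_{\omega|_{i}}\varphi_{\omega|_{i}}$ is determined by the prefix $\omega|_{i}$. Its nonlinearity on $\Omega$ is governed by the innermost letters $\omega_{i-1},\omega_{i-2},\dots$, not by $\sigma^{i}\omega$. Likewise, the discrepancy between $\varphi_{\omega|_{i}}'(\Pi\sigma^{i}\omega)$ and $\varphi_{\omega|_{i}}'(0)$ depends on those last prefix letters, so no continuous function of the tail can absorb it. The Montel argument in Proposition \ref{prop:noncoboundary} gives subsequential limits along prefixes, not a continuous function of $\sigma^{i}\omega$.

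Your pairing of two bad indices $i,i'$ also needs them to carry the same measure, which they do not, and your remedies do not close this. A finite cover of the family has cardinality growing with $\epsilon$, while $\delta$ is fixed by Corollary \ref{cor:from equidist prop}. That corollary also says nothing about the proportion of separated indices among those $i$ with $\sigma^{i}\omega$ in a fixed cylinder.

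Two ideas are missing.

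First, replace $i$ by $i+k$ with $m\ll k\ll n$; this costs only $\epsilon$ in probability. Then linearize the whole outer map $\psi\circ\varphi_{u\omega|_{i}}$ on the small set $\varphi_{(\sigma^{i}\omega)|_{k}}(\Omega)$ at the point $\Pi\sigma^{i}\omega$. By the chain rule and (\ref{eq:alpha_n equals}), its derivative direction there is exactly $(\psi\circ\varphi_{u})'(\Pi\omega)\mathbb{R}\cdot\alpha_{i}(\omega)$. What remains is $S_{(\sigma^{i}\omega)|_{k}}\varphi_{(\sigma^{i}\omega)|_{k}}\mu$, which depends on the tail only through its first $k$ letters (Lemma \ref{lem:linearization in non sat argument}).

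Second, never compare different indices. For each single $v\in\Lambda^{k}$, apply Lemma \ref{lem:ent wrt mod cont dyad part} to two directions of the \emph{same} measure $S_{v}\varphi_{v}\mu$, one of them an approximate minimizer $w_{v}\mathbb{R}$ of the projected entropy. This shows every direction outside $B(w_{v}\mathbb{R},\epsilon)$ has projected entropy $>\frac{1}{2}\dim\mu-\epsilon/2$ (Lemma \ref{lem:ent > 1/2 dim mu outside a small interval}). Now set $h(\omega):=w_{\omega|_{k}}\mathbb{R}$, which is locally constant and hence continuous. Apply Corollary \ref{cor:from equidist prop} with this $h$ and the line $z(\psi\circ\varphi_{u})'(\Pi\omega)^{-1}\mathbb{R}$; this is exactly why the corollary is stated for an arbitrary continuous $h$.

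Since $h$ depends only on $\epsilon,m,k$, and the corollary holds for all lines at once, taking $n$ large relative to $h$ gives the bound uniformly in $z\mathbb{R}$, $u$ and $\psi$. In your proposal $h$ serves as a correction term rather than as the measure-dependent bad direction, which is why the comparison across different measures stays unresolved.
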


The proof of the proposition requires some preparation.
\begin{lem}
\label{lem:ent > 1/2 dim mu outside a small interval}For every $\epsilon>0$,
$m\ge M(\epsilon)\ge1$, and $u\in\Lambda^{*}$, there exists $z\mathbb{R}\in\mathbb{RP}^{1}$
such that
\[
\frac{1}{m}H\left(S_{u}\pi_{w\mathbb{R}}\varphi_{u}\mu,\mathcal{D}_{m}\right)>\frac{1}{2}\dim\mu-\epsilon\text{ for all }w\mathbb{R}\in\mathbb{RP}^{1}\setminus B\left(z\mathbb{R},\epsilon\right).
\]
\end{lem}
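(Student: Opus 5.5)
We will deduce the statement from Lemma \ref{lem:ent wrt mod cont dyad part} (with $\psi$ the identity, which lies in $\mathcal{F}_{\epsilon}$ for $\epsilon\le1$) together with the conditional entropy formula. The key observation is that two projections in directions that are $\epsilon$-separated jointly carry essentially all the entropy of $\theta:=S_{u}\varphi_{u}\mu$. Hence at most one "cluster" of directions can have projected entropy below half of $\dim\mu$.

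Fix $\epsilon>0$, let $m$ satisfy $\epsilon^{-1}\ll m$, and let $u\in\Lambda^{*}$. We argue by contradiction. Suppose that no $z\mathbb{R}$ has the required property. Choose $z\mathbb{R}$ with
\[
\frac{1}{m}H\left(\pi_{z\mathbb{R}}\theta,\mathcal{D}_{m}\right)\le\frac{1}{2}\dim\mu-\epsilon .
\]
If there is no such $z\mathbb{R}$, then any choice of $z\mathbb{R}$ works and we are done. By the contradiction hypothesis applied to this $z\mathbb{R}$, there is some $w\mathbb{R}\notin B(z\mathbb{R},\epsilon)$ with
\[
\frac{1}{m}H\left(\pi_{w\mathbb{R}}\theta,\mathcal{D}_{m}\right)\le\frac{1}{2}\dim\mu-\epsilon .
\]
Here we use that $S_{u}$ commutes with $\pi_{w\mathbb{R}}$, so $S_{u}\pi_{w\mathbb{R}}\varphi_{u}\mu=\pi_{w\mathbb{R}}\theta$.

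Since $d(z\mathbb{R},w\mathbb{R})>\epsilon$, Lemma \ref{lem:ent wrt mod cont dyad part} applies with $\psi=\mathrm{id}$ and $\epsilon/2$ in place of $\epsilon$, which is fine since $\epsilon^{-1}\ll m$. It gives
\[
\frac{1}{m}H\left(\theta,\pi_{z\mathbb{R}}^{-1}\mathcal{D}_{m}\vee\pi_{w\mathbb{R}}^{-1}\mathcal{D}_{m}\right)>\dim\mu-\epsilon/2 .
\]
Subadditivity of entropy for joined partitions, via the conditional entropy formula, bounds the left-hand side by
\[
\frac{1}{m}H\left(\pi_{z\mathbb{R}}\theta,\mathcal{D}_{m}\right)+\frac{1}{m}H\left(\pi_{w\mathbb{R}}\theta,\mathcal{D}_{m}\right)\le\dim\mu-2\epsilon .
\]
These two bounds contradict each other, which proves the lemma.

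There is no real obstacle in this argument. The only point needing care is the bookkeeping of constants: we must check that the identity map belongs to $\mathcal{F}_{\epsilon}$ and that $M(\epsilon)$ can be taken to be the $M$ from Lemma \ref{lem:ent wrt mod cont dyad part} at scale $\epsilon/2$. The essential input, which is entirely contained in that earlier lemma, is that $\epsilon$-transversal pairs of projection partitions are commensurable with $\mathcal{D}_{m}^{\mathbb{C}}$, combined with exact dimensionality of $\mu$.
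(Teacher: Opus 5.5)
Your proof is correct and is essentially the paper's argument: both apply Lemma \ref{lem:ent wrt mod cont dyad part} with $\psi=\mathrm{id}$ to a pair of directions at distance more than $\epsilon$, and then bound the entropy of the join by the sum of the two projected entropies. The only cosmetic difference is in the choice of $z\mathbb{R}$. The paper takes an approximate minimizer of $w\mathbb{R}\mapsto\frac{1}{m}H(S_{u}\pi_{w\mathbb{R}}\varphi_{u}\mu,\mathcal{D}_{m})$ and argues directly, whereas you take any direction below the threshold and wrap the argument in a contradiction that is not actually needed.
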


\begin{proof}
Let $0<\epsilon<1$ and $m\ge1$ be with $\epsilon^{-1}\ll m$, and
let $u\in\Lambda^{*}$ be given. For each $w\mathbb{R}\in\mathbb{RP}^{1}$
set
\[
H\left(w\mathbb{R}\right):=\frac{1}{m}H\left(S_{u}\pi_{w\mathbb{R}}\varphi_{u}\mu,\mathcal{D}_{m}\right),
\]
and let $z\mathbb{R}\in\mathbb{RP}^{1}$ be such that
\[
H\left(z\mathbb{R}\right)\le\underset{w\mathbb{R}\in\mathbb{RP}^{1}}{\inf}H\left(w\mathbb{R}\right)+\epsilon.
\]
From Lemma \ref{lem:ent wrt mod cont dyad part}, by the conditional
entropy formula, and by the last inequality, it follows that for each
$w\mathbb{R}\in\mathbb{RP}^{1}\setminus B\left(z\mathbb{R},\epsilon\right)$
\[
\dim\mu-\epsilon<H\left(z\mathbb{R}\right)+H\left(w\mathbb{R}\right)\le2H\left(w\mathbb{R}\right)+\epsilon,
\]
which proves the lemma.
\end{proof}
The proof of Proposition \ref{prop:nontriv portion > dim(mu)/2} relies
on a linearization argument which is contained in the proof of the
following lemma.
\begin{lem}
\label{lem:linearization in non sat argument}For every $\epsilon>0$,
$m\ge M(\epsilon)\ge1$, $k\ge K(\epsilon,m)\ge1$, $u\in\Lambda^{*}$,
$\psi\in\mathcal{F}_{\epsilon}$, $i\in\mathbb{Z}_{>0}$, $\omega\in\Lambda^{\mathbb{N}}$,
and $z\mathbb{R}\in\mathbb{RP}^{1}$, we have
\[
\frac{1}{m}H\left(S_{u\omega|_{i+k}}\pi_{z\mathbb{R}}\psi\varphi_{u\omega|_{i+k}}\mu,\mathcal{D}_{m}\right)\ge\frac{1}{m}H\left(S_{(\sigma^{i}\omega)|_{k}}\pi_{\ell}\varphi_{(\sigma^{i}\omega)|_{k}}\mu,\mathcal{D}_{m}\right)-\epsilon,
\]
where
\begin{equation}
\ell:=\frac{z}{\left(\psi\circ\varphi_{u}\right)'\left(\Pi\omega\right)}\mathbb{R}\alpha_{i}(\omega)^{-1}\in\mathbb{RP}^{1}.\label{eq:def of ell}
\end{equation}
\end{lem}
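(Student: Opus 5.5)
The plan is to linearize $\psi\circ\varphi_{u\omega|_{i}}$ on the small set $\varphi_{(\sigma^{i}\omega)|_{k}}(\Omega)$ and then identify the resulting projection direction as $\ell$. Write $v:=u\omega|_{i}$, $w:=(\sigma^{i}\omega)|_{k}$ and $x:=\Pi(\sigma^{i}\omega)$, so that $u\omega|_{i+k}=vw$ and $\psi\varphi_{vw}\mu=(\psi\circ\varphi_{v})\varphi_{w}\mu$. Note that $x\in\overline{\varphi_{w}(K_{\Phi})}$ and that $\mathrm{supp}(\varphi_{w}\mu)\subset\varphi_{w}(\Omega)$. By Lemma \ref{lem:Lip prop of varphi_u} (with $\psi=\mathrm{id}$), the latter set has diameter $O(|\varphi_{w}'(0)|)$. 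Moreover $|\varphi_{w}'(0)|\le Cr_{\max}^{k}$ by bounded distortion.

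\textbf{Step 1 (linearization).} Apply Lemma \ref{lem:Taylor estimate} with order $1$ at the point $a=x$ to $\psi\circ\varphi_{v}$. For $y\in\mathrm{supp}(\varphi_{w}\mu)$ this gives
\[
\left|\psi\varphi_{v}(y)-\psi\varphi_{v}(x)-c(y-x)\right|\le C^{2}\epsilon^{-1}|\varphi_{v}'(0)|\,O\left(|\varphi_{w}'(0)|^{2}\right),\qquad c:=(\psi\circ\varphi_{v})'(x).
\]
By bounded distortion, $|\varphi_{vw}'(0)|$ is comparable to $|\varphi_{v}'(0)||\varphi_{w}'(0)|$ up to a multiplicative constant. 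Hence after applying $S_{vw}$ the error becomes $O(\epsilon^{-1}|\varphi_{w}'(0)|)=O(\epsilon^{-1}r_{\max}^{k})$, which is less than $2^{-m}$ once $k\gg\epsilon,m$. Projections are $1$-Lipschitz, so the same bound holds after applying $\pi_{z\mathbb{R}}$. By Lemma \ref{lem:close func --> close ent}, up to an $O(1)/m$ error, the left-hand side equals
\[
\tfrac{1}{m}H\left(S_{vw}\pi_{z\mathbb{R}}\left(T_{b}S_{c}\varphi_{w}\mu\right),\mathcal{D}_{m}\right)
\]
for a constant $b\in\mathbb{C}$.

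\textbf{Step 2 (identifying the direction).} We have $\pi_{z\mathbb{R}}(cy)=c\,\pi_{c^{-1}z\mathbb{R}}(y)$, since multiplication by $c$ is a rotation composed with a scaling. Therefore the measure above is the image of $S_{w}\pi_{(z/c)\mathbb{R}}\varphi_{w}\mu$ under an affine map of $\mathbb{C}$. That map is multiplication by $c\,|\varphi_{w}'(0)|/|\varphi_{vw}'(0)|$ followed by a translation. Its modulus is bounded above and below by constants depending only on $\epsilon$, by bounded distortion and $\psi\in\mathcal{F}_{\epsilon}$. Lemma \ref{lem:dyad ent =000026 lip func} (and translation invariance up to $O(1)$) then changes the entropy by $O(1+\log\epsilon^{-1})/m<\epsilon/2$ for $m\gg\epsilon$. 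For the direction itself, the chain rule gives $c=(\psi\circ\varphi_{u})'(\varphi_{\omega|_{i}}(x))\,\varphi_{\omega|_{i}}'(x)$. Moreover $\varphi_{\omega|_{i}}(x)=\Pi\omega$, and $\varphi_{\omega|_{i}}'(x)\mathbb{R}=\alpha_{i}(\omega)$ by (\ref{eq:alpha_n equals}). Hence $(z/c)\mathbb{R}=\ell$ exactly as in (\ref{eq:def of ell}), which yields the inequality.

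\textbf{Main obstacle.} I expect the main difficulty to be the bookkeeping of scales, not any conceptual step. One must check that the Taylor error is genuinely $o(2^{-m})$ after the normalization $S_{vw}$ rather than $S_{v}S_{w}$, and that the normalization mismatch and $|c|$ are controlled uniformly in $u,i,\omega,z$. Both reduce to bounded distortion (Lemma \ref{lem:bounded distortion}) and Lemma \ref{lem:Lip prop of varphi_u}. The order of quantifiers $\epsilon\ll m\ll k$ absorbs all the constants, and the resulting estimates do not depend on $u$, $i$, $\omega$ or $z$.
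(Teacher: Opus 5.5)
Your proposal is correct and follows essentially the same route as the paper. Both arguments take a first-order Taylor expansion of $\psi\circ\varphi_{u\omega|_{i}}$ at $\Pi(\sigma^{i}\omega)$, whose error becomes $O(\epsilon^{-1}r_{\max}^{k})$ after the $S_{u\omega|_{i+k}}$ normalization, identify the projection direction as $\ell$ via the chain rule and (\ref{eq:alpha_n equals}), and absorb the bounded scaling and the translation using bounded distortion and $\psi\in\mathcal{F}_{\epsilon}$; the one cosmetic difference is that the paper identifies the direction by conjugating $\pi_{z\mathbb{R}}$ with the rotation $S_{b}$, $b=c/|c|$, whereas you use the equivalent identity $\pi_{z\mathbb{R}}(cy)=c\,\pi_{c^{-1}z\mathbb{R}}(y)$.
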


\begin{proof}
Let $C>1$ be a large global constant, and let $\epsilon\in(0,1)$
and $m,k\in\mathbb{Z}_{>0}$ be such that $C,\epsilon^{-1}\ll m\ll k$.
Let $u\in\Lambda^{*}$, $\psi\in\mathcal{F}_{\epsilon}$, $i\in\mathbb{Z}_{>0}$,
$\omega\in\Lambda^{\mathbb{N}}$, and $z\mathbb{R}\in\mathbb{RP}^{1}$
be given, and set
\[
H:=\frac{1}{m}H\left(S_{u\omega|_{i+k}}\pi_{z\mathbb{R}}\psi\varphi_{u\omega|_{i+k}}\mu,\mathcal{D}_{m}\right).
\]

Write
\[
r_{\max}:=\sup\left\{ \left|\varphi_{j}'(w)\right|\::\:j\in\Lambda\text{ and }w\in\Omega\right\} .
\]
By Lemma \ref{lem:Lip prop of varphi_u},
\[
\mathrm{diam}\left(\varphi_{(\sigma^{i}\omega)|_{k}}(\Omega)\right)\le C\left|\varphi_{(\sigma^{i}\omega)|_{k}}'(0)\right|\le Cr_{\max}^{k}.
\]
Hence, setting $a:=\Pi\left(\sigma^{i}\omega\right)$ and $q:=P_{1,a}\left(\psi\circ\varphi_{u\omega|_{i}}\right)$,
it follows from Lemma \ref{lem:Taylor estimate} and since $a\in\varphi_{(\sigma^{i}\omega)|_{k}}(\Omega)$
that for all $w\in\varphi_{(\sigma^{i}\omega)|_{k}}(\Omega)$
\begin{eqnarray*}
\left|\psi\circ\varphi_{u\omega|_{i}}(w)-q(w)\right| & \le & C^{3}\epsilon^{-1}\left|\varphi_{u\omega|_{i}}'(0)\right|\left|\varphi_{(\sigma^{i}\omega)|_{k}}'(0)\right|^{2}\\
 & \le & C^{3}\epsilon^{-1}r_{\max}^{k}\left|\varphi_{u\omega|_{i}}'(0)\right|\left|\varphi_{(\sigma^{i}\omega)|_{k}}'(0)\right|.
\end{eqnarray*}
From this, by bounded distortion, by Lemma \ref{lem:close func --> close ent},
and since $C,\epsilon^{-1}\ll m\ll k$,
\[
H+\epsilon\ge\frac{1}{m}H\left(S_{u\omega|_{i+k}}\pi_{z\mathbb{R}}q\varphi_{(\sigma^{i}\omega)|_{k}}\mu,\mathcal{D}_{m}\right).
\]

Set $b:=\frac{\left(\psi\circ\varphi_{u\omega|_{i}}\right)'(a)}{\left|\left(\psi\circ\varphi_{u\omega|_{i}}\right)'(a)\right|}$,
and recall that $S_{b}w=bw$ for $w\in\mathbb{C}$. Since $S_{b}^{-1}$
is an isometry of $\mathbb{C}$,
\begin{equation}
H+2\epsilon\ge\frac{1}{m}H\left(S_{u\omega|_{i+k}}S_{b}^{-1}\pi_{z\mathbb{R}}q\varphi_{(\sigma^{i}\omega)|_{k}}\mu,\mathcal{D}_{m}\right).\label{eq:H+2epsilon>=00003D}
\end{equation}

By the definition of the orthogonal projections and from (\ref{eq:alpha_n equals}),
\[
S_{b}^{-1}\circ\pi_{z\mathbb{R}}\circ S_{b}=\pi_{b^{-1}z\mathbb{R}}=\pi_{\ell},
\]
where $\ell\in\mathbb{RP}^{1}$ is defined in (\ref{eq:def of ell}).
Thus, for $w\in\mathbb{C}$,
\begin{eqnarray*}
S_{b}^{-1}\circ\pi_{z\mathbb{R}}\circ q(w) & = & S_{b}^{-1}\circ\pi_{z\mathbb{R}}\circ q(0)+S_{b}^{-1}\circ\pi_{z\mathbb{R}}\left(\left(\psi\circ\varphi_{u\omega|_{i}}\right)'(a)\cdot w\right)\\
 & = & S_{b}^{-1}\circ\pi_{z\mathbb{R}}\circ q(0)+\left|\left(\psi\circ\varphi_{u\omega|_{i}}\right)'(a)\right|\cdot\pi_{\ell}(w).
\end{eqnarray*}
From this, from (\ref{eq:H+2epsilon>=00003D}), by bounded distortion,
since $\psi\in\mathcal{F}_{\epsilon}$, and since $\epsilon^{-1}\ll m$,
\[
H+3\epsilon\ge\frac{1}{m}H\left(S_{(\sigma^{i}\omega)|_{k}}\pi_{\ell}\varphi_{(\sigma^{i}\omega)|_{k}}\mu,\mathcal{D}_{m}\right),
\]
which completes the proof of the lemma.
\end{proof}
\begin{proof}[Proof of Proposition \ref{prop:nontriv portion > dim(mu)/2}]
Let $0<\delta<1$ be as obtained in Corollary \ref{cor:from equidist prop},
let $0<\epsilon<\delta/3$ and $m,k,n\in\mathbb{Z}_{>0}$ be with
$\epsilon^{-1}\ll m\ll k\ll n$, and fix $z\mathbb{R}\in\mathbb{RP}^{1}$,
$u\in\Lambda^{*}$, and $\psi\in\mathcal{F}_{\epsilon}$. Set
\[
H(v):=\frac{1}{m}H\left(S_{uv}\pi_{z\mathbb{R}}\psi\varphi_{uv}\mu,\mathcal{D}_{m}\right)\text{ for }v\in\Lambda^{*},
\]
and let
\[
P:=\mathbb{P}_{1\le i\le n}\left\{ H\left(\mathbf{U}_{i}\right)>\frac{1}{2}\dim\mu-\epsilon\right\} .
\]

Recalling that $\lambda_{n}$ denotes the uniform probability measure
on $\mathcal{N}_{n}:=\left\{ 1,...,n\right\} $,
\begin{eqnarray*}
P & = & \int\beta\left\{ \omega:H\left(\omega|_{i}\right)>\frac{1}{2}\dim\mu-\epsilon\right\} d\lambda_{n}(i)\\
 & = & \int\lambda_{n}\left\{ i\in\mathcal{N}_{n}:H\left(\omega|_{i}\right)>\frac{1}{2}\dim\mu-\epsilon\right\} d\beta(\omega).
\end{eqnarray*}
Hence, since $\epsilon^{-1},k\ll n$,
\begin{equation}
P\ge\int\lambda_{n}\left\{ i\in\mathcal{N}_{n}\::\:H\left(\omega|_{i+k}\right)>\frac{1}{2}\dim\mu-\epsilon\right\} \:d\beta(\omega)-\epsilon.\label{eq:first lb on P}
\end{equation}

Be Lemma \ref{lem:linearization in non sat argument}, for all $(i,\omega)\in\mathcal{N}_{n}\times\Lambda^{\mathbb{N}}$
we have
\begin{equation}
H\left(\omega|_{i+k}\right)\ge\frac{1}{m}H\left(S_{(\sigma^{i}\omega)|_{k}}\pi_{\ell(i,\omega)}\varphi_{(\sigma^{i}\omega)|_{k}}\mu,\mathcal{D}_{m}\right)-\epsilon/2,\label{eq:by linearization non sat}
\end{equation}
where
\[
\ell(i,\omega):=\frac{z}{\left(\psi\circ\varphi_{u}\right)'\left(\Pi\omega\right)}\mathbb{R}\alpha_{i}(\omega)^{-1}\in\mathbb{RP}^{1}.
\]

By Lemma \ref{lem:ent > 1/2 dim mu outside a small interval}, for
each $v\in\Lambda^{k}$ there exists $w_{v}\mathbb{R}\in\mathbb{RP}^{1}$
such that
\begin{equation}
\frac{1}{m}H\left(S_{v}\pi_{w\mathbb{R}}\varphi_{v}\mu,\mathcal{D}_{m}\right)>\frac{1}{2}\dim\mu-\epsilon/2\text{ for all }w\mathbb{R}\in\mathbb{RP}^{1}\setminus B\left(w_{v}\mathbb{R},\epsilon\right).\label{eq:>Delta-=00005Cepsilon/2 ouside of interval}
\end{equation}

Let $h:\Lambda^{\mathbb{N}}\rightarrow\mathbb{RP}^{1}$ be defined
by $h(\omega)=w_{\omega|_{k}}\mathbb{R}$ for $\omega\in\Lambda^{\mathbb{N}}$.
Note that since $\epsilon^{-1},m,k\ll n$, we may assume that $n$
is large with respect to $h$. From (\ref{eq:by linearization non sat})
and (\ref{eq:>Delta-=00005Cepsilon/2 ouside of interval}),
\[
H\left(\omega|_{i+k}\right)>\frac{1}{2}\dim\mu-\epsilon\text{ for all }(i,\omega)\in\mathcal{N}_{n}\times\Lambda^{\mathbb{N}}\text{ such that }\ell(i,\omega)\notin B\left(h(\sigma^{i}\omega),\epsilon\right).
\]
Hence, by (\ref{eq:first lb on P}) and the definition of the lines
$\ell(i,\omega)$,
\begin{eqnarray*}
P & \ge & \int\lambda_{n}\left\{ i\in\mathcal{N}_{n}\::\:\ell(i,\omega)\notin B\left(h(\sigma^{i}\omega),\epsilon\right)\right\} \:d\beta(\omega)-\epsilon\\
 & = & \int\lambda_{n}\left\{ i\in\mathcal{N}_{n}\::\:d\left(\frac{z}{\left(\psi\circ\varphi_{u}\right)'\left(\Pi\omega\right)}\mathbb{R},\alpha_{i}(\omega)h(\sigma^{i}\omega)\right)>\epsilon\right\} \:d\beta(\omega)-\epsilon.
\end{eqnarray*}
From this, by Corollary \ref{cor:from equidist prop}, since $n$
is large with respect to $h$, and since $\epsilon<\delta/3$, it
follows that $P\ge\delta/2$, which completes the proof of the proposition.
\end{proof}

\subsection{\label{subsec:Proof-of-Propositions}Proof of Propositions \ref{prop:lb on ent of proj of comp of nu}
and \ref{prop:lb on ent of proj of cylinders of nu}}

First, we prove Proposition \ref{prop:lb on ent of proj of cylinders of nu},
which we restate here for convenience.
\begin{prop*}
Suppose that $\dim\mu<2$. Then there exists $0<\gamma<1$ such that
for every $\epsilon>0$, $n\ge N(\epsilon)\ge1$, $z\mathbb{R}\in\mathbb{RP}^{1}$,
$u\in\Lambda^{*}$, and $\psi\in\mathcal{F}_{\epsilon}$,
\[
\frac{1}{n}H\left(S_{u}\pi_{z\mathbb{R}}\psi\varphi_{u}\mu,\mathcal{D}_{n}\right)\ge\dim\mu-1+\gamma.
\]
\end{prop*}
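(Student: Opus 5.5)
The plan is to run a multiscale decomposition of $\theta:=S_{u}\pi_{z\mathbb{R}}\psi\varphi_{u}\mu$ along the symbolic tree of words $v$ extending $u$, and to average the single-scale lower bounds. Lemma \ref{lem:triv lb} gives the bound $\dim\mu-1-\epsilon$ at every scale. Proposition \ref{prop:nontriv portion > dim(mu)/2} gives the bound $\frac{1}{2}\dim\mu-\epsilon$ at a positive proportion $\delta$ of scales. Since $\frac{1}{2}\dim\mu-(\dim\mu-1)=1-\frac{1}{2}\dim\mu>0$, averaging yields a gain $\gamma\approx\delta(1-\frac{1}{2}\dim\mu)/2$.

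Concretely, fix $\epsilon$ and choose $m\ll n_{0}\ll n$. Here $m$ is the scale length from Proposition \ref{prop:nontriv portion > dim(mu)/2} and $n_{0}$ its word length $N(\epsilon,m)$. I will use stationarity in the form (\ref{eq:mu as conv comb with u in Psi_n}) with the cut-sets $\Psi(j,l;\cdot)$. Write $\varphi_{u}\mu=\sum_{v}p_{v}\varphi_{uv}\mu$ over $v\in\Psi(j,l;km)$, successively for $k=1,2,\dots,n/m$. By (\ref{eq:der at 0 is comp to 2^-n for u in Psi_n}), $S_{u}\pi_{z\mathbb{R}}\psi\varphi_{uv}\mu$ is then supported on a set of diameter $O_{\epsilon,l}(2^{-km})$, and after rescaling by $2^{km}$ it agrees up to bounded distortion with $S_{uv}\pi_{z\mathbb{R}}\psi\varphi_{uv}\mu$.

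By concavity of entropy and the chain rule for conditional entropy over the levels $\mathcal{D}_{km}$, I will obtain
\[
\frac{1}{n}H(\theta,\mathcal{D}_{n})\ge\frac{m}{n}\sum_{k}\sum_{v\in\Psi(j,l;km)}p_{v}\frac{1}{m}H\left(S_{uv}\pi_{z\mathbb{R}}\psi\varphi_{uv}\mu,\mathcal{D}_{m}\right)-O(\epsilon).
\]
The bounded-support errors are $O_{\epsilon,l}(1)/m$ per scale, via Lemmas \ref{lem:dyad ent =000026 lip func} and \ref{lem:close func --> close ent}. Here $S_{uv}\psi\varphi_{uv}$ stays in a fixed class, since $S_{u}\psi\varphi_{u}$-type maps lie in $\mathcal{F}_{\epsilon/C}$. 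Equivalently, the right-hand side is an expectation of $\frac{1}{m}H(\ldots)$ with $v$ drawn from $\mathbb{E}_{k}\delta_{\mathbf{I}(j,l;km)}$.

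The main obstacle is that Proposition \ref{prop:nontriv portion > dim(mu)/2} controls words drawn by length, via $\mathbb{E}_{1\le i\le n_{0}}\delta_{\mathbf{U}_{i}}$ appended to an arbitrary prefix. The decomposition, however, produces words drawn by derivative size, via $\mathbf{I}(j,l;\cdot)$. This is exactly what Lemma \ref{lem:ac of words} is for. For a suitable $l$ and each $j$, the law of $\mathbf{U}_{j+lk}$ averaged over $k$ is absolutely continuous with respect to the averaged $\mathbf{I}(j,l;k)$, with density at most $M$. Averaging over $0\le j<l$ turns the length-averaged law into $\mathbb{E}_{i}\delta_{\mathbf{U}_{i}}$.

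To handle this, I will apply Proposition \ref{prop:nontriv portion > dim(mu)/2} blockwise. I condition on the word $w$ reached at one scale, which becomes the new $u$, and consider the next $n_{0}$ letters. The event ``entropy $>\frac{1}{2}\dim\mu-\epsilon$'' then has mass $>\delta$ under the length-averaged law. By the density bound it has mass $\ge\delta/M$ under the $\mathbf{I}$-averaged law. Bookkeeping between scales $km$ and derivative thresholds needs care: I expect to take the scales at integer resolution and use that $\mathbf{I}(j,l;k)$ ranges over all $k$ while we only sample multiples of $m$. A shift-by-$r$ averaging over $0\le r<m$ handles this, again using concavity.

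On the good event I use the bound $\frac{1}{2}\dim\mu-\epsilon$, and elsewhere Lemma \ref{lem:triv lb}. This gives
\[
\frac{1}{n}H(\theta,\mathcal{D}_{n})\ge\dim\mu-1+\frac{\delta}{M}\left(1-\tfrac{1}{2}\dim\mu\right)-O(\epsilon).
\]
Since $\dim\mu<2$, I take $\gamma:=\frac{\delta}{2M}(1-\frac{1}{2}\dim\mu)$, with $\delta$ and $M$ depending only on $\Phi$ and $p$, and let $\epsilon$ be small relative to $\gamma$. The case of general $\epsilon$ follows by monotonicity, since $\mathcal{F}_{\epsilon}$ increases as $\epsilon$ decreases.
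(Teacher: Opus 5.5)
Your proposal is correct and is essentially the paper's proof: decompose $\varphi_u\mu$ along the cut-sets $\Psi(j,l;i)$ at all scales $1\le i\le n$ (the paper invokes \cite[Lemma 3.4]{Ho1} where you use shifted $m$-adic scales), use Lemma \ref{lem:ac of words} to pass the $\delta$-proportion of good words from $\mathbf{U}_i$ to $\mathbf{I}(j,l;i)$ at a cost of a factor $M$, and use Lemma \ref{lem:triv lb} on the remaining words, giving a gain of order $\frac{\delta}{M}(1-\frac{1}{2}\dim\mu)$. The blockwise re-rooting is unnecessary, because Proposition \ref{prop:nontriv portion > dim(mu)/2} holds for all $n\ge N(\epsilon,m)$ and all prefixes: the paper applies it once, with the original $u$, over word lengths up to about $ln/M$, and then fixes one residue $0\le j<l$ by pigeonhole, which avoids having to match residues mod $l$ for words re-rooted at derivative-defined cut points.
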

\begin{proof}
Let $0<\delta<1$ be as obtained in Proposition \ref{prop:nontriv portion > dim(mu)/2},
and let $0<\epsilon<1$ and $l,m,n\in\mathbb{Z}_{>0}$ be with
\[
\delta^{-1}\ll l\ll\epsilon^{-1}\ll m\ll n.
\]
Let $M=M(l)\in\mathbb{Z}_{>0}$ be as obtained in Lemma \ref{lem:ac of words}.
Since $l\ll\epsilon^{-1}$, we may assume that $M\ll\epsilon^{-1}$.
Fix $z\mathbb{R}\in\mathbb{RP}^{1}$, $u\in\Lambda^{*}$ and $\psi\in\mathcal{F}_{\epsilon}$,
and set
\[
H:=\frac{1}{n}H\left(S_{u}\pi_{z\mathbb{R}}\psi\varphi_{u}\mu,\mathcal{D}_{n}\right).
\]

Set $n':=\left\lfloor n/M\right\rfloor $, and let $\mathcal{V}$
denote the set of $v\in\Lambda^{*}$ such that
\[
\frac{1}{m}H\left(S_{uv}\pi_{z\mathbb{R}}\psi\varphi_{uv}\mu,\mathcal{D}_{m}\right)>\frac{1}{2}\dim\mu-\epsilon.
\]
By Proposition \ref{prop:nontriv portion > dim(mu)/2} and since $\delta^{-1},\epsilon^{-1},l,M,m\ll n$,
\[
\mathbb{P}_{l\le i\le n'l+l-1}\left\{ \mathbf{U}_{i}\in\mathcal{V}\right\} >\delta/2.
\]
Hence, there exists $0\le j<l$ such that
\begin{equation}
\mathbb{P}_{1\le i\le n'}\left\{ \mathbf{U}_{j+li}\in\mathcal{V}\right\} >\delta/2.\label{eq:P=00007BU_=00007Bj+li=00007D in U_1=00007D>delta/2}
\end{equation}

Setting
\[
\rho:=\mathbb{P}_{1\le i\le n}\left\{ \mathbf{I}\left(j,l;i\right)\in\mathcal{V}\right\} ,
\]
it follows from (\ref{eq:P=00007BU_=00007Bj+li=00007D in U_1=00007D>delta/2}),
by Lemma \ref{lem:ac of words}, and since $M\ll n$, that $\rho>\delta/(4M)$.

By Lemma \ref{lem:Lip prop of varphi_u},
\begin{equation}
\mathrm{diam}\left(\mathrm{supp}\left(S_{v}\pi_{z\mathbb{R}}\psi\varphi_{v}\mu\right)\right)=O\left(1/\epsilon\right)\text{ for all }v\in\Lambda^{*}.\label{eq:control of diam of supp all v}
\end{equation}
From this, by \cite[Lemma 3.4]{Ho1}, and since $\epsilon^{-1},m\ll n$,
\[
H\ge\mathbb{E}_{1\le i\le n}\left(\frac{1}{m}H\left(S_{u}\pi_{z\mathbb{R}}\psi\varphi_{u}\mu,\mathcal{D}_{i+m}\mid\mathcal{D}_{i}\right)\right)-\epsilon.
\]
By (\ref{eq:mu as conv comb with u in Psi_n}), we have $\varphi_{u}\mu=\mathbb{E}\left(\varphi_{u\mathbf{I}\left(j,l;i\right)}\mu\right)$
for each $i\ge1$. Hence, from the last formula, by the concavity
of conditional entropy, from (\ref{eq:der at 0 is comp to 2^-n for u in Psi_n}),
and since $l,\epsilon^{-1}\ll m$, 
\[
H\ge\mathbb{E}_{1\le i\le n}\left(\frac{1}{m}H\left(S_{u}S_{\mathbf{I}\left(j,l;i\right)}\pi_{z\mathbb{R}}\psi\varphi_{u\mathbf{I}\left(j,l;i\right)}\mu,\mathcal{D}_{m}\mid\mathcal{D}_{0}\right)\right)-2\epsilon.
\]
 From this, by bounded distortion, from (\ref{eq:control of diam of supp all v}),
and since $\epsilon^{-1}\ll m$, 
\[
H\ge\mathbb{E}_{1\le i\le n}\left(\frac{1}{m}H\left(S_{u\mathbf{I}\left(j,l;i\right)}\pi_{z\mathbb{R}}\psi\varphi_{u\mathbf{I}\left(j,l;i\right)}\mu,\mathcal{D}_{m}\right)\right)-3\epsilon.
\]

Now, from the last formula, by Lemma \ref{lem:triv lb}, and since
$\rho>\delta/(4M)$,
\begin{eqnarray*}
H & \ge & \rho\left(\frac{1}{2}\dim\mu-\epsilon\right)+\left(1-\rho\right)\left(\dim\mu-1-\epsilon\right)-3\epsilon\\
 & \ge & \dim\mu-1+\frac{\delta}{4M}\left(1-\frac{1}{2}\dim\mu\right)-4\epsilon.
\end{eqnarray*}
Since $\dim\mu<2$ and $\delta^{-1},M\ll\epsilon^{-1}$, this completes
the proof of the proposition.
\end{proof}
We can now prove Proposition \ref{prop:lb on ent of proj of comp of nu},
which is the following statement.
\begin{prop*}
Suppose that $\dim\mu<2$. Then there exists $\gamma>0$ such that
for every $\epsilon>0$, $m\ge M(\epsilon)\ge1$, $n\ge1$, and $\psi\in\mathcal{F}_{\epsilon}$,
if we set $\theta:=\psi\mu$, then
\[
\mathbb{P}\left\{ \underset{w\mathbb{R}\in\mathbb{RP}^{1}}{\inf}\frac{1}{m}H\left(\pi_{w\mathbb{R}}\theta_{z,n},\mathcal{D}_{n+m}\right)>\dim\mu-1+\gamma\right\} >1-\epsilon.
\]
\end{prop*}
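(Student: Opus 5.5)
The plan is to deduce the statement from Proposition \ref{prop:lb on ent of proj of cylinders of nu}, applied to cylinder measures at the scale of the components, together with the boundary estimate of Proposition \ref{prop:mu-mass of neigh of dyd cubes}. Let $\gamma_{0}$ be the constant from Proposition \ref{prop:lb on ent of proj of cylinders of nu}, and set $\gamma:=\gamma_{0}/2$. Given $\epsilon$, choose parameters
\[
\epsilon^{-1}\ll\eta^{-1}\ll\delta^{-1}\ll m .
\]
Fix $n\ge1$, $\psi\in\mathcal{F}_{\epsilon}$, and $\theta=\psi\mu$.

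\textbf{Step 1: decomposition into cylinders.} By (\ref{eq:mu as conv comb with u in Psi_n}), write $\theta=\sum_{u\in\Psi_{n+s}}p_{u}\,\psi\varphi_{u}\mu$ for a fixed integer $s$ with $\delta^{-1}\ll s\ll m$. By Lemma \ref{lem:Lip prop of varphi_u} and (\ref{eq:der at 0 is comp to 2^-n for u in Psi_n}), each piece $\psi\varphi_{u}\mu$ has support of diameter $O(\epsilon^{-1}2^{-n-s})<\delta2^{-n}$.

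By Proposition \ref{prop:mu-mass of neigh of dyd cubes}, the total weight of the $u$'s whose piece meets $\cup_{D\in\mathcal{D}_{n}}(\partial D)^{(\delta2^{-n})}$ is less than $\eta$. Every other piece lies entirely inside a single cell $D\in\mathcal{D}_{n}$.

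Consequently, for components $\theta_{D}$ outside a family of $\theta$-mass at most $\sqrt{\eta}$, the following holds: $\theta_{D}$ is a convex combination of pieces $\psi\varphi_{u}\mu$ with $u\in\Psi_{n+s}$, up to a proportion at most $\sqrt{\eta}$ of its mass. This is a Markov-inequality argument applied to the bad mass.

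\textbf{Step 2: entropy of each good component.} For $w\mathbb{R}\in\mathbb{RP}^{1}$ and such a good $D$, use concavity of entropy to get
\[
\frac{1}{m}H(\pi_{w\mathbb{R}}\theta_{D},\mathcal{D}_{n+m})\ \ge\ (1-\sqrt{\eta})\,\min_{u}\frac{1}{m}H(\pi_{w\mathbb{R}}\psi\varphi_{u}\mu,\mathcal{D}_{n+m}).
\]
Each term on the right is handled by rescaling with $S_{u}$. Since $|\varphi_{u}'(0)|\asymp_{s}2^{-n}$ with constants depending only on $s$, changing scale costs $O(s/m)$. We obtain
\[
\frac{1}{m}H(\pi_{w\mathbb{R}}\psi\varphi_{u}\mu,\mathcal{D}_{n+m})\ \ge\ \frac{1}{m}H(S_{u}\pi_{w\mathbb{R}}\psi\varphi_{u}\mu,\mathcal{D}_{m})-O(s/m).
\]
Proposition \ref{prop:lb on ent of proj of cylinders of nu} bounds the right-hand term from below by $\dim\mu-1+\gamma_{0}$. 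This bound is uniform in $w\mathbb{R}$ and $u$, provided $m\ge N(\epsilon)$.

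Hence the infimum over $w\mathbb{R}$ exceeds $(1-\sqrt{\eta})(\dim\mu-1+\gamma_{0})-O(s/m)>\dim\mu-1+\gamma$. This holds on a set of components of probability at least $1-\sqrt{\eta}>1-\epsilon$.

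\textbf{Main obstacle.} The delicate point is the bookkeeping in Step 1. A single dyadic cell may contain many cylinder pieces, and the pieces straddling its boundary must be shown to carry negligible mass within most components, not merely in total. Proposition \ref{prop:mu-mass of neigh of dyd cubes} gives only the total mass of the $\delta2^{-n}$-neighborhood of cell boundaries, uniformly over $\psi\in\mathcal{F}_{\epsilon}$. The Markov-inequality step is what converts this total bound into a bound for most individual components.

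One must also check that the order of quantifiers is consistent: $s$ depends on $\delta$, which depends on $\epsilon$ through $\eta$, while $m\gg s$. This keeps the $O(s/m)$ scale-change error and the bi-Lipschitz constants $O_{s,\epsilon}(1)$ negligible after division by $m$.
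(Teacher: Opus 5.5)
Your proposal is correct and follows the paper's proof essentially step for step: decompose $\theta$ over $\Psi_{n+k}$, use the boundary-neighbourhood estimate together with a Markov argument to select cells in which almost all mass comes from cylinder pieces lying entirely inside the cell, then rescale by $S_u$ and combine the cylinder-level lower bound with concavity. One harmless bookkeeping point is that a piece which merely meets the $\delta 2^{-n}$-neighbourhood $B$ of the cell boundaries is only contained in the $2\delta 2^{-n}$-neighbourhood, so you should either apply the boundary estimate with $2\delta$, or, as the paper does, call a piece bad only when it is not contained in a single cell (such a piece then lies inside $B$, so the total weight of bad pieces is at most $\theta(B)$).
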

\begin{proof}
Let $0<\gamma<1$ be as obtained in Proposition \ref{prop:lb on ent of proj of cylinders of nu},
let $0<\epsilon,\delta<1$ and $k,m,n\in\mathbb{Z}_{>0}$ be with
\[
\gamma^{-1}\ll\epsilon^{-1}\ll\delta^{-1}\ll k\ll m,
\]
and fix $\psi\in\mathcal{F}_{\epsilon}$.

Set $B:=\cup_{D\in\mathcal{D}_{n}^{\mathbb{C}}}\left(\partial D\right)^{\left(2^{-n}\delta\right)}$.
By Proposition \ref{prop:mu-mass of neigh of dyd cubes} and since
$\epsilon^{-1}\ll\delta^{-1}$, we may assume that $\psi\mu(B)<\epsilon^{2}$.
Let $\mathcal{U}$ be the set of words $u\in\Psi_{n+k}$ such that
$\psi\circ\varphi_{u}(\Omega)\subset D$ for some $D\in\mathcal{D}_{n}^{\mathbb{C}}$.
By Lemma \ref{lem:Lip prop of varphi_u} and (\ref{eq:der at 0 is comp to 2^-n for u in Psi_n}),
\[
\mathrm{diam}\left(\psi\circ\varphi_{u}(\Omega)\right)=O\left(\epsilon^{-1}2^{-n-k}\right)\text{ for }u\in\Psi_{n+k}.
\]
From this and since $\epsilon^{-1},\delta^{-1}\ll k$, it follows
that $\psi\circ\varphi_{u}(\Omega)\subset B$ for $u\in\Psi_{n+k}\setminus\mathcal{U}$.
Thus, by (\ref{eq:mu as conv comb with u in Psi_n}),
\[
\sum_{u\in\Psi_{n+k}\setminus\mathcal{U}}p_{u}\le\sum_{u\in\Psi_{n+k}}p_{u}\cdot\psi\varphi_{u}\mu(B)=\psi\mu(B)<\epsilon^{2},
\]
which implies,
\begin{equation}
\epsilon^{2}>\sum_{D\in\mathcal{D}_{n}^{\mathbb{C}}}\psi\mu(D)\sum_{u\in\Psi_{n+k}\setminus\mathcal{U}}p_{u}\cdot\frac{\psi\varphi_{u}\mu(D)}{\psi\mu(D)}.\label{eq:eta^2>}
\end{equation}

Let $\mathcal{E}$ be the set of $D\in\mathcal{D}_{n}^{\mathbb{C}}$
such that $\psi\mu(D)>0$ and
\[
\frac{1}{\psi\mu(D)}\sum_{u\in\Psi_{n+k}\setminus\mathcal{U}}p_{u}\cdot\psi\varphi_{u}\mu(D)<\epsilon.
\]
By (\ref{eq:eta^2>}), we have $\psi\mu\left(\cup\mathcal{E}\right)>1-\epsilon$.

Fix $D\in\mathcal{E}$ and let $\mathcal{U}_{D}$ be the set of $u\in\Psi_{n+k}$
such that $\psi\circ\varphi_{u}(\Omega)\subset D$. By (\ref{eq:mu as conv comb with u in Psi_n}),
\begin{equation}
\left(\psi\mu\right)_{D}=\frac{1}{\psi\mu(D)}\sum_{u\in\Psi_{n+k}}p_{u}\cdot\psi\varphi_{u}\mu(D)\cdot\left(\psi\varphi_{u}\mu\right)_{D}.\label{eq:(psi mu)_D =00003D}
\end{equation}
Additionally, from $D\in\mathcal{E}$ and since $\psi\varphi_{u}\mu(D)=0$
for $u\in\mathcal{U}\setminus\mathcal{U}_{D}$,
\begin{equation}
\frac{1}{\psi\mu(D)}\sum_{u\in\mathcal{U}_{D}}p_{u}\cdot\psi\varphi_{u}\mu(D)>1-\epsilon.\label{eq:weights in U_D >}
\end{equation}

Fix $w\mathbb{R}\in\mathbb{RP}^{1}$. Given $u\in\mathcal{U}_{D}$,
we have $\left(\psi\varphi_{u}\mu\right)_{D}=\psi\varphi_{u}\mu$.
Hence, from $u\in\Psi_{n+k}$, (\ref{eq:der at 0 is comp to 2^-n for u in Psi_n}),
and $\gamma^{-1},k\ll m$,
\[
\frac{1}{m}H\left(\pi_{w\mathbb{R}}\left(\psi\varphi_{u}\mu\right)_{D},\mathcal{D}_{n+m}\right)\ge\frac{1}{m}H\left(S_{u}\pi_{w\mathbb{R}}\psi\varphi_{u}\mu,\mathcal{D}_{m}\right)-\gamma/2.
\]
From this and by Proposition \ref{prop:lb on ent of proj of cylinders of nu},
\[
\frac{1}{m}H\left(\pi_{w\mathbb{R}}\left(\psi\varphi_{u}\mu\right)_{D},\mathcal{D}_{n+m}\right)\ge\dim\mu-1+\gamma/2\;\text{ for all }u\in\mathcal{U}_{D}.
\]
Thus, by concavity of entropy, (\ref{eq:(psi mu)_D =00003D}), (\ref{eq:weights in U_D >}),
and $\gamma^{-1}\ll\epsilon^{-1}$,
\begin{eqnarray*}
\frac{1}{m}H\left(\pi_{w\mathbb{R}}\left(\psi\mu\right)_{D},\mathcal{D}_{n+m}\right) & \ge & (1-\epsilon)\left(\dim\mu-1+\gamma/2\right)\\
 & > & \dim\mu-1+\gamma/4,
\end{eqnarray*}
which holds for all $D\in\mathcal{E}$ and $w\mathbb{R}\in\mathbb{RP}^{1}$.
Since $\psi\mu\left(\cup\mathcal{E}\right)>1-\epsilon$, this completes
the proof of the proposition.
\end{proof}

\section{\label{sec:An-entropy-increase-result}An entropy increase result}

The purpose of this section is to prove the following theorem. The
proof resembles that of \cite[Theorem 3.1]{Rap_analytic_on_R}, except
that here we must take into account the phenomenon of saturation,
which does not occur on the real line and was addressed in the previous
section.
\begin{thm}
\label{thm:ent inc with poly}Suppose that $\dim\mu<2$. Then for
each $k\in\mathbb{Z}_{>0}$ and $0<\epsilon<1$ there exists $\rho=\rho(k,\epsilon)>0$
such that the following holds for all $n\ge N(k,\epsilon)\ge1$. Let
$\nu\in\mathcal{M}(\mathcal{P}_{k})$ be such that $\Vert q\Vert_{2}\le\epsilon^{-1}$
and $\left|q'(z)\right|\ge\epsilon$ for all $q\in\mathrm{supp}(\nu)$
and $z\in\mathbb{D}$. Suppose also that $\frac{1}{n}H\left(\nu,\mathcal{D}_{n}\right)\ge\epsilon$,
and let $\psi\in\mathcal{F}_{\epsilon}$ be with $\psi\left(\Omega\right)\subset\mathbb{D}$.
Then $\frac{1}{n}H\left(\nu.\left(\psi\mu\right),\mathcal{D}_{n}\right)\ge\dim\mu+\rho$.
\end{thm}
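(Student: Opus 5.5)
We argue by contradiction, following Hochman's scheme as adapted in \cite[Theorem 3.1]{Rap_analytic_on_R}, but using Hochman's inverse theorem in $\mathbb{R}^{2}$ from \cite{Ho} in place of the one-dimensional one. Set $\theta:=\psi\mu$. The first step is a multiscale decomposition. By \cite[Lemma 3.4]{Ho1} and concavity of entropy, $\frac{1}{n}H(\nu.\theta,\mathcal{D}_{n})$ is bounded below, up to $o(1)$, by an average over scales $1\le i\le n$ of terms $\frac{1}{m}H(\nu_{q,i}.\theta_{z,i},\mathcal{D}_{i+m})$. Here $q$ is drawn from $\nu$ and $z$ from $\theta$, and $\nu_{q,i}.\theta_{z,i}$ denotes the pushforward of $\nu_{q,i}\times\theta_{z,i}$ under $(g,w)\mapsto g(w)$. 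This uses that the map $(g,w)\mapsto g(w)$ is Lipschitz on $\mathrm{supp}(\nu)\times\mathbb{D}$, with constant depending on $k$ and $\epsilon$.

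The second step is linearization at scale $2^{-i}$. On $\mathcal{D}_{i}(z)\cap\mathbb{D}$, every $g\in\mathcal{D}_{i}(q)$ satisfies $g(w)=g(z)+g'(z)(w-z)+O_{k,\epsilon}(2^{-2i})$. Hence, for $m\ll i$, the measure $\nu_{q,i}.\theta_{z,i}$ is $2^{-i-m}$-close to a convolution $A*B$ in $\mathbb{C}\cong\mathbb{R}^{2}$. Here $A$ is the pushforward of $\nu_{q,i}$ under $g\mapsto g(z)$, a measure supported on a set of diameter $O_{k,\epsilon}(2^{-i})$. The second factor is $B=S_{q'(z)}\theta_{z,i}$ up to a translation, using $g'(z)=q'(z)+O(2^{-i})$. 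Since $|q'(z)|\ge\epsilon$ and the multiplication $S_{q'(z)}$ is a rotation-dilation, $B$ has the same entropy and the same projection-entropy profile as $\theta_{z,i}$, up to rotating the direction and an $O_{\epsilon}(1)$ loss. Lemma \ref{lem:close func --> close ent} controls the linearization error.

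The third step applies the inverse theorem. We need two inputs.

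\emph{(a) Entropy of $\theta_{z,i}$.} By Proposition \ref{prop:uni ent dim} (uniform entropy dimension) applied to $\theta=\psi\mu$, most components satisfy $\frac{1}{m}H(\theta_{z,i},\mathcal{D}_{i+m})\approx\dim\mu<2$.

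\emph{(b) Non-saturation.} By Proposition \ref{prop:lb on ent of proj of comp of nu}, for most $(z,i)$ the component $\theta_{z,i}$ has $\inf_{\ell}\frac{1}{m}H(\pi_{\ell}\theta_{z,i},\mathcal{D}_{i+m})>\dim\mu-1+\gamma$, so it is not saturated in any direction. The same holds for its rotation $B$.

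Hochman's inverse theorem \cite{Ho} then says the following. If $\frac{1}{m}H(A*B)<\frac{1}{m}H(B)+\rho'$, then at most scales $A$ is concentrated, after rescaling, near translates of subspaces $V$ such that $B$ is saturated in direction $V$ at those scales. Non-saturation of $B$ rules out $V$ being a line, and $V=\mathbb{C}$ is excluded because $\dim\mu<2$ forces most components of $B$ to have entropy bounded away from $2$. So $A$ must have small entropy at most scales. Consequently, whenever $\frac{1}{m}H(A,\mathcal{D}_{i+m})$ exceeds some $\eta>0$, we obtain an entropy gain $\rho'(\eta)>0$ over $\frac{1}{m}H(\theta_{z,i},\mathcal{D}_{i+m})$.

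The main obstacle is the fourth step: showing that the factors $A$ carry non-negligible entropy on a positive proportion of scales. The hypothesis $\frac{1}{n}H(\nu,\mathcal{D}_{n})\ge\epsilon$ concerns $\nu$ on $\mathcal{P}_{k}$, whereas $A$ is its image under evaluation at a single point $z$. Evaluation at one point can collapse the entropy, so an extra argument is needed. Following \cite{Rap_analytic_on_R}, we use that $\mathcal{P}_{k}$ has finite dimension $k+1$. The evaluation maps at $k+1$ well-separated points $z_{0},\dots,z_{k}$ jointly form a bi-Lipschitz (Vandermonde) isomorphism $\mathcal{P}_{k}\to\mathbb{C}^{k+1}$, with constants depending on the separation. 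Since $\theta$ gives no mass to points or curves (Proposition \ref{prop:mu(Gamma)=00003D0}), a $\theta$-random $z$ yields a projection $g\mapsto g(z)$ that, averaged over $z$, retains a proportion $\gg_{k}\epsilon$ of the entropy of $\nu$ at most scales. This is done via a projection-entropy averaging argument and the chain rule for entropy over the coordinates. Moreover, the entropy of $\nu$ at scale $2^{-i}$ transfers to the entropy of $A$ at scale $2^{-i}$; this transfer is possible because, after conditioning on $\mathcal{D}_{i}(q)$, the linearization error is $O(2^{-2i})$.

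Combining the steps, the entropy gain is realised on a proportion $\gg_{k,\epsilon}1$ of the pairs $(i,z)$. Averaging over scales then gives $\frac{1}{n}H(\nu.\theta,\mathcal{D}_{n})\ge\dim\mu+\rho$, with $\rho=\rho(k,\epsilon)>0$.
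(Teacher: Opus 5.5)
Your proposal is correct and follows the paper's proof essentially step for step: the multiscale decomposition (Lemma \ref{lem:step1 in ent inc pf}), linearization (Lemma \ref{lem:linearization}), and the planar inverse theorem (Theorem \ref{thm:ent inc in C}), fed by Propositions \ref{prop:uni ent dim} and \ref{prop:lb on ent of proj of comp of nu} applied to sub-components via \cite[Lemma 2.7]{Ho}; your fourth step is exactly Lemma \ref{lem:ent of nu imp ent of push of comp of nu}, and your Vandermonde argument for it needs only uniform non-atomicity of $\psi\mu$, not vanishing on curves. The one cosmetic difference is that the paper writes the linearized term as $\bigl(S_{q'(z)}^{-1}\nu_{q,i}\bigr).z*\theta_{z,i}$, moving the rotation-dilation onto the polynomial factor so that the inverse theorem applies directly to the components of $\theta_{z,i}$; this spares you the routine but fiddly comparison between the dyadic components of $S_{q'(z)}\theta_{z,i}$ and those of $\theta_{z,i}$.
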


\subsection{\label{subsec:Entropy-growth-under conv in C}Entropy growth under
convolution in $\mathbb{C}$}

The proof of Theorem \ref{thm:ent inc with poly} relies on the following
theorem, which follows directly from \cite[Theorem 2.8]{Ho}; a complete
derivation is given in \cite[Section 5.1]{Rap_Ren_Fur_on_CP1}. For
$\theta,\xi\in\mathcal{M}(\mathbb{C})$, we write $\theta*\xi$ for
their convolution, that is, the push-forward of $\theta\times\xi$
under the map $(x,y)\mapsto x+y$.
\begin{thm}
\label{thm:ent inc in C}For every $0<\epsilon<1$, $m\ge1$ and $0<\eta<\eta(\epsilon)$,
there exists $\delta=\delta(\epsilon,m,\eta)>0$, such that for all
$n\ge N(\epsilon,m,\eta)\ge1$ the following holds. Let $i\in\mathbb{Z}_{>0}$
and $\xi,\theta\in\mathcal{M}\left(\mathbb{C}\right)$ be such that
\[
\mathrm{diam}(\mathrm{supp}(\xi)),\mathrm{diam}(\mathrm{supp}(\theta))\le\epsilon^{-1}2^{-i},
\]
\[
\mathbb{P}_{i\le j\le i+n}\left\{ \frac{1}{m}H\left(\theta_{z,j},\mathcal{D}_{j+m}\right)<2-\epsilon\right\} >1-\eta,
\]
\[
\mathbb{P}_{i\le j\le i+n}\left\{ \underset{w\mathbb{R}\in\mathbb{RP}^{1}}{\inf}\frac{1}{m}H\left(\pi_{w\mathbb{R}}\theta_{z,j},\mathcal{D}_{j+m}\right)>\frac{1}{m}H\left(\theta_{z,j},\mathcal{D}_{j+m}\right)-1+\epsilon\right\} >1-\eta,
\]
and
\[
\frac{1}{n}H\left(\xi,\mathcal{D}_{i+n}\right)>\epsilon.
\]
Then,
\[
\frac{1}{n}H\left(\xi*\theta,\mathcal{D}_{i+n}\right)\ge\frac{1}{n}H\left(\theta,\mathcal{D}_{i+n}\right)+\delta.
\]
\end{thm}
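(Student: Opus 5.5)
The plan is to derive the statement from Hochman's inverse theorem \cite[Theorem 2.8]{Ho} in $\mathbb{R}^{2}\cong\mathbb{C}$, arguing by contradiction. I would first normalize. Applying the similarity $w\mapsto\epsilon2^{i}(w-w_{0})$, for suitable base points, moves $\xi$ and $\theta$ into a bounded region, after which a translation by an $O_{\epsilon}(1)$ vector puts them into $[0,1]^{2}$ up to $O_{\epsilon}(1)$ dyadic cubes. Under this map the level $i+j$ dyadic partitions go to partitions that are $O_{\epsilon}(1)$-commensurable with level-$j$ dyadic partitions. Hence every entropy in the hypotheses and in the conclusion changes by $O_{\epsilon}(1)$ at scale $n$ and by $O_{\epsilon}(1)$ at scale $m$ (using \cite[Lemma 3.2]{Ho1} and Lemma \ref{lem:dyad ent =000026 lip func}). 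The components $\theta_{z,i+j}$ correspond, up to such errors, to level-$j$ components of the rescaled measure. Taking $m$ large in terms of $\epsilon$ absorbs these errors: the hypotheses persist with $\epsilon$ replaced by $\epsilon/2$, and we may take $i=0$.

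Next, assume $\frac{1}{n}H(\xi*\theta,\mathcal{D}_{n})<\frac{1}{n}H(\theta,\mathcal{D}_{n})+\delta$. For a parameter $\epsilon'\ll\epsilon$ and a scale $m'$ (taken equal to $m$, or a multiple handled by \cite[Lemma 3.4]{Ho1}), Hochman's theorem with $\delta=\delta(\epsilon',m')$ and $n$ large yields linear subspaces $V_{0},\dots,V_{n}\le\mathbb{R}^{2}$ such that
\[
\mathbb{P}_{0\le j\le n}\big\{\theta_{z,j}\text{ is }(V_{j},\epsilon',m')\text{-saturated and }\xi_{x,j}\text{ is }(V_{j},\epsilon')\text{-concentrated}\big\}>1-\epsilon'.
\]
Here saturation means $\frac{1}{m'}H(\theta_{z,j},\mathcal{D}_{j+m'})\ge\frac{1}{m'}H(\pi_{V_{j}^{\perp}}\theta_{z,j},\mathcal{D}_{j+m'})+\dim V_{j}-\epsilon'$. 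Concentration means that $\xi_{x,j}$ gives mass $\ge1-\epsilon'$ to a $2^{-j}\epsilon'$-neighbourhood of a translate of $V_{j}$.

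Then I would do a case analysis according to $\dim V_{j}$, for levels $j$ outside the exceptional sets. If $V_{j}=\mathbb{C}$, saturation forces $\frac{1}{m}H(\theta_{z,j},\mathcal{D}_{j+m})\ge2-\epsilon'$, which contradicts the first hypothesis. If $V_{j}$ is a line $\ell$, then $V_{j}^{\perp}=w\mathbb{R}$ for some $w$. Saturation then reads $\frac{1}{m}H(\pi_{w\mathbb{R}}\theta_{z,j},\mathcal{D}_{j+m})\le\frac{1}{m}H(\theta_{z,j},\mathcal{D}_{j+m})-1+\epsilon'$, which contradicts the non-saturation hypothesis once $\epsilon'<\epsilon$. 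A union bound shows that, when $\eta<\eta(\epsilon)$ and $\epsilon'$ are small compared with $\epsilon$, at least a $1-2\eta-2\epsilon'$ fraction of levels $j$ have some component $\theta_{z,j}$ that is both saturated and non-exceptional. Since $V_{j}$ depends only on $j$, this forces $V_{j}=\{0\}$ for all such $j$. For these levels the components $\xi_{x,j}$ are $\epsilon'$-concentrated near a point, so $\frac{1}{m}H(\xi_{x,j},\mathcal{D}_{j+m})=O(\epsilon'+\frac{\log(1/\epsilon')}{m})$ up to an $O(1/m)$ term. By the multiscale formula \cite[Lemma 3.4]{Ho1},
\[
\frac{1}{n}H(\xi,\mathcal{D}_{n})\le\mathbb{E}_{0\le j\le n}\Big(\frac{1}{m}H(\xi_{x,j},\mathcal{D}_{j+m})\Big)+O\Big(\frac{m}{n}\Big)\le O(\eta+\epsilon')+o(1),
\]
using the trivial bound $\frac{1}{m}H(\xi_{x,j},\mathcal{D}_{j+m})\le2+O(1/m)$ on the exceptional levels. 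This contradicts $\frac{1}{n}H(\xi,\mathcal{D}_{n})>\epsilon$ once $\eta,\epsilon'\ll\epsilon$, which fixes $\eta(\epsilon)$; then $\delta=\delta(\epsilon',m)$ is the constant produced by Hochman's theorem.

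The main obstacle I expect is bookkeeping rather than ideas. One must reconcile Hochman's exact definitions (the scale parameter and component levels in his saturation and concentration notions, and his requirement that the measures live on $[0,1]^{d}$) with the hypotheses here, which are stated at a fixed scale $m$, at offset $i$, and for measures supported on sets of diameter $\epsilon^{-1}2^{-i}$. I would handle this as in \cite[Section 5.1]{Rap_Ren_Fur_on_CP1}: first pass to the rescaled measures, then apply \cite[Lemma 3.4]{Ho1} and commensurability estimates to transfer component statistics between scale $m$ and Hochman's scale.
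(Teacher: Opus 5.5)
Your route is the one the paper intends; the paper only cites \cite[Theorem 2.8]{Ho} and the derivation in \cite[Section 5.1]{Rap_Ren_Fur_on_CP1}. You argue by contradiction, take Hochman's subspaces $V_j$, exclude $\dim V_j\in\{1,2\}$ using the two hypotheses on $\theta$, and conclude that $\xi$ has small entropy. The case analysis and the coupling argument are fine: they give $V_j=\{0\}$ on a $1-O(\eta+\epsilon')$ proportion of levels, and a $1-O(\eta+\epsilon')$ proportion of pairs $(j,x)$ for which $\xi_{x,j}$ is $(\{0\},\epsilon')$-concentrated.

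The gap is the final entropy estimate for $\xi$. You claim $\frac{1}{m}H(\xi_{x,j},\mathcal{D}_{j+m})=O(\epsilon'+\frac{\log(1/\epsilon')}{m})$ for concentrated components, and you need this to be small, i.e.\ $m\gg\log(1/\epsilon')$. But $(\{0\},\epsilon')$-concentration only places mass $1-\epsilon'$ in a ball of radius about $\epsilon'2^{-j}$, and says nothing about finer scales. If $\xi_{x,j}$ is normalized Lebesgue measure on such a ball, then $\frac{1}{m}H(\xi_{x,j},\mathcal{D}_{j+m})\approx2-\frac{2\log(1/\epsilon')}{m}$. This is close to $2$ precisely when $m\gg\log(1/\epsilon')$. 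The correct bound is $2\epsilon'+O(1/m)$, and it holds only for $m\le\log(1/\epsilon')$. So the inequality $\frac{1}{n}H(\xi,\mathcal{D}_n)\le O(\eta+\epsilon')+o(1)$ does not follow from what you wrote.

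The repair is to decouple the two scales. Keep $m$ for the saturation step, since that is where the hypotheses on $\theta$ live. Apply the multiscale formula \cite[Lemma 3.4]{Ho1} to $\xi$ at the auxiliary scale $m'':=\lfloor\log(1/\epsilon')\rfloor$ instead.

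\begin{itemize}
\item At this scale, every concentrated component satisfies $\frac{1}{m''}H(\xi_{x,j},\mathcal{D}_{j+m''})\le2\epsilon'+O(1/m'')$.
\item The remaining pairs $(j,x)$ contribute at most $2$ each.
\item Hence $\frac{1}{n}H(\xi,\mathcal{D}_{i+n})\le O(\eta+\epsilon'+1/\log(1/\epsilon'))+O_{\epsilon,\epsilon'}(1/n)$, which is $<\epsilon$ once $\eta<\eta(\epsilon)$, $\epsilon'\ll\epsilon$ and $n$ is large.
\end{itemize}

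Separately, $m$ is not at your disposal. The statement is for every $m\ge1$, and only $\delta$ and $N$ may depend on $m$.

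\begin{itemize}
\item Your non-dyadic normalization $w\mapsto\epsilon2^{i}(w-w_0)$ costs $O_\epsilon(1)/m$ in every scale-$m$ entropy. You can avoid this by using only dyadic rescalings and integer translations.
\item However, comparing projections of Hochman's rescaled components with $\pi_{w\mathbb{R}}\theta_{z,j}$ still involves a non-dyadic translation, which costs $O(1)/m$.
\end{itemize}

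So, as written, your argument yields the theorem only for $m\ge M(\epsilon)$. That is the regime actually used in the proof of Theorem \ref{thm:ent inc with poly}, where the scale $l$ is chosen large in terms of $\eta$ and the other parameters.
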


\subsection{Uniform entropy dimension}

By Lemma \ref{lem:ent wrt mod cont dyad part} and (\ref{eq:der at 0 is comp to 2^-n for u in Psi_n}),
for every $\epsilon>0$, $m\ge M(\epsilon)\ge1$, $n\ge1$, $u\in\Psi_{n}$,
and $\psi\in\mathcal{F}_{\epsilon}$, we have
\[
\frac{1}{m}H\left(\psi\varphi_{u}\mu,\mathcal{D}_{n+m}\right)\ge\dim\mu-\epsilon.
\]
The following proposition, whose proof we omit, follows from this
and from Proposition \ref{prop:mu-mass of neigh of dyd cubes} by
an argument similar to that in the proof of \cite[Proposition 3.6]{Rap_analytic_on_R}.
\begin{prop}
\label{prop:bef uni ent dim prop}For each $0<\epsilon<1$, $m\ge M(\epsilon)\ge1$,
$n\ge1$, and $\psi\in\mathcal{F}_{\epsilon}$, if we set $\theta:=\psi\mu$,
then
\[
\mathbb{P}\left\{ \frac{1}{m}H\left(\theta_{z,n},\mathcal{D}_{n+m}\right)>\dim\mu-\epsilon\right\} >1-\epsilon.
\]
\end{prop}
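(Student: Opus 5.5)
The proof follows the same route as the proof of Proposition \ref{prop:lb on ent of proj of comp of nu}: decompose each component into cylinder measures that sit entirely inside the dyadic cell. Then the component inherits the lower bound already available for cylinders. Fix $0<\epsilon<1$ and choose $\delta,k,m$ with $\epsilon^{-1}\ll\delta^{-1}\ll k\ll m$. Fix also $n\ge1$ and $\psi\in\mathcal{F}_{\epsilon}$, and set $\theta:=\psi\mu$.

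\textbf{Step 1 (cylinders).} For every $u\in\Psi_{n+k}$, the bound recorded just before the proposition (from Lemma \ref{lem:ent wrt mod cont dyad part} and (\ref{eq:der at 0 is comp to 2^-n for u in Psi_n})) will be applied at scale $n+k$ with $m-k$ in place of $m$. Combined with $k\ll m$ and $\mathrm{diam}\left(\psi\varphi_{u}(\Omega)\right)=O(\epsilon^{-1}2^{-n-k})$, this should give
\[
\frac{1}{m}H\left(\psi\varphi_{u}\mu,\mathcal{D}_{n+m}\right)\ge\dim\mu-\epsilon/2 .
\]
The $k$ coarse levels cost at most $O(k/m)$, which is negligible.

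\textbf{Step 2 (boundary layer).} Let $B:=\cup_{D\in\mathcal{D}_{n}}(\partial D)^{(\delta2^{-n})}$. By Proposition \ref{prop:mu-mass of neigh of dyd cubes}, $\theta(B)<\epsilon^{2}$. Let $\mathcal{U}$ be the set of $u\in\Psi_{n+k}$ with $\psi\varphi_{u}(\Omega)$ contained in a single $D\in\mathcal{D}_{n}$. Since the diameter bound above is $<\delta2^{-n}$ once $k$ is large, every $u\notin\mathcal{U}$ has $\psi\varphi_{u}(\Omega)\subset B$. Using (\ref{eq:mu as conv comb with u in Psi_n}), this gives $\sum_{u\notin\mathcal{U}}p_{u}<\epsilon^{2}$. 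A Markov/Chebyshev argument, exactly as in (\ref{eq:eta^2>}), then produces a family $\mathcal{E}\subset\mathcal{D}_{n}$ with $\theta(\cup\mathcal{E})>1-\epsilon$. For each $D\in\mathcal{E}$, the words $u$ with $\psi\varphi_{u}(\Omega)\subset D$ carry relative weight greater than $1-\epsilon$ in the decomposition
\[
\theta_{D}=\frac{1}{\theta(D)}\sum_{u}p_{u}\,\psi\varphi_{u}\mu(D)\,(\psi\varphi_{u}\mu)_{D}.
\]

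\textbf{Step 3 (concavity).} For such $u$ we have $(\psi\varphi_{u}\mu)_{D}=\psi\varphi_{u}\mu$. Concavity of entropy together with Step 1 then yields
\[
\frac{1}{m}H(\theta_{D},\mathcal{D}_{n+m})\ge(1-\epsilon)(\dim\mu-\epsilon/2)>\dim\mu-\epsilon
\]
for all $D\in\mathcal{E}$, after a harmless rescaling of $\epsilon$ at the start. Since $\theta(\cup\mathcal{E})>1-\epsilon$, this is the claimed probability bound. The delicate step is Step 2, ensuring that most of the mass in a typical cell comes from cylinders fully inside it; Proposition \ref{prop:mu-mass of neigh of dyd cubes} is exactly what handles this, uniformly in $\psi\in\mathcal{F}_{\epsilon}$ and $n$.
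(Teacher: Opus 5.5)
Your proposal is correct and is essentially the argument the paper has in mind. The paper omits this proof, saying only that it follows from the cylinder bound stated just before the proposition together with Proposition \ref{prop:mu-mass of neigh of dyd cubes}; your Steps 1--3 reproduce, with the projections removed, the same ingredients the paper uses in its proof of Proposition \ref{prop:lb on ent of proj of comp of nu}: the decomposition into $\Psi_{n+k}$-cylinders, the boundary-layer/Markov selection of good cells, and concavity. The rescaling of $\epsilon$ you mention is indeed harmless, since $\mathcal{F}_{\epsilon}\subset\mathcal{F}_{\epsilon'}$ for $\epsilon'\le\epsilon$, so every auxiliary result (including Proposition \ref{prop:mu-mass of neigh of dyd cubes} applied with $\epsilon^{2}$) can be invoked with the smaller parameter.
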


Roughly speaking, and in the terminology of \cite{Ho1}, the following
proposition states that holomorphic images of $\mu$ have uniform
entropy dimension $\dim\mu$. It follows easily from Proposition \ref{prop:bef uni ent dim prop}
by an argument identical to the one given in the proof of \cite[Proposition 3.3]{Rap_analytic_on_R}.
We therefore omit the proof.
\begin{prop}
\label{prop:uni ent dim}For each $0<\epsilon<1$, $m\ge M(\epsilon)\ge1$,
$n\ge N(\epsilon,m)$, and $\psi\in\mathcal{F}_{\epsilon}$, if we
set $\theta:=\psi\mu$, then
\[
\mathbb{P}_{1\le i\le n}\left\{ \left|\frac{1}{m}H\left(\theta_{z,i},\mathcal{D}_{i+m}\right)-\dim\mu\right|<\epsilon\right\} >1-\epsilon.
\]
\end{prop}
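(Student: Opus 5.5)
The plan is to deduce Proposition \ref{prop:uni ent dim} from Proposition \ref{prop:bef uni ent dim prop} by averaging over scales, using an upper bound coming from the global entropy. The lower half of the two-sided estimate is immediate from Proposition \ref{prop:bef uni ent dim prop}; the upper half needs the additional global information.

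\emph{Lower half.} Fix $\epsilon$, and let $\epsilon'\ll\epsilon$ and $m\ge M(\epsilon')$. Applying Proposition \ref{prop:bef uni ent dim prop} with $\epsilon'$ at every level $i$ gives, for each $1\le i\le n$,
\[
\mathbb{P}\left\{ \frac{1}{m}H\left(\theta_{z,i},\mathcal{D}_{i+m}\right)>\dim\mu-\epsilon'\right\} >1-\epsilon'.
\]
Averaging over $i$ yields the same bound under $\mathbb{P}_{1\le i\le n}$. Since $\psi\in\mathcal{F}_{\epsilon}\subset\mathcal{F}_{\epsilon'}$, the proposition applies.

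\emph{Upper half.} Next I would bound the average of the component entropies from above. By \cite[Lemma 3.4]{Ho1}, since $\mathrm{diam}(\mathrm{supp}\,\theta)=O(\epsilon^{-1})$ and $m\ll n$,
\[
\mathbb{E}_{1\le i\le n}\left(\frac{1}{m}H\left(\theta_{z,i},\mathcal{D}_{i+m}\right)\right)\le\frac{1}{n}H\left(\theta,\mathcal{D}_{n}\right)+\epsilon'.
\]
Since $\psi$ is bi-Lipschitz with constant $O(\epsilon^{-1})$ on $\Omega$ by Lemma \ref{lem:lip prop for psi in F_epsilon}, Lemma \ref{lem:dyad ent =000026 lip func} together with (\ref{eq:ent dim =00003D exact dim}) gives $\frac{1}{n}H(\theta,\mathcal{D}_{n})\le\dim\mu+\epsilon'$ for $n\ge N(\epsilon,m)$. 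Hence the mean of the random variable $X:=\frac{1}{m}H(\theta_{z,i},\mathcal{D}_{i+m})$ is at most $\dim\mu+2\epsilon'$.

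\emph{Markov step.} This is the key step: a random variable whose mean is at most $\dim\mu+2\epsilon'$ and which is, with probability $>1-\epsilon'$, at least $\dim\mu-\epsilon'$ is concentrated near $\dim\mu$. Since $0\le X\le2+O(1/m)$, we have $X-(\dim\mu-\epsilon')\ge-(\dim\mu-\epsilon')\ge-2$ always, and it is $\ge0$ off an event of probability $<\epsilon'$. Writing $Y:=X-(\dim\mu-\epsilon')$, we get
\[
\mathbb{E}\left(Y\mathbf{1}_{Y\ge0}\right)\le3\epsilon'+2\epsilon'=5\epsilon'.
\]
Markov's inequality then gives $\mathbb{P}\{Y>\epsilon/2\}\le10\epsilon'/\epsilon$. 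Combining this with the lower half, we get $|X-\dim\mu|<\epsilon$ outside probability $\epsilon'+10\epsilon'/\epsilon<\epsilon$, provided $\epsilon'$ is chosen small relative to $\epsilon$.

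There is no real obstacle beyond bookkeeping of the parameter dependencies: $\epsilon'\ll\epsilon$, $M(\epsilon)$ taken as $M(\epsilon')$ from Proposition \ref{prop:bef uni ent dim prop}, and $n$ large enough for both the multiscale formula and the convergence in (\ref{eq:ent dim =00003D exact dim}). The convergence rate in (\ref{eq:ent dim =00003D exact dim}) depends only on $\mu$, which is fixed, so the choice of $n$ is uniform in $\psi$.
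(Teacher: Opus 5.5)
Your proposal is correct and follows essentially the same route the paper intends: the paper omits this proof and defers to the argument for \cite[Proposition 3.3]{Rap_analytic_on_R}, which gets the per-scale lower bound from Proposition \ref{prop:bef uni ent dim prop} (with a smaller parameter), gets the average upper bound from \cite[Lemma 3.4]{Ho1} together with the bi-Lipschitz invariance of entropy and exact dimensionality, and combines the two with a Markov-type inequality. Your bookkeeping for this — $\mathcal{F}_{\epsilon}\subset\mathcal{F}_{\epsilon'}$, the bound $0\le X\le 2$, and the choice $\epsilon'\ll\epsilon$ (for instance $\epsilon'=\epsilon^{2}/20$) — is sound.
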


\subsection{Additional preparations for the proof of Theorem \ref{thm:ent inc with poly}}

The proof of the following lemma is similar to that of \cite[Lemma 6.9]{HR}
and is therefore omitted.
\begin{lem}
\label{lem:step1 in ent inc pf}Let $k\in\mathbb{Z}_{>0}$, $R>1$,
$\nu\in\mathcal{M}(\mathcal{P}_{k})$ and $\theta\in\mathcal{M}(\mathbb{D})$
be given. Suppose that $\Vert q\Vert_{2}\le R$ for $q\in\mathrm{supp}(\nu)$.
Then for every $n\ge m\ge1,$
\[
\frac{1}{n}H\left(\nu.\theta,\mathcal{D}_{n}\right)\ge\mathbb{E}_{1\le i\le n}\left(\frac{1}{m}H\left(\nu_{q,i}.\theta_{z,i},\mathcal{D}_{i+m}\right)\right)-O_{k,R}\left(\frac{1}{m}+\frac{m}{n}\right).
\]
\end{lem}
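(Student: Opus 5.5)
The plan is the standard multiscale decomposition of entropy, applied to the product measure $\nu\times\theta$ on $\mathcal{P}_k\times\mathbb{D}$, followed by a comparison between the evaluation map and the dyadic partitions. Let $F:\mathcal{P}_k\times\mathbb{D}\to\mathbb{C}$ denote $F(q,z)=q(z)$, so that $\nu.\theta=F(\nu\times\theta)$. Since $\Vert q\Vert_2\le R$ and $|z|\le1$, the map $F$ is Lipschitz on $\mathrm{supp}(\nu)\times\mathbb{D}$ with constant $O_{k,R}(1)$. We work with the product partition $\mathcal{Q}_i:=\mathcal{D}_i^{\mathcal{P}_k}\times\mathcal{D}_i^{\mathbb{C}}$.

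First, by \cite[Lemma 3.4]{Ho1}, the multiscale formula for a compactly supported measure, we have
\[
\frac{1}{n}H(\nu.\theta,\mathcal{D}_n)\ge\mathbb{E}_{1\le i\le n}\Bigl(\frac{1}{m}H(\nu.\theta,\mathcal{D}_{i+m}\mid\mathcal{D}_i)\Bigr)-O_{k,R}\Bigl(\frac{m}{n}\Bigr).
\]
Here we use that $\mathrm{supp}(\nu.\theta)$ has diameter $O_{k,R}(1)$, so that $H(\nu.\theta,\mathcal{D}_0)=O_{k,R}(1)$. For each $i$ we then compare the conditional entropy $H(\nu.\theta,\mathcal{D}_{i+m}\mid\mathcal{D}_i)$ with $H(\nu.\theta,\mathcal{D}_{i+m}\mid F(\mathcal{Q}_i))$. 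Because $F$ is $O_{k,R}(1)$-Lipschitz, the image $F(Q)$ of each cell $Q\in\mathcal{Q}_i$ meets only $O_{k,R}(1)$ cells of $\mathcal{D}_i$. It follows that $H(\nu.\theta,\mathcal{D}_{i+m}\mid\mathcal{D}_i)\ge H(F(\nu\times\theta),\mathcal{D}_{i+m}\mid F^{-1}\mathcal{D}_i\vee\mathcal{Q}_i)-O_{k,R}(1)$. The cleanest way to see this is to write $H(\cdot,\mathcal{D}_{i+m}\vee\mathcal{Q}_i)$ via the chain rule in the two possible orders, using that $\mathcal{Q}_i$ refined by $F^{-1}\mathcal{D}_i$ costs only $O_{k,R}(1)$ entropy.

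Next, conditioning on $\mathcal{Q}_i$ and using concavity of conditional entropy, we have
\[
H(F(\nu\times\theta),\mathcal{D}_{i+m}\mid\mathcal{Q}_i)\ge\sum_{Q}(\nu\times\theta)(Q)\,H\bigl(F((\nu\times\theta)_Q),\mathcal{D}_{i+m}\bigr)-O_{k,R}(1).
\]
Since $(\nu\times\theta)_{Q}=\nu_{q,i}\times\theta_{z,i}$ for $Q=\mathcal{D}_i(q)\times\mathcal{D}_i(z)$, the weighted sum equals exactly $\mathbb{E}\bigl(H(\nu_{q,i}.\theta_{z,i},\mathcal{D}_{i+m})\bigr)$, where the expectation is over the random components. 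The additional conditioning on $F^{-1}\mathcal{D}_i$ only decreases the entropy by $O_{k,R}(1)$, by the same bounded-intersection argument as above.

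Dividing by $m$ and averaging over $1\le i\le n$ turns each $O_{k,R}(1)$ loss into $O_{k,R}(1/m)$, which yields the stated bound. The main obstacle is the bookkeeping of the bounded-multiplicity comparisons between partitions, namely $F(\mathcal{Q}_i)$ against $\mathcal{D}_i$. These comparisons rely only on the uniform Lipschitz bound for $F$ on $\{\Vert q\Vert_2\le R\}\times\mathbb{D}$, and on \cite[Lemma 3.2]{Ho1} for commensurable partitions. This is where the dependence of the error terms on $k$ and $R$ enters.
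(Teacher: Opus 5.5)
Your proposal is correct and is essentially the standard argument that the paper defers to (it omits the proof and points to \cite[Lemma 6.9]{HR}): the multiscale formula \cite[Lemma 3.4]{Ho1}, followed by lower-bounding $H(\nu.\theta,\mathcal{D}_{i+m}\mid\mathcal{D}_i)$ by the average of $H(\nu_{q,i}.\theta_{z,i},\mathcal{D}_{i+m})$ at an $O_{k,R}(1)$ cost, using the uniform Lipschitz bound for $(q,z)\mapsto q(z)$ on $\{\Vert q\Vert_2\le R\}\times\mathbb{D}$. Your conditioning on $\mathcal{Q}_i$ is just the concavity of conditional entropy written out on the product space. Your middle quantities should be read as $H(\nu\times\theta,F^{-1}\mathcal{D}_{i+m}\mid\cdot)$ on $\mathcal{P}_k\times\mathbb{D}$, and with that reading your losses are misplaced, though harmlessly. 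Passing from $F^{-1}\mathcal{D}_i$ to $F^{-1}\mathcal{D}_i\vee\mathcal{Q}_i$ is free, and the $\mathcal{Q}_i$-decomposition is an identity. The only genuine $O_{k,R}(1)$ loss is $H(\nu\times\theta,F^{-1}\mathcal{D}_i\mid\mathcal{Q}_i)$.
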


Writing $F$ for the map taking $(q,z)\in\mathcal{P}_{k}\times\mathbb{D}$
to $q(z)$, the (complex) differential of $F$ at a point $(q,z)$
is given by $dF_{(q,z)}(r,w)=r(z)+q'(z)w$. Using this, the proof
of the following linearization lemma is almost identical to that of
\cite[Lemma 4.2]{BHR}, so we omit it.
\begin{lem}
\label{lem:linearization}For every $\epsilon>0$, $k\in\mathbb{Z}_{>0}$,
$R>1$, $m\ge M(\epsilon)\ge1$, and $0<\delta<\delta(\epsilon,k,R,m)$,
the following holds. Let $q\in\mathcal{P}_{k}$, $z\in\mathbb{D}$,
$\nu\in\mathcal{M}(\mathcal{P}_{k})$ and $\theta\in\mathcal{M}(\mathbb{D})$
be such that $\Vert q\Vert_{2}\le R$, $\Vert r-q\Vert_{2}\le\delta$
for $r\in\mathrm{supp}(\nu)$, and $|w-z|\le\delta$ for $w\in\mathrm{supp}(\theta)$.
Then,
\[
\left|\frac{1}{m}H\left(\nu.\theta,\mathcal{D}_{m-\log\delta}\right)-\frac{1}{m}H\left(\left(\nu.z\right)*\left(S_{q'(z)}\theta\right),\mathcal{D}_{m-\log\delta}\right)\right|<\epsilon.
\]
\end{lem}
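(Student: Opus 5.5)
The plan is to compare the two measures as pushforwards of the same product measure $\nu\times\theta$ under two maps that are uniformly close at scale $2^{-(m-\log\delta)}=2^{-m}\delta$. We then conclude with Lemma \ref{lem:close func --> close ent} together with the fact that translations change dyadic entropy by only $O(1)$.

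Let $F(r,w):=r(w)$, so that $\nu.\theta=F(\nu\times\theta)$. Let $G(r,w):=r(z)+q'(z)w$, so that $G(\nu\times\theta)=(\nu.z)*(S_{q'(z)}\theta)$. For $r\in\mathrm{supp}(\nu)$ and $w\in\mathrm{supp}(\theta)$ we expand $r$ around $z$:
\[
r(w)=r(z)+r'(z)(w-z)+E_{r}(w),\qquad r'(z)(w-z)=q'(z)w-q'(z)z+(r-q)'(z)(w-z).
\]
We need three estimates, all uniform in $z\in\mathbb{D}$.
\begin{itemize}
\item Since $\Vert r\Vert_{2}\le R+\delta\le R+1$ and $r$ has degree at most $k$, its second derivative is bounded by $O_{k,R}(1)$ on the disc $\overline{D}_{2}(0)$, which contains the segment $[z,w]$ once $\delta<1$. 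Hence $|E_{r}(w)|=O_{k,R}(|w-z|^{2})=O_{k,R}(\delta^{2})$.
\item Since $\Vert r-q\Vert_{2}\le\delta$, we have $|(r-q)'(z)|=O_{k}(\delta)$, so $|(r-q)'(z)(w-z)|=O_{k}(\delta^{2})$.
\item The constant $c:=q'(z)z$ does not depend on $(r,w)$.
\end{itemize}
Together these give
\[
\left|F(r,w)-\left(G(r,w)-c\right)\right|\le C_{k,R}\,\delta^{2}\quad\text{for all }(r,w)\in\mathrm{supp}(\nu)\times\mathrm{supp}(\theta).
\]

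Now choose $\delta(\epsilon,k,R,m)$ so small that $C_{k,R}\delta^{2}\le2^{-m}\delta$, that is, $\delta\le2^{-m}/C_{k,R}$. Then Lemma \ref{lem:close func --> close ent}, applied on the probability space $(\mathcal{P}_{k}\times\mathbb{D},\nu\times\theta)$ at level $\lfloor m-\log\delta\rfloor$, shows that $H(\nu.\theta,\mathcal{D}_{m-\log\delta})$ and $H(T_{-c}G(\nu\times\theta),\mathcal{D}_{m-\log\delta})$ differ by $O(1)$. Translating by $c$ changes dyadic entropy by at most $O(1)$, since each dyadic cell meets a bounded number of translated cells. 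So the two entropies in the statement differ by $O(1)$, an absolute constant. Dividing by $m$ and taking $m\ge M(\epsilon)$ with $M(\epsilon)$ large compared with this constant gives an error below $\epsilon$. This is also why $M$ depends only on $\epsilon$, while all the dependence on $k$ and $R$ is absorbed into $\delta$.

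The only step needing care is the second-order Taylor bound: it must be uniform over all of $\mathrm{supp}(\nu)$ and all $z\in\mathbb{D}$, with a constant depending only on $k$ and $R$. This follows from the coefficient bound $\Vert r\Vert_{2}\le R+1$, since all derivatives of a polynomial of degree at most $k$ on a fixed disc are controlled by its coefficients. Everything else is routine bookkeeping with the floor in $\mathcal{D}_{m-\log\delta}$.
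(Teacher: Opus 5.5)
Your proposal is correct and follows the same route as the paper, which omits the proof and refers to the standard linearization argument of \cite[Lemma 4.2]{BHR}. That argument expands $F(r,w)=r(w)$ to first order at $(q,z)$ using the differential $dF_{(q,z)}(r,w)=r(z)+q'(z)w$, which equals your $G$, with an $O_{k,R}(\delta^{2})$ remainder below the scale $2^{-m}\delta$; it then concludes via the lemma on entropies of uniformly close maps and the $O(1)$ cost of the translation by $q'(z)z$.
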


Finally, the following lemma is obtained by an argument similar to
that used in the proof of \cite[Lemma 3.10]{Rap_analytic_on_R}, and
we again omit the details.
\begin{lem}
\label{lem:ent of nu imp ent of push of comp of nu}For every $0<\epsilon<1$
and $k\in\mathbb{Z}_{>0}$ there exists $\epsilon_{0}=\epsilon_{0}(\epsilon,k)>0$
such that for all $m\ge M(\epsilon,k)\ge1$ and $n\ge N(\epsilon,k,m)$
the following holds. Let $\nu\in\mathcal{M}(\mathcal{P}_{k})$ be
such that $\Vert q\Vert_{2}\le\epsilon^{-1}$ for $q\in\mathrm{supp}(\nu)$
and $\frac{1}{n}H\left(\nu,\mathcal{D}_{n}\right)\ge\epsilon$. Let
$\psi\in\mathcal{F}_{\epsilon}$ be with $\psi\left(\Omega\right)\subset\mathbb{D}$,
and set $\theta:=\psi\mu$. Then,
\begin{equation}
\int\mathbb{P}_{1\le i\le n}\left\{ \frac{1}{m}H\left(\left(\nu_{q,i}\right).z,\mathcal{D}_{i+m}\right)>\epsilon_{0}\right\} \:d\theta(z)>\epsilon_{0}.\label{eq:ent of nu.x nontrivial}
\end{equation}
\end{lem}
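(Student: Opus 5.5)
We argue by contradiction combined with a compactness argument, in the spirit of \cite[Lemma 3.10]{Rap_analytic_on_R}. The key identity is that, for each dyadic cell $Q\in\mathcal{D}_i^{\mathcal{P}_k}$ of $\mathcal{P}_k$, the measure $(\nu_{q,i}).z$ is the pushforward of $\nu_{q,i}$ under the linear evaluation map $e_z:r\mapsto r(z)$. Thus the integrand in (\ref{eq:ent of nu.x nontrivial}) measures how much of the entropy of the component $\nu_{q,i}$ survives evaluation at a $\theta$-typical point $z$.

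The first step is to use the assumption $\frac1n H(\nu,\mathcal{D}_n)\ge\epsilon$ together with the multiscale formula \cite[Lemma 3.4]{Ho1}. Since $\Vert q\Vert_2\le\epsilon^{-1}$ on $\mathrm{supp}(\nu)$, this gives
\[
\mathbb{E}_{1\le i\le n}\Bigl(\tfrac1m H(\nu_{q,i},\mathcal{D}_{i+m})\Bigr)\ge\epsilon/2
\]
for $m\ll n$. Consequently, with probability at least $\epsilon/4$ over $(i,q)$, the component $\nu_{q,i}$, rescaled by $2^{i}$, has normalized entropy at least $\epsilon/4$ at scale $m$. Each such rescaled component is a measure on a unit cube of $\mathbb{C}^{k+1}$ with entropy bounded below.

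The second step is to show that, for any such measure $\eta$, the set of $z\in\mathbb{D}$ for which $\frac1m H(e_z\eta,\mathcal{D}_m)$ is small carries small $\theta$-mass, uniformly in $\eta$ and in $\psi\in\mathcal{F}_\epsilon$. The mechanism is as follows. Since $\eta$ has entropy at least $\epsilon/4$ in dimension $2(k+1)$, its mass cannot concentrate, at scale $2^{-m}$, near any affine subspace of real codimension at least one in a way that persists across a positive proportion of scales. On the other hand, the kernels $\ker e_z$ are complex hyperplanes. So if $e_z\eta$ has small entropy, then $\eta$ is essentially concentrated near a translate of $\ker e_z$, that is, near polynomials $r$ with $r(z)$ nearly constant.

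If this happens for two points $z_1,z_2$, then $\eta$ is concentrated near the intersection of the two translates. Iterating, if it happens for $k+1$ points that are well separated (in the sense that the corresponding Vandermonde matrix is invertible with controlled condition number), then $\eta$ concentrates at scale $2^{-m}$ on an $O_k(1)$-neighborhood of a single point. This contradicts the entropy lower bound once $m\gg_{k,\epsilon}1$.

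Hence the bad set of $z$ cannot contain $k+1$ points that are $\kappa$-separated. It is therefore contained in a union of $k$ discs of radius $\kappa$, with $\kappa=\kappa(\epsilon,k)$ small. The main obstacle is making this quantitative at the level of entropy rather than exact concentration. I would first handle it by passing to most subcomponents via \cite[Lemma 3.4]{Ho1}, and then use submodularity of entropy: for the map $(e_{z_1},\dots,e_{z_{k+1}})$, which is bi-Lipschitz with constant $O_{k,\kappa}(1)$,
\[
H(\eta,\mathcal{D}_m)\le\sum_j H(e_{z_j}\eta,\mathcal{D}_m)+O_{k,\kappa}(1).
\]
If every $e_{z_j}\eta$ had entropy at most $\epsilon_0 m$, the left side would be at most $(k+1)\epsilon_0 m+O(1)$, contradicting the bound $\epsilon m/4$ once $\epsilon_0<\epsilon/(8(k+1))$.

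The third step is to bound the $\theta$-mass of $k$ discs of radius $\kappa$, uniformly in $\psi\in\mathcal{F}_\epsilon$. Because $\mu$ is nonatomic (no common fixed point) and $\psi$ is bi-Lipschitz by Lemma \ref{lem:lip prop for psi in F_epsilon}, a compactness argument in the style of Lemma \ref{lem:pre mu-mass of neigh of dyd cubes} shows that $\sup_{\psi,w}\psi\mu(D_\kappa(w))\to0$ as $\kappa\to0$. Choosing $\kappa$ small therefore makes the bad set have $\theta$-mass at most $1/2$.

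Finally, we integrate over $z$. A proportion $\epsilon/4$ of pairs $(i,q)$ are good, and for each good pair at least half the $\theta$-mass of $z$ is good. Hence the left side of (\ref{eq:ent of nu.x nontrivial}) exceeds $\epsilon/8\ge\epsilon_0$ after shrinking $\epsilon_0$.
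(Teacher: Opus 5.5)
Your proposal is correct and follows essentially the same route as the argument the paper defers to (\cite[Lemma 3.10]{Rap_analytic_on_R}). There, too, the multiscale formula produces a non-negligible proportion of high-entropy components $\nu_{q,i}$; the bi-Lipschitz Vandermonde map $r\mapsto(r(z_1),\dots,r(z_{k+1}))$ at $\kappa$-separated points forces the bad set of $z$ into $k$ discs of radius $\kappa$; and uniform non-atomicity of $\psi\mu$ (from Lemma \ref{lem:lip prop for psi in F_epsilon} and non-atomicity of $\mu$) finishes the argument.

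The only slip is quantitative. Components in $\mathcal{P}_k\cong\mathbb{R}^{2k+2}$ have normalized entropy up to $2(k+1)$, so from $\mathbb{E}\ge\epsilon/2$ you get a proportion $\ge\epsilon/(8(k+1))$ of good pairs $(i,q)$ rather than $\epsilon/4$. This only changes $\epsilon_0$, which is allowed to depend on $k$.
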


\subsection{\label{subsec:Proof-of-the ent in result}Proof of the entropy increase
result}
\begin{proof}[Proof of Theorem \ref{thm:ent inc with poly}]
Suppose that $\dim\mu<2$, let $0<\gamma<1$ be as obtained in Proposition
\ref{prop:lb on ent of proj of comp of nu}, and let $k,l,m,n\in\mathbb{Z}_{>0}$
and $0<\epsilon,\epsilon_{0},\eta,\rho,\delta<1$ be with,
\begin{equation}
\left(2-\dim\mu\right)^{-1},\gamma,k,\epsilon^{-1}\ll\epsilon_{0}^{-1}\ll\eta^{-1}\ll l\ll\rho^{-1}\ll\delta^{-1}\ll m\ll n.\label{eq:rel between params in ent inc result}
\end{equation}
Let $\nu\in\mathcal{M}(\mathcal{P}_{k})$ be such that $\Vert q\Vert_{2}\le\epsilon^{-1}$
and $\left|q'(z)\right|\ge\epsilon$ for $q\in\mathrm{supp}(\nu)$
and $z\in\mathbb{D}$, and such that $\frac{1}{n}H\left(\nu,\mathcal{D}_{n}\right)\ge\epsilon$.
Let $\psi\in\mathcal{F}_{\epsilon}$ be with $\psi(\Omega)\subset\mathbb{D}$,
and set $\theta:=\psi\mu$.

By Lemma \ref{lem:step1 in ent inc pf},
\[
\frac{1}{n}H\left(\nu.\theta,\mathcal{D}_{n}\right)+\delta\ge\mathbb{E}_{1\le i\le n}\left(\frac{1}{m}H\left(\nu_{q,i}.\theta_{z,i},\mathcal{D}_{i+m}\right)\right).
\]
Hence, by Lemma \ref{lem:linearization},
\[
\frac{1}{n}H\left(\nu.\theta,\mathcal{D}_{n}\right)+2\delta\ge\mathbb{E}_{1\le i\le n}\left(\frac{1}{m}H\left(\left(\nu_{q,i}.z\right)*\left(S_{q'(z)}\theta_{z,i}\right),\mathcal{D}_{i+m}\right)\right).
\]
Thus, since $\left|q'(z)\right|\ge\epsilon$ for $q\in\mathrm{supp}(\nu)$
and $z\in\mathbb{D}$,
\begin{equation}
\frac{1}{n}H\left(\nu.\theta,\mathcal{D}_{n}\right)+3\delta\ge\mathbb{E}_{1\le i\le n}\left(\frac{1}{m}H\left(\left(S_{q'(z)}^{-1}\nu_{q,i}\right).z*\theta_{z,i},\mathcal{D}_{i+m}\right)\right).\label{eq:by linearization ect}
\end{equation}

Write $\Gamma:=\lambda_{n}\times\theta\times\nu$, where $\lambda_{n}$
is defined in Section \ref{subsec:Basic-notations}. Let $E_{1}$
be the set of all $(i,z,q)\in\mathcal{N}_{n}\times\mathbb{D}\times\mathrm{supp}(\nu)$
such that $\frac{1}{m}H\left(\theta_{z,i},\mathcal{D}_{i+m}\right)\ge\dim\mu-\delta$.
By Proposition \ref{prop:bef uni ent dim prop}, we may assume that
$\Gamma(E_{1})>1-\delta$. Also, by \cite[Corollary 4.10]{Ho1},
\begin{equation}
\frac{1}{m}H\left(\left(S_{q'(z)}^{-1}\nu_{q,i}\right).z*\theta_{z,i},\mathcal{D}_{i+m}\right)\ge\dim\mu-2\delta\text{ for }(i,z,q)\in E_{1}.\label{eq:lb on E_1}
\end{equation}

Let $E_{2}$ be the set of all $(i,z,q)\in E_{1}$ such that
\begin{equation}
\mathbb{P}_{i\le j\le i+m}\left\{ \frac{1}{l}H\left(\left(\theta_{z,i}\right)_{w,j},\mathcal{D}_{j+l}\right)<1+\frac{1}{2}\dim\mu\right\} >1-\eta,\label{eq:first prop of E_2}
\end{equation}
\begin{equation}
\mathbb{P}_{i\le j\le i+m}\left\{ \begin{array}{c}
\underset{u\mathbb{R}\in\mathbb{RP}^{1}}{\inf}\frac{1}{l}H\left(\pi_{u\mathbb{R}}\left(\theta_{z,i}\right)_{w,j},\mathcal{D}_{j+l}\right)\\
>\frac{1}{l}H\left(\left(\theta_{z,i}\right)_{w,j},\mathcal{D}_{j+l}\right)-1+\gamma/2
\end{array}\right\} >1-\eta,\label{eq:proj prop of E_2}
\end{equation}
and
\begin{equation}
\frac{1}{m}H\left(\left(\nu_{q,i}\right).z,\mathcal{D}_{i+m}\right)>\epsilon_{0}.\label{eq:second prop of E_2}
\end{equation}
By Propositions \ref{prop:lb on ent of proj of comp of nu} and \ref{prop:uni ent dim},
Lemma \ref{lem:ent of nu imp ent of push of comp of nu}, \cite[Lemma 2.7]{Ho},
and since $\dim\mu<2$, we may assume that $\Gamma(E_{2})>\epsilon_{0}$.

Let $(i,z,q)\in E_{2}$ and set $\xi:=\left(S_{q'(z)}^{-1}\nu_{q,i}\right).z$.
We next want to apply Theorem \ref{thm:ent inc in C} in order to
obtain entropy increase for the convolution $\xi*\theta_{z,i}$. Let
$q_{1},q_{2}\in\mathrm{supp}(\nu_{q,i})$ be given. Since $z\in\mathbb{D}$,
$\left|q'(z)\right|\ge\epsilon$, and $q_{1}$ and $q_{2}$ belong
to the same atom of $\mathcal{D}_{i}^{\mathcal{P}_{k}}$,
\[
\left|q'(z)\right|^{-1}\cdot\left|q_{1}(z)-q_{2}(z)\right|\le\epsilon^{-1}(k+1)2^{1-i}.
\]
From this and since $\theta_{z,i}$ is supported on a single atom
of $\mathcal{D}_{i}^{\mathbb{C}}$,
\begin{equation}
\mathrm{diam}\left(\mathrm{supp}\left(\xi\right)\right),\mathrm{diam}\left(\mathrm{supp}\left(\theta_{z,i}\right)\right)=O_{k,\epsilon}\left(2^{-i}\right).\label{eq:supports are big O}
\end{equation}
We have $\Vert q\Vert_{2}\le\epsilon^{-1}$, which implies $\left|q'(z)\right|=O_{k,\epsilon}(1)$.
Thus, from (\ref{eq:second prop of E_2}) and since $k,\epsilon^{-1},\epsilon_{0}^{-1}\ll m$,
\[
\frac{1}{m}H\left(\xi,\mathcal{D}_{i+m}\right)>\epsilon_{0}/2.
\]
From this, (\ref{eq:rel between params in ent inc result}), (\ref{eq:first prop of E_2}),
(\ref{eq:proj prop of E_2}), (\ref{eq:supports are big O}), and
Theorem \ref{thm:ent inc in C},
\[
\frac{1}{m}H\left(\xi*\theta_{z,i},\mathcal{D}_{i+m}\right)\ge\frac{1}{m}H\left(\theta_{z,i},\mathcal{D}_{i+m}\right)+\rho.
\]
Hence, since $E_{2}\subset E_{1}$, we have thus proven that
\begin{equation}
\frac{1}{m}H\left(\left(S_{q'(z)}^{-1}\nu_{q,i}\right).z*\theta_{z,i},\mathcal{D}_{i+m}\right)\ge\dim\mu+\rho-\delta\text{ for }(i,z,q)\in E_{2}.\label{eq:lb on E_2}
\end{equation}

Now, from (\ref{eq:by linearization ect}), (\ref{eq:lb on E_1})
and (\ref{eq:lb on E_2}),
\[
\frac{1}{n}H\left(\nu.\theta,\mathcal{D}_{n}\right)+3\delta\ge\Gamma\left(E_{1}\setminus E_{2}\right)\left(\dim\mu-2\delta\right)+\Gamma\left(E_{2}\right)\left(\dim\mu+\rho-\delta\right).
\]
Thus, since $\Gamma(E_{1})>1-\delta$ and $\Gamma(E_{2})>\epsilon_{0}$,
\[
\frac{1}{n}H\left(\nu.\theta,\mathcal{D}_{n}\right)\ge\dim\mu+\epsilon_{0}\rho-O(\delta).
\]
By (\ref{eq:rel between params in ent inc result}), this completes
the proof of the theorem.
\end{proof}

\section{\label{sec:Proof-of-the-main-result}Proof of the main result}

In this section we prove our main result, Theorem \ref{thm:main thm}.
The proof is an adaptation of the proof of the main result of \cite{Rap_analytic_on_R}
and largely follows it. Nevertheless, for the reader\textquoteright s
convenience, we shall provide full details of the main argument.

The proof of the following preliminary lemma is almost identical to
that of \cite[Lemma 4.1]{Rap_analytic_on_R} and is therefore omitted.
Recall from Section \ref{subsec:Symbolic-notations} that $\{\beta_{\omega}\}_{\omega\in\Lambda^{\mathbb{N}}}\subset\mathcal{M}\left(\Lambda^{\mathbb{N}}\right)$
denotes the disintegration of $\beta$ with respect to $\Pi^{-1}\mathcal{B}_{\mathbb{C}}$.
Also, recall that for each $n\ge1$, we defined $\Pi_{n}:\Lambda^{\mathbb{N}}\rightarrow\mathcal{O}(\Omega)$
by $\Pi_{n}(\omega)=\varphi_{\omega|_{n}}$ for $\omega\in\Lambda^{\mathbb{N}}$.
\begin{lem}
\label{lem:0-ent lemma}For $\beta$-a.e. $\omega$,
\[
\underset{n\rightarrow\infty}{\lim}\frac{1}{n}H\left(\left(\Pi_{n}\beta_{\omega}\right).\mu,\mathcal{D}_{\chi n}\right)=0.
\]
\end{lem}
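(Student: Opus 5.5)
The idea is to show that, for $\beta_\omega$-typical $\eta$, the map $\varphi_{\eta|_n}$ is pinned down at scale $2^{-\chi n}$ by the point $\Pi(\omega)$ together with a bounded amount of further information. This is the same mechanism as in \cite[Lemma 4.1]{Rap_analytic_on_R}.

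Fix $\epsilon>0$. By (\ref{eq:chi as a.s.  limit}) and Egorov's theorem there are $N$ and a set $G\subset\Lambda^{\mathbb{N}}$ with $\beta(G)>1-\epsilon$ such that
\[
\left|-\tfrac1n\log|\varphi_{\eta|_n}'(0)|-\chi\right|<\epsilon\quad\text{for all }\eta\in G,\ n\ge N.
\]
By the martingale theorem, $\beta_\omega(G)\to\beta(G)$ in the sense that $\int\beta_\omega(G)\,d\beta(\omega)=\beta(G)$. Hence for most $\omega$ we have $\beta_\omega(G)>1-\sqrt\epsilon$. We split $\Pi_n\beta_\omega=\beta_\omega(G)\,\Pi_n(\beta_\omega)_G+\beta_\omega(G^c)\,\Pi_n(\beta_\omega)_{G^c}$. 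The convolution with $\mu$ then splits the same way. By concavity and almost-convexity of entropy, the bad part contributes at most $\sqrt\epsilon\cdot O(1)+O(1/n)$ to $\frac1nH(\cdot,\mathcal{D}_{\chi n})$, because all measures involved are supported in the bounded set $\Omega$.

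For the good part, every $\eta\in\mathrm{supp}(\beta_\omega)$ satisfies $\Pi(\eta)=\Pi(\omega)=:x$. By Lemma \ref{lem:Lip prop of varphi_u}, $\varphi_{\eta|_n}(\Omega)$ is a set containing $x$ of diameter $O(|\varphi_{\eta|_n}'(0)|)$. For $\eta\in G$ this is $O(2^{-(\chi-\epsilon)n})$. Thus $(\Pi_n(\beta_\omega)_G).\mu$ is supported in the disc $D_{C2^{-(\chi-\epsilon)n}}(x)$. A measure supported on such a disc meets at most $O(2^{4\epsilon n})$ atoms of $\mathcal{D}_{\chi n}$, so its entropy is at most $4\epsilon n+O(1)$.

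Combining the two parts, for $\beta$-most $\omega$ we get $\limsup_n\frac1nH\le 4\epsilon+O(\sqrt\epsilon)$. To upgrade this to an almost-sure statement, take $\epsilon=\epsilon_j\downarrow0$ and use Markov's inequality: the set of $\omega$ with $\beta_\omega(G_j^c)>\epsilon_j^{1/2}$ has measure at most $\epsilon_j^{1/2}$. Choosing $\sum_j\epsilon_j^{1/2}<\infty$, Borel--Cantelli gives that for a.e. $\omega$ the good estimate holds for all large $j$. This yields $\limsup=0$, and since entropy is nonnegative the limit equals $0$.

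The main obstacle is the bookkeeping for the bad part. Its mass is small, but its entropy could be as large as $\log(\#\text{atoms})$. The entropy is still bounded by $O(n)$ only because the support lies in the bounded set $\Omega$ at scale $2^{-\chi n}$. The resulting contribution $\sqrt\epsilon\,(2\chi+o(1))$ is small, so this works. One also has to check measurability of $\omega\mapsto\beta_\omega(G)$, which is standard for disintegrations.
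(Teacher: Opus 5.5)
Your proof is correct and follows the argument the paper defers to \cite[Lemma 4.1]{Rap_analytic_on_R}: use that $\beta_\omega$ lives on the fibre $\Pi^{-1}(\Pi\omega)$, split off the small-mass part where $|\varphi_{\eta|_n}'(0)|$ deviates from $2^{-\chi n}$ and bound its entropy trivially by $O(n)$, and note that the rest is concentrated on a disc of radius $O(2^{-(\chi-\epsilon)n})$ about $\Pi(\omega)$. Two cosmetic points: the identity $\int\beta_\omega(G)\,d\beta(\omega)=\beta(G)$ is the disintegration formula, not the martingale theorem, and the step you actually use is Markov's inequality; also, the Egorov/Borel--Cantelli bookkeeping can be avoided, since for $\beta$-a.e.\ $\omega$ the measure $\beta_\omega$ already gives full mass to the set where $-\frac{1}{n}\log|\varphi_{\eta|_n}'(0)|\to\chi$.
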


We shall also need the following lemma.
\begin{lem}
\label{lem:estimate on r_psi}There exists $C>1$ such that for all
$u\in\Lambda^{*}$ the following holds. Let $\psi:\mathbb{C}\rightarrow\mathbb{C}$
be the unique homothetic map such that $\psi'(0)>0$, $\psi\left(\varphi_{u}(0)\right)=0$,
and
\[
\sup\left\{ \left|\psi\left(\varphi_{u}(z)\right)\right|\::\:z\in\Omega\right\} =1.
\]
Let $r_{\psi}>0$ and $b_{\psi}\in\mathbb{C}$ be with $\psi(z)=r_{\psi}z+b_{\psi}$
for $z\in\mathbb{C}$. Then, $C^{-1}\le r_{\psi}\left|\varphi_{u}'(0)\right|\le C$.
\end{lem}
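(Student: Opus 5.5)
The plan is to compute $\sup\{|\psi(\varphi_u(z))|:z\in\Omega\}$ directly in terms of $r_\psi$ and $|\varphi_u'(0)|$, and then compare it with the bi-Lipschitz bounds of Lemma \ref{lem:Lip prop of varphi_u}.

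Since $\psi$ is homothetic with $\psi(\varphi_u(0))=0$, we have $\psi(w)=r_\psi(w-\varphi_u(0))$. Hence
\[
\sup_{z\in\Omega}\left|\psi(\varphi_u(z))\right|=r_\psi\sup_{z\in\Omega}\left|\varphi_u(z)-\varphi_u(0)\right|=1 .
\]
So it suffices to show that $\sup_{z\in\Omega}|\varphi_u(z)-\varphi_u(0)|$ is comparable to $|\varphi_u'(0)|$, with constants independent of $u$.

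\textbf{Upper bound.} Take $\psi=\mathrm{id}\in\mathcal{F}_{1}$ in Lemma \ref{lem:Lip prop of varphi_u}. For all $z\in\Omega$,
\[
|\varphi_u(z)-\varphi_u(0)|\le C|\varphi_u'(0)||z|\le C\,\mathrm{diam}(\Omega)\,|\varphi_u'(0)|,
\]
using that $0\in\Omega$ and $\Omega$ is bounded.

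\textbf{Lower bound.} Fix once and for all a point $z_1\in\Omega$ with $z_1\ne0$; this is possible since $\Omega$ is a nonempty open set. The lower bound in Lemma \ref{lem:Lip prop of varphi_u} gives
\[
|\varphi_u(z_1)-\varphi_u(0)|\ge C^{-1}|z_1|\,|\varphi_u'(0)|.
\]
Therefore the supremum is at least $C^{-1}|z_1|\,|\varphi_u'(0)|$.

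Combining the two bounds with $r_\psi\sup_{z\in\Omega}|\varphi_u(z)-\varphi_u(0)|=1$ yields
\[
\left(C\,\mathrm{diam}(\Omega)\right)^{-1}\le r_\psi|\varphi_u'(0)|\le C|z_1|^{-1}.
\]
Enlarging the constant gives the claim. In particular the supremum is strictly positive and finite, so $\psi$ is well defined and unique.

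There is no real obstacle here. The only point that needs care is that the constants depend on $\Omega$ alone and not on $u$, which is guaranteed by Lemma \ref{lem:Lip prop of varphi_u} applied with $\epsilon=1$.
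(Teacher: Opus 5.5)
Your proof is correct, and it takes a slightly different route from the paper's. The paper argues as follows:
\begin{itemize}
\item By compactness of $\overline{\Omega}$ and the maximum modulus principle, there is a boundary point $w\in\partial\Omega$ with $r_\psi|\varphi_u(w)-\varphi_u(0)|=1$.
\item It then applies the two-sided estimate of Lemma \ref{lem:Lip prop of varphi_u} at that single point, together with $C^{-1}\le|w|\le C$ for $w\in\partial\Omega$. Since that lemma is stated only for points of $\Omega$, this tacitly extends it to $\overline{\Omega}$ by continuity.
\end{itemize}
You instead bound the supremum separately from each side. From above, you use the Lipschitz upper bound at every point together with $|z|\le\mathrm{diam}(\Omega)$. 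From below, you use the lower bi-Lipschitz bound at one fixed point $z_1\in\Omega\setminus\{0\}$, which plays the role of the paper's $|w|\ge C^{-1}$.

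What your route buys is that it needs neither holomorphy nor any information about where the supremum is attained, and it needs no boundary extension of Lemma \ref{lem:Lip prop of varphi_u}. The cost is only that your constants, $C\,\mathrm{diam}(\Omega)$ and $C|z_1|^{-1}$, are not symmetric, which is immaterial. You also check that the supremum is finite and positive, so that $\psi$ exists and is unique; the paper leaves this implicit.
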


\begin{proof}
Let $C>1$ be a large global constant depending only on $\Phi$ and
$\Omega$. In particular, since the bounded domain $\Omega$ contains
$0$, we may assume that $C^{-1}\le|z|\le C$ for all $z\in\partial\Omega$.

Let $u\in\Lambda^{*}$, and let $\psi:\mathbb{C}\rightarrow\mathbb{C}$
be as in the statement of the lemma. Since $\overline{\Omega}$ is
compact and by the maximum modulus principle, there exists $w\in\partial\Omega$
such that $\left|\psi\left(\varphi_{u}(w)\right)\right|=1$. We have,
\[
1=\left|\psi\left(\varphi_{u}(w)\right)-\psi\left(\varphi_{u}(0)\right)\right|=r_{\psi}\left|\varphi_{u}(w)-\varphi_{u}(0)\right|.
\]
Moreover, by Lemma \ref{lem:Lip prop of varphi_u},
\[
C^{-1}\left|\varphi_{u}'(0)\right||w|\le\left|\varphi_{u}(w)-\varphi_{u}(0)\right|\le C\left|\varphi_{u}'(0)\right||w|.
\]
Combining all of this, we obtain $C^{-2}\le r_{\psi}\left|\varphi_{u}'(0)\right|\le C^{2}$,
which completes the proof of the lemma.
\end{proof}
We can now begin the proof of our main result.
\begin{proof}[Proof of Theorem \ref{thm:main thm}]
Suppose that the assumptions made in the theorem are all satisfied,
and assume by contradiction that $\dim\mu<\min\left\{ 2,H(p)/\chi\right\} $.
Recall that for $l\ge1$, we denote by $\mathcal{C}_{l}$ the partition
of $\Lambda^{\mathbb{N}}$ into level-$l$ cylinder sets. Set $\Delta':=H\left(\beta,\mathcal{C}_{1}\mid\Pi^{-1}\mathcal{B}_{\mathbb{C}}\right)$,
where the right-hand side denotes the conditional entropy of $\mathcal{C}_{1}$
given $\Pi^{-1}\mathcal{B}_{\mathbb{C}}$ with respect to $\beta$.
By \cite[Theorem 2.8]{FH-dimension}, we have $\dim\mu=\left(H(p)-\Delta'\right)/\chi$.
Thus, from $\dim\mu<H(p)/\chi$, it follows that $\Delta'>0$. Moreover,
by \cite[Proposition 4.10]{FH-dimension},
\begin{equation}
\underset{l\rightarrow\infty}{\lim}\frac{1}{l}H\left(\beta_{\omega},\mathcal{C}_{l}\right)=\Delta'\text{ for }\beta\text{-a.e. }\omega.\label{eq:ent of slices}
\end{equation}

Since $\Phi$ is exponentially separated, there exists $0<c<1$ as
in Definition \ref{def:exp sep}. Set $\Delta:=\min\left\{ \Delta',1\right\} /2$,
and let $C>1$ be a large global constant depending only on $\Phi$,
$\Omega$ and $\Omega_{0}$. Let $0<\rho,\delta<1$ and $M,k,n\in\mathbb{Z}_{>0}$
be such that
\begin{equation}
\Delta^{-1},c^{-1},C\ll M\ll k\ll\rho^{-1}\ll\delta^{-1}\ll n.\label{eq:rel between params in main pf}
\end{equation}
Set $n'=\left\lfloor n\Delta/\left(2\log|\Lambda|\right)\right\rfloor $.
By exponential separation and the choice of $c$, we may clearly assume
that
\begin{equation}
\Vert\varphi_{u_{1}}-\varphi_{u_{2}}\Vert_{\Omega}\ge c^{n+n'}\text{ for all distinct }u_{1},u_{2}\in\Lambda^{n+n'}.\label{eq:by exp sep}
\end{equation}
By (\ref{eq:ent dim =00003D exact dim}) and $\delta^{-1}\ll n$,
we may also assume that
\begin{equation}
\dim\mu+\delta\ge\frac{1}{Mn}H\left(\mu,\mathcal{D}_{Mn+\chi(n+n')}\mid\mathcal{D}_{\chi(n+n')}\right).\label{eq:lb by exact dim}
\end{equation}

By (\ref{eq:def rel of mu}) and since $\beta=\int\beta_{\omega}\:d\beta(\omega)$,
\[
\mu=\left(\Pi_{n+n'}\beta\right).\mu=\int\left(\Pi_{n+n'}\beta_{\omega}\right).\mu\:d\beta(\omega).
\]
Hence, from (\ref{eq:lb by exact dim}) and by the concavity of conditional
entropy,
\[
\dim\mu+\delta\ge\int\frac{1}{Mn}H\left(\left(\Pi_{n+n'}\beta_{\omega}\right).\mu,\mathcal{D}_{Mn+\chi(n+n')}\mid\mathcal{D}_{\chi(n+n')}\right)\:d\beta(\omega).
\]
Together with Lemma \ref{lem:0-ent lemma}, this gives
\[
\dim\mu+2\delta\ge\int\frac{1}{Mn}H\left(\left(\Pi_{n+n'}\beta_{\omega}\right).\mu,\mathcal{D}_{Mn+\chi(n+n')}\right)\:d\beta(\omega).
\]
Thus, setting
\[
g(\omega):=\frac{1}{Mn}H\left(\left(\Pi_{n+n'}\beta_{\omega}\right).\mu,\mathcal{D}_{Mn+\chi(n+n')}\right)\text{ for }\omega\in\Lambda^{\mathbb{N}},
\]
we have
\begin{equation}
\dim\mu+2\delta\ge\int g(\omega)\:d\beta(\omega).\label{eq:lb on dim(mu) by g}
\end{equation}

Recall that for $\mathcal{U}\subset\Lambda^{*}$, we write $\left[\mathcal{U}\right]$
in place of $\cup_{u\in\mathcal{U}}[u]$. For $l\in\mathbb{Z}_{>0}$,
let $\mathcal{U}_{l}$ be the set of words $u\in\Lambda^{l}$ such
that $2^{-l(\chi+\delta)}\le\left|\varphi_{u}'(0)\right|\le2^{-l(\chi-\delta)}$.
Let $E$ be the set of $\omega\in\Lambda^{\mathbb{N}}$ such that
$\mathrm{supp}(\beta_{\omega})\subset\Pi^{-1}\left(\Pi\omega\right)$,
$\frac{1}{n}H\left(\beta_{\omega},\mathcal{C}_{n}\right)>\Delta$,
$\beta_{\omega}\left(\left[\mathcal{U}_{n}\right]\right)>1-\delta$,
and $\sigma^{n}\beta_{\omega}\left(\left[\mathcal{U}_{n'}\right]\right)>1-\delta$.
By (\ref{eq:ent of slices}) and (\ref{eq:chi as a.s.  limit}), by
basic properties of disintegrations, and since $\Delta^{-1},\delta^{-1}\ll n$,
we may assume that $\beta(E)>1-\delta$. In what follows, fix $\omega\in E$.

By the definition of $\mathcal{U}_{n}$, there exists a partition
$\left\{ \mathcal{U}_{n,j}\right\} _{j\in J}$ of $\mathcal{U}_{n}$
such that $|J|\le3\delta n$ and,
\begin{equation}
\left|\varphi_{u_{1}}'(0)\right|\le2\left|\varphi_{u_{2}}'(0)\right|\text{ for all }j\in J\text{ and }u_{1},u_{2}\in\mathcal{U}_{n,j}.\label{eq:def prop of U_n,j}
\end{equation}
For $j\in J$ and $u\in\mathcal{U}_{n'}$, set $F_{j,u}:=\left[\mathcal{U}_{n,j}\right]\cap\sigma^{-n}\left[u\right]$
and
\[
g(\omega,j,u):=\frac{1}{Mn}H\left(\left(\Pi_{n+n'}\left(\beta_{\omega}\right)_{F_{j,u}}\right).\mu,\mathcal{D}_{Mn+\chi(n+n')}\right).
\]

Since $\omega\in E$, there exist $0\le\delta'<2\delta$ and $\theta\in\mathcal{M}\left(\Lambda^{\mathbb{N}}\right)$
such that
\[
\beta_{\omega}=\sum_{(j,u)\in J\times\mathcal{U}_{n'}}\beta_{\omega}\left(F_{j,u}\right)\left(\beta_{\omega}\right)_{F_{j,u}}+\delta'\theta.
\]
Thus, by concavity,
\begin{equation}
g(\omega)\ge\sum_{(j,u)\in J\times\mathcal{U}_{n'}}\beta_{\omega}\left(F_{j,u}\right)g(\omega,j,u).\label{eq:lb on g(omega) by g(omega,j,u)}
\end{equation}
Moreover, from $\frac{1}{n}H\left(\beta_{\omega},\mathcal{C}_{n}\right)>\Delta$,
by the convexity bound (see \cite[Lemma 3.1]{Ho1}), and since the
cardinality of $J\times\mathcal{U}_{n'}$ is at most $3\delta n|\Lambda|^{n'}$,
\begin{multline*}
\Delta<\sum_{(j,u)\in J\times\mathcal{U}_{n'}}\beta_{\omega}\left(F_{j,u}\right)\frac{1}{n}H\left(\left(\beta_{\omega}\right)_{F_{j,u}},\mathcal{C}_{n}\right)\\
+\delta'\frac{1}{n}H\left(\theta,\mathcal{C}_{n}\right)+\frac{\log\left(3\delta n\right)}{n}+\frac{n'}{n}\log|\Lambda|.
\end{multline*}
Note that,
\begin{equation}
\frac{1}{n}H\left(\theta',\mathcal{C}_{n}\right)\le\log|\Lambda|\text{ for all }\theta'\in\mathcal{M}\left(\Lambda^{\mathbb{N}}\right).\label{eq:ub ent all measures}
\end{equation}
Hence, from the previous formula and the definition of $n'$,
\begin{equation}
\Delta/3<\sum_{(j,u)\in J\times\mathcal{U}_{n'}}\beta_{\omega}\left(F_{j,u}\right)\frac{1}{n}H\left(\left(\beta_{\omega}\right)_{F_{j,u}},\mathcal{C}_{n}\right).\label{eq:Delta/3<}
\end{equation}

Let $\mathcal{Q}$ be the set of all $(j,u)\in J\times\mathcal{U}_{n'}$
such that $\frac{1}{n}H\left(\left(\beta_{\omega}\right)_{F_{j,u}},\mathcal{C}_{n}\right)\ge\Delta/6$.
From (\ref{eq:ub ent all measures}) and (\ref{eq:Delta/3<}),
\begin{equation}
\frac{\Delta}{6\log|\Lambda|}<\sum_{(j,u)\in\mathcal{Q}}\beta_{\omega}\left(F_{j,u}\right).\label{eq:lb on mass of (j,u ) in Q}
\end{equation}

Recall that $C>1$ is a large global constant. For $v\in\mathcal{U}_{n}$
and $w\in\mathcal{U}_{n'}$, it follows by bounded distortion, the
chain rule, and the definition of the sets $\mathcal{U}_{l}$, that
\[
C^{-1}2^{-\delta(n+n')}\le2^{\chi(n+n')}\left|\varphi_{vw}'(0)\right|\le C2^{\delta(n+n')}.
\]
Thus, by Lemma \ref{lem:Lip prop of varphi_u}, $S_{2^{\chi(n+n')}}\circ\varphi_{vw}$
is bi-Lipschitz with bi-Lipschitz constant at most $C^{2}2^{\delta(n+n')}$.
Moreover, given $(j,u)\in J\times\mathcal{U}_{n'}$ and setting $\xi:=\left(\Pi_{n+n'}\left(\beta_{\omega}\right)_{F_{j,u}}\right)$,
we have
\[
\mathrm{supp}(\xi)\subset\left\{ \varphi_{vw}\::\:v\in\mathcal{U}_{n}\text{ and }w\in\mathcal{U}_{n'}\right\} .
\]
From these facts, by concavity, from Lemma \ref{lem:dyad ent =000026 lip func},
by (\ref{eq:ent dim =00003D exact dim}), and since $C,\delta^{-1}\ll n$,
\[
g(\omega,j,u)\ge\int\frac{1}{Mn}H\left(S_{2^{\chi(n+n')}}\varphi\mu,\mathcal{D}_{Mn}\right)\:d\xi(\varphi)-O(1/n)\ge\dim\mu-O(\delta).
\]
We have thus shown that,
\begin{equation}
g(\omega,j,u)\ge\dim\mu-O(\delta)\text{ for }(j,u)\in J\times\mathcal{U}_{n'}.\label{eq:simple lb by conc}
\end{equation}

Fix $(j,u)\in\mathcal{Q}$, and set $a:=\varphi_{u}(0)$. Recall from
Section \ref{subsec:Function-spaces} that the linear operator $P_{k,a}:\mathcal{O}(\Omega)\rightarrow\mathcal{P}_{k}$
is defined by sending $f\in\mathcal{O}(\Omega)$ to its $k$-th order
Taylor polynomial at the point $a$. Let $v\in\mathcal{U}_{n,j}$
and $z\in\Omega$ be given. From Lemma \ref{lem:Lip prop of varphi_u}
and since $u\in\mathcal{U}_{n'}$,
\[
\left|\varphi_{u}(z)-a\right|\le C\left|\varphi_{u}'(0)\right|\le C2^{n'(\delta-\chi)}.
\]
From this, since we can assume that $\delta<\chi/2$, by Lemma \ref{lem:Taylor estimate},
and since $v\in\mathcal{U}_{n}$,
\begin{eqnarray*}
\left|\left(\varphi_{v}-P_{k,a}\varphi_{v}\right)\left(\varphi_{u}(z)\right)\right| & \le & C^{k+1}\left|\varphi_{v}'(0)\right|\left|\varphi_{u}(z)-a\right|^{k+1}\\
 & \le & C^{2k+2}2^{(k+1)n'(\delta-\chi)}2^{n(\delta-\chi)}.
\end{eqnarray*}
Hence, from $\Delta^{-1},C,M\ll k\ll n$, by the definition of $n'$,
and since $\delta<\chi/2$,
\begin{equation}
\left|\left(\varphi_{v}-P_{k,a}\varphi_{v}\right)\left(\varphi_{u}(z)\right)\right|\le2^{-Mn-\chi(n+n')}\text{ for }v\in\mathcal{U}_{n,j}\text{ and }z\in\Omega.\label{eq:bd on remainder}
\end{equation}

Note that
\[
\left(\Pi_{n+n'}\left(\beta_{\omega}\right)_{F_{j,u}}\right).\mu=\left(\Pi_{n}\left(\beta_{\omega}\right)_{F_{j,u}}\right).\varphi_{u}\mu,
\]
and also
\begin{equation}
\mathrm{supp}\left(\Pi_{n}\left(\beta_{\omega}\right)_{F_{j,u}}\right)=\left\{ \varphi_{v}\::\:v\in\mathcal{U}_{n,j}\text{ and }\beta_{\omega}\left(\left[vu\right]\right)>0\right\} .\label{eq:form of supp of push by Pi_n}
\end{equation}
Together with (\ref{eq:bd on remainder}) and Lemma \ref{lem:close func --> close ent},
this implies that
\begin{equation}
g(\omega,j,u)\ge\frac{1}{Mn}H\left(\left(P_{k,a}\Pi_{n}\left(\beta_{\omega}\right)_{F_{j,u}}\right).\varphi_{u}\mu,\mathcal{D}_{Mn+\chi(n+n')}\right)-\delta.\label{eq:lb on g by taylor}
\end{equation}

Let $\psi:\mathbb{C}\rightarrow\mathbb{C}$ be the unique homothetic
map such that $\psi'(0)>0$, $\psi(a)=0$, and
\[
\sup\left\{ \left|\psi\left(\varphi_{u}(z)\right)\right|\::\:z\in\Omega\right\} =1.
\]
Let $r_{\psi}>0$ and $b_{\psi}\in\mathbb{C}$ be with $\psi(z)=r_{\psi}z+b_{\psi}$
for $z\in\mathbb{C}$. By Lemma \ref{lem:estimate on r_psi} and since
$u\in\mathcal{U}_{n'}$,
\begin{equation}
C^{-1}2^{n'(\chi-\delta)}\le C^{-1}\left|\varphi_{u}'(0)\right|^{-1}\le r_{\psi}\le C\left|\varphi_{u}'(0)\right|^{-1}\le C2^{n'(\chi+\delta)}.\label{eq:bounds on r_psi}
\end{equation}
Let $R_{\psi^{-1}}:\mathcal{P}_{k}\rightarrow\mathcal{P}_{k}$ be
with $R_{\psi^{-1}}(q):=q\circ\psi^{-1}$ for $q\in\mathcal{P}_{k}$.

By (\ref{eq:def prop of U_n,j}) and $\mathcal{U}_{n,j}\subset\mathcal{U}_{n}$,
there exists $2^{n(\chi-\delta)}\le\alpha\le2^{n(\chi+\delta)}$ such
that
\begin{equation}
\frac{1}{2}\le\alpha\left|\varphi_{v}'(0)\right|\le2\text{ for }v\in\mathcal{U}_{n,j}.\label{eq:bounds on alpha}
\end{equation}
Set
\[
\nu:=S_{\alpha r_{\psi}}T_{-\Pi(\omega)}R_{\psi^{-1}}P_{k,a}\Pi_{n}\left(\beta_{\omega}\right)_{F_{j,u}},
\]
where recall from Section \ref{subsec:Basic-notations} that for $q(X)\in\mathcal{P}_{k}$
\[
T_{-\Pi(\omega)}\left(q(X)\right)=q(X)-\Pi(\omega)\:\text{ and }\:S_{\alpha r_{\psi}}\left(q(X)\right)=\alpha r_{\psi}\cdot q(X).
\]
Note that,
\[
\nu.\left(\psi\varphi_{u}\mu\right)=S_{\alpha r_{\psi}}T_{-\Pi(\omega)}\left(\left(P_{k,a}\Pi_{n}\left(\beta_{\omega}\right)_{F_{j,u}}\right).\varphi_{u}\mu\right).
\]
Together with (\ref{eq:lb on g by taylor}), $r_{\psi}\le C2^{n'(\chi+\delta)}$
and $\alpha\le2^{n(\chi+\delta)}$, this gives
\begin{equation}
g(\omega,j,u)\ge\frac{1}{Mn}H\left(\nu.\left(\psi\varphi_{u}\mu\right),\mathcal{D}_{Mn}\right)-O(\delta).\label{eq:lb on g(omega,j,u) bef ent enc}
\end{equation}

Next, we aim to apply Theorem \ref{thm:ent inc with poly} to the
entropy appearing on the right-hand side of the last inequality. We
proceed to verify the conditions of the theorem. Let $v\in\mathcal{U}_{n,j}$
be with $\beta_{\omega}\left(\left[vu\right]\right)>0$, and set $q:=S_{\alpha r_{\psi}}T_{-\Pi(\omega)}R_{\psi^{-1}}P_{k,a}\varphi_{v}$.
Note that by (\ref{eq:form of supp of push by Pi_n}), each polynomial
in $\mathrm{supp}(\nu)$ is of this form. We have
\[
\psi^{-1}(X)-a=r_{\psi}^{-1}X-b_{\psi}r_{\psi}^{-1}-a=r_{\psi}^{-1}\left(X-\psi(a)\right)=r_{\psi}^{-1}X,
\]
which, together with (\ref{eq:exp form of P_k,a}), gives
\begin{eqnarray}
q(X)+\alpha r_{\psi}\Pi(\omega) & = & \alpha r_{\psi}\sum_{l=0}^{k}\frac{\varphi_{v}^{(l)}\left(a\right)}{l!}\left(\psi^{-1}(X)-a\right)^{l}\nonumber \\
 & = & \alpha r_{\psi}\sum_{l=0}^{k}\frac{\varphi_{v}^{(l)}\left(a\right)}{l!}r_{\psi}^{-l}X^{l}.\label{eq:dev of p}
\end{eqnarray}
Thus,
\begin{equation}
\Vert q(X)\Vert_{2}^{2}=\alpha^{2}r_{\psi}^{2}\left|\varphi_{v}\left(a\right)-\Pi(\omega)\right|^{2}+\sum_{l=1}^{k}\left|\alpha r_{\psi}\frac{\varphi_{v}^{(l)}\left(a\right)}{l!}r_{\psi}^{-l}\right|^{2}.\label{eq:norm of p equals}
\end{equation}

By Lemma \ref{lem:ub on high derivs}, since $v\in\mathcal{U}_{n,j}$,
and from (\ref{eq:bounds on alpha}),
\begin{equation}
\left|\varphi_{v}^{(l)}\left(a\right)\right|\le l!C^{l}\left|\varphi_{v}'(0)\right|\le l!C^{l}2\alpha^{-1}\text{ for }1\le l\le k.\label{eq:ub on derivatives}
\end{equation}
Since $\omega\in E$, we have $\mathrm{supp}(\beta_{\omega})\subset\Pi^{-1}\left(\Pi\omega\right)$.
From this and $\beta_{\omega}\left(\left[vu\right]\right)>0$, it
follows that there exists $\omega'\in\left[vu\right]$ such that $\Pi(\omega)=\Pi(\omega')=\varphi_{vu}\Pi(\sigma^{n+n'}(\omega'))$.
Additionally, from $a=\varphi_{u}(0)$, we get $\varphi_{v}\left(a\right)=\varphi_{vu}\left(0\right)$.
Hence, by Lemma \ref{lem:Lip prop of varphi_u}, by bounded distortion,
since $\Pi(\sigma^{n+n'}(\omega'))\in\Omega$, and from (\ref{eq:bounds on r_psi})
and (\ref{eq:bounds on alpha}),
\begin{multline*}
\left|\varphi_{v}\left(a\right)-\Pi(\omega)\right|=\left|\varphi_{vu}\left(0\right)-\varphi_{vu}\Pi(\sigma^{n+n'}(\omega'))\right|\\
\le C\left|\varphi_{v}'(0)\right|\left|\varphi_{u}'(0)\right|\le2C^{2}\alpha^{-1}r_{\psi}^{-1}.
\end{multline*}
From this, (\ref{eq:norm of p equals}) and (\ref{eq:ub on derivatives}),
\[
\Vert q(X)\Vert_{2}^{2}\le4C^{4}+\sum_{l=1}^{k}4C^{2l}r_{\psi}^{2-2l}\le8kC^{2k}.
\]

Let $z\in\mathbb{D}$ be given. From (\ref{eq:dev of p}),
\[
q'(z)=\alpha r_{\psi}\sum_{l=1}^{k}\frac{\varphi_{v}^{(l)}\left(a\right)}{(l-1)!}r_{\psi}^{-l}z^{l-1}.
\]
From (\ref{eq:bounds on r_psi}) and (\ref{eq:ub on derivatives}),
it follows that for $2\le l\le k$
\[
\left|\alpha\frac{\varphi_{v}^{(l)}\left(a\right)}{(l-1)!}r_{\psi}^{1-l}z^{l-1}\right|\le2kC^{2k-1}\cdot2^{-n'(\chi-\delta)}.
\]
Moreover, by (\ref{eq:bounds on alpha}) and bounded distortion, we
get $\left|\alpha\varphi_{v}'\left(a\right)\right|\ge\frac{1}{2}C^{-1}$.
Since $C,k\ll n'$ and we can assume that $\delta<\chi/2$, all of
this implies that $\left|q'(z)\right|\ge\frac{1}{4}C^{-1}$. We have
thus shown that,
\begin{equation}
\Vert q\Vert_{2}\le\left(8kC^{2k}\right)^{1/2}\text{ and }\left|q'(z)\right|\ge\frac{1}{4}C^{-1}\text{ for }q\in\mathrm{supp}(\nu)\text{ and }z\in\mathbb{D}.\label{eq:ub on norm =000026 lb on der}
\end{equation}

Let $v_{1},v_{2}\in\mathcal{U}_{n,j}$ be distinct, and for $i=1,2$
set $q_{i}:=S_{\alpha r_{\psi}}T_{-\Pi(\omega)}R_{\psi^{-1}}P_{k,a}\varphi_{v_{i}}$.
Assume by contradiction that $\mathcal{D}_{Mn}^{\mathcal{P}_{k}}(q_{1})=\mathcal{D}_{Mn}^{\mathcal{P}_{k}}(q_{2})$.
By the definition of $\mathcal{D}_{Mn}^{\mathcal{P}_{k}}$ (see Section
\ref{subsec:Dyadic-partitions}), this implies that
\begin{equation}
\left|q_{1}(z)-q_{2}(z)\right|\le(k+1)2^{1-Mn}\text{ for }z\in\mathbb{D}.\label{eq:ub on dist of poly}
\end{equation}
On the other hand, by (\ref{eq:by exp sep}), we have $\Vert\varphi_{v_{1}u}-\varphi_{v_{2}u}\Vert_{\Omega}\ge c^{n+n'}$.
Thus, since $\psi\circ\varphi_{u}(\Omega)\subset\mathbb{D}$, there
exists $z_{0}\in\mathbb{D}$ such that $\psi^{-1}(z_{0})\in\varphi_{u}(\Omega)$
and
\[
\left|\varphi_{v_{1}}\left(\psi^{-1}(z_{0})\right)-\varphi_{v_{2}}\left(\psi^{-1}(z_{0})\right)\right|\ge c^{n+n'}.
\]
Additionally, from $\psi^{-1}(z_{0})\in\varphi_{u}(\Omega)$ and (\ref{eq:bd on remainder}),
\[
\left|\left(\varphi_{v_{i}}-P_{k,a}\varphi_{v_{i}}\right)\left(\psi^{-1}(z_{0})\right)\right|\le2^{-Mn-\chi(n+n')}\text{ for }i=1,2.
\]
From the last two inequalities and since $c^{-1}\ll M$,
\[
\left|P_{k,a}\varphi_{v_{1}}\left(\psi^{-1}(z_{0})\right)-P_{k,a}\varphi_{v_{2}}\left(\psi^{-1}(z_{0})\right)\right|\ge c^{n+n'}/2,
\]
which gives $\left|q_{1}(z_{0})-q_{2}(z_{0})\right|\ge c^{n+n'}/2$.
Since $c^{-1}\ll M$ and $k\ll n$, this contradicts (\ref{eq:ub on dist of poly}).
Hence we must have $\mathcal{D}_{Mn}^{\mathcal{P}_{k}}(q_{1})\ne\mathcal{D}_{Mn}^{\mathcal{P}_{k}}(q_{2})$,
which implies
\begin{equation}
\frac{1}{Mn}H\left(\nu,\mathcal{D}_{Mn}^{\mathcal{P}_{k}}\right)=\frac{1}{Mn}H\left(\left(\beta_{\omega}\right)_{F_{j,u}},\mathcal{C}_{n}\right)\ge\frac{\Delta}{6M},\label{eq:lb on ent of nu}
\end{equation}
where the last inequality follows from $(j,u)\in\mathcal{Q}$.

For $z\in\Omega_{0}$ we have $(\psi\circ\varphi_{u})'(z)=r_{\psi}\varphi_{u}'(z)$.
Thus, by Lemma \ref{lem:bounded distortion} and (\ref{eq:bounds on r_psi}),
\[
C^{-2}\le\left|(\psi\circ\varphi_{u})'(z)\right|\le C^{2},
\]
which implies that $\psi\circ\varphi_{u}\in\mathcal{F}_{1/C^{2}}$.
Also note that $\psi\circ\varphi_{u}(\Omega)\subset\mathbb{D}$. From
these facts, from (\ref{eq:ub on norm =000026 lb on der}) and (\ref{eq:lb on ent of nu}),
since $\dim\mu<2$, by the relations (\ref{eq:rel between params in main pf}),
and by Theorem \ref{thm:ent inc with poly},
\[
\frac{1}{Mn}H\left(\nu.\left(\psi\varphi_{u}\mu\right),\mathcal{D}_{Mn}\right)\ge\dim\mu+\rho.
\]
Hence, by (\ref{eq:lb on g(omega,j,u) bef ent enc}),
\begin{equation}
g(\omega,j,u)\ge\dim\mu+\rho-O(\delta)\text{ for all }(j,u)\in\mathcal{Q}.\label{eq:lb for (j,u) in Q}
\end{equation}

We can now complete the proof. From (\ref{eq:lb on g(omega) by g(omega,j,u)}),
(\ref{eq:simple lb by conc}), and (\ref{eq:lb for (j,u) in Q}),
\begin{eqnarray*}
g(\omega) & \ge & \sum_{(j,u)\in\mathcal{Q}}\beta_{\omega}\left(F_{j,u}\right)\left(\dim\mu+\rho-O(\delta)\right)\\
 & + & \sum_{(j,u)\in(J\times\mathcal{U}_{n'})\setminus\mathcal{Q}}\beta_{\omega}\left(F_{j,u}\right)\left(\dim\mu-O(\delta)\right).
\end{eqnarray*}
Thus, by (\ref{eq:lb on mass of (j,u ) in Q}) and since $\cup_{(j,u)\in J\times\mathcal{U}_{n'}}F_{j,u}=\left[\mathcal{U}_{n}\right]\cap\sigma^{-n}\left[\mathcal{U}_{n'}\right]$,
\[
g(\omega)\ge\frac{\rho\Delta}{6\log|\Lambda|}+\beta_{\omega}\left(\left[\mathcal{U}_{n}\right]\cap\sigma^{-n}\left[\mathcal{U}_{n'}\right]\right)\left(\dim\mu-O(\delta)\right),
\]
which holds for all $\omega\in E$. From this and by the definition
of $E$,
\[
g(\omega)\ge\frac{\rho\Delta}{6\log|\Lambda|}+\dim\mu-O(\delta)\text{ for all }\omega\in E.
\]
Hence, from (\ref{eq:lb on dim(mu) by g}) and since $\beta(E)>1-\delta$,
\[
\dim\mu+2\delta\ge(1-\delta)\left(\frac{\rho\Delta}{6\log|\Lambda|}+\dim\mu-O(\delta)\right).
\]
But since $\Delta^{-1},\rho^{-1}\ll\delta^{-1}$, this yields the
desired contradiction, completing the proof of the theorem.
\end{proof}

\section{\label{sec:Proof-of-main Corollary for sets}Proof of Corollary \ref{cor:main cor for sets}}

In this section we prove our result concerning the dimension of self-conformal
sets, Corollary \ref{cor:main cor for sets}. The proof requires some
preparation.
\begin{lem}
\label{lem:psi_1' almost like psi_2'}Let $n\ge1$ and let $\psi_{1},\psi_{2}:\Omega\rightarrow\mathbb{C}$
be holomorphic and injective. Suppose that $\psi_{j}\circ\varphi_{u}\circ\psi_{j}^{-1}:\psi_{j}(\Omega)\rightarrow\mathbb{C}$
is homothetic for each $j=1,2$ and $u\in\Lambda^{n}$. Then there
exists $0\ne w\in\mathbb{C}$ such that $\psi_{2}'(z)\mathbb{R}=\psi_{1}'(z)w\mathbb{R}$
for all $z\in K_{\Phi}$.
\end{lem}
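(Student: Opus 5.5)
Write $F:=\psi_{2}'/\psi_{1}'$. This is a holomorphic function on $\Omega$, and it vanishes nowhere because $\psi_{1},\psi_{2}$ are injective and holomorphic. The plan is to show that the line $F(z)\mathbb{R}$ is invariant under the maps $\varphi_{u}$ with $u\in\Lambda^{n}$, and then to use density of orbits in $K_{\Phi}$ and continuity to conclude that it is constant on $K_{\Phi}$. We then take $w:=F(z_{0})$ for any fixed $z_{0}\in K_{\Phi}$.

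\textbf{Step 1: a functional equation.} Fix $u\in\Lambda^{n}$ and $j\in\{1,2\}$. By hypothesis $g_{j}:=\psi_{j}\circ\varphi_{u}\circ\psi_{j}^{-1}$ is homothetic, so $g_{j}'\equiv r_{j,u}$ for some $0\ne r_{j,u}\in\mathbb{R}$. Since $\varphi_{u}(\Omega)\subset\Omega$, we have $g_{j}\circ\psi_{j}=\psi_{j}\circ\varphi_{u}$ on $\Omega$. The chain rule then gives
\[
\psi_{j}'\left(\varphi_{u}(z)\right)\varphi_{u}'(z)=r_{j,u}\psi_{j}'(z)\;\text{ for all }z\in\Omega .
\]
Divide the identity for $j=2$ by the one for $j=1$. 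The factor $\varphi_{u}'(z)$ cancels, which yields $F(\varphi_{u}(z))=(r_{2,u}/r_{1,u})F(z)$, and hence
\[
F\left(\varphi_{u}(z)\right)\mathbb{R}=F(z)\mathbb{R}\;\text{ for all }z\in\Omega\text{ and }u\in\Lambda^{n}.
\]
Iterating this identity shows that $F(\varphi_{v}(z))\mathbb{R}=F(z)\mathbb{R}$ for every word $v\in\Lambda^{nk}$ with $k\ge1$, since such a word is a concatenation of words in $\Lambda^{n}$.

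\textbf{Step 2: propagation to all of $K_{\Phi}$.} Fix $z_{0}\in K_{\Phi}$ and let $y\in K_{\Phi}$ be arbitrary, say $y=\Pi(\omega)$ for some $\omega\in\Lambda^{\mathbb{N}}$. Since $\mathrm{diam}\,\varphi_{\omega|_{m}}(\Omega)\to0$ and $y\in\varphi_{\omega|_{m}}(\overline{\Omega})$ for every $m$, we get $\varphi_{\omega|_{nk}}(z_{0})\to y$ as $k\to\infty$. By Step 1, each point $\varphi_{\omega|_{nk}}(z_{0})$ satisfies $F(\varphi_{\omega|_{nk}}(z_{0}))\mathbb{R}=F(z_{0})\mathbb{R}$. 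The map $z\mapsto F(z)\mathbb{R}$ from $\Omega$ to $(\mathbb{RP}^{1},d)$ is continuous because $F$ is continuous and nonvanishing. Passing to the limit gives $F(y)\mathbb{R}=F(z_{0})\mathbb{R}$.

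Setting $w:=F(z_{0})\ne0$, we obtain $\psi_{2}'(y)\mathbb{R}=\psi_{1}'(y)F(y)\mathbb{R}=\psi_{1}'(y)w\mathbb{R}$ for all $y\in K_{\Phi}$, which is the claim.

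There is no serious obstacle in this argument. The key point is the cancellation of $\varphi_{u}'$ in Step 1. The only care needed is in checking continuity of the map $z\mapsto F(z)\mathbb{R}$ into $\mathbb{RP}^{1}$, which uses that $F$ does not vanish.
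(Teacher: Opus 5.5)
Your proof is correct and follows essentially the same route as the paper: there too the chain rule is applied to $\varphi_{\omega|_{mn}}=\psi_{j}^{-1}\circ h_{j,u_{0}}\circ\cdots\circ h_{j,u_{m-1}}\circ\psi_{j}$ (with $h_{j,u}=\psi_{j}\circ\varphi_{u}\circ\psi_{j}^{-1}$ homothetic), the factor $\varphi_{\omega|_{mn}}'(0)$ is cancelled by comparing $j=1,2$, and one lets $\varphi_{\omega|_{mn}}(0)\to z$. The differences are cosmetic: the paper treats the whole composition at once instead of iterating your one-step identity $F\circ\varphi_{u}=(r_{2,u}/r_{1,u})F$, and it uses the base point $0$ with $w=\psi_{2}'(0)/\psi_{1}'(0)$ instead of $z_{0}\in K_{\Phi}$ with $w=F(z_{0})$.
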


\begin{proof}
For $j=1,2$ and $u\in\Lambda^{n}$ set $h_{j,u}:=\psi_{j}\circ\varphi_{u}\circ\psi_{j}^{-1}$.
Let $z\in K_{\Phi}$ and $\omega\in\Lambda^{\mathbb{N}}$ be with
$\Pi\omega=z$, and for $k\ge0$ set $u_{k}:=(\sigma^{kn}\omega)|_{n}$.
For $j=1,2$ and $m\ge1$, 
\[
\psi_{j}^{-1}\circ h_{j,u_{0}}\circ...\circ h_{j,u_{m-1}}\circ\psi_{j}=\varphi_{u_{0}}\circ...\circ\varphi_{u_{m-1}}=\varphi_{\omega|_{mn}}.
\]
Thus, since the maps $h_{j,u}$ are homothetic,
\begin{multline*}
\left(\psi_{j}^{-1}\circ h_{j,u_{0}}\circ...\circ h_{j,u_{m-1}}\circ\psi_{j}\right)'(0)\mathbb{R}=\left(\psi_{j}^{-1}\right)'\left(h_{j,u_{0}}\circ...\circ h_{j,u_{m-1}}\circ\psi_{j}(0)\right)\psi_{j}'(0)\mathbb{R}\\
=\left(\psi_{j}^{-1}\right)'\left(\psi_{j}\circ\varphi_{\omega|_{mn}}(0)\right)\psi_{j}'(0)\mathbb{R}=\left(\psi_{j}'\left(\varphi_{\omega|_{mn}}(0)\right)\right)^{-1}\psi_{j}'(0)\mathbb{R}.
\end{multline*}

From the last two equalities,
\[
\left(\psi_{1}'\left(\varphi_{\omega|_{mn}}(0)\right)\right)^{-1}\psi_{1}'(0)\mathbb{R}=\left(\psi_{2}'\left(\varphi_{\omega|_{mn}}(0)\right)\right)^{-1}\psi_{2}'(0)\mathbb{R}.
\]
Hence, by letting $m$ tend to $\infty$,
\[
\left(\psi_{1}'\left(z\right)\right)^{-1}\psi_{1}'(0)\mathbb{R}=\left(\psi_{2}'\left(z\right)\right)^{-1}\psi_{2}'(0)\mathbb{R},
\]
which completes the proof of the lemma with $w=\psi_{2}'(0)/\psi_{1}'(0)$.
\end{proof}
\begin{lem}
\label{lem:Phi_n or Phi_=00007Bn+1=00007D not conj to homo}Let $n\ge1$
be given. Then at least one of the IFSs $\left\{ \varphi_{u}\right\} _{u\in\Lambda^{n}}$
and $\left\{ \varphi_{u}\right\} _{u\in\Lambda^{n+1}}$ is not holomorphically
conjugate to a homothetic IFS.
\end{lem}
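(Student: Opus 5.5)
We argue by contradiction. Suppose there are injective holomorphic maps $\psi_{1},\psi_{2}:\Omega\rightarrow\mathbb{C}$ such that $\psi_{1}\circ\varphi_{u}\circ\psi_{1}^{-1}$ is homothetic for every $u\in\Lambda^{n}$, and $\psi_{2}\circ\varphi_{u}\circ\psi_{2}^{-1}$ is homothetic for every $u\in\Lambda^{n+1}$. The aim is to show that a single map $\psi_{1}$ then conjugates every $\varphi_{i}$, $i\in\Lambda$, to a homothety. This contradicts the standing assumption \ref{enu:main thm cond not holo conj}.

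\textbf{Step 1: compare $\psi_{1}$ and $\psi_{2}$ at level $n(n+1)$.} Both maps conjugate the IFS $\{\varphi_{u}\}_{u\in\Lambda^{n(n+1)}}$ to a homothetic one, because compositions of homotheties are homotheties and every word of length $n(n+1)$ splits into $n+1$ blocks of length $n$ and into $n$ blocks of length $n+1$. Lemma \ref{lem:psi_1' almost like psi_2'}, applied with $n(n+1)$ in place of $n$, gives $0\ne w\in\mathbb{C}$ with
\[
\psi_{2}'(z)\mathbb{R}=\psi_{1}'(z)w\mathbb{R}\quad\text{for all }z\in K_{\Phi}.
\]

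\textbf{Step 2: transport homotheticity.} Fix $i\in\Lambda$ and $v\in\Lambda^{n}$, so that $iv\in\Lambda^{n+1}$. Since $h_{1}:=\psi_{1}\circ\varphi_{v}\circ\psi_{1}^{-1}$ and $h_{2}:=\psi_{2}\circ\varphi_{iv}\circ\psi_{2}^{-1}$ are homothetic, their derivatives are real constants. For $z\in K_{\Phi}$ we have $\varphi_{v}(z)\in K_{\Phi}$ and $\varphi_{iv}(z)\in K_{\Phi}$. The chain rule gives
\[
\varphi_{v}'(z)\mathbb{R}=\psi_{1}'(\varphi_{v}z)^{-1}\psi_{1}'(z)\mathbb{R}
\quad\text{and}\quad
\varphi_{iv}'(z)\mathbb{R}=\psi_{2}'(\varphi_{iv}z)^{-1}\psi_{2}'(z)\mathbb{R}.
\]
By Step 1 the constant $w$ cancels in the second identity, so $\psi_{2}'$ may be replaced by $\psi_{1}'$ on $K_{\Phi}$. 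Writing $\varphi_{iv}'(z)=\varphi_{i}'(\varphi_{v}z)\varphi_{v}'(z)$ and dividing the two identities yields
\[
\varphi_{i}'(y)\mathbb{R}=\psi_{1}'(\varphi_{i}y)^{-1}\psi_{1}'(y)\mathbb{R}\quad\text{for all }y\in\varphi_{v}(K_{\Phi}).
\]
Since $\bigcup_{v\in\Lambda^{n}}\varphi_{v}(K_{\Phi})=K_{\Phi}$, this holds for all $y\in K_{\Phi}$.

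\textbf{Step 3: conclude via Lemma \ref{lem:lem bef no cobd prop}.} Put $g:=\psi_{1}\circ\varphi_{i}\circ\psi_{1}^{-1}$. By the chain rule,
\[
g'(\psi_{1}(y))=\psi_{1}'(\varphi_{i}y)\,\varphi_{i}'(y)\,\psi_{1}'(y)^{-1},
\]
and Step 2 shows that $g'(\psi_{1}(y))\mathbb{R}=\mathbb{R}$ for all $y\in K_{\Phi}$. Lemma \ref{lem:lem bef no cobd prop} then shows that $g$ is homothetic. This holds for every $i\in\Lambda$, so $\Phi$ is holomorphically conjugate to a homothetic IFS via $\psi_{1}$, a contradiction.

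The main point of care is the line-valued algebra in Step 2. All identities hold only in the group $\mathbb{RP}^{1}$, and only on $K_{\Phi}$ rather than on all of $\Omega$. One must therefore check at each stage that the points being evaluated lie in $K_{\Phi}$, which is where Step 1 applies. Lemma \ref{lem:lem bef no cobd prop} requires exactly this weak hypothesis, and its proof relies on Proposition \ref{prop:mu(Gamma)=00003D0} and thus on assumption \ref{enu:main thm cond no inv curve}.
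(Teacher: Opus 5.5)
Your proof is correct and follows the paper's argument. Both compare $\psi_{1},\psi_{2}$ at level $n(n+1)$ via Lemma \ref{lem:psi_1' almost like psi_2'}, cancel the constant $w$ in $\mathbb{RP}^{1}$ on $K_{\Phi}$, and conclude with Lemma \ref{lem:lem bef no cobd prop}. The only difference is a mirror-image choice: the paper factors words as $ui$ and shows that $\psi_{2}$ conjugates each $\varphi_{i}$ to a homothety, whereas you factor them as $iv$ and obtain the conjugacy via $\psi_{1}$.
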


\begin{proof}
Assume by contradiction that the lemma is false. Then there exist
holomorphic and injective functions $\psi_{1},\psi_{2}:\Omega\rightarrow\mathbb{C}$
such that $\psi_{1}\circ\varphi_{u}\circ\psi_{1}^{-1}$ is homothetic
for each $u\in\Lambda^{n}$ and $\psi_{2}\circ\varphi_{u}\circ\psi_{2}^{-1}$
is homothetic for each $u\in\Lambda^{n+1}$. It is clear that $\psi_{j}\circ\varphi_{u}\circ\psi_{j}^{-1}$
is homothetic for each $j=1,2$ and $u\in\Lambda^{n(n+1)}$. Hence,
by Lemma \ref{lem:psi_1' almost like psi_2'}, there exists $0\ne w\in\mathbb{C}$
such that $\psi_{2}'(z)\mathbb{R}=\psi_{1}'(z)w\mathbb{R}$ for all
$z\in K_{\Phi}$. This also implies that for each $z\in K_{\Phi}$
\[
\left(\psi_{2}^{-1}\right)'(\psi_{2}(z))\mathbb{R}=\frac{1}{\psi_{2}'(z)}\mathbb{R}=\frac{1}{\psi_{1}'(z)w}\mathbb{R}=w^{-1}\left(\psi_{1}^{-1}\right)'(\psi_{1}(z))\mathbb{R}.
\]

Let $i\in\Lambda$ and set $g:=\psi_{2}\circ\varphi_{i}\circ\psi_{2}^{-1}$.
Given $u\in\Lambda^{n}$, it holds that $\psi_{2}\circ\varphi_{ui}\circ\psi_{2}^{-1}$
is homothetic. Hence, for $z\in K_{\Phi}$
\begin{multline*}
\mathbb{R}=\left(\psi_{2}\circ\varphi_{ui}\circ\psi_{2}^{-1}\right)'(\psi_{2}z)\mathbb{R}=\left(\psi_{2}\circ\varphi_{u}\circ\psi_{2}^{-1}\circ\psi_{2}\circ\varphi_{i}\circ\psi_{2}^{-1}\right)'(\psi_{2}z)\mathbb{R}\\
=\left(\psi_{2}'\left(\varphi_{ui}(z)\right)\right)\cdot\left(\varphi_{u}'\left(\varphi_{i}(z)\right)\right)\cdot\left(\left(\psi_{2}^{-1}\right)'\left(\psi_{2}\circ\varphi_{i}(z)\right)\right)\cdot\left(g'(\psi_{2}z)\right)\mathbb{R}.
\end{multline*}
Thus, since
\[
\psi_{2}'\left(\varphi_{ui}(z)\right)\mathbb{R}=\psi_{1}'\left(\varphi_{ui}(z)\right)w\mathbb{R}
\]
 and
\[
\left(\psi_{2}^{-1}\right)'\left(\psi_{2}\circ\varphi_{i}(z)\right)\mathbb{R}=w^{-1}\left(\psi_{1}^{-1}\right)'\left(\psi_{1}\circ\varphi_{i}(z)\right)\mathbb{R},
\]
we obtain that
\begin{multline*}
\mathbb{R}=\left(\psi_{1}'\left(\varphi_{ui}(z)\right)\right)\cdot\left(\varphi_{u}'\left(\varphi_{i}(z)\right)\right)\cdot\left(\left(\psi_{1}^{-1}\right)'\left(\psi_{1}\circ\varphi_{i}(z)\right)\right)\cdot\left(g'(\psi_{2}z)\right)\mathbb{R}\\
=\left(\left(\psi_{1}\circ\varphi_{u}\circ\psi^{-1}\right)'(\psi_{1}\circ\varphi_{i}(z))\right)\cdot\left(g'(\psi_{2}z)\right)\mathbb{R}=\left(g'(\psi_{2}z)\right)\mathbb{R},
\end{multline*}
where the last equality holds since $\psi_{1}\circ\varphi_{u}\circ\psi_{1}^{-1}$
is homothetic.

We have thus shown that $\left(\psi_{2}\circ\varphi_{i}\circ\psi_{2}^{-1}\right)'(\psi_{2}z)\mathbb{R}=\mathbb{R}$
for all $z\in K_{\Phi}$. Hence, by Lemma \ref{lem:lem bef no cobd prop},
it follows that $\psi_{2}\circ\varphi_{i}\circ\psi_{2}^{-1}$ is homothetic.
Since this holds for all $i\in\Lambda$, we obtain that $\Phi$ is
holomorphically conjugate to a homothetic IFS, which contradicts our
standing assumption and completes the proof of the lemma.
\end{proof}
\begin{lem}
\label{lem:Gamma_n not Phi_n invariant}Let $n\ge1$ be given, set
$\Phi_{n}:=\left\{ \varphi_{u}\right\} _{u\in\Lambda^{n}}$, and let
$\Gamma\subset\Omega$ be a regular real-analytic curve which is closed
in $\Omega$. Then $\Gamma$ is not $\Phi_{n}$-invariant.
\end{lem}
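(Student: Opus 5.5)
The plan is to reduce the statement directly to Proposition \ref{prop:mu(Gamma)=00003D0}, which holds under our standing assumptions (in particular \ref{enu:main thm cond no inv curve} for $\Phi$ itself). Suppose by contradiction that $\Gamma\subset\Omega$ is a (nonempty) regular real-analytic curve, closed in $\Omega$, with $\varphi_{u}(\Gamma)\subset\Gamma$ for all $u\in\Lambda^{n}$. Nonemptiness is the implicit convention in Definition \ref{def:reg real-anal curve}; otherwise \ref{enu:main thm cond no inv curve} itself would be vacuously violated. I will show that $K_{\Phi}\subset\Gamma$. This gives $\mu(\Gamma)=1$, contradicting Proposition \ref{prop:mu(Gamma)=00003D0}.

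To prove $K_{\Phi}\subset\Gamma$, fix $z\in\Gamma$ and $\omega\in\Lambda^{\mathbb{N}}$. For each $k\ge1$ the word $\omega|_{kn}$ is a concatenation of $k$ words from $\Lambda^{n}$. By $\Phi_{n}$-invariance, $\varphi_{\omega|_{kn}}(z)\in\Gamma$. By Lemma \ref{lem:Lip prop of varphi_u} and uniform contraction, $|\varphi_{\omega|_{kn}}(z)-\varphi_{\omega|_{kn}}(0)|\le C|\varphi_{\omega|_{kn}}'(0)|\,|z|\to0$. Hence $\varphi_{\omega|_{kn}}(z)\to\Pi(\omega)$ as $k\to\infty$. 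Since $\Pi(\omega)\in K_{\Phi}\subset\Omega$ and $\Gamma$ is closed in $\Omega$, we get $\Pi(\omega)\in\Gamma$. As $K_{\Phi}=\Pi(\Lambda^{\mathbb{N}})$, this yields $K_{\Phi}\subset\Gamma$. Then $\mu(\Gamma)\ge\mu(K_{\Phi})=1$, contradicting Proposition \ref{prop:mu(Gamma)=00003D0}.

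There is essentially no serious obstacle. The only points to check are the closedness argument, which uses that the limit point lies in $\Omega$, and the nonemptiness convention. The real work is already contained in Proposition \ref{prop:mu(Gamma)=00003D0}, which was proved using only the assumptions on $\Phi$ and not on $\Phi_{n}$.
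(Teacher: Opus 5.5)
Your proposal is correct and follows the paper's proof. Assuming $\Phi_n$-invariance, one gets $K_{\Phi}\subset\Gamma$, hence $\mu(\Gamma)=1$, contradicting Proposition \ref{prop:mu(Gamma)=00003D0}. The paper only notes that $K_{\Phi}\subset\Gamma$ follows easily because $K_{\Phi}$ is also the attractor of $\Phi_n$; you supply the details via $\varphi_{\omega|_{kn}}(z)\to\Pi(\omega)$, and the paper's argument relies on the same implicit nonemptiness convention you point out.
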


\begin{proof}
Assume by contradiction that $\Gamma$ is $\Phi_{n}$-invariant. It
is clear that $K_{\Phi}$ is also the self-conformal set associated
to $\Phi_{n}$. From this, and since $\Gamma$ is $\Phi_{n}$-invariant
and closed in $\Omega$, it follows easily that $K_{\Phi}\subset\Gamma$.
But since $\mu$ is supported on $K_{\Phi}$, this contradicts Proposition
\ref{prop:mu(Gamma)=00003D0}, which completes the proof of the lemma.
\end{proof}
\begin{proof}[Proof of Corollary \ref{cor:main cor for sets}]
Using \cite[Theorem 1.22]{MR2423393}, it can be shown that there
exists a unique $\sigma$-invariant $\nu\in\mathcal{M}\left(\Lambda^{\mathbb{N}}\right)$
such that $s(\Phi)=h(\nu)/\chi(\Phi,\nu)$. Here $h(\nu)$ is the
entropy of $\nu$, and $\chi(\Phi,\nu)$ is the Lyapunov exponent
associated to $\Phi$ and $\nu$, defined by
\[
\chi(\Phi,\nu):=-\int\log\left|\varphi_{\omega_{0}}'\left(\Pi(\sigma\omega)\right)\right|d\nu(\omega).
\]
Moreover, by the construction of $\nu$ (again see \cite{MR2423393}),
we have $\nu\left([u]\right)>0$ for $u\in\Lambda^{n}$.

Given $n\ge1$, set $\Phi_{n}:=\left\{ \varphi_{u}\right\} _{u\in\Lambda^{n}}$
and $p_{n}:=\left(\nu\left([u]\right)\right)_{u\in\Lambda^{n}}$,
and denote by $\mu_{n}$ the self-conformal measure corresponding
to $\Phi_{n}$ and $p_{n}$. The Lyapunov exponent $\chi(\Phi_{n},p_{n})$
associated to $\Phi_{n}$ and $p_{n}$ is defined as in (\ref{eq:def of Lyap expo}).
It is easy to verify that $\frac{1}{n}H(p_{n})\overset{n}{\rightarrow}h(\nu)$
and $\frac{1}{n}\chi(\Phi_{n},p_{n})\overset{n}{\rightarrow}\chi(\Phi,\nu)$,
and so
\begin{equation}
\underset{n\rightarrow\infty}{\lim}\left(H(p_{n})/\chi(\Phi_{n},p_{n})\right)=s(\Phi).\label{eq:conv of ratio to s(Phi)}
\end{equation}

Since $\Phi$ is exponentially separated and its maps do not have
a common fixed point, it is easy to verify that these two properties
also hold for $\Phi_{n}$ for every $n\ge1$. Thus, by Lemmas \ref{lem:Phi_n or Phi_=00007Bn+1=00007D not conj to homo}
and \ref{lem:Gamma_n not Phi_n invariant}, there exist arbitrarily
large $n\ge1$ for which the conditions of Theorem \ref{thm:main thm}
are satisfied with $\Phi_{n}$ in place of $\Phi$. Hence, by that
theorem and since $\mu_{n}$ is supported on $K_{\Phi}$ for all $n\ge1$,
\[
\min\left\{ 2,H(p_{n})/\chi(\Phi_{n},p_{n})\right\} =\dim\mu_{n}\le\dim_{H}K_{\Phi}
\]
for arbitrarily large $n\ge1$. This together with (\ref{eq:conv of ratio to s(Phi)})
shows that $\min\left\{ 2,s(\Phi)\right\} \le\dim_{H}K_{\Phi}$. Since
the reverse inequality always holds, this completes the proof of the
corollary.
\end{proof}
\bibliographystyle{plain}
\bibliography{../../bibfile.bib}

$\newline$\textsc{Zhou Feng, Department of Mathematics, Technion, Haifa, Israel}$\newline$\textit{E-mail: }
\texttt{zfeng@campus.technion.ac.il}

$\newline$\textsc{Ariel Rapaport, Department of Mathematics, Technion, Haifa, Israel}$\newline$\textit{E-mail: }
\texttt{arapaport@technion.ac.il}
\end{document}